\documentclass[12pt,a4paper]{article}
\pdfoutput=1
\usepackage{a4,a4wide}
\usepackage{amsfonts,amsmath,amssymb,amsthm,enumitem}

\usepackage{graphicx}
\usepackage{color}


\usepackage[latin1]{inputenc}
\numberwithin{equation}{section}

\usepackage{doi}

\newtheorem{theorem}{Theorem}[section]
\newtheorem{lemma}[theorem]{Lemma}
\newtheorem{proposition}[theorem]{Proposition}
\newtheorem{corollary}[theorem]{Corollary}

\newtheorem{remark}[theorem]{Remark}
\newtheorem*{theorem*}{Theorem}
\newtheorem*{lemma*}{Lemma}
\newtheorem*{proposition*}{Proposition}
\newtheorem*{corollary*}{Corollary}

\newtheorem*{theo1.2}{Theorem 1.2}
\newtheorem*{theo1.4}{Theorem 1.4}
\newtheorem*{cor1.1}{Corollary 1.1}
\newtheorem*{prop1.6}{Proposition 1.6}
\newtheorem*{lem1.7}{Lemma 1.7}
\newtheorem*{lem1.8}{Lemma 1.8}
\newtheorem*{prop1.9}{Proposition 1.9}
\newtheorem*{prop1.10}{Proposition 1.10}
\newtheorem*{lem1.11}{Lemma 1.11}
\newtheorem*{lem1.12}{Lemma 1.12}\newtheorem*{lem1.13}{Lemma 1.13}
\newtheorem*{lem1.14}{Lemma 1.14}

\renewcommand\tilde{\widetilde}

\newcommand\vp{\varphi}

\def\R{\mathbb{R}}

\def\N{\mathbb{N}}

\def\H{\mathcal{H}}

\def\P{\mathbb{P}}

\def\LM#1{\hbox{\vrule width.2pt \vbox to#1pt{\vfill \hrule width#1pt
height.2pt}}}
\def\LL{{\mathchoice {\>\LM7\>}{\>\LM7\>}{\,\LM5\,}{\,\LM{3.35}\,}}}
\def\restr{{\LL}}

\def\Div{\nabla \cdot}

\def\ds{\displaystyle}

\def\1{\mathbf{1}}

\def\XXint#1#2#3{{\setbox0=\hbox{$#1{#2#3}{\int}$ }
\vcenter{\hbox{$#2#3$ }}\kern-.57\wd0}}

\def\eps{\varepsilon}

\def\lt{\left}
\def\rt{\right}
\def\les{\lesssim}
\def\ges{\gtrsim}

\def\bdr{\textrm{bdr}}

\def\lead{\textrm{lead}}

\def\spt{\textup{Spt}\,}

\newcommand{\del}{\delta\hspace{-0.2ex}}

\newcommand\numberthis{\addtocounter{equation}{1}\tag{\theequation}}

\def\rhoin{\rho^{\textrm{in}}}
\def\jin{j^{\textrm{in}}}
\def\rhofin{\rho^{\textrm{fin}}}
\def\jfin{j^{\textrm{fin}}}
\def\kept{\textrm{kept}}
\def\tang{\textrm{tan}}
\def\barR{\overline{R}}
\def\underR{\underline{R}}

\begin{document}
\title{Quantitative linearization results for the Monge-Amp\`ere equation}

\author{Michael Goldman\thanks{Universit\'e de Paris, CNRS, Sorbonne-Universit\'e,  Laboratoire Jacques-Louis Lions (LJLL), F-75005 Paris, France, \texttt{michael.goldman@u-paris.fr}} \and Martin Huesmann\thanks{Martin Huesmann,  Universit\"at M\"unster, Einsteinstra\ss e 62, 48149 M\"unster, Germany, \texttt{martin.huesmann@uni-muenster.de}} \and Felix Otto\thanks{Max Planck Institute for Mathematics in the Sciences, 04103 Leipzig, Germany, \texttt{Felix.Otto@mis.mpg.de}}}

\date{\today}

\maketitle

\abstract{This paper is about quantitative linearization results for the Monge-Amp\`ere equation with rough data.
We develop a large-scale regularity theory and prove that if a measure $\mu$ is close to the Lebesgue measure in Wasserstein distance at all scales,
then the displacement of the macroscopic optimal coupling is quantitatively close at all scales to the gradient of the solution of the corresponding Poisson equation.
The main ingredient we use is a harmonic approximation result for the optimal transport plan between arbitrary measures. This is used in a Campanato iteration which transfers the information through the scales.    
}

\tableofcontents
\setlength{\parindent}{0in}
\section{Introduction}

Motivated by the optimal matching problem (see for instance \cite{Ta14,BaBo13,AmStTr16,Le18}), we develop  a large-scale 
regularity theory for optimal couplings between an arbitrary measure and the Lebesgue measure.
We start by giving a simplified version of our main result which already gives a good idea of what we achieved.
For this we fix a radial  cutoff function 
\begin{equation}\label{cutoff}
 \eta\in C^\infty_c(B_1) \ \textrm{ with } \ \int_{\R^d} \eta =1, \qquad \textrm{ and set } \ \eta_R:=\frac{1}{R^d}\eta\lt(\frac{\cdot}{R}\rt).
\end{equation}
We will work under the assumption that $\sup |\nabla^2 \eta|\le C$  for some fixed universal constant $C>0$ depending only on the dimension. 
We introduce the following notation: for $R>0$ and $\mu$ a positive  measure on $\R^d$, $W_{B_R}^2(\mu,\kappa):=W^2(\mu\restr B_R,\kappa dx\restr B_R)$ where $W$ is the $2-$Wasserstein distance (see \eqref{eq:W2} for a definition) and $\kappa=\frac{\mu(B_R)}{|B_R|}$ 
is the generic constant for which this makes sense. 
\begin{corollary}\label{cormain}
 For every  $0\le \alpha<1$, there is a constant $C>0$ depending only on the dimension $d$ and  $\alpha$  such that the following holds: consider $\barR\ge C$ and   a positive measure $\mu$
 on $B_{\barR}$ with $\mu(B_{\barR})=|B_{\barR}|$ and such that 
 \begin{equation}\label{hypdata}
  \frac{1}{|B_R|} W^2_{B_R}\lt(\mu, \kappa\rt)\le R^\alpha \qquad \forall R\in[1, \barR].
 \end{equation}
Let $u$ be a (distributional) solution of the Poisson equation (here $\nu$ denotes the exterior normal to $\partial B_{\barR}$)
\[
 \Delta u=\mu-1 \ \textrm{ in } B_{\barR} \qquad \textrm{ and } \qquad \nu\cdot \nabla u=0 \ \textrm{ on } \partial B_{\barR}.
\]
 Then, if $\pi$ is the optimal coupling for $W_{B_{\barR}}(\mu,1)$, for $\barR\ge R\ge C$,
\begin{equation}\label{mainweakcor}
\lt|\int_{\R^d\times \R^d} \eta_R(x)(y-x-\int_{\R^d} \eta_R \nabla u)d\pi\rt|\leq \frac{C}{R^{1-\alpha}}.
\end{equation}

\end{corollary}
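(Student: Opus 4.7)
The plan is to derive Corollary~\ref{cormain} from the paper's main quantitative linearization theorem (stated later as Theorem~1.2), which under hypothesis~\eqref{hypdata} produces, via a Campanato-type harmonic approximation iteration, a sharp estimate of the form
\[
\Bigl|\int \eta_R(x)\bigl(y-x-\nabla u(x)\bigr)\,\mathrm{d}\pi\Bigr| \,\lesssim\, R^{\alpha-1}
\]
at every scale $R\in[C,\barR]$, with $\nabla u$ evaluated pointwise. The stated form~\eqref{mainweakcor}, where the pointwise $\nabla u(x)$ is replaced by its $\eta_R$-average $\int\eta_R\nabla u$, then follows by showing that the difference $\int\eta_R(x)\bigl(\nabla u(x)-\int\eta_R\nabla u\bigr)\,\mathrm{d}\pi$ is bounded by the same order.

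The underlying Campanato iteration has two main components. First, a one-scale harmonic-approximation lemma asserting that when $\mu$ is Wasserstein-close to $\kappa\,\mathrm{d}x$ on $B_R$, the displacement field of the optimal coupling is $L^2$-close, on a slightly smaller ball, to the gradient of some harmonic function $\phi_R$; this is the analytic heart of the paper. Second, a dyadic iteration across scales $R_k=2^{-k}\barR$ telescoping the successive $\phi_{R_k}$ into the gradient of the Poisson solution $u$, with the Neumann condition on $\partial B_{\barR}$ pinning down the integration constants at the top scale. Because $\alpha<1$ the telescoping series converges geometrically, producing exactly the rate $R^{\alpha-1}$.

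For the passage from the pointwise form to~\eqref{mainweakcor} I would use that $\pi$ has first marginal $\mu$ to rewrite $\int\eta_R(x)\nabla u(x)\,\mathrm{d}\pi=\int\eta_R\nabla u\,\mathrm{d}\mu$, then use $\mu-1=\Delta u$ and integration by parts (boundary terms vanish since $\mathrm{spt}\,\eta_R\subseteq B_R\subseteq B_{\barR}$) to rewrite the discrepancy $\int\eta_R\nabla u\,(\mathrm{d}\mu-\mathrm{d}x)$ as $-\int(\nabla\eta_R\cdot\nabla u)\nabla u\,\mathrm{d}x+\tfrac{1}{2}\int\nabla\eta_R\,|\nabla u|^2\,\mathrm{d}x$. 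Both terms are of order $\|\nabla\eta_R\|_\infty\int_{B_R}|\nabla u|^2\lesssim R^{-d-1}\cdot R^{d+\alpha}=R^{\alpha-1}$, where the Poisson energy estimate $\int_{B_R}|\nabla u|^2\lesssim W^2_{B_R}(\mu,1)$ is combined with~\eqref{hypdata}. A similar bound controls the remainder coming from $\int\eta_R\,\mathrm{d}\mu\neq 1$ multiplied by the constant vector $\int\eta_R\nabla u$.

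The main obstacle is not this deduction but the Campanato iteration itself, and in particular the one-scale harmonic-approximation lemma: quantitatively passing from a purely Wasserstein bound on $\mu$ to an $L^2$ control of the transport displacement on an interior ball, in the spirit of De Giorgi's excess-decay but adapted to the Monge--Amp\`ere/optimal-transport setting, is the classical technical bottleneck. Handling boundary layers near $\partial B_{\barR}$ cleanly via the Neumann condition, and identifying the telescoping limit of the harmonic approximants with $\nabla u$, add further technical layers but are expected to be more routine.
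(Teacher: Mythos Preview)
Your proposal has a genuine misunderstanding of what Theorem~1.2 actually delivers. Theorem~1.2 does \emph{not} produce a pointwise estimate of the form $\bigl|\int \eta_R(x)(y-x-\nabla u(x))\,d\pi\bigr|\lesssim R^{\alpha-1}$; its conclusion~(1.7) is already stated in terms of the averaged quantity $h_R=\int\eta_R\nabla u$, which is precisely the object appearing in~\eqref{mainweakcor}. So the entire ``passage from pointwise to averaged'' step you describe is superfluous: once Theorem~1.2 applies, Corollary~\ref{cormain} is immediate.

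Worse, the pointwise expression $\int\eta_R(x)\nabla u(x)\,d\pi$ is in general ill-defined: for rough $\mu$ (e.g.\ atomic), $\nabla u$ is only in $L^p$ for $p<\frac{d}{d-1}$ (cf.\ Lemma~2.6), while $\mu$ may be singular, so $\nabla u$ need not be defined $\mu$-a.e. Relatedly, the energy estimate $\int_{B_R}|\nabla u|^2\lesssim W^2_{B_R}(\mu,1)$ you invoke is \emph{false} for rough data; the paper explicitly flags this after Theorem~1.4 as the reason the harmonic approximation uses $\Phi$ (solving $\Delta\Phi=c$) rather than the Poisson solution $u$ (solving $\Delta u=\mu-1$).

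The actual derivation of Corollary~\ref{cormain} from Theorem~1.2 is much shorter, but requires a point you do not address: matching the boundary conditions. Theorem~1.2 defines $u_0$ via $\nu\cdot\nabla u_0=\nu\cdot\bar j$ on $\partial B_{\barR'}$ for some good radius $\barR'\sim\barR$, whereas the corollary's $u$ has $\nu\cdot\nabla u=0$ on $\partial B_{\barR}$. The paper's proof observes that since $\spt\pi\subset B_{\barR}\times B_{\barR}$, there is no transport across $\partial B_{\barR}$; hence $\barR$ itself is a good radius (the left-hand sides of the good-radius conditions~(4.7)--(4.8) vanish), hypothesis~(1.5) is trivially satisfied, $\Phi_0=0$, and $\bar j$ has zero normal trace on $\partial B_{\barR}$, so $u_0=u$. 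That is the content of the proof.
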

This result gives a precise and quantitative description of the well-known fact that the Monge-Amp\`ere equation
linearizes to the Poisson equation around the Lebesgue measure (see for instance \cite{AmStTr16,BreLoe}). Indeed, \eqref{mainweakcor}
shows that provided the measure $\mu$ is close to the Lebesgue measure at every scale in the sense of \eqref{hypdata}, then the displacement $y-x$ is close to $\nabla u$ on average, that is,  in a weak topology.
\medskip 

Let us  comment on the scaling in \eqref{mainweakcor}. 
As discussed above, (\ref{mainweakcor}) amounts to the estimate of a linearization
error. Since the left-hand side obviously is linear in the displacement, one
would generically expect the right-hand side to be quadratic in the displacement.
Obviously, $\frac{1}{|B_R|}W_{B_R}^2(\mu,\kappa)$ is also a quadratic expression 
in the displacement. However, in terms of dimensions, the left-hand side
of (\ref{mainweakcor}) has units of length, so also the right-hand side has 
to have units of length. This leaves only the expression
$\frac{1}{R|B_R|}W_{B_R}^2(\mu,\kappa)$, which is consistent with (\ref{hypdata}). Setting $\beta(R):= R^\alpha$, another way to read \eqref{mainweakcor} is to divide it by the displacement scale $\sqrt{\beta(R)}$, so that the left-hand 
side turns into the relative linearization error and the right-hand side assumes the non-dimensional form $\frac{\sqrt{\beta(R)}}{R}\ll1$. However, in proving this, it is crucial that the dimensional
quantity $\frac{\beta(R)}{R}$ is decreasing with increasing $R$, to the point that the sum over $\frac{\beta(R_k)}{R_k}$ for geometrically decaying $(R_k)_{1\leq k\leq K}$  is dominated by the smallest scale, see \eqref{hypbeta} below. This explains why we require $\beta(R)\ll R$ (i.e.\ $\alpha<1$) rather than the
dimensionally more natural $\beta(R)\ll R^2$.
\medskip

Let us now introduce some notation and state our first main result. 
Let $\pi$ be a measure on $\R^d\times \R^d$ with cyclically monotone support. For a given $\barR>0$, let $\mu:=\pi_1\restr B_{\barR}$ and
assume that $\pi_2\restr B_{\barR}=dx$, where we denoted by $\pi_i$ the  marginals of $\pi$. 
For $t\in[0,1]$, $\zeta\in C^0_c(\R^d)$, and $\xi\in C^0_c(\R^d)^d$, define the flux-density pair
\begin{multline}\label{defrhojintro}
 \int_{\R^d} \zeta d\rho_t=\int_{\R^d\times \R^d} \zeta((1-t)x+ty)d\pi \\
 \textrm{and} \quad \int_{\R^d} \xi \cdot dj_t=\int_{\R^d\times \R^d} \xi((1-t)x+ty) \cdot(y-x)d\pi.
\end{multline}
We then let  
\begin{equation}\label{defbarjintro}
 \bar j:=\int_0^1 dj(\cdot,t)
\end{equation}
 be the time integral (see \eqref{defbar}) of the flux $j$. Notice that $(\rho,j)$ solves in a distributional sense the continuity equation
\begin{equation}\label{conteqintro}
 \partial_t \rho+\nabla\cdot j=0 \qquad \textrm{ and } \qquad  \rho_0=\mu, \ \rho_1= 1.
\end{equation}
 We will fix for the whole paper a nonnegative rate function $\beta(R)$ satisfying the following hypothesis: 
it is increasing, $R\mapsto\frac{\beta(R)}{R}$ is decreasing, and  
 there exists a constant $C_\beta>0$ such that  
\begin{equation}\label{hypbeta}
 \sum_{\ell\geq0} \frac{\beta(2^\ell R)}{2^\ell R}\le C_\beta \frac{\beta(R)}{R} \quad \textrm{for every } R\ge 1 \qquad \textrm{and} \qquad \beta(1)\le C_\beta.
\end{equation} 
 We do not claim optimality of these conditions. They are rather made to  include both the choices
$\beta(R)=R^\alpha$ for $0\le \alpha<1$ and $\beta(R)=\log (R+C)$
(for\footnote{
The notation $A\ll 1$, which we only use in assumptions, means that there exists an $\eps>0$ only depending on the dimension and on $C_\beta$, such that
if $A\leq \eps$ then the conclusion holds.
 Similarly, the notation $A\les B$, which we use in output statements, means that there exists a global
constant $C>0$ depending on the dimension and on $C_\beta$ such that $A\le C B$.} $R\gg1$).

\begin{theorem}\label{theo:main}
 Let $\mu$ be a positive measure on $\R^d$ such that for some  $\barR \gg 1$, 
 \begin{equation}\label{hypstart}
  \frac{1}{|B_{\barR}|} \int_{(B_{\barR}\times \R^d)\cup (\R^d\times B_{\barR})} |x-y|^2 d\pi \le \beta(\barR)
 \end{equation}
and 
\begin{equation}\label{hypdata2}
 \frac{1}{|B_R|} W^2_{B_R}\lt(\mu, \kappa\rt)\le \beta(R) \qquad \forall R\in[ 1, \barR].
\end{equation}
 Then there exists $\barR'\sim \bar R$ such that letting  $u$ be a (distributional) solution of the Poisson equation (recall the definition \eqref{defbarjintro} of $\bar j$)
\begin{equation}\label{ma87}
 \Delta u=\mu-1 \ \textrm{ in } B_{\barR'} \qquad \textrm{ and } \qquad \nu\cdot \nabla u=\nu\cdot \bar j \ \textrm{ on } \partial B_{\barR'}
\end{equation}
and for $R>0$  (recall the definition \eqref{cutoff} of $\eta_R$),
\[
 h_R:=\int_{\R^d} \eta_R \nabla u,
\]
there holds for $\barR\ges R \ges 1$, 
\begin{multline}\label{mainweaktheo}
 \lt|\int_{\R^d\times \R^d} \eta_R(x)(y-x-h_R)d\pi\rt|+\lt|\int_{\R^d\times \R^d} \eta_R(y-h_R)(y-x-h_R)d\pi\rt| \\\les \frac{\beta(R)}{R} ,
\end{multline}
and
\begin{multline}\label{strongestimintro}
 \sup \lt\{ |y-x-h_R| \ : \ (x,y)\in \spt\pi\cap ((B_{R}\times\R^d)\cup(\R^d\times B_{R}(h_R)))\rt\}\\
 \les   R \lt(\frac{\beta(R)}{R^2}\rt)^{\frac{1}{d+2}}.
 \end{multline}
\end{theorem}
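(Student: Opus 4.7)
The proof will follow the classical Campanato iteration strategy, driven by a one-step harmonic approximation lemma (the key ingredient advertised in the abstract, presumably stated as a preliminary proposition earlier in the paper). The idea is to construct the vector $h_R$ not by solving the Poisson equation directly, but through a telescoping sequence of local harmonic approximants at dyadic scales, and only afterwards identify the limit with $\int\eta_R\nabla u$.

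\medskip

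\textbf{Step 1: Harmonic approximation at one scale.} At each scale $R\in[1,\barR']$, the hypothesis \eqref{hypdata2} together with the boundary control \eqref{hypstart} should put us in the range of applicability of the harmonic approximation result. This produces a harmonic function $\phi_R$ on (roughly) $B_R$ with
\begin{equation*}
\frac{1}{|B_R|}\int_{B_R\times\R^d}|y-x-\nabla\phi_R(x)|^2\,d\pi\les \beta(R).
\end{equation*}
Setting $h_R:=\int \eta_R\nabla\phi_R$, the first term in \eqref{mainweaktheo} is then essentially a Cauchy-Schwarz application, using that $\eta_R$ is a smooth radial weight and that the mean of $\nabla\phi_R$ against $\eta_R$ coincides with $h_R$ up to an error controlled by $R^{-1}\big(\beta(R)\big)^{1/2}\cdot\big(\beta(R)\big)^{1/2}=\beta(R)/R$.

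\medskip

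\textbf{Step 2: Campanato iteration through the scales.} The delicate point is that to iterate, one needs the shifted transport problem (replacing $y$ by $y-h_{R_k}$, and $\pi$ by its push-forward) to still satisfy a near-Lebesgue hypothesis at the next smaller scale $R_{k+1}=R_k/2$. Here, harmonicity of $\phi_{R_k}$ lets us subtract its affine part and control the remainder by interior gradient/Hessian estimates. Comparing two successive approximants $\phi_{R_k}$ and $\phi_{R_{k+1}}$ on $B_{R_{k+1}}$ — both approximating the same displacement field on that ball — yields the Campanato estimate
\begin{equation*}
|h_{R_{k+1}}-h_{R_k}|\les \frac{\beta(R_k)}{R_k}.
\end{equation*}
Hypothesis \eqref{hypbeta} then makes these increments summable and in fact dominated by $\beta(R)/R$ at any fixed scale $R$, so that the sequence $h_{R_k}$ is Cauchy and matches, up to the same error, $\int\eta_R\nabla u$ where $u$ solves the Poisson problem with the Neumann data $\nu\cdot\bar j$ inherited from the largest scale.

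\medskip

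\textbf{Step 3: The second weak bound and the strong bound.} The second term in \eqref{mainweaktheo}, which tests against $(y-h_R)$ instead of $\eta_R(x)$, follows from the same harmonic approximation applied after swapping the two marginals of $\pi$ (the optimal plan is symmetric under $(x,y)\mapsto(y,x)$ up to recentering by $h_R$), combined with the $L^2$ control on $y-x-h_R$ obtained in Step~1. For the pointwise estimate \eqref{strongestimintro} I would use cyclical monotonicity: if some $(x_0,y_0)\in\spt\pi$ had $|y_0-x_0-h_R|\gg R(\beta(R)/R^2)^{1/(d+2)}$, then monotonicity forces a ball of radius comparable to the displacement around $x_0$ to be mapped far away, contributing to $\int \eta_R|y-x-h_R|^2 d\pi$ an amount of order $R^d$ times the $(d+2)$-th power of the displacement divided by $R^{d+2}$, contradicting the $L^2$ bound just obtained. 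The exponent $1/(d+2)$ is the standard volume-weighted interpolation exponent between the $L^2$ and $L^\infty$ controls.

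\medskip

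The main obstacle is organizing Step~2 cleanly: keeping track of how the shift by the cumulative $h_{R_k}$ affects the Wasserstein hypothesis \eqref{hypdata2} at the next scale, and ensuring that the boundary loss from passing to a slightly smaller ball (the choice of $\barR'\sim\barR$) does not spoil the iteration. This is exactly why the Neumann datum in \eqref{ma87} has to be $\nu\cdot\bar j$ rather than $0$ — the flux generated by the transport at the outer scale must feed consistently into the Poisson problem all the way down.
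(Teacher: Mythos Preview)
Your outline has the right skeleton (harmonic approximation, Campanato iteration, $L^\infty$ from monotonicity), but the heart of the argument is missing, and the claimed estimates in Steps~1--2 do not follow from what you wrote.

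\textbf{The gap.} In Step~1 you assert that Cauchy--Schwarz yields $\bigl|\int\eta_R(x)(y-x-h_R)\,d\pi\bigr|\lesssim\beta(R)/R$. It does not: the harmonic approximation gives an $L^2$ error of order $\beta(R)^{1/2}$ on the displacement, and Cauchy--Schwarz against $\eta_R$ produces $\beta(R)^{1/2}$, not $\beta(R)/R$. Similarly, in Step~2 you claim $|h_{R_{k+1}}-h_{R_k}|\lesssim\beta(R_k)/R_k$ from comparing harmonic approximants; that comparison only yields $\beta(R_k)^{1/2}$. The paper explicitly flags this issue (see the discussion around \eqref{eqtildeh}): the naive triangle inequality on the cumulative shift $\tilde h=\sum_k\nabla\Phi_k(0)$ loses a factor $\log\barR$, and overcoming this is the main point of the proof. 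Also note that without first shifting, the energy $E(R)$ at scale $R$ is a priori only $\lesssim\beta(\barR)$, not $\beta(R)$, so your Step~1 presupposes the iteration.

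\textbf{What is actually needed.} The $\beta(R)/R$ bound comes from a genuine cancellation, established through an \emph{Eulerian} linearization: one shows that the time-integrated flux $\bar j_k$ is close to $\nabla u_k$ (the Poisson solution at scale $R_k$) in the sense that $\bigl|\int\eta_R(\bar j_k-\nabla u_k)\bigr|\lesssim E_k/R_k$ (Lemma~\ref{lem:Le1intro}). This is proved by observing that $\bar j_k-\nabla u_k$ is divergence-free with vanishing normal trace, while $\bar j_k$ is weakly close to the gradient of $\tfrac12|y|^2-\psi_k$ (Kantorovich potential); the div-curl structure then forces the average to be small. Combined with the near-additivity of the shift on the Eulerian flux (Lemma~\ref{lem:additive}) and a telescoping identity for the Neumann--Poisson fluxes on nested balls (Lemma~\ref{lem:sumvk}), this yields the key representation (Proposition~\ref{prop:repres intro}): the cumulative shift $\sum_{\ell\le k}\nabla\Phi_\ell(0)$ equals $\int\eta_R\nabla u_0$ up to $\beta(R_k)/R_k$. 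Your proposal skips this entire mechanism, and in particular offers no argument for why the iteratively constructed shift should coincide with $\int\eta_R\nabla u$ for the \emph{single} Poisson solution $u=u_0$ at the top scale --- this identification is the substance of the theorem, not a formality.

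Your Step~3 is essentially correct: the second term in \eqref{mainweaktheo} follows from the first by estimating $\int(\eta_R(x)-\eta_R(y-h_R))(y-x-h_R)\,d\pi$ via the $L^2$ bound (no marginal swap needed), and \eqref{strongestimintro} is indeed the $L^\infty$ bound (Lemma~\ref{Linf}) applied to the shifted coupling.
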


For our future work on the optimal matching problem, the following observations will be useful: 
\begin{remark}
\begin{itemize}
\item[(i)]It will be clear from the proof of Theorem \ref{theo:main} that
 we do not need the hypothesis \eqref{hypdata2} to hold for every radius $R\in [1,\barR]$
 but that it is enough to have it for a sequence of approximately geometrically decaying radii i.e. for a sequence $R_0=\barR\ge \cdots\ge R_K\gg 1$ with $R_{k-1}\ge 2 R_k\ges R_{k-1}$;
 \item[(ii)]  in the statement of Theorem \ref{theo:main}, we have set the 'small' scale to be equal to one. By scaling, we see that for every $\underline{R}>0$, letting $\underline{\beta}(R):=\underline{R}^2\beta(R/\underline{R})$, if  hypothesis $\eqref{hypstart}\&\eqref{hypdata2}$ hold with $\underline{\beta}$ instead of $\beta$ and  for $R\in [\underR,\barR]$ for some $\barR\gg \underR$, then the conclusions $\eqref{mainweaktheo}\&\eqref{strongestimintro}$ also hold for  $\barR\ges R\ges \underR$ (still with $\underline{\beta}$ instead of $\beta$).
\end{itemize}
 \end{remark}

 As  Corollary \ref{cormain} does, 
Theorem \ref{theo:main} states that if the averaged square displacement is small in the sense of \eqref{hypstart} at the macroscopic scale $\barR$ and if the measure $\mu$
is close to the Lebesgue measure in the sense of \eqref{hypdata2} at every scale from the macroscopic one $\barR$ to the microscopic one which is set to unity, then
the displacement is quantitatively close to $\nabla u$ both in a weak sense (see \eqref{mainweaktheo}) and in a strong sense (see \eqref{strongestimintro}).
\medskip 

As mentioned above, our motivation for proving Theorem \ref{theo:main} comes
from the optimal matching problem.
When $\mu$ is given by a Poisson point process, it is known since \cite{AKT84} (see also \cite{AmStTr16} and \cite{GHO}) that for $d=2$, \eqref{hypdata2} holds with $\beta(R)=\log (R+C)$
with very high probability while for $d\ge 3$, \eqref{hypdata2}  holds for $\beta(R)=C$ also with very high probability.
As will be shown in a future work, \eqref{mainweaktheo} gives a rigorous validation, at the level of the displacement and down to the microscopic scale,
of the ansatz used by Caracciolo et al.\ in \cite{CaLuPaSi14}. From this point of view, \eqref{mainweaktheo} should be seen as a quenched result 
separating the deterministic from the stochastic part of the analysis. We refer to \cite{AmStTr16} and the recent paper \cite{AGS19} for a justification of this ansatz at the macroscopic scale on the level of the transport cost respectively the transport map (and to \cite{Le17, Le18} for application of these ideas in the case of Gaussian matching). 

In a future work we will also prove that for $d=2$, $|h_R|^2\sim \log \barR$ for $R\sim 1$ (notice that $\log \barR\gg 1$) with very high probability and thus \eqref{strongestimintro}
gives the expected deviation  for the transport  from the identity plus a shift. In higher dimension, since $|h_R|\sim 1$ for $R\sim 1$ this estimate is less meaningful (see \cite{HuSt13}). 
In another future work, we would  also like to address the thermodynamic limit in the matching problem;
this will require controlling how the optimal coupling depends on changing the positions of (distant) points.
This relates to the question of how sensitively, in terms of a H\"older modulus of continuity,
the coupling depends on the measure (see \cite{AGS19,Be18,MeDeCh19}). 
In our setting, we hope that  our large-scale regularity theory might lead to  more precise statements.
\medskip

Before explaining the strategy/main ideas for the proof of Theorem \ref{theo:main} we would like to single out one ingredient which can be seen as our second main result. It is a quantitative harmonic approximation result for the displacement $y-x$ under the optimal coupling between two {\it arbitrary measures}.

Let $\mu$ and $\lambda$ be two positive measures on $\R^d$ and let $\pi$ be a  coupling between $\mu$ and $\lambda$ with monotone support. For a given scale $R$, we introduce the local energy
\begin{equation}\label{Eintro}
 E(R):=\frac{1}{|B_{6R}|}\int_{(B_{6R}\times \R^d)\cup( \R^d\times B_{6R})} |x-y|^2 d\pi
\end{equation}
and the local measure of the squared distance between the data and the Lebesgue measure 
\begin{equation}\label{hypdata3}
 D(R):=\frac{1}{|B_{6R}|} W^2_{B_{6R}}(\mu, \kappa_\mu)+ \frac{R^2}{\kappa_\mu}(\kappa_\mu-1)^2+\frac{1}{|B_{6R}|} W^2_{B_{6R}}(\lambda, \kappa_\lambda)+ \frac{R^2}{\kappa_\lambda}(\kappa_\lambda-1)^2.
\end{equation}
The harmonic approximation theorem states that provided these two quantities are small (at the natural scale, i.e.\ in a dimensionless sense) for some radius $\barR>0$, then there exists a harmonic gradient field $\nabla \Phi$ such that the displacement $y-x$ is very close to $\nabla \Phi$ in $B_{\barR}$.
\begin{theorem}\label{theo:harmonicLagintro}
 For every $0<\tau\ll 1$, there exist $\eps(\tau)>0$ and $C(\tau)>0$ such that provided $\frac{1}{\barR^2}E(\barR)+\frac{1}{\barR^2}D(\barR)\le \eps$ the following holds. There exists a radius $R\in (3\barR,4\barR)$ such that if $\Phi$ is the up to an additive constant unique (distributional) solution of
 \begin{equation}\label{ma89}
  \Delta \Phi=c \ \textrm{ in } B_{R} \qquad \textrm{ and } \qquad \nu \cdot \nabla \Phi=\nu\cdot \bar j \ \textrm{ on } \partial B_{R},
 \end{equation}
where $\bar j$ is  defined in \eqref{defbarjintro} and $c$ is the generic constant for which this equation is solvable, then
\begin{equation}\label{eq:mainestimateintro}
 \frac{1}{|B_{\barR}|}\int_{(B_{\barR}\times \R^d)\cup(\R^d\times B_{\barR})} |x-y+\nabla \Phi(x)|^2 d\pi\le \tau E(\barR)+ C D(\barR)
\end{equation}
and 
\begin{equation}\label{introestimPhi}
  \barR^2 \sup_{B_{2\barR}}|\nabla^2 \Phi|^2+ \sup_{B_{2\barR}}|\nabla \Phi|^2\les E(\barR)+D(\barR).
\end{equation}
\end{theorem}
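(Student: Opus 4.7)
The plan is to construct $\Phi$ explicitly as a Neumann Poisson potential on a well-chosen ball $B_R$ whose boundary flux matches $\nu\cdot\bar j$, and then obtain \eqref{eq:mainestimateintro} from a Pythagorean (Helmholtz) comparison between $\bar j$ and $\nabla\Phi$ on $B_R$, up to a linearization error and density fluctuations.

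First I would select $R\in(3\barR,4\barR)$ by a Chebyshev/coarea argument over radii. Since $E(\barR)$ and $D(\barR)$ are assumed small, averaging over $R$ produces a radius for which (i) the $\pi$-mass of pairs straddling $\partial B_R$, weighted by $|y-x|^2$, is bounded by a multiple of $E(\barR)$, (ii) the mass defects $|\mu(B_R)-|B_R||$ and $|\lambda(B_R)-|B_R||$ are controlled by $\barR\sqrt{D(\barR)}$, and (iii) the trace $\nu\cdot\bar j$ is well defined on $\partial B_R$ with a useable $L^2$ bound. The compatibility constant $c$ in \eqref{ma89} then has size $\les\sqrt{D(\barR)}/\barR$ by the divergence theorem, and $\Phi$ is well-defined modulo additive constants. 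The pointwise bounds \eqref{introestimPhi} on $B_{2\barR}\subset\subset B_R$ follow from standard interior and boundary Neumann elliptic regularity.

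For \eqref{eq:mainestimateintro} I expand the square and rewrite the cross term using \eqref{defbarjintro}:
\[
\int(y-x)\cdot\nabla\Phi(x)\,d\pi \;=\; \int_{B_R}\nabla\Phi\cdot d\bar j \;+\; \E_{\mathrm{lin}},
\]
with a linearization error $\E_{\mathrm{lin}}=\int_0^1\!\int\bigl(\nabla\Phi(x)-\nabla\Phi((1-t)x+ty)\bigr)\cdot(y-x)\,d\pi\,dt$ of size $\les\|\nabla^2\Phi\|_{L^\infty}E(\barR)\les\sqrt{\eps}\,E(\barR)$ by \eqref{introestimPhi}. Integration by parts using $\nabla\cdot\bar j=\mu-\lambda$, the matching Neumann condition, and $\Delta\Phi=c$ yields $\int_{B_R}\nabla\Phi\cdot d\bar j=\int_{B_R}|\nabla\Phi|^2+O(D(\barR))$. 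Jensen's inequality in the time variable together with the Benamou--Brenier representation and the density bound coming from $D(\barR)$ gives $\int_{B_R}|\bar j|^2\le\int|y-x|^2\,d\pi+O(D(\barR))$. Combining these with the identity $\int|\bar j|^2=\int|\nabla\Phi|^2+2\int\nabla\Phi\cdot(\bar j-\nabla\Phi)+\int|\bar j-\nabla\Phi|^2$ (the cross term being $O(D(\barR))$ by one more integration by parts using $\nu\cdot(\bar j-\nabla\Phi)=0$) collapses the expanded square to $\E_{\mathrm{lin}}+O(D(\barR))\le \tau E(\barR)+CD(\barR)$, provided $\eps(\tau)\ll\tau^2$.

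The main obstacle is quantifying the small prefactor $\tau$. It arises from $\E_{\mathrm{lin}}$, which is genuinely cubic in the displacement ($|y-x|^2$ times one factor of $\nabla^2\Phi\les\sqrt{E(\barR)+D(\barR)}/\barR$); the smallness hypothesis must be used nonlinearly to convert this into a factor $\sqrt{\eps}$ in front of $E(\barR)$. The subtle accounting of boundary terms and density fluctuations is precisely what forces the Chebyshev selection of $R$ from a thick annulus $(3\barR,4\barR)$, rather than working on $B_{\barR}$ directly where the trace of $\bar j$ and the marginal defects could not be controlled.
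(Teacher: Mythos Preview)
Your argument has a fundamental gap in the way the ``collapse'' is supposed to happen. Granting all your approximations, expanding the square and using your steps (a), (b) yields
\[
\int|x-y+\nabla\Phi(x)|^2\,d\pi \;=\; \int|y-x|^2\,d\pi - \int_{B_R}|\nabla\Phi|^2 \;-\;2\E_{\mathrm{lin}} \;+\; O(D).
\]
Your Jensen/Benamou--Brenier step and the Helmholtz identity only give $\int_{B_R}|\bar j|^2 \le \int|y-x|^2\,d\pi + O(D)$ and $\int_{B_R}|\bar j|^2 \ge \int_{B_R}|\nabla\Phi|^2 - O(D)$, i.e.\ the inequality $\int|y-x|^2\,d\pi - \int_{B_R}|\nabla\Phi|^2 \ge -O(D)$, which is the \emph{wrong direction}. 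Nothing in your outline bounds $\int|y-x|^2\,d\pi - \int_{B_R}|\nabla\Phi|^2$ from above by $\tau E + CD$; this is precisely what has to be proved, and it requires using the \emph{minimality} of $\pi$. In the paper this is done by constructing an explicit competitor $(\tilde\rho,\tilde j)$ in $B_R\times(0,1)$ with the same boundary data whose Benamou--Brenier cost is $\le \int_{B_R}|\nabla\phi|^2 + \tau E + CD$ (Lemma~\ref{Lconintro}); combined with the approximate orthogonality (Lemma~\ref{Lortintro}) and local minimality this yields $\int_{B_2}\int_0^1\frac{1}{\rho}|j-\rho\nabla\phi|^2 \le \tau E + CD$. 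Your Helmholtz comparison cannot substitute for this variational step.

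There are also two technical gaps specific to the general measure-to-measure setting. First, for arbitrary $\mu,\lambda$ the flux $\bar j$ is only a measure, so its normal trace $\nu\cdot\bar j$ on $\partial B_R$ is in general only a measure and has no $L^2$ bound; the paper addresses this by introducing the regularized boundary fluxes $\hat g_\pm$ (obtained by pushing forward the endpoints of trajectories that start/end at constant density, see Lemma~\ref{Lapp}), working with the solution $\phi$ of the Neumann problem with data $\hat g$, and only at the end passing from $\phi$ to $\Phi$ via a Wasserstein estimate (Lemma~\ref{lem:difphi}). Second, the Eulerian density $\rho_t$ need not be close to $1$ in $L^\infty$ (displacement convexity is unavailable here), so one cannot simply replace $\int\frac{1}{\rho}|j|^2$ by $\int|j|^2$; the paper handles the resulting term $\int|\nabla\phi^r|^2(d\bar\rho - dx)$ by a further convolution at scale $r$ on $\partial B_R$ combined with the restriction estimate $W_{B_R}^2(\bar\rho,\kappa)\les E+D$ of Lemma~\ref{Lres}. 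Both of these issues are discussed explicitly in Section~\ref{sec:strategy} as the main new difficulties compared to the earlier work where one marginal is Lebesgue.
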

\begin{remark}\label{rem:DataLebesgueintro}
 If one of the two measures, say the target measure $\lambda$, is the Lebesgue measure, then the same conclusion holds replacing the definition \eqref{hypdata3} of $D(R)$ by $D'(R):=\frac{1}{|B_{6R}|}W^2_{B_{6R}}(\mu,\kappa)$, see Remark \ref{rem:DataLebesgue} for more details. 
 \end{remark}
Most of the paper is dedicated to the proof  of this far-reaching generalization of \cite[Prop. 4.7]{GO}. Let us stress that since  $E(\barR)$ behaves like a squared $H^1$ norm in terms
of the Kantorovich potentials and since the squared Wasserstein distance $D(\barR)$ behaves like a squared $H^{-1}$ norm (cf.\ \cite[Sec.\ 7.6]{Viltop}), 
 all quantities occur in the estimate \eqref{eq:mainestimateintro} as if we were dealing with a second order linear elliptic equation and looking at squared $L^2$-based quantities.
 Anticipating a bit, we point out that having as boundary flux $\nu\cdot \bar j$ on $\partial B_R$ for $\nabla \Phi$ will play a crucial role in the derivation of Theorem \ref{theo:main} from Theorem \ref{theo:harmonicLagintro}. 
For other applications, such as the small-scale regularity theory developped in \cite{GO},   it only matters that $\nabla \Phi$ is harmonic and that \eqref{eq:mainestimateintro} and \eqref{introestimPhi} hold 
(in fact we need also  an estimate on $\sup_{B_{\barR}} |\nabla^3 \Phi|$ but this follows from harmonicity).       
\medskip 

We would also like to comment on the fact that the right-hand side of \eqref{ma89} is a constant and not $\mu-\lambda$ as could be expected from \eqref{ma87} (see also \cite{GO}).
This is due to the fact that we need (interior) $L^2$ bounds on $\nabla \Phi$ which are generally not true for solutions of the Poisson equation with rough right-hand side. Similar issues were tackled in \cite{AmStTr16, Le17}  by mollification  with smooth kernels (the heat and the Mehler kernel respectively).
\medskip 

 Finally, we should say that while \cite[Prop. 4.7]{GO} served the purpose of an alternative proof for the partial regularity result of \cite{FigKim} (see also \cite{DePFig}), 
 it is not clear to us that the approach therein based on Caffarelli's regularity theory (see \cite{CafJAMS92,CafCPAM91}) could also be used in the present context.

\subsection{Strategy of the proofs}\label{sec:strategy}
 
In this section, we will explain the key ideas for the proofs of our two main theorems. The rough picture is as follows: 
The first step in the proof of Theorem \ref{theo:main} is the harmonic approximation result Theorem \ref{theo:harmonicLagintro} itself. As in  \cite{GO}, this allows to run a Campanato-style iteration scheme which transfers the information
\eqref{hypstart}, namely that the displacement is controlled at the macroscopic scale (here $\barR$) down to the microscopic scale (here $1$),
provided the data are well-behaved in the sense of \eqref{hypdata2}. Due to the iterative application of Theorem \ref{theo:harmonicLagintro} this results into a cumulative shift. 
The last ingredient for the proof of Theorem \ref{theo:main} is the fact that to leading order, this shift is equal 
to the flux  averaged over a microscopic region  of the solution
of the linearized equation i.e. the Poisson equation, at the macroscopic scale.
\medskip

This is inspired by the approach of Avellaneda and Lin \cite{MR0910954} to a regularity theory for (linear) elliptic equations
with periodic coefficients: The good regularity theory of the homogenized operator, i.e.~the regularity theory
on the thermodynamic scale, is passed down to the scale of the periodicity.
This approach has been converted from a qualitative to a quantitative one in the seminal work of Armstrong and Smart \cite{MR3481355}, in the context of stochastic homogenization. It has been further refined in this context, for instance by Gloria, Neukamm and
the last author \cite{arXiv:1409.2678} (see also \cite{AKMbook}).

The Campanato iteration driving the approach of Avellaneda and Lin was originally devised for comparing a nonlinear variational problem to its linearization across scales, like the minimal surface problem. Thus not surprisingly, large-scale regularity has been applied also to nonlinear problems in homogenization, see \cite{ArFeKu20, FiNe19} for recent work. 
\medskip

Let us now describe the different steps for the proofs of Theorems \ref{theo:harmonicLagintro} and \ref{theo:main} in more detail.
We start with the harmonic approximation result.

\subsubsection{The harmonic approximation theorem}
Since all the quantities appearing in Theorem \ref{theo:harmonicLagintro} have the same scaling, it is enough to prove it for $\barR=1$. We use here the short-hand notation
\[
 E:=E(1)=\frac{1}{|B_6|}   \int_{(B_{6}\times \R^d)\cup( \R^d\times B_{6})} |x-y|^2 d\pi
\]
         and
         \[
          D:=D(1)=  \frac{1}{|B_{6}|} W^2_{B_{6}}(\mu, \kappa_\mu)+ \frac{1}{\kappa_\mu}(\kappa_\mu-1)^2+\frac{1}{|B_{6}|} W^2_{B_{6}}(\lambda, \kappa_\lambda)+ \frac{1}{\kappa_\lambda}(\kappa_\lambda-1)^2.
         \]

As in \cite{GO}, the proof is done at the Eulerian (or Benamou-Brenier) level. 
Notice that this is somewhat reminiscent of the regularity theory for the pressure field in incompressible Euler equations \cite{AmbFig,Breincompr,Santamduality}.
Recalling the definition \eqref{defrhojintro} of the density-flux pair $(\rho,j)$, we will show the following 
\begin{proposition}\label{eulerian1stepintro}
 For every $0<\tau\ll1$, there exist positive constants $\eps(\tau)>0$ and $C(\tau)>0$ such that if $E+D\le \eps$, then there exists $R\in(3,4)$ such that if $\Phi$ solves,
 \[
  \Delta \Phi=c \ \textrm{ in } B_{R} \qquad \textrm{ and } \qquad \nu \cdot \nabla \Phi=\nu\cdot \bar j \ \textrm{ on } \partial B_{R},
 \]
 then
 \begin{equation}\label{ma36intro}
  \int_{B_2}\int_0^1\frac{1}{\rho} |j-\rho \nabla \Phi|^2\le \tau E+C D.
 \end{equation}
 Moreover,
 \begin{equation}\label{estimPhiintro}
  \sup_{B_{2}}|\nabla^2 \Phi|^2+\sup_{B_2} |\nabla \Phi|^2 \les E+D.
 \end{equation}
\end{proposition}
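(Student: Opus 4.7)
By the scale invariance preceding the statement, it suffices to work at $\barR = 1$. The plan is first to choose a radius $R \in (3,4)$ via a mean-value/Fubini argument so that several boundary traces on $\partial B_R$ are simultaneously under control: the squared $L^2$-norm of the normal flux $\nu\cdot\bar j$, the mass of $\pi$-trajectories crossing $\partial B_R$, and the deviation from $1$ of the time-averaged density $\int_0^1 \rho_t\,dt$ near $\partial B_R$. Each of these quantities averages (over $R \in (3,4)$) to $\les E+D$, so some good $R$ exists. With such $R$ fixed, define $\Phi$ by the Neumann problem in the statement; the compatibility constant is $c = |B_R|^{-1}\int_{B_R}(\mu-\lambda)$, and one verifies $|c|^2 \les D$. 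Classical Neumann $L^2$-estimates then give $\|\nabla\Phi\|_{L^2(B_R)}^2 \les E+D$, and since $\Phi - \tfrac{c}{2d}|x|^2$ is harmonic in $B_R\supset B_3$, interior regularity upgrades this to the pointwise bound \eqref{estimPhiintro} on $B_2$.

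\textbf{Main identity.} For \eqref{ma36intro} I would expand the square on $B_R$,
\begin{equation*}
\int_{B_R}\int_0^1\frac{|j-\rho\nabla\Phi|^2}{\rho} = \int_{B_R}\int_0^1\frac{|j|^2}{\rho} - 2\int_{B_R}\bar j\cdot\nabla\Phi + \int_{B_R}\int_0^1\rho|\nabla\Phi|^2,
\end{equation*}
and test the continuity equation against the time-independent $\Phi$. Integrating by parts on $B_R$ and applying the Neumann condition rewrites the cross-term as $\int_{B_R}\bar j\cdot\nabla\Phi = \int_{B_R}(\lambda-\mu)\Phi + \int_{B_R}|\nabla\Phi|^2 + c\int_{B_R}\Phi$, up to boundary contributions absorbed by the good-radius choice. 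The cyclical monotonicity of $\pi$ together with Benamou--Brenier identifies $\int\int|j|^2/\rho$ with the second moment of $\pi$ on $B_R$, again modulo boundary defects controlled by the good radius. The $H^{-1}$-type bound $\abs{\int (\mu-\lambda)\Phi}\les \sqrt{D}\,\|\nabla\Phi\|_{L^2(B_R)}$ (a manifestation of $W^2\sim\|\cdot\|_{H^{-1}}^2$) and the estimate $\abs{\int_{B_R}\bra{\int_0^1\rho\,dt - 1}|\nabla\Phi|^2}\les \sqrt{D}\,\|\nabla\Phi\|_{L^\infty}^2$ together yield a crude bound of the form $CE+CD$. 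Restriction from $B_R$ to $B_2$ is then by positivity of the integrand.

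\textbf{Main obstacle.} The principal difficulty is replacing the constant $C$ multiplying $E$ by an arbitrarily small $\tau$. A naive energy argument only delivers $C(E+D)$, so the refinement must exploit the $L^2$-orthogonality in the Helmholtz-type decomposition $\bar j = \nabla\Phi + w$: the residual $w$ satisfies $\nu\cdot w = 0$ on $\partial B_R$ and $\nabla\cdot w = \lambda-\mu-c$, which is controlled by $D$ in a weak norm, so that $\int_{B_R}|\bar j|^2 = \int_{B_R}|\nabla\Phi|^2 + \int_{B_R}|w|^2 + O(D)$. The small factor $\tau$ should then be extracted by showing that the Cauchy--Schwarz gap $\int\int|j|^2/\rho - \int|\bar j|^2/\int\rho$ is a small fraction of $E$ in the regime of vanishing data: along each monotone-transport geodesic the velocity is constant in $t$, so this Cauchy--Schwarz inequality is close to being sharp when $\pi$ is close to the identity.

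\textbf{Closing argument.} I expect the final leap from the crude $CE$ to $\tau E$ to proceed through a contradiction-and-compactness step: if the estimate failed for some fixed $\tau>0$, one would pick a sequence of data $(\mu_n,\lambda_n,\pi_n)$ with $E_n+D_n\to 0$ violating it, rescale by $\sqrt{E_n+D_n}$, and pass to a limit in the space of density-flux pairs. The Wasserstein smallness $D_n\to 0$ would force the limits of $\mu_n$ and $\lambda_n$ to be the Lebesgue measure, the limiting flux would solve the linearized (Poisson) problem, and the Helmholtz orthogonality above would identify it with $\nabla\Phi_\infty$, contradicting the assumed failure. The technical core will thus be (i) setting up a compactness framework in which the displacement term $E$ and the data term $D$ decouple cleanly under rescaling, and (ii) quantifying the Cauchy--Schwarz defect in terms of the cyclical monotonicity of $\pi_n$ and the Wasserstein control $D_n$.
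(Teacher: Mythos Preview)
Your proposal has a genuine gap at the very first step. You plan to choose $R\in(3,4)$ so that $\|\nu\cdot\bar j\|_{L^2(\partial B_R)}^2\les E+D$ by a Fubini/mean-value argument, and then invoke classical Neumann $L^2$-estimates to get $\|\nabla\Phi\|_{L^2(B_R)}^2\les E+D$. But in this setting $\bar j$ is only a \emph{measure}: since $\mu$ and $\lambda$ are arbitrary (possibly atomic), the flux defined by $\int\xi\cdot dj_t=\int\xi((1-t)x+ty)\cdot(y-x)\,d\pi$ need not be absolutely continuous, and its normal trace on $\partial B_R$ is not a function at all, for any choice of $R$. The paper flags this explicitly (Section~\ref{sec:strategy}): ``since in general $j$ is just a measure, there is no hope in this setting to obtain such bounds.'' This is precisely why the paper introduces the \emph{regularized} boundary flux $\hat g$ (obtained by tracing trajectories back to time $t=-1$, where the density is the constant $\kappa_\mu$), proves the result first for the auxiliary $\phi$ solving $\nu\cdot\nabla\phi=\hat g$ (Proposition~\ref{prop:eulerian1stepphi}), and only at the end passes from $\phi$ to $\Phi$ via a Wasserstein estimate between $\bar f_\pm$ and $\hat g_\pm$ on $\partial B_R$ (Lemma~\ref{lem:difphi}). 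Without some such regularization, neither your $L^2$ energy bound on $\nabla\Phi$ nor the interior estimate \eqref{estimPhiintro} is available.

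Your plan for extracting the small factor $\tau$ by contradiction/compactness is also a genuine departure from the paper, which instead \emph{constructs} an explicit competitor $(\tilde\rho,\tilde j)$ with energy $\le\int_{B_R}|\nabla\phi|^2+\tau E+CD$ (Lemma~\ref{Lconintro}) and combines this with local minimality and an approximate orthogonality identity (Lemma~\ref{Lortintro}); the parameter $\tau$ enters as the width of an initial/terminal time layer in the construction. A compactness argument can sometimes substitute for such constructions, but here it runs into the same obstruction: to set up the compactness you would need uniform interior bounds on $\nabla\Phi_n$, and those are not available without first regularizing the boundary flux. There is a second difficulty as well: the statement asks for the specific shape $\tau E+C(\tau)D$ with the small coefficient only in front of $E$, and it is not clear that a rescale-and-pass-to-the-limit argument preserves this separation when the ratio $D/E$ is not a~priori bounded.
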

To ease notation, let $f:= \nu\cdot j$ and $\bar f:=\int_0^1 f=\nu\cdot \bar j$. Notice that $(\rho,j)$ solves the local version of \eqref{conteqintro}:
\begin{equation}\label{conteqintroloc}
      \begin{cases}
         \partial_t \rho+\nabla\cdot j=0  & \textrm{ in } B_R,\\
          \rho_0=\mu, \ \rho_1= \lambda  &    \textrm{ in } B_R,   \\
          \nu\cdot j=  f & \textrm{ on } \partial B_R\times (0,1).
      \end{cases}                                   
\end{equation}
The strategy of the proof of Proposition \ref{eulerian1stepintro} is the same as for \cite[Prop. 3.3]{GO}, namely to choose first a good radius $R$ for which the flux $\bar f$ is well behaved on $\partial B_R$
and then rely on an almost orthogonality property (see Lemma \ref{Lortintro} below) in order to be able to use the minimization properties of $(\rho,j)$ and finally obtain the desired 
estimates through the construction of a competitor (see Lemma \ref{Lconintro}). However, each of these steps is considerably harder than in \cite{GO}. 
Indeed, since our analysis is at the $L^2$ level, to obtain $L^2$ bounds on $\nabla \Phi$, we would need $L^2$ bounds on $\bar f$ (or at least $L^{\frac{2(d-1)}{d}}$ bounds by the Sobolev trace embedding). 
However since in general $j$ is just a measure, there is no hope in this setting to obtain such bounds. In order to solve this issue, we first prove an analog of Proposition \ref{eulerian1stepintro} but with $\Phi$ replaced by
$\phi$, which is a solution of 
\begin{equation}\label{defphiintro}
 \Delta \phi=c \ \textrm{ in } B_{R} \qquad \textrm{ and } \qquad \nu \cdot \nabla \phi=\hat g \ \textrm{ on } \partial B_{R},
\end{equation}
where $\hat{g}$ is a regularization through rearrangement of $\bar f$ with good $L^2$ bounds. More specifically, treating separately incoming and outgoing fluxes $\bar f_\pm$ (so that $\bar f=\bar f_+-\bar f_-$ and similarly $\hat{g}=\hat{g}_+-\hat{g}_-$), 
we construct in Lemma \ref{Lapp} densities $\hat{g}_{\pm}$ on $\partial B_R$ with (we refer to \eqref{BBwassboundary} for the definition of $W_{\partial B_R}$)
\begin{equation}\label{ma69}
 \int_{\partial B_R} \hat{g}_\pm^2\les E+D \qquad \textrm{ and } \qquad W_{\partial B_R}^2(\bar f_{\pm},\hat g_\pm)\les (E+D)^{\frac{d+3}{d+2}}.
\end{equation}
 \begin{figure}\begin{center}
 \resizebox{7.cm}{!}{\input{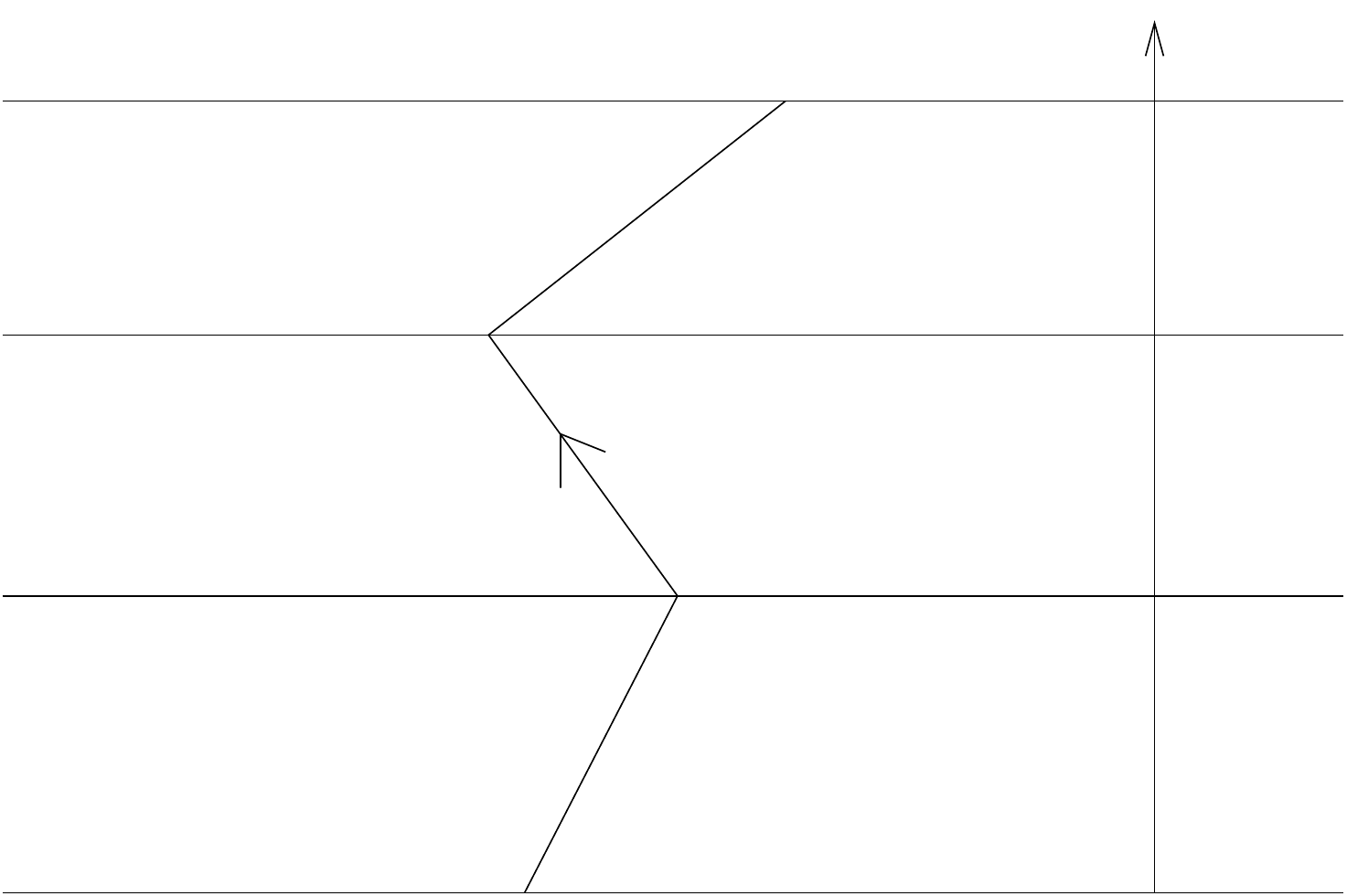_t}} 
   \caption{The trajectories $X$.} \label{fig:gluing}
 \end{center}
 \end{figure}
 \medskip
 
\noindent In order to construct $\hat{g}_{\pm}$, we consider the time dependent version of the Lagrangian problem and couple it with the data to the effect of having trajectories
connecting the constant density $\kappa_\mu$  at time $t=-1$, 
 the measure $\mu$ at time $t=0$,  the measure $\lambda$ at time $t=1$ and finally  the constant density $\kappa_\lambda$ at time $t=2$ (see Figure \ref{fig:gluing}). Since $E+D\ll1$,
thanks to the $L^\infty$ bound on the displacement proven in Lemma \ref{Linf}, particles hitting $\partial B_R\times (0,1)$ must come from (respectively end in)  a small neighborhood of $\partial B_R$ at time $t=-1$ (respectively at time $t=2$).
A key point in   deriving \eqref{ma69} is that at time $t=-1$ and $t=2$, the densities are well-behaved (being constant) and thus the number of particles coming from such a small neighborhood is under control.
\medskip 

Once the good radius $R$ is chosen, the second step in the proof of Proposition \ref{eulerian1stepintro} is the following approximate orthogonality result (cf.\ \cite[Rem. 3.4]{GO} for an explanation of the name).

\begin{lemma}[Orthogonality]\label{Lortintro}
For every $0<r\ll 1$, there exist $\eps(r)>0$ and $C(r)>0$ such that if $E+D\le \eps$, 
  \begin{equation}\label{io15intro}
   \int_{B_2}\int_0^1 \frac{1}{\rho}|j-\rho\nabla \phi|^2- \lt(\int_{B_R}\int_0^1 \frac{1}{\rho}|j|^2-\int_{B_R}|\nabla \phi|^2\rt)
   \le r E +C D.
  \end{equation}
 \end{lemma}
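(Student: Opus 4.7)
The plan is to expand the square, transfer the integration from $B_2$ to the larger ball $B_R$ (which is free of charge since the expanded integrand is nonnegative), and then exploit the integration-by-parts identities induced by the continuity equation for $\bar j$ and the Poisson equation for $\phi$, treating the leftover error terms with the regularization bound \eqref{ma69}.

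\textbf{Step 1 (Expansion and enlargement of the domain).} I would write
$$\frac{1}{\rho}|j-\rho\nabla\phi|^2=\frac{1}{\rho}|j|^2-2j\cdot\nabla\phi+\rho|\nabla\phi|^2$$
and, using that $|j-\rho\nabla\phi|^2/\rho\geq 0$, estimate
$$\int_{B_2}\int_0^1\frac{|j-\rho\nabla\phi|^2}{\rho}\leq \int_{B_R}\int_0^1\frac{|j-\rho\nabla\phi|^2}{\rho}.$$
After cancellation of the quadratic term in $j$, the left-hand side of \eqref{io15intro} is therefore controlled by
$$I:=-2\int_{B_R}\bar j\cdot\nabla\phi+\int_{B_R}\bar\rho\,|\nabla\phi|^2+\int_{B_R}|\nabla\phi|^2,\qquad\bar\rho:=\int_0^1\rho\,dt.$$

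\textbf{Step 2 (Integration by parts on $B_R$).} Since $\bar j$ satisfies the time-averaged continuity equation $\nabla\cdot\bar j=\mu-\lambda$ in $B_R$ with $\nu\cdot\bar j=\bar f$ on $\partial B_R$, and $\phi$ solves \eqref{defphiintro}, we have
$$\int_{B_R}\bar j\cdot\nabla\phi=-\int_{B_R}\phi(\mu-\lambda)+\int_{\partial B_R}\phi\,\bar f,\qquad\int_{B_R}|\nabla\phi|^2=-c\int_{B_R}\phi+\int_{\partial B_R}\phi\,\hat g.$$
Splitting $\bar\rho=1+(\bar\rho-1)$ and subtracting the mean $\bar\phi$ of $\phi$ on $B_R$ (permitted because $\hat g$ is a rearrangement of $\bar f$, so $\int_{\partial B_R}\hat g=\int_{\partial B_R}\bar f=\int_{B_R}(\mu-\lambda)=c|B_R|$), the two mean cancellations produce the almost-orthogonal identity
$$I\leq 2\int_{B_R}(\phi-\bar\phi)(\mu-\lambda)+2\int_{\partial B_R}(\phi-\bar\phi)(\hat g-\bar f)+\int_{B_R}(\bar\rho-1)|\nabla\phi|^2.$$

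\textbf{Step 3 (Estimating the three residuals).} For the bulk term, I would insert the constants $\kappa_\mu,\kappa_\lambda$ and use Kantorovich--Rubinstein duality together with $W_1\leq |B_R|^{1/2}W_2$ to get
$$\Bigl|\int_{B_R}(\phi-\bar\phi)(\mu-\lambda)\Bigr|\lesssim \|\nabla\phi\|_{L^\infty(B_R)}\sqrt{D},$$
and analogously for the boundary term via the trace gradient of $\phi$ and the quantitative estimate $W_{\partial B_R}(\bar f,\hat g)\lesssim(E+D)^{(d+3)/(2(d+2))}$ supplied by \eqref{ma69}. Combined with the elliptic bound $\|\nabla\phi\|_{L^\infty(B_R)}^2\lesssim E+D$ (standard from $\|\hat g\|_{L^2}^2\lesssim E+D$), the bulk contribution is of the order $\sqrt{(E+D)D}$, which by Young's inequality is $\leq r E+C(r)D$, while the boundary contribution is of order $(E+D)^{1+1/(2(d+2))}$, an admissible higher-order term. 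For the annulus-type term $\int(\bar\rho-1)|\nabla\phi|^2$, I would combine the $L^\infty$ displacement bound of Lemma~\ref{Linf} (ensuring that trajectories into $B_R$ start and end in a small neighborhood of $B_R$) with the closeness of $\mu,\lambda$ to the Lebesgue measure to bound $\int_{B_R}|\bar\rho-1|$ by $o(1)$ as $E+D\to 0$, and then distribute with Young.

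\textbf{Main obstacle.} The delicate part is the boundary integral: since $\bar f$ is only a measure, no naive $L^2$ manipulation with $\phi$ is available. The entire preparation that produced \eqref{ma69}, namely rearranging incoming and outgoing fluxes into an $L^2$-controlled $\hat g$ that is $W_2$-close to $\bar f$, is exactly what is needed here, and together with a judicious choice of good radius $R\in(3,4)$ providing control on the traces, it is what upgrades the orthogonality from its formal Hilbertian form to the quantitative statement \eqref{io15intro}.
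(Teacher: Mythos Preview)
Your overall algebraic setup (expand the square, pass to $B_R$, integrate by parts using $\nabla\cdot\bar j=\mu-\lambda$ and the Poisson equation, leaving the three residuals) matches the paper exactly. The genuine gap is in Step 3, where you invoke $\|\nabla\phi\|_{L^\infty(B_R)}^2\lesssim E+D$ as ``standard from $\|\hat g\|_{L^2}^2\lesssim E+D$''. This is false: with merely $L^2$ Neumann data, $\nabla\phi$ is only in $L^2(B_R)$ (cf.\ \eqref{wg12}, \eqref{wg15}); there is no up-to-the-boundary $L^\infty$ bound. The interior bound $\sup_{B_2}|\nabla\phi|^2\lesssim E+D$ does hold, but all three error terms live on $B_R$ or $\partial B_R$, where you cannot use it. The same problem wrecks your treatment of $\int_{B_R}(\bar\rho-1)|\nabla\phi|^2$: $\bar\rho$ is only known to be close to Lebesgue in Wasserstein distance (Lemma \ref{Lres'}), not in $L^1$, so you need to test it against a $C^2$ function via an estimate of type \eqref{wg80}, which requires control of $\nabla^2(|\nabla\phi|^2)$ and $\nabla^3\phi$ on all of $B_R$.

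This is precisely why the paper introduces a \emph{second} regularization: convolve $\hat g$ on $\partial B_R$ at scale $r$ to obtain $\hat g_r$ and the associated $\phi^r$, for which one does have $\sup_{B_R}|\nabla^k\phi^r|\lesssim r^{-(k-1)-(d-1)/2}(\int_{\partial B_R}\hat g^2)^{1/2}$ (cf.\ \eqref{wg03}). The three residuals are then estimated for $\phi^r$ on $B_R$, picking up negative powers of $r$ that are absorbed by higher powers of $E+D$, and only afterwards does one pass from $\phi^r$ back to $\phi$ using the interior estimate \eqref{wg17} on $B_2$ (this is why the first integral in \eqref{io15} is over $B_2$ and not $B_R$). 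In particular, the parameter $r$ in the statement is this convolution scale, not just a Young's-inequality splitting parameter; your proof never actually uses $r$ in a way that reflects its role.
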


In order to prove Lemma \ref{Lortintro}, we consider $\hat{g}_r$ the convolution at scale $r$ (on $\partial B_R$) of $\hat{g}$ and show that for $\phi^r$ the solution of \eqref{defphiintro} but with the (further) regularized flux $\hat{g}_r$ instead of $\hat g$,
\begin{align*}
\int_{B_R}\int_0^1 \frac{1}{\rho}|j&-\rho\nabla \phi^r|^2=\int_{B_R}\int_0^1 \frac{1}{\rho}|j|^2-\int_{B_R} |\nabla \phi^r|^2\\
 &+2\int_{B_R} \phi^r d(\mu-\lambda) +2\int_{\partial B_R} \phi^r d(\hat{g}_r-\bar f) +\int_{B_R}\int_0^1 |\nabla \phi^r|^2(d \rho-dx)
\end{align*}
and estimate the three error terms in the second line. The first term is estimated using that $\mu$ and $\lambda$ are very close in Wasserstein distance. 
The second term is estimated using the second part of \eqref{ma69}. While in \cite{GO}, the last error term was easily estimated since in that case   $\rho\le1$ (up to  a small error) by McCann's displacement convexity, we need here a more delicate argument based 
on elliptic regularity and the fact that 
\[
 W_{B_R}^2\lt(\int_0^1 \rho, \kappa\rt)\les E+D.
\]
This estimate is a consequence of a restriction result for the Wasserstein distance (see Lemma \ref{Lres}). It is here that we need the further regularization $\hat g_r$ of $\hat g$. The proof of \eqref{io15intro} is concluded using that $\nabla \phi$ and $\nabla \phi^r$ are very close  in $B_2$. 
\medskip 

In the third step we construct a competitor for the variational problem solved by $(\rho,j)$:
\begin{lemma}\label{Lconintro}
 For every $0<\tau\ll1$, there exist $\eps(\tau)>0$ and $C(\tau)>0$ such that if $E+D\le \eps$, then there exists a density-flux pair $(\tilde \rho, \tilde j)$ satisfying \eqref{conteqintroloc} and such that 
 \begin{equation}\label{io31intro}
  \int\int_0^1 \frac{1}{\tilde \rho}|\tilde{j}|^2-\int_{B_R}|\nabla \phi|^2 \le \tau E +C D.
 \end{equation}
\end{lemma}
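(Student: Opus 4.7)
The plan is to build $(\tilde\rho,\tilde j)$ as a three-stage time-splitting competitor. Fix a small $\tau_0\in(0,1/4)$ (to be chosen in terms of $\tau$). On the middle stage $t\in[\tau_0,1-\tau_0]$ I set
\[
 \tilde j(x,t):=\frac{1}{1-2\tau_0}\,\nabla\phi(x),
\]
and let $\tilde\rho$ be forced by the continuity equation. Since $\Delta\phi=c$ is constant, $\tilde\rho$ is spatially constant and affine in $t$; using $c\,|B_R|=\int_{\partial B_R}\hat g\approx\int\mu-\int\lambda$ (the approximation coming from $|\kappa_\mu-\mu(B_R)/|B_R||\les\sqrt{D}$, etc.), $\tilde\rho$ interpolates from $\kappa_\mu$ at $t=\tau_0$ to $\kappa_\lambda$ at $t=1-\tau_0$. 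Since $E+D\ll 1$ both endpoints lie close to $1$, so $\tilde\rho$ is uniformly bounded away from zero, and combined with \eqref{estimPhiintro} this yields
\[
 \int_{\tau_0}^{1-\tau_0}\!\!\int_{B_R}\frac{|\tilde j|^2}{\tilde\rho}\;\le\;\int_{B_R}|\nabla\phi|^2+C\tau_0(E+D).
\]
On $[0,\tau_0]$ I use the Wasserstein geodesic inside $B_R$ from $\mu$ to $\kappa_\mu$ at constant speed, and on $[1-\tau_0,1]$ from $\kappa_\lambda$ to $\lambda$; the combined cost is at most $\tau_0^{-1}D$, absorbable into $C(\tau_0)D$.

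The delicate point is enforcing the boundary constraint $\nu\cdot\tilde j=f$ on $\partial B_R\times(0,1)$: the three stages above carry boundary flux $(1-2\tau_0)^{-1}\hat g$ only on $[\tau_0,1-\tau_0]$, whereas the constraint demands the time-dependent profile $f$ of time-average $\bar f$. To reconcile I add a correction supported in a thin annulus just inside $\partial B_R$ consisting of (i) a tangential transport on $\partial B_R$ rearranging $\hat g_\pm$ into $\bar f_\pm$, and (ii) a divergence-free rearrangement concentrating the time oscillation $f-\bar f\otimes dt$ into a short sub-interval of $[0,1]$. Step (i) costs $\les\tau_0^{-1}W_{\partial B_R}^2(\bar f,\hat g)\les\tau_0^{-1}(E+D)^{(d+3)/(d+2)}$ thanks to \eqref{ma69}; step (ii) is handled at the same order by a standard duality estimate for the Neumann Laplacian on $B_R$, using the $L^2$ control on $\hat g$ from \eqref{ma69}.

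Summing the three stage costs and the two corrections,
\[
 \int_0^1\!\!\int\frac{|\tilde j|^2}{\tilde\rho}-\int_{B_R}|\nabla\phi|^2\;\les\;\tau_0(E+D)+\frac{D}{\tau_0}+\frac{1}{\tau_0}(E+D)^{\frac{d+3}{d+2}}.
\]
Choosing $\tau_0=\tau_0(\tau)$ small and then $\eps=\eps(\tau)$ even smaller so that $(E+D)^{1/(d+2)}\le\tau\tau_0$, the right-hand side is bounded by $\tau E+C(\tau)D$, which is \eqref{io31intro}.

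The main obstacle is the boundary-flux matching: the bulk choice $\tilde j=\nabla\phi$ inevitably produces the ``wrong'' flux $\hat g$ on $\partial B_R$, and the entire subcritical gain $(E+D)^{1/(d+2)}$ from \eqref{ma69} is needed to realize the rearrangement $\bar f-\hat g$ (plus the pointwise-in-time residual $f-\bar f$) in the Benamou--Brenier functional at a cost strictly smaller than $E+D$. This in turn forces a careful choice of the radial thickness of the annular correction layer, balanced against its tangential transport cost.
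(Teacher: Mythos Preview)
Your overall architecture---time-splitting with $\nabla\phi$ carrying the bulk flux, Wasserstein geodesics in the initial/final layers, and a boundary correction---is the same as the paper's. But your boundary-flux matching has a real gap.

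The issue is step (ii). The actual flux $f$ is a measure on $\partial B_R\times[0,1]$ with no a~priori time-regularity; in particular it can charge $[0,\tau_0]\cup[1-\tau_0,1]$. Your initial/final layer constructions (geodesics from $\mu$ to $\kappa_\mu$, etc.) carry \emph{zero} boundary flux, so the competitor must supply all of $f\!\restriction_{[0,\tau_0]}$ through the correction. But a ``divergence-free rearrangement concentrating $f-\bar f\otimes dt$'' and a ``standard duality estimate for the Neumann Laplacian'' do not give a Benamou--Brenier cost bound here: the Neumann Laplacian is spatial and says nothing about redistributing a time-singular measure, and the paper's boundary-transport lemma (Lemma~\ref{lem:consbound}) crucially needs the two time-profiles to have \emph{disjoint} supports separated by a gap $\tau$; without that separation the cost estimate \eqref{ma25} degenerates.

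The paper resolves this by a device you are missing: it peels off the exceptional trajectories $\delta\Omega_R$ (those exiting before $3\tau$ or entering after $1-3\tau$), keeps them verbatim in the competitor (Step~5, cost $\lesssim\tau E$), and passes to the reduced quantities $\mu',\lambda',\hat g',f'$. This does two things at once: (a) the reduced flux $f'$ is now supported in $[3\tau,1-3\tau]$, so after an annular boundary-layer construction (Step~2, using \cite[Lem.~2.4]{GO}) one can apply Lemma~\ref{lem:consbound} with a genuine time gap; and (b) the mass balance $|B_R|(\kappa_{\mu'}-\kappa_{\lambda'})=\int_{\partial B_R}\hat g'$ holds \emph{exactly}, so the leading-order continuity equation closes without the approximate identity you invoke. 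Your sketch needs both ingredients to go through.
\medskip
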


Choosing $r=\tau$, Lemma \ref{Lconintro} together with \eqref{io15intro} and the fact that $(\rho,j)$ is minimizing concludes the proof of \eqref{ma36intro} (with $\nabla \phi$ instead of $\nabla \Phi$). 
\medskip

To show Lemma \ref{Lconintro} we introduce a small time like parameter $0<\tau\ll1$ and work separately in the initial layer $(0,\tau)$, in $(\tau,1-\tau)$ and in the final layer $(1-\tau,1)$ (this can again be compared with the approach of \cite{AmStTr16,Le17}).
We need to  separately treat a small set of exceptional trajectories which either enter $B_R$ too early or exit too late. Even though this leads to some non trivial complications (see Section \ref{sec:exceptional}),
we will ignore this issue here to keep the discussion simpler and assume that the flux $f$ is supported in $\partial B_R\times (3\tau,1-3\tau)$. Using that the data term $D$ is small, we connect the measure $\mu$ to a constant $\kappa_\mu$ in $(0,\tau)$ and another constant $\kappa_\lambda$ to $\lambda$ in $(1-\tau,1)$.  
The leading order construction in $B_R\times (\tau,1-\tau)$ is given by
\[
 \tilde{\rho}_t:= \frac{t-\tau}{1-2\tau} \kappa_\lambda+\frac{1-\tau-t}{1-2\tau} \kappa_\mu, \qquad \tilde{j}_t:=\frac{1}{1-2\tau} \nabla \phi,
\]
which connect the two constants but has the flux boundary condition $\hat{g}$ instead of $f$. To correct the boundary condition, we first make a construction in the boundary layer
$B_R\backslash \bar B_{R-r}\times (\tau, 1-\tau)$  with $0<r\ll1$, using \cite[Lem. 2.4]{GO} to connect the constant-in-$t$ density $\hat{g}$ to
\[
 \tilde{g}:=\begin{cases}
             \frac{1}{\tau} \hat{g}_+ & \textrm{for } t\in (\tau,2\tau)\\
             -\frac{1}{\tau} \hat{g}_- &\textrm{for } t\in (1-2\tau,1-\tau).
            \end{cases}
\]
Since $\tilde{g}$ is supported in $(\tau,2\tau)\cup(1-2\tau,1-\tau)$ and $f$ is supported in $(3\tau,1-3\tau)$, we can then use the second estimate in \eqref{ma69} to connnect $\tilde{g}$ to $f$ in $\partial B_R\times [0,1]$ with cost $\frac{1}{\tau} (E+D)^{\frac{d+3}{d+2}}$ (see Lemma \ref{lem:connectboundarylayer}).
\medskip 

In a  a final step, we conclude the proof of \eqref{ma36intro} using that by elliptic regularity and \eqref{ma69}, $\nabla \phi$ and $\nabla \Phi$ are very close in $B_2$,

\subsubsection{Proof of Theorem \ref{theo:main}}
Let us recall that from now on, we assume that the target measure is the Lebesgue measure. The reason for this is that we will heavily use its invariance properties under shifts.
\medskip

The first step in the proof of Theorem \ref{theo:main} is a Campanato iteration scheme based on Theorem \ref{theo:harmonicLagintro}. To state the result we need to fix some notation. 
For a sequence of approximately geometric radii i.e.
\begin{equation*}
 \barR\ge R_0\ge\cdots\ge R_K\ge C \qquad \textrm{with } \ R_{k-1}\ge 2 R_k\ge \frac{1}{C} R_{k-1}
\end{equation*}
with $\barR\sim R_0$ and $R_K\sim 1$, we set (recall \eqref{hypdata3})
\[
 E_k:=E(R_k)=\frac{1}{|B_{6R_k}|}\int_{(B_{6R_k}\times \R^d)\cup(\R^d\times B_{6R_k})} |x-y|^2 d\pi_k,
\]
where $\pi_k$ is the coupling recursively defined by $\pi_0:=\pi$ and 
\begin{equation}\label{so34intro}
 \pi_k:=({\rm id},{\rm id}-\nabla \Phi_{k-1}(0))\# \pi_{k-1},
\end{equation}
where $\Phi_k$ solves the Poisson equation
\begin{equation}\label{so33intro}
 \Delta \Phi_k=c \ \textrm{ in } B_{R_k} \qquad \textrm{ and } \qquad \nu\cdot\nabla\Phi_k=\nu\cdot \bar{j}_k \ \textrm{ on } \partial B_{R_k}.
\end{equation}
The flux $j_k$ is defined as in \eqref{defrhojintro} with $\pi_k$ playing the role of $\pi$ and 
 $\bar j_k$ is then  obtained by integrating in time.

 \begin{proposition}\label{theo:campintro}
 Assume that
 \begin{multline*}
  \frac{1}{|B_{\barR}|}\int_{(B_{\barR}\times\R^d)\cup(\R^d\times B_{\barR})}|x-y|^2 d\pi \le \beta(\barR) \\
  \textrm{and } \quad \frac{1}{|B_R|} W^2_{B_R}(\mu,\kappa)\le \beta(R) \qquad \forall R\in [1,\barR].
 \end{multline*}
Then, there exists a sequence of approximately geometric radii $(R_k)_{0\leq k \leq K}$ such that 
\begin{equation}\label{so01 intro}
 E_k\les \beta(R_k),
\end{equation}
and 
\begin{equation}\label{so39 intro}
 |\nabla \Phi_k(0)|^2 \les \beta(R_k).
\end{equation}
 \end{proposition}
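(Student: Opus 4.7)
The plan is to prove Proposition \ref{theo:campintro} by induction on $k$, using Theorem \ref{theo:harmonicLagintro} as a one-step improvement that transfers control of the energy $E$ from scale $R_k$ down to the strictly smaller scale $R_{k+1}$. The sequence $(R_k)$ is not prescribed in advance: at each step, Theorem \ref{theo:harmonicLagintro} is applied at an input scale $\barR\sim R_k$ and outputs a ball $B_{R_{k+1}}$ whose radius lies in a fixed interval (coming from the factor 3--4 in Theorem \ref{theo:harmonicLagintro}), which is why the statement only asserts that the $R_k$ are approximately geometric. Preliminarily, since the first marginal of $\pi_k$ is $\mu$ and the second marginal is the Lebesgue measure translated by $-\sum_{j<k}\nabla\Phi_j(0)$, hence still Lebesgue, the hypothesis \eqref{hypdata2} together with Remark \ref{rem:DataLebesgueintro} give $D_k\lesssim\beta(R_k)$ at every step; the smallness condition of Theorem \ref{theo:harmonicLagintro} then reduces via the bound $\beta(R)/R\leq C_\beta$ from \eqref{hypbeta} to $R_K$ being larger than a universal constant.

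The inductive step relies on the elementary decomposition
\begin{equation*}
y-x-\nabla\Phi_k(0)=\bigl(y-x-\nabla\Phi_k(x)\bigr)+\bigl(\nabla\Phi_k(x)-\nabla\Phi_k(0)\bigr),
\end{equation*}
together with the observation that, since $\pi_{k+1}$ is obtained from $\pi_k$ by pushing $y$ to $y-\nabla\Phi_k(0)$, the energy $E_{k+1}$ is computed by averaging $|y-x-\nabla\Phi_k(0)|^2$ against $\pi_k$ on a ball of radius $\sim R_{k+1}$. Squaring the decomposition and integrating, the first summand is absorbed by the harmonic approximation estimate \eqref{eq:mainestimateintro} and contributes at most $\tau E_k+CD_k$ (the volume ratio $|B_{\barR}|/|B_{6R_{k+1}}|$ is bounded since $\barR/R_{k+1}$ is a universal constant), while the second is bounded pointwise by $|x|^2\sup|\nabla^2\Phi_k|^2$ and hence by $C(R_{k+1}/R_k)^2(E_k+D_k)$ thanks to \eqref{introestimPhi}. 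Combining yields the one-step recursion $E_{k+1}\leq c_1 E_k+C_1 D_k$, where $c_1$ can be made arbitrarily small by first choosing $\tau$ small in Theorem \ref{theo:harmonicLagintro} and then tuning the geometric ratio $\lambda=R_k/R_{k+1}$ within the range allowed by Theorem \ref{theo:harmonicLagintro}.

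Iterating and using $D_k\lesssim\beta(R_k)$ gives
\begin{equation*}
E_k\lesssim c_1^k\beta(\barR)+\sum_{j=0}^{k-1}c_1^{k-1-j}\beta(R_j);
\end{equation*}
the summability built into \eqref{hypbeta} implies $\beta(R_j)\lesssim\lambda^{k-j}\beta(R_k)$, which converts the right-hand side into $\beta(R_k)\sum_{\ell\geq 0}(c_1\lambda)^\ell$. Provided $c_1\lambda<1$, this yields \eqref{so01 intro}. Estimate \eqref{so39 intro} is then a direct consequence of the pointwise bound in \eqref{introestimPhi}, since $|\nabla\Phi_k(0)|^2\leq\sup|\nabla\Phi_k|^2\lesssim E_k+D_k\lesssim\beta(R_k)$.

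The main technical obstacle is the simultaneous tuning of $\tau$ and the geometric ratio $\lambda$: the factor 3--4 in Theorem \ref{theo:harmonicLagintro} constrains $\lambda$ to a narrow band, the Hessian term in the decomposition only becomes small when $\lambda$ is large enough, and the recursion only closes when $c_1\lambda<1$. A secondary bookkeeping issue is that after the shift the $y$-ball in the definition of $E_{k+1}$ becomes $B_{6R_{k+1}}(\nabla\Phi_k(0))$ rather than centered at the origin; since $|\nabla\Phi_k(0)|\lesssim\sqrt{\beta(R_k)}$ is small compared to $R_{k+1}$ once $R_K$ is large, this shift is absorbed into a slightly larger concentric ball up to harmless constants, but must be tracked consistently through the induction.
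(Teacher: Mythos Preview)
Your approach is essentially the paper's: a Campanato iteration driven by Theorem \ref{theo:harmonicLagintro}, using the decomposition $y-x-\nabla\Phi_k(0)=(y-x-\nabla\Phi_k(x))+(\nabla\Phi_k(x)-\nabla\Phi_k(0))$ and bounding the two pieces by \eqref{eq:mainestimateintro} and \eqref{introestimPhi} respectively. The iteration scheme, the recursion $E_{k+1}\le c_1E_k+C_1D_k$, and the deduction of \eqref{so39 intro} from \eqref{introestimPhi} are all correct.

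Where your sketch falls short is precisely the point you flag as the ``main technical obstacle,'' and the flaw is a misreading of Theorem~\ref{theo:harmonicLagintro}. The interval $(3\barR,4\barR)$ does \emph{not} constrain the ratio $\lambda=R_k/R_{k+1}$ of successive scales: it only says that, given an input scale $\barR_k$ (so that $\bar E_k,\bar D_k$ are computed on $B_{6\barR_k}$), the Neumann problem for $\Phi_k$ is posed on some $B_{R_k}$ with $R_k\in(3\barR_k,4\barR_k)$, while the Lagrangian estimate \eqref{eq:mainestimateintro} holds on $B_{\barR_k}$. The next input scale $\barR_{k+1}$ is then a \emph{free} parameter, subject only to the containment $B_{6\barR_{k+1}}(\nabla\Phi_k(0))\subset B_{\barR_k}$; the paper takes $\barR_{k+1}=\theta\barR_k$ with $\theta\ll1$ dyadic. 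In particular, the volume ratio you claim is a universal constant is actually $|B_{\barR_k}|/|B_{6\barR_{k+1}}|\sim\theta^{-d}$, and the honest recursion reads
\[
\bar E_{k+1}\lesssim\Bigl(\frac{\tau}{\theta^d}+\theta^2\Bigr)\bar E_k+C(\tau,\theta)\,\bar D_k.
\]
Combined with $\beta(\barR_k)\lesssim\theta^{-1}\beta(\barR_{k+1})$ (from the monotonicity of $\beta(R)/R$, which is equivalent to your series bound $\beta(R_j)\lesssim\lambda^{k-j}\beta(R_k)$), closing the induction requires $\tau\theta^{-(d+1)}+\theta<1$. This is achieved by choosing $\theta$ small \emph{first} (to make the Hessian contribution $\theta$ small), and only then choosing $\tau$ small depending on $\theta$ (to kill $\tau\theta^{-(d+1)}$); the opposite order you propose cannot work if $\lambda$ is believed to lie in a narrow band, since the Hessian term alone would contribute $C/\lambda$ to $c_1\lambda$ with an uncontrolled implicit constant $C$. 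Once this order of choices is made, your argument coincides with the paper's proof.
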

Assuming that $\beta(R)=R^\alpha$, we can see that we are performing here  a Campanato iteration   at the $C^{0,\frac{\alpha}{2}}$ level for the displacement rather than at the $C^{1,\frac{\alpha}{2}}$ level as in \cite{GO}.
In fact, we can run the iteration as long as $\frac1{R_k^2}E_k + \frac{1}{R_k^2}D(R_k)\ll 1$. For large scales $R_k\gg 1$ this holds for $\alpha<2$ 
(and notice that Proposition \ref{theo:campintro} actually would hold under this weaker assumption on $\alpha$),
while  for small scales $R_k\ll1$ it would require $\alpha>2$ (and this would be inconsistent).          
\medskip

From \eqref{so01 intro} and the iterative definition \eqref{so34intro}, we obtain an estimate in the strong $L^2$ topology   of the deviation of the displacement $y-x$ from the cumulated shift $\tilde{h}:=\sum_{k=0}^{K-1} \nabla \Phi_k(0)$.  
The main ingredient to upgrade this into the statement of Theorem \ref{theo:main} is a better identification of the shift, taking into account cancellation effects. Indeed, a naive computation gives
\begin{equation}\label{eqtildeh}|\tilde{h}|\le \sum_{k=0}^{K-1} |\nabla \Phi_k(0)|\stackrel{\eqref{so39 intro}}{\les} K \beta^{\frac{1}{2}}(R_0)\les (\log \barR )\,  \beta^{\frac{1}{2}}(\barR),  \end{equation}
which fails by $\log \barR$ the expected estimate $ |\tilde{h}|\les \beta^{\frac{1}{2}}(\barR)$. 
\medskip 

To state the identification result  let $(R_k)_k$ be the sequence of radii from Proposition \ref{theo:campintro} and
 for each $k\in [0,K]$,  let  $u_k$ be the (distributional) solution to
 \begin{align}\label{so16intro}
\Delta u_k=\mu-1\ \textrm{ in } B_{R_k}\qquad\textrm{ and }\qquad
	\nu\cdot\nabla u_k=\nu\cdot\bar j_k\ \textrm{ on } \partial B_{R_k}.
\end{align}
Then we have
\begin{proposition}\label{prop:repres intro}
For every $k\in [0,K-1]$ and $R\in[R_{k+1}, R_k]$, we have the two estimates on the cumulative flux
\begin{align}\label{ao08intro}
\lt|\sum_{\ell=0}^k\nabla\Phi_\ell(0)
-\frac{1}{|B_{R_k}|}\int_{\partial B_{R_k}}x\nu\cdot\nabla u_0\rt|\lesssim \frac{\beta(R_k)}{R_k}
\end{align}
and
\begin{align}\label{ao21intro}
\lt|\sum_{\ell=0}^{k-1}\nabla\Phi_\ell(0)+\int\eta_R\nabla u_k
-\int\eta_R\nabla u_0\rt|\lesssim \frac{\beta(R_k)}{R_k},
\end{align}
where $\eta_R$ is as in \eqref{cutoff}.
 \end{proposition}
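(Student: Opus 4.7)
The plan is to reduce both \eqref{ao08intro} and \eqref{ao21intro} to a single pointwise estimate on a well-chosen harmonic function at the origin, and then to prove that estimate by induction on $k$ using the shift law for the time-integrated flux under $\pi_{k+1}=(\mathrm{id},\mathrm{id}-h_k)\#\pi_k$ together with the Campanato bounds of Proposition~\ref{theo:campintro}.

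\emph{Step 1 (reduction to a harmonic corrector).} Set $s_k:=\sum_{\ell=0}^{k-1}\nabla\Phi_\ell(0)$ and define
\[
 w_k:=u_k-u_0+s_k\cdot x\qquad\text{on }B_{R_k}.
\]
Since $\Delta u_k=\Delta u_0=\mu-1$ in $B_{R_k}$, the function $w_k$ is harmonic there, hence so is $\nabla w_k$. For any radial $\eta_R$ supported in $B_R\subset B_{R_k}$, the mean value property gives $\int\eta_R\nabla w_k=\nabla w_k(0)$, which rearranged is exactly $\sum_{\ell=0}^{k-1}\nabla\Phi_\ell(0)+\int\eta_R\nabla u_k-\int\eta_R\nabla u_0=\nabla w_k(0)$; therefore \eqref{ao21intro} is equivalent to $|\nabla w_k(0)|\les\beta(R_k)/R_k$. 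For \eqref{ao08intro}, observe that $\Phi_k-\frac{c_k}{2d}|x|^2$ is harmonic, so the mean value property together with Gauss--Green yields the exact identity $\nabla\Phi_k(0)=\frac{1}{|B_{R_k}|}\int_{\partial B_{R_k}}x\,\nu\cdot\bar j_k=\frac{1}{|B_{R_k}|}\int_{\partial B_{R_k}}x\,\nu\cdot\nabla u_k$; combining this with the integration by parts $\int_{\partial B_{R_k}}x\,\nu\cdot\nabla f=\int_{B_{R_k}}\nabla f+\int_{B_{R_k}}x\,\Delta f$ applied to the harmonic $u_0-u_k$ shows that \eqref{ao08intro} is \emph{also} equivalent to $|\nabla w_k(0)|\les\beta(R_k)/R_k$.

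\emph{Step 2 (shift law for $\bar j$).} Exploiting $X_t^{(k+1)}=X_t^{(k)}-th_k$, I Taylor-expand in $h_k$ and use the symmetry of $\nabla^2\phi$ with the time integration-by-parts $\int_0^1 t\,\frac{d}{dt}\nabla\phi(X_t)\,dt=\nabla\phi(y)-\int_0^1\nabla\phi(X_t)\,dt$ to obtain, for smooth scalar $\phi$,
\[
 \int\nabla\phi\cdot(\bar j_{k+1}-\bar j_k)=-h_k\cdot\int\nabla\phi\,d\lambda_k+Q_k[\phi],
\]
with $Q_k[\phi]$ quadratic in $h_k$ and bounded by $E_k\,\|\nabla^2\phi\|_\infty\les\beta(R_k)\|\nabla^2\phi\|_\infty$ via Proposition~\ref{theo:campintro}. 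Since $\lambda_k$ agrees with Lebesgue measure on $B_{R_{k+1}}$, testing against $\phi=x_i$ (linear, so $\nabla^2\phi\equiv 0$) yields $\int_{\partial B_{R_{k+1}}}x\,\nu\cdot(\bar j_{k+1}-\bar j_k)=-h_k\,|B_{R_{k+1}}|$ modulo a boundary-strip error of size $|h_k|^2 R_{k+1}^{d-1}$ controlled through the $L^\infty$ displacement bound of Lemma~\ref{Linf}.

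\emph{Step 3 (incremental estimate and telescoping).} The harmonic difference $w_{k+1}-w_k=u_{k+1}-u_k+h_k\cdot x$ has Neumann data $\nu\cdot\bar j_{k+1}-\nu\cdot\nabla u_k+h_k\cdot\nu$ on $\partial B_{R_{k+1}}$. Applying Gauss--Green to this harmonic function together with Step~2 and the fact that $\bar j_k-\nabla u_k$ is divergence-free in $B_{R_k}$ with vanishing normal trace on $\partial B_{R_k}$ (so in particular $\int_{B_{R_k}}(\bar j_k-\nabla u_k)=0$) gives
\[
 \nabla(w_{k+1}-w_k)(0)=-\frac{1}{|B_{R_{k+1}}|}\int_{B_{R_k}\setminus B_{R_{k+1}}}(\bar j_k-\nabla u_k)+\mathrm{err}_k.
\]
The annular integral is controlled through Proposition~\ref{eulerian1stepintro} (which yields $\|\bar j_k-\nabla\Phi_k\|_{L^2(B_{R_k})}^2\les R_k^d\beta(R_k)$) and the elliptic bound $\|\nabla(u_k-\Phi_k)\|_{L^2(B_{R_k})}^2\les R_k^d\beta(R_k)$ obtained from $\Delta(u_k-\Phi_k)=\mu-1-c_k$ with vanishing Neumann data combined with the $H^{-1}$--$W_2$ duality. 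Exploiting in addition the mean value equality $\fint_{B_R}\nabla\Phi_k=\nabla\Phi_k(0)$ for $R\le R_k$ (which lets one subtract the constant $\nabla\Phi_k(0)$ from both fields before integrating over the annulus) produces the missing power of $R_k$ and yields $|\nabla(w_{k+1}-w_k)(0)|\les\beta(R_k)/R_k$. Base case $w_0\equiv0$ and the summability \eqref{hypbeta} of $\beta(R_\ell)/R_\ell$ then give
\[
 |\nabla w_k(0)|\le\sum_{\ell=0}^{k-1}|\nabla(w_{\ell+1}-w_\ell)(0)|\les\sum_{\ell<k}\frac{\beta(R_\ell)}{R_\ell}\les\frac{\beta(R_k)}{R_k},
\]
which by Step~1 proves both \eqref{ao08intro} and \eqref{ao21intro}.

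\emph{Main obstacle.} The hardest step is extracting the improved $\beta(R_k)/R_k$ bound on the bulk/annular integral in Step~3: a naive Cauchy--Schwarz on $L^2(B_{R_k}\setminus B_{R_{k+1}})$ only gives $\sqrt{\beta(R_k)}$, which is off by a factor of $R_k$ at large scales. The improvement hinges on combining the exact Hodge cancellation $\int_{B_{R_k}}(\bar j_k-\nabla u_k)=0$ with the quantitative closeness of both $\bar j_k$ and $\nabla u_k$ to the same harmonic field $\nabla\Phi_k$, whose mean value equals its value at the origin, allowing one to subtract the leading constant and gain the missing length factor.
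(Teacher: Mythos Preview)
Your Step~1 reduction is correct and elegant: both \eqref{ao08intro} and \eqref{ao21intro} indeed reduce to $|\nabla w_k(0)|\lesssim\beta(R_k)/R_k$ for the harmonic $w_k=u_k-u_0+s_k\cdot x$, and this is equivalent to the paper's route through the telescoping Lemma~\ref{lem:sumvk}. Your Step~2 is essentially the paper's Lemma~\ref{lem:additive} (you should note that it requires $R_{k+1}$ to be a ``good radius'' in the sense of Lemma~\ref{lem:goodradii}, but this is a detail).

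The real gap is in Step~3, and it is exactly the obstacle you flag but do not overcome. The quantity you must bound is $\frac{1}{|B_{R_{k+1}}|}\bigl|\int_{B_{R_{k+1}}}(\bar j_k-\nabla u_k)\bigr|$, and your $L^2$ route does not close. First, Proposition~\ref{eulerian1stepintro} does not give $\|\bar j_k-\nabla\Phi_k\|_{L^2(B_{R_k})}^2\lesssim R_k^d\beta(R_k)$: it bounds the Eulerian energy $\int\!\!\int_0^1\frac{1}{\rho}|j-\rho\nabla\Phi|^2$ on a ball strictly smaller than $B_{R_k}$, and $\bar j_k$ is in general only a measure, so the $L^2$ norm need not even make sense. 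Second, the claimed $\|\nabla v_k\|_{L^2}^2\lesssim R_k^d\beta(R_k)$ via $H^{-1}$--$W_2$ duality is not available for singular $\mu$, since testing $\Delta v_k=\mu-\kappa$ against $v_k$ needs Lipschitz control on $v_k$, whereas by Lemma~\ref{lem:gradu} one only has $\nabla v_k\in L^p$ for $p<\frac{d}{d-1}$. Third, even granting both $L^2$ bounds, ``subtracting $\nabla\Phi_k(0)$'' does not gain a factor $R_k$: the oscillation $\nabla\Phi_k-\nabla\Phi_k(0)$ satisfies $\|\nabla\Phi_k-\nabla\Phi_k(0)\|_{L^2(B_{R_k})}\sim R_k^{d/2}\sqrt{\beta(R_k)}$, the same order as before, and the Hodge cancellation $\int_{B_{R_k}}\sigma_k=0$ only converts the ball integral into an annular one of the same size. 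The paper obtains the needed bound (Lemma~\ref{lem:Le1intro}) through a genuinely different mechanism: the divergence-free $\sigma_k=\bar j_k-\nabla u_k$ is weakly close to the gradient of $\tfrac{1}{2}|y|^2-\psi_k-u_k$, where $\psi_k$ is the Kantorovich potential; the point is that $\mathrm{id}-\nabla\psi_k-\bar j_k$ compares the Lagrangian displacement at the endpoint to its average along the trajectory, a quantity that is \emph{quadratic} in $y-x$ and hence directly of size $E_k$, not $\sqrt{E_k}$. A div--curl representation (Lemma~\ref{lem:omegaR}) then turns this weak closeness into \eqref{so06intro}. This use of the Kantorovich potential is the missing ingredient in your argument.
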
 
   Since \eqref{ao21intro} is a fairly direct consequence of \eqref{ao08intro}, we will just comment on the proof of the latter.
For  each $k\in[0,K]$ let $v_k$ be the distributional solution to
\begin{equation}\label{defvkintro}
 \Delta v_k=\mu-1-c\ \textrm{ in } B_{R_k}\qquad\textrm{ and }\qquad
	\nu\cdot\nabla v_k=0\ \textrm{ on } \partial B_{R_k}
\end{equation}
so that $u_k=v_k+\Phi_k$. The first ingredient for the proof of \eqref{ao08intro} is the following estimate for fixed $k$.
\begin{lemma}\label{lem:shiftonestep}
 For every $k\in[1,K-1]$,
 \begin{equation}\label{so45intro}
  \lt|\nabla\Phi_{k}(0)
-\frac{1}{|B_{R_{k}}|}\int_{\partial B_{R_{k}}}x\nu\cdot\nabla v_{k-1}\rt|
	\lesssim \frac{\beta(R_{k})}{R_{k}}.
 \end{equation}

\end{lemma}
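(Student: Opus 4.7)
My plan is to convert both sides of the claim into bulk integrals on $B_{R_k}$ using integration by parts and the harmonicity of several auxiliary vector fields, then to isolate the leading-order contribution of the shift $h_{k-1}:=\nabla\Phi_{k-1}(0)$ and estimate the remainders using Proposition~\ref{eulerian1stepintro} applied at scale $R_{k-1}$ together with the shift relation $\pi_k=(\mathrm{id},\mathrm{id}-h_{k-1})\#\pi_{k-1}$.

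First I would produce two IBP identities exploiting the constant-Laplacian structure of $\Phi_k$ and of $v_{k-1}-v_k$. Since $\Delta\Phi_k=c_k$ is constant in $B_{R_k}$, $\nabla\Phi_k$ is harmonic there, and testing $\Delta\Phi_k=c_k$ against $x_i$ (the constant piece vanishes by $\int_{B_{R_k}}x=0$), together with the mean-value property and the Neumann condition $\nu\cdot\nabla\Phi_k=\nu\cdot\bar j_k$, gives $\nabla\Phi_k(0)=\tfrac{1}{|B_{R_k}|}\int_{\partial B_{R_k}}x\,\nu\cdot\bar j_k$. The same computation applied to $v_{k-1}-v_k$, whose Laplacian $c_k-c_{k-1}$ is constant in $B_{R_k}$ and whose normal derivative on $\partial B_{R_k}$ reduces to $\nu\cdot\nabla v_{k-1}$ by the zero Neumann datum of $v_k$, yields $\tfrac{1}{|B_{R_k}|}\int_{\partial B_{R_k}}x\,\nu\cdot\nabla v_{k-1}=\nabla v_{k-1}(0)-\nabla v_k(0)$. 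Using $\nabla u_k(0)=\nabla v_k(0)+\nabla\Phi_k(0)$, the claim reduces to $|\nabla u_k(0)-\nabla v_{k-1}(0)|\lesssim \beta(R_k)/R_k$. Since $\Delta(u_k-v_{k-1})=c_{k-1}$ is constant in $B_{R_k}$ and matching of Neumann/divergence data produces $\int_{B_{R_k}}\nabla u_k=\int_{B_{R_k}}\bar j_k$, this becomes the bulk estimate $|\tfrac{1}{|B_{R_k}|}\int_{B_{R_k}}(\bar j_k-\nabla v_{k-1})|\lesssim \beta(R_k)/R_k$.

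I would then decompose
$$\bar j_k-\nabla v_{k-1}=(\bar j_k-\bar j_{k-1}+h_{k-1})+(\bar j_{k-1}-\nabla u_{k-1})+(\nabla\Phi_{k-1}-h_{k-1}),$$
using $\nabla u_{k-1}-\nabla v_{k-1}=\nabla\Phi_{k-1}$. The third bracket integrates to zero on $B_{R_k}$ by the mean-value property of the harmonic field $\nabla\Phi_{k-1}$. The first bracket is a shift-flux residual: writing $\int_{B_{R_k}}\bar j_k$ at the Lagrangian level via the trajectories $z_{k,t}=(1-t)x+t(y-h_{k-1})$ of $\pi_k$, the contribution of the constant $-h_{k-1}$ to the velocity produces a term $-h_{k-1}\bar\rho_k(B_{R_k})\approx-|B_{R_k}|h_{k-1}$ that cancels the added $+h_{k-1}$ to leading order; the residual comes from the symmetric difference of the time spent in $B_{R_k}$ by $z_{k,t}$ and $z_{k-1,t}$, concentrated in a shell of width $\lesssim|h_{k-1}|$ near $\partial B_{R_k}$. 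The second bracket is divergence-free in $B_{R_{k-1}}$ with vanishing normal on $\partial B_{R_{k-1}}$; writing it as $(\bar j_{k-1}-\nabla\Phi_{k-1})-\nabla v_{k-1}$, one controls the first piece using Proposition~\ref{eulerian1stepintro} applied to $\pi_{k-1}$ at scale $R_{k-1}$, and the second using the Wasserstein hypothesis on $\mu$ together with the zero Neumann datum of $v_{k-1}$.

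The hard part will be extracting the sharp rate $\beta(R_k)/R_k$ rather than the $\beta(R_k)^{1/2}$ that a bare $L^2$-Cauchy--Schwarz would produce on each bracket. Three ingredients seem necessary: testing against the constant vector on $B_{R_k}$ (a first-moment estimate, not an $L^2$ one) so that divergence-free corrections only contribute through their boundary trace on $\partial B_{R_k}$; using the harmonic approximation $y-x\approx\nabla\Phi_{k-1}(x)$ in the shell so that the relevant velocity is of order $|h_{k-1}|\lesssim\beta(R_{k-1})^{1/2}$ rather than of order one, upgrading the shell contribution from $|h_{k-1}|R_k^{d-1}$ to the desired $|h_{k-1}|^2R_k^{d-1}\sim|B_{R_k}|\beta(R_k)/R_k$; and choosing the radii $R_k$ within the approximately geometric sequence so that a good-slice argument applies to $\bar j_{k-1}-\nabla\Phi_{k-1}$ on $\partial B_{R_k}$.
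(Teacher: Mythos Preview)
Your reduction and decomposition are exactly the ones the paper uses: you correctly arrive at the bulk estimate $\big|\tfrac{1}{|B_{R_k}|}\int_{B_{R_k}}(\bar j_k-\nabla v_{k-1})\big|\lesssim \beta(R_k)/R_k$, you split $\bar j_k-\nabla v_{k-1}$ into the three brackets, you note that the $(\nabla\Phi_{k-1}-h_{k-1})$-piece integrates to zero, and your shell/good-slice argument for the shift-flux residual $(\bar j_k-\bar j_{k-1}+h_{k-1})$ is precisely the content of Lemma~\ref{lem:additive}. (A minor point: your intermediate manipulations use pointwise values like $\nabla v_k(0)$ that are not defined for measure right-hand side; only the differences you ultimately need, such as $\nabla(u_k-v_{k-1})(0)$, make sense since their Laplacian is constant.)

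The genuine gap is in the second bracket $\bar j_{k-1}-\nabla u_{k-1}$. Your plan is to split it further as $(\bar j_{k-1}-\nabla\Phi_{k-1})-\nabla v_{k-1}$ and control the first piece by the Eulerian harmonic approximation (Proposition~\ref{eulerian1stepintro}) and the second by the Wasserstein hypothesis on $\mu$. This destroys a cancellation and cannot yield the rate $E_{k-1}/R_{k-1}$: the Eulerian estimate is $L^2$ and Cauchy--Schwarz only gives $\sqrt{E_{k-1}}$, while $\tfrac{1}{|B_{R_k}|}\int_{B_{R_k}}\nabla v_{k-1}$ leads (after integration by parts) to a first moment $\tfrac{1}{|B_{R_k}|}\int_{B_{R_k}}x\,d(\mu-\kappa)$ that is likewise only of order $\sqrt{\beta(R_k)}$. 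The missing idea is the paper's Lemma~\ref{lem:Le1intro}: one observes that $\sigma_{k-1}:=\bar j_{k-1}-\nabla u_{k-1}$ is not merely divergence-free with vanishing normal on $\partial B_{R_{k-1}}$, but is also weakly close to a \emph{gradient}, namely $\nabla w_{k-1}$ with $w_{k-1}=\tfrac12|y|^2-\psi_{k-1}-u_{k-1}$ where $\psi_{k-1}$ is the Kantorovich potential. The weak closeness $\sigma_{k-1}\approx\nabla w_{k-1}$ is of order $E_{k-1}$ (not $\sqrt{E_{k-1}}$) because it comes from the Lagrangian identity $\bar j_{k-1}\approx\mathrm{id}-\nabla\psi_{k-1}$, cf.\ \eqref{so09}. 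One then expresses $\tfrac{1}{|B_{R_k}|}\int_{B_{R_k}}\sigma_{k-1}$ via the dual formula \eqref{so65}, which pairs $\sigma_{k-1}-\nabla w_{k-1}$ against a bounded test field built from the explicit $\nabla\omega_{R_k}^{R_{k-1}}$ of Lemma~\ref{lem:omegaR}; the divergence-free/gradient structure ensures that only the difference $\sigma_{k-1}-\nabla w_{k-1}$ contributes, and this yields $E_{k-1}/R_{k-1}$ directly. Neither Proposition~\ref{eulerian1stepintro} nor a good-slice argument on $\partial B_{R_k}$ can substitute for this step.
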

This result is in itself quite remarkable since it proves that the local shift $\nabla\Phi_{k}(0)$ depends (to leading order) only on the local structure of $\mu$ in $B_{R_{k-1}}$
(since $v_{k-1}$ only depends on $\mu\restr B_{R_{k-1}}$).
\medskip

   Lemma \ref{lem:shiftonestep} follows itself from the combination of two facts. The first is that the shift $\nabla \Phi_{k-1}(0)$, cf.\ \eqref{so34intro}, which is additive in the displacement retains an almost additive effect on the Eulerian flux:
   \begin{lemma}\label{lem:additive}
    For every $k\in [1,K]$,
    \begin{equation}\label{so42intro}
      \lt| \frac{1}{|B_{R_{k}}|}\int_{B_{R_{k}}} (\bar j_{k} +\nabla \Phi_{k-1}(0)-\bar j_{k-1})\rt|\les\frac{\beta(R_{k})}{R_{k}}.
    \end{equation}

   \end{lemma}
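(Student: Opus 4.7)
The starting point is the shift relation $\pi_k = (\mathrm{id}, \mathrm{id}-h)\#\pi_{k-1}$ with $h := \nabla\Phi_{k-1}(0)$: change of variables in \eqref{defrhojintro}--\eqref{defbarjintro} yields, with $z_t := (1-t)x+ty$,
\[
\int_{B_{R_k}}\bar j_k = \int_0^1\!\!\int \mathbf{1}_{B_{R_k}}(z_t - th)(y-x-h)\,d\pi_{k-1}\,dt.
\]
Subtracting $\int_{B_{R_k}}\bar j_{k-1}$ and adding $h|B_{R_k}|$ decomposes the quantity of interest as $I_1 + I_2$, with
\[
I_1 := \int_0^1\!\!\int [\mathbf{1}_{B_{R_k}}(z_t-th) - \mathbf{1}_{B_{R_k}}(z_t)](y-x)\,d\pi_{k-1}\,dt
\]
a boundary-shift contribution, nonzero only where shifting $z_t$ by $-th$ crosses $\partial B_{R_k}$, and $I_2 := h\bigl(|B_{R_k}| - \int_0^1\rho_t^k(B_{R_k})\,dt\bigr)$ a volume defect. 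The target \eqref{so42intro} amounts to $|I_1|+|I_2|\lesssim R_k^{d-1}\beta(R_k)$.

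For $I_2$, integrating the continuity equation $\partial_t\rho^k+\nabla\cdot j^k=0$ over $B_{R_k}$ gives $\rho_t^k(B_{R_k})-\mu(B_{R_k})=-\int_0^t\int_{\partial B_{R_k}}\nu\cdot j_s^k\,d\mathcal{H}^{d-1}ds$. The static defect $\bigl||B_{R_k}|-\mu(B_{R_k})\bigr|\le R_k^d|\kappa-1|\lesssim R_k^{d-1}\sqrt{\beta(R_k)}$ follows from \eqref{hypdata2}; the time-averaged boundary flux is controlled using the boundary condition $\nu\cdot\bar j_k=\nu\cdot\nabla\Phi_k$ on $\partial B_{R_k}$ from \eqref{so33intro} combined with the $L^\infty$-bound on $\nabla\Phi_k$ in \eqref{introestimPhi} and the residual estimate \eqref{ma36intro}, yielding again $R_k^{d-1}\sqrt{\beta(R_k)}$. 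Multiplying by $|h|\lesssim\sqrt{\beta(R_{k-1})}\sim\sqrt{\beta(R_k)}$ (consequence of \eqref{so39 intro} and \eqref{hypbeta}) produces $|I_2|\lesssim R_k^{d-1}\beta(R_k)$.

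For $I_1$, the indicator difference is supported in the shell $S := \{z : \bigl|\,|z|-R_k\,\bigr|\le|h|\}$; recognising that the spatial pushforward of $(y-x)d\pi_{k-1}$ under $(x,y)\mapsto z_t$ is by definition the flux $j_t^{k-1}$, we get $|I_1|\le\int_0^1|j_t^{k-1}|(S)\,dt$. Decomposing $j_t^{k-1}=\rho_t^{k-1}\nabla\Phi_{k-1}+r_t$ and using the $L^\infty$-bound $\|\nabla\Phi_{k-1}\|_{L^\infty(B_{2R_{k-1}})}\lesssim\sqrt{\beta(R_k)}$ from \eqref{introestimPhi}, the main part is bounded by $\sqrt{\beta(R_k)}\int_0^1\rho_t^{k-1}(S)dt$, while the residual $r_t$ is handled by Cauchy--Schwarz against the harmonic-approximation estimate \eqref{ma36intro} at scale $R_{k-1}$, yielding $\sqrt{\int_0^1\rho_t^{k-1}(S)dt}\cdot\sqrt{R_{k-1}^d(\tau E_{k-1}+CD(R_{k-1}))}$. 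Both terms reduce to the same geometric quantity: the Lagrangian shell-mass bound $\int_0^1\rho_t^{k-1}(S)dt\lesssim R_k^{d-1}|h|$, which, together with $|h|\lesssim\sqrt{\beta(R_k)}$, delivers $|I_1|\lesssim R_k^{d-1}\beta(R_k)$.

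The main technical obstacle is the Lagrangian shell-mass estimate, since the displacement-interpolant density $\rho_t^{k-1}$ is not $L^\infty$-bounded a priori and a naive volume argument fails. The identity $\int_0^1\rho_t^{k-1}(S)dt=\int|[x,y]\cap S|/|y-x|\,d\pi_{k-1}$ reduces the task to counting, with the correct weight, the $\pi_{k-1}$-trajectories crossing the shell $S$: short trajectories must have an endpoint in a neighbourhood of $\partial B_{R_k}$ whose $\mu$-mass is controlled by the data-closeness \eqref{hypdata2} at scale $R_k$ via a Kantorovich-duality argument, while long trajectories are suppressed by Chebyshev applied to the $L^2$-displacement bound $\int|y-x|^2\,d\pi_{k-1}\lesssim R_k^d\beta(R_k)$ inherited from $E_{k-1}\lesssim\beta(R_{k-1})$. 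It is this interplay of the data assumption with the previous-scale energy that makes the shell-mass estimate both the pivotal and the hardest step of the argument.
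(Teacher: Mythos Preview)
Your algebraic decomposition into $I_1$ and $I_2$ is the paper's own rearrangement in slightly different notation; the divergence is entirely in how the two pieces are estimated. The paper does not attempt any shell-mass argument: it invokes instead the good-radius properties \eqref{so76} and \eqref{so40} of Lemma~\ref{lem:goodradii}, which are proved by Fubini-averaging over $R\in(3\barR_k,4\barR_k)$ so that the indicator difference collapses to a single displacement factor, after which one Cauchy--Schwarz against the energy closes the estimate. Once those two properties are available the lemma is a two-line application.

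Your direct route has a quantitative gap. In the $I_1$-residual step, Cauchy--Schwarz against \eqref{ma36intro} produces
\[
\sqrt{R_k^{d-1}|h|}\cdot\sqrt{R_{k}^d\,\beta(R_k)}\ \sim\ R_k^{\,d-\frac12}\,\beta(R_k)^{3/4},
\]
overshooting the target $R_k^{d-1}\beta(R_k)$ by the divergent factor $(R_k^2/\beta(R_k))^{1/4}$; the $L^2$-residual from the harmonic approximation is simply too large to be localised to a thin shell this way. The shell-mass bound $\int_0^1\rho_t^{k-1}(S)\,dt\lesssim R_k^{d-1}|h|$ is likewise not available at a fixed radius: a $\mu$ with mass concentrated near $\partial B_{R_k}$, fully consistent with \eqref{hypdata2}, can make it fail. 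For $I_2$, the $L^\infty$-estimate \eqref{introestimPhi} on $\nabla\Phi_k$ holds only in the interior $B_{2\barR_k}\subset B_{R_k}$, not on $\partial B_{R_k}$ where $\nu\cdot\nabla\Phi_k=\bar f$ is merely a measure. The missing idea throughout is the good-radius selection: Lemma~\ref{lem:goodradii} is exactly what replaces your shell-mass heuristic by a rigorous bound, via averaging in $R$ rather than pointwise control.
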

This is proven using that $R_{k}$ is a good radius. The second, and more substantial 
result leading to Lemma \ref{lem:shiftonestep} is a linearization result at the level of  the Eulerian flux:
\begin{lemma}\label{lem:Le1intro}
 Recall the definition \eqref{so16intro} of $u_k$. For $R\in[R_{k+1},R_k]$, there holds 

\begin{align}
 	\lt|\int\eta_R(\bar j_k-\nabla u_k)\rt|&\lesssim\frac{E_k}{R_k},\label{ao13intro}\\
	\frac{1}{|B_{R_{k+1}}|}\lt|\int_{B_{R_{k+1}}}(\bar j_k-\nabla u_k)\rt|&\lesssim \frac{E_k}{R_k}.\label{so06intro}
\end{align}
\end{lemma}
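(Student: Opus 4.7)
My approach would be to first replace the Eulerian flux $\bar j_k$ by a Lagrangian average of the displacements via a Taylor expansion, and then compare this Lagrangian quantity with $\int\eta_R\nabla u_k$ using the common divergence structure. Starting from the representation
\[
\int\eta_R\bar j_k=\iint_0^1\eta_R((1-t)x+ty)(y-x)\,dt\,d\pi_k,
\]
a first-order Taylor expansion of $\eta_R$ along the straight-line trajectories gives
\[
\int\eta_R\bar j_k=\int\eta_R(x)(y-x)\,d\pi_k+\mathrm{Err},
\]
where $\mathrm{Err}$ is quadratic in $|y-x|$. Using the $L^\infty$-bound on the displacement to confine the relevant trajectories to $B_{cR_k}$ and the scaling $\|\nabla\eta_R\|_\infty\sim R^{-d-1}$, I would find $|\mathrm{Err}|\lesssim R^{-d-1}R_k^d E_k\lesssim E_k/R_k$, using $R\sim R_k$ (since the radii $R_k$ are approximately geometric).

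Next I would compare $\int\eta_R(x)(y-x)\,d\pi_k$ with $\int\eta_R\nabla u_k$. Integrating the continuity equation in time gives $\nabla\cdot\bar j_k=\mu-1$, while $\Delta u_k=\mu-1$, both in $B_{R_k}$ and with matching Neumann boundary flux $\nu\cdot\bar j_k$. Hence $w_k:=\bar j_k-\nabla u_k$ is divergence-free with vanishing normal trace on $\partial B_{R_k}$, so $\int\nabla\phi\cdot w_k=0$ for every scalar $\phi$ with compact support in $B_{R_k}$. Decomposing the vector test field $\eta_R e_i=\nabla\psi_i+\psi_i^\perp$ via Helmholtz (with $\Delta\psi_i=\partial_i\eta_R$ and $\nu\cdot\nabla\psi_i=0$ on $\partial B_{R_k}$), only the divergence-free remainder $\psi_i^\perp$ contributes to the $i$-th component of $\int\eta_R w_k$. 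A second Taylor expansion of the Lagrangian representation of $\bar j_k$ tested against $\psi_i^\perp$, combined with the continuity-equation identity $\int\nabla\phi\cdot\bar j_k=-\int\phi(\mu-1)$, turns the remaining integral into a second-order expression in $(y-x)$ of size $\lesssim E_k/R_k$, which proves \eqref{ao13intro}.

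The main obstacle is the second step: a naive Cauchy--Schwarz against $y-x-\nabla u_k(x)$ yields only $\sqrt{E_k+D(R_k)}$, far from the sharp $E_k/R_k$. One must exploit both the radial symmetry of $\eta_R$ (which kills the gradient part of the test field by parity, as in the identity $\int\nabla\eta_R\cdot w_k=0$) and the divergence-free structure of $w_k$ to squeeze out the extra factor $R_k^{-1}$. For \eqref{so06intro} I would repeat the same argument with a smoothed version of $\chi_{B_{R_{k+1}}}$ obtained from a good-radius selection in $[R_{k+1}/2,R_{k+1}]$, absorbing the indicator's discontinuity into a boundary-layer average at the same $E_k/R_k$ cost.
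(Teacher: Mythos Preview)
Your Helmholtz decomposition $\eta_R e_i=\nabla\psi_i+\psi_i^\perp$ (with $\Delta\psi_i=\partial_i\eta_R$ in $B_{R_k}$ and $\nu\cdot\nabla\psi_i=0$ on $\partial B_{R_k}$) is exactly the paper's representation formula; the paper calls $\psi_i=\omega_R$ and obtains $\int\eta_R\sigma_{k,1}=\int(\eta_Re_1-\nabla\omega_R)\cdot\sigma_k$, which reduces to $\int\psi_i^\perp\cdot\bar j_k$ just as you say, since $\psi_i^\perp$ is divergence-free with vanishing normal trace and thus annihilates $\nabla u_k$.

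The genuine gap is in how you close this. The continuity-equation identity $\int\nabla\phi\cdot\bar j_k=-\int\phi(\mu-1)$ only probes the \emph{gradient} component of test fields; it is blind to the divergence-free $\psi_i^\perp$ and cannot help you here. If you Taylor-expand $\int\psi_i^\perp\cdot\bar j_k$ at $y$, the term $\int\psi_i^\perp(y)\cdot y\,dy$ indeed vanishes (gradient against divergence-free field), but you are left with $\int\psi_i^\perp(y)\cdot x\,d\pi_k$, which is first-order in the displacement, not second-order. What the paper uses at this point --- and what is missing from your sketch --- is the Kantorovich potential: since $\pi_k$ is supported on the graph of $\partial\psi_k$ and the $y$-marginal is Lebesgue, one has $x=\nabla\psi_k(y)$ a.e., so $\int\psi_i^\perp(y)\cdot x\,d\pi_k=\int\psi_i^\perp\cdot\nabla\psi_k\,dy=0$ by the same divergence-free argument. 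The paper packages this by introducing $w_k=\tfrac12|y|^2-\psi_k-u_k$, so that $\sigma_k-\nabla w_k=\bar j_k-({\rm id}-\nabla\psi_k)$ is precisely the quadratic Taylor remainder, and then bounds $\int\psi_i^\perp\cdot(\sigma_k-\nabla w_k)$ directly (Step~3 of the proof). The paper explicitly remarks that, apart from the $L^\infty$ bound, this is the only place where optimality of $\pi_k$ enters; without it your argument cannot reach $E_k/R_k$. The ``radial symmetry'' of $\eta_R$ is not what drives the cancellation.

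For \eqref{so06intro} the paper does not smooth $\chi_{B_{R_{k+1}}}$ by a good-radius selection; it uses an explicit piecewise-harmonic dipole kernel $\nabla\omega_r^1$ on $B_r\cup(B_1\setminus\bar B_r)$ (Lemma~\ref{lem:omegaR}) in the role of $\psi_i^\perp$, and applies the same Kantorovich-based estimate on each piece, handling the jump at $\partial B_r$ via the good-radius conditions \eqref{so76}--\eqref{so10}. Your smoothing idea is plausible in spirit but would still require the missing Kantorovich-potential ingredient to yield the sharp rate.
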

While \eqref{ao13intro} is very important in combination with \eqref{ao21intro} to obtain  estimate \eqref{mainweaktheo}, we are mainly interested  in \eqref{so06intro} for this discussion. 
Both estimates are obtained by proving that the divergence free vector field $\bar j_k-\nabla u_k$ is weakly close on large scale to a gradient field, namely the gradient of
$\frac{1}{2}|y|^2-\psi_k-u_k$, where $\psi_k$ is the Kantorovich potential of $\pi_k$.\\

Since it is at the basis of our argument, let us derive Lemma \ref{lem:shiftonestep} from Lemma \ref{lem:additive} and Lemma \ref{lem:Le1intro} 
\begin{proof}[Proof of Lemma \ref{lem:shiftonestep}]
 The starting point is that since $\nabla \Phi_{k}$ is harmonic and thus satisfies the mean-value property and since, concentrating for instance on the first variable, 
 $\nabla\cdot (x_1\nabla \Phi_{k})= \partial_1 \Phi_k +x_1 c$, we have by the divergence theorem
 \begin{equation}\label{eq:div}
  \nabla \Phi_{k}(0)=\frac{1}{|B_{R_k}|}\int_{B_{R_k}} \nabla \Phi_{k}=\frac{1}{|B_{R_k}|}\int_{\partial B_{R_k}} x \nu \cdot \nabla \Phi_{k}.
 \end{equation}
Observe that by \eqref{conteqintro} $\nabla \cdot \bar{j}_k=\mu-1$ in $B_{R_k}$. Combining this with \eqref{defvkintro} implies that $\nabla \cdot (\bar{j}_{k}-\nabla v_{k-1})= c$ in $B_{R_{k}}$. Hence, we have again by the divergence theorem

\begin{align*}
 \nabla \Phi_{k}(0)-\frac{1}{|B_{R_{k}}|}\int_{\partial B_{R_{k}}}x\nu\cdot\nabla v_{k-1}&= \frac{1}{|B_{R_k}|}\int_{\partial B_{R_k}} x \nu \cdot (\nabla \Phi_{k}- \nabla v_{k-1})\\
 &\stackrel{\eqref{so33intro}}{=}\frac{1}{|B_{R_k}|}\int_{\partial B_{R_k}} x \nu \cdot (\bar j_{k}- \nabla v_{k-1})\\
 &=\frac{1}{|B_{R_k}|}\int_{B_{R_k}} (\bar{j}_{k}-\nabla v_{k-1}).
\end{align*}
Writing  
\[
 \bar{j}_{k}-\nabla v_{k-1}= (\bar{j}_{k} +\nabla \Phi_{k-1}(0)-\bar j_{k-1})+ (\bar j_{k-1}-\nabla \Phi_{k-1}(0)-\nabla v_{k-1}),
\]
using \eqref{eq:div} for $\nabla \Phi_{k-1}$ and the fact that $u_{k-1}=\Phi_{k-1}+v_{k-1}$, we obtain as desired
\begin{align*}
 \lefteqn{\lt|\nabla \Phi_{k}(0)-\frac{1}{|B_{R_{k}}|}\int_{\partial B_{R_{k}}}x\nu\cdot\nabla v_{k-1}\rt|}\\
 &\le \lt|\frac{1}{|B_{R_k}|}\int_{B_{R_k}} (\bar{j}_{k} +\nabla \Phi_{k-1}(0)-\bar j_{k-1}) \rt| +\lt|\frac{1}{|B_{R_k}|}\int_{B_{R_k}}(\bar j_{k-1}-\nabla u_{k-1})\rt|\\
 &\stackrel{\eqref{so42intro}\&\eqref{so06intro}}{\les} \frac{\beta(R_{k})}{R_{k}} +\frac{E_{k-1}}{R_{k-1}}\\
 &\stackrel{\eqref{so01 intro}}{\les} \frac{\beta(R_{k})}{R_{k}}.
\end{align*}

\end{proof}

Besides Lemma \ref{lem:shiftonestep}, the second main ingredient for the proof of \eqref{ao08intro} is a summation formula for the fluxes of $v_k$.
\begin{lemma}\label{lem:sumvk}
 For every $k\in [1,K]$,
\begin{equation}\label{eq2intro} \frac{1}{|B_{R_k}|}\int_{\partial B_{R_k}}x\nu\cdot\nabla v_{0}
	=\sum_{\ell=1}^{k}\frac{1}{|B_{R_{\ell}}|}\int_{\partial B_{R_{\ell}}}x\nu\cdot\nabla v_{\ell-1}.
	\end{equation}
\end{lemma}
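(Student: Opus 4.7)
The plan is to prove \eqref{eq2intro} by induction on $k$ via a telescoping identity. Set
\[
B_k := \frac{1}{|B_{R_k}|}\int_{\partial B_{R_k}} x\,\nu\cdot\nabla v_0, \qquad A_k := \frac{1}{|B_{R_k}|}\int_{\partial B_{R_k}} x\,\nu\cdot\nabla v_{k-1},
\]
so the claim becomes $B_k = \sum_{\ell=1}^k A_\ell$. For $k=1$ this is trivial since $v_{k-1}=v_0$ gives $A_1=B_1$; for $k\geq 2$ it suffices to establish the telescoping relation $B_k - A_k = B_{k-1}$, from which the induction completes the formula.

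The starting point is the componentwise identity $\nabla\cdot(x_i\nabla v) = \partial_i v + x_i\Delta v$, which on any ball $B_R$ on which $v$ is defined yields
\[
\int_{\partial B_R} x\,\nu\cdot\nabla v \;=\; \int_{B_R} \nabla v + \int_{B_R} x\,\Delta v.
\]
Applying this to $v_0$ and $v_{k-1}$ on $B_{R_k}$ (which is contained in both $B_{R_0}$ and $B_{R_{k-1}}$) and subtracting, the $\mu-1$ parts of the two Laplacians from \eqref{defvkintro} cancel, while the residual constant in $\Delta(v_0-v_{k-1})$ is integrated against $x$ and vanishes by the symmetry of $B_{R_k}$. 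This leaves
\[
B_k - A_k \;=\; \frac{1}{|B_{R_k}|}\int_{B_{R_k}} \nabla(v_0 - v_{k-1}).
\]

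The crux is that $\Delta(v_0 - v_{k-1})$ is a constant throughout $B_{R_{k-1}}\subset B_{R_0}$, so each component of $\nabla(v_0-v_{k-1})$ is harmonic on $B_{R_{k-1}}$. By the mean value property, its average over $B_{R_k}$ coincides with its average over $B_{R_{k-1}}$. Running the same integration-by-parts identity on $B_{R_{k-1}}$ and using the Neumann condition $\nu\cdot\nabla v_{k-1}=0$ on $\partial B_{R_{k-1}}$ to kill the boundary term coming from $v_{k-1}$, this latter average is identified exactly with $B_{k-1}$. Chaining these equalities gives $B_k - A_k = B_{k-1}$, and the induction yields \eqref{eq2intro}. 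The only mild caveat is that $v_0$ and $v_{k-1}$ have limited regularity since $\mu$ is only a measure, so the two applications of the divergence identity must be read distributionally (or justified by approximating $\mu$ by smooth densities and passing to the limit); this is harmless, since the object $\nabla(v_0-v_{k-1})$ that actually needs to be manipulated is genuinely smooth in the interior of $B_{R_{k-1}}$ thanks to its constant Laplacian, so no serious obstacle is expected.
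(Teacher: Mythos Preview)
Your proof is correct and follows a genuinely different route from the paper's.

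The paper introduces, for each pair of radii $0<r<R$, an explicit auxiliary function $\omega_r^R$ (piecewise harmonic, with a prescribed jump on $\partial B_r$ and homogeneous Neumann data on $\partial B_R$) and establishes two facts: a representation formula
\[
\frac{1}{|B_r|}\int_{\partial B_r} x_1\,\nu\cdot\nabla v^R = -\int_{B_R}\omega_r^R\,d\mu,
\]
and a telescoping identity $\omega_{R_k}^{R_0}=\sum_{\ell=1}^k \omega_{R_\ell}^{R_{\ell-1}}$, which is verified directly from an explicit closed-form expression for $\omega_r^R$. The summation formula then follows by applying the representation to each term.

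Your argument bypasses the auxiliary functions entirely: you use only the divergence identity $\nabla\cdot(x_i\nabla v)=\partial_i v+x_i\Delta v$, the fact that $\Delta(v_0-v_{k-1})$ is constant on $B_{R_{k-1}}$ (so that $\nabla(v_0-v_{k-1})$ is harmonic there and satisfies the mean-value property), and the Neumann condition for $v_{k-1}$ on $\partial B_{R_{k-1}}$. This yields the recursion $B_k-A_k=B_{k-1}$ directly. The regularity caveat you flag is exactly the one the paper addresses as well (via $\nabla v_k\in L^p$ for $p<\frac{d}{d-1}$ and approximation of $\mu$), so no issue there.

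Your approach is more elementary and self-contained for this lemma. The paper's approach, however, is not gratuitous: the explicit formula for $\nabla\omega_r^R$ is reused later in the linearization lemma (the representation of $\frac{1}{|B_r|}\int_{B_r}\sigma$ in terms of $\nabla w-\sigma$), so the investment in $\omega_r^R$ pays off twice.
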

The proof of \eqref{eq2intro} is based on the fact that each $v_k$ type of solves the same equation (see \eqref{defvkintro}) and that we may find an explicit formula relating the flux  $\frac{1}{|B_{R_{k}}|}\int_{\partial B_{R_{k}}} x\nu\cdot \nabla v_{k-1}$ and the measure $\mu$.
\medskip 

Let us point out that \eqref{ao08intro} is a direct consequence of \eqref{so45intro}, \eqref{eq2intro} and hypothesis \eqref{hypbeta} on $\beta$, once observed that   by \eqref{eq:div}, and since $u_0=v_0+\Phi_0$,
\[
 \nabla \Phi_0(0)-\frac{1}{|B_{R_k}|}\int_{\partial B_{R_k}} x\nu \cdot \nabla u_0=-\frac{1}{|B_{R_k}|}\int_{\partial B_{R_k}} x\nu \cdot \nabla v_0.
\]

\subsection{Comparison with the first part of \cite{GHO}}
We close this introduction by stressing the connection and differences between this paper and \cite{GHO}. This paper replaces and supersedes the first (deterministic) part of \cite{GHO} (which will thus not be submitted anywhere).
The second part containing the application to the optimal matching problem will appear elsewhere. A first main difference with \cite{GHO}
is that we are  able here to directly relate the displacements to the flux given by the Poisson equation (see \eqref{mainweaktheo}) and thus confirming the linearization ansatz by Caracciolo and al. from \cite{CaLuPaSi14}.
As a by-product, we obtain optimal estimates for the shift $h$ in \eqref{strongestimintro} while \cite[Th. 1.1]{GHO} was only optimal up to a logarithmic factor  in the system size i.e $|h_R|\les\log^{\frac{3}{2}} \barR$ in dimension $d=2$ and $|h_R|\les \log \barR$ in higher dimension (see \eqref{eqtildeh}).   
Another main difference is in the statement and proofs of the harmonic approximation result (see Theorem \ref{theo:harmonicLagintro}). Compared with \cite[Prop. 1.5]{GHO}, where the source was given by the Lebesgue measure, 
it is stated  here for general source and target measures. While this might look like a minor point, it actually means that we cannot rely at all on McCann's 
interpolation convexity estimate to obtain an $L^\infty$ bound on the Eulerian density $\rho$ (since in general it is only a measure), see for instance \cite[Lem.2.4]{GHO}.
Having to go around this relatively rigid argument actually led to a completely revised, simpler and hopefully more flexible proof. 
Let us point out that having to connect arbitrary source and target measures also called for a new proof of the $L^\infty$ bound on the displacement (compare the proofs of Lemma \ref{Linf} and \cite[Lem. 4.1]{GO}). 
A last difference, more on the technical side, is that we substantially simplified the Camapanato iteration with respect to \cite[Th. 1.1]{GHO}.

\subsection*{Acknowledgements}
 MH gratefully acknowledges partial support by the DFG through the CRC 1060 ``The Mathematics of Emerging Effects'' and by the Hausdorff Center for Mathematics during the first part of this project. During the second part of this work MH has been funded by the Vienna Science and Technology Fund (WWTF) through project VRG17-005. In the final part MH has been funded by the Deutsche Forschungsgemeinschaft (DFG, German Research Foundation) under Germany's Excellence Strategy EXC 2044 -390685587, 
 Mathematics M\"unster: Dynamics-Geometry-Structure
 MG acknowledges partial support from the ANR project SHAPO.  MG and MH thank the Max Planck Institute MIS for its warm hospitality.

 \section{Preliminaries}
 \subsection{Notation}

In this paper we will use the following notation.  We recall from the introduction that the symbols $\sim$, $\ges$, $\les$ indicate estimates that hold up to a global constant $C$,
which typically only depends on the dimension $d$.
For instance, $f\les g$ means that there exists such a constant with $f\le Cg$,
$f\sim g$ means $f\les g$ and $g\les f$. An assumption of the form $f\ll1$ means that there exists $\eps>0$, typically only
depending on dimension, such that if $f\le\eps$, 
then the conclusion holds.  We write $\log$ for the natural logarithm. 

We denote by $\H^k$ the $k-$dimensional Hausdorff measure. 
For a set $E$,  $\nu_E$ will always denote the external normal to $E$. When clear from the context we will drop the explicit dependence on the set. 
We write $|E|$ for the  Lebesgue measure of a set $E$ and $I(E)$ for the indicator function of $E$. 
When no confusion is possible, we will drop the integration measures in the integrals as well as the domain of integration. Similarly, we will often identify, if possible, measures with their densities with respect to the Lebesgue measure.
For $R>0$ and $x_0\in \R^d$, $B_R(x_0)$ denotes the ball of radius $R$ centered in $x_0$. 
When $x_0=0$, we will simply write $B_R$ for $B_R(0)$. We denote the gradient on $\partial B_R$ by $\nabla_{\tang}$. For a measure $\rho$ on $\R^d\times [0,1]$, we denote by $\bar \rho$
its time integral i.e., the measure defined for $\zeta\in C^0_c(\R^d)$ as 
\begin{equation}\label{defbar}
 \int_{\R^d} \zeta d\bar \rho=\int_{\R^d}\int_0^1 \zeta d\rho.
\end{equation}
From now on, we use the letter  $\zeta$ for a generic real-valued test function and $\xi$ for a generic vector-valued test function, that is $\xi\in C^0_c(\R^d)^d$.
For a given radius $R>0$, we let $\Pi_R(x):= R \frac{x}{|x|}$ be the projection on $\partial B_R$ and define for every measure $\rho$ on $\R^d$, the projected measure on $\partial B_R$, $\hat{\rho}:=\Pi_R\# \rho$, that is
\begin{equation}\label{defhat}
\int \zeta d\hat{\rho}=\int \zeta \lt(R \frac{x}{|x|}\rt) d\rho(x).
\end{equation}
For given $f$ and $g$ (potentially measures), we let $c$ be the generic constant such that the Poisson equation
\[
 \Delta \vp = g+c \ \textrm{ in } B_R \qquad \textrm{ and } \qquad \nu\cdot \nabla \vp=f \ \textrm{ on } \partial B_R
\]
is solvable i.e 
\[
 c=\frac{1}{|B_R|}\lt(\int_{\partial B_R} df -\int_{B_R} dg\rt).
\]
We use $W^2$ for the squared $2-$Wasserstein distance (see Section \ref{sec:OT} for more details). For a (positive) measure $\mu$ and a set $O\subset \R^d$, we define $\kappa:=\frac{\mu(O)}{|O|}$
the generic constant such that
\begin{equation}\label{defkappa}
 W^2_{O}(\mu,\kappa):=W^2(\mu\restr O, \kappa dx\restr O)
\end{equation}
is finite. For $\zeta$ a function on $\R^d\times \R$ and $t\in \R$, we will often use the short-hand notation $\zeta_t:=\zeta(\cdot,t)$.

 \subsection{Elliptic estimates}
 We gather in this section a few technical lemmas giving more or less standard estimates for solutions of the Poisson equation.
 \begin{lemma}\label{lem:ellipticr}
  Let $R\in (3,4)$ and $0\le r\ll1$ be fixed. Letting $f_r$ denote the convolution   at scale $r$ on $\partial B_R$ (with the convention that $f_0=f$) and then $\vp^r$ with $\int_{B_R} \vp^r=0$ be the solution of 
  \[
   \Delta \vp^r=c \ \textrm{ in } B_R \qquad \textrm{ and } \qquad 
   \nu\cdot \nabla \vp^r= f_r \ \textrm{ on } \partial B_R.
  \]
  Then $\vp^r$ is regular up to the boundary with
\begin{align}\label{wg03}
r^2\sup_{B_R}|\nabla^3\vp^r|+r\sup_{B_R}|\nabla^2\vp^r|+\sup_{B_R}|\nabla\vp^r|
\lesssim\lt(\frac{1}{r^{d-1}}\int_{\partial B_R}f^2\rt)^\frac{1}{2},
\end{align}
 is close to $\vp$ in the interior
\begin{align}\label{wg17}
\sup_{B_2}|\nabla(\vp^r-\vp)|\lesssim r\lt(\int_{\partial B_R}f^2\rt)^\frac{1}{2}
\end{align}
and satisfies the uniform-in-$r$ regularity estimates both up to the boundary 
\begin{align}\label{wg15}
\lt(\int_{\partial B_s}|\nabla\vp^r|^2\rt)^\frac{1}{2}
\lesssim\lt(\int_{\partial B_R}f_r^2\rt)^\frac{1}{2} \qquad \forall\  0<s\le R,
\end{align}
and in the  interior  
\begin{align}\label{wg12}
\sup_{B_2}|\nabla^2\vp^r|+\sup_{B_2}|\nabla\vp^r|\lesssim\lt(\int_{B_R}|\nabla\vp^r|^2\rt)^\frac{1}{2}
\lesssim\lt(\int_{\partial B_R}f_r^2\rt)^\frac{1}{2}.
\end{align}
 \end{lemma}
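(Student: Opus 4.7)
My plan is to treat the four estimates in the logical order \eqref{wg15}--\eqref{wg12}--\eqref{wg03}--\eqref{wg17}, since the first two encode the basic well-posedness of the Neumann problem with $L^{2}$ data, the third uses in addition the regularization properties of convolution at scale $r$, and the last is a direct perturbation argument built on top of them.

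For \eqref{wg15}, the natural device is the spherical harmonics expansion on $\partial B_R$. Writing $f_r=\sum_{n,k} a_{n,k} Y_{n,k}$ (with $a_{0,0}$ determined by the compatibility condition relating $f_r$ and $c$), the explicit solution of the Neumann problem inside $B_R$ is $\vp^{r}(x)=\sum_{n\ge 1,k}\frac{R}{n}\,a_{n,k}\,(|x|/R)^{n} Y_{n,k}(x/|x|)+\tfrac{c}{2d}|x|^{2}+\text{const}$, from which one reads off $\int_{\partial B_s}|\nabla\vp^{r}|^{2}\les \sum n^{2}(s/R)^{2n-2} a_{n,k}^{2}\le \sum a_{n,k}^{2} =\int_{\partial B_R}f_r^{2}$, plus a trivial contribution from the constant piece (controlled by the compatibility condition). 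This also delivers the $H^{1}(B_R)$ bound $\int_{B_R}|\nabla\vp^{r}|^{2}\les \int_{\partial B_R}f_r^{2}$ needed for \eqref{wg12}; the remaining sup-norm estimates in \eqref{wg12} follow from standard interior Schauder/Moser estimates on $B_{2}\Subset B_{R}$ (recall $R\in(3,4)$), applied to $\vp^{r}-\tfrac{c}{2d}|x|^{2}$ which is harmonic.

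For \eqref{wg03}, I would exploit that convolution at scale $r$ on $\partial B_R$ gives, for every $k\ge 0$,
\[
\sup_{\partial B_R}|\nabla_{\tang}^{k} f_r|\les r^{-k}\sup_{\partial B_R}|f_r|\les r^{-k-(d-1)/2}\lt(\int_{\partial B_R}f^{2}\rt)^{1/2},
\]
by Cauchy--Schwarz against the appropriate derivative of the convolution kernel. Since the boundary data are now smooth, Schauder estimates up to the boundary for the Neumann problem on $B_R$ (together with interpolation to produce the $C^{2}$ and $C^{3}$ norms from the $C^{0}$ norm of $f_r$ and its derivatives) yield \eqref{wg03} with exactly the stated $r$-dependent scaling.

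Finally, \eqref{wg17} is a Green's function computation. The difference $w:=\vp^{r}-\vp$ is harmonic in $B_R$ with Neumann datum $f_r-f$, which has vanishing mean (the convolution preserves integrals) so the problem is solvable. Representing
\[
w(x)=\int_{\partial B_R} G(x,y)(f_r(y)-f(y))\,d\H^{d-1}(y)
\]
for the Neumann Green's function $G$ and moving the convolution onto $G$ using the symmetry of the mollifier, one gets $w(x)=\int_{\partial B_R}(G(x,\cdot)_r-G(x,\cdot))(y)\,f(y)\,d\H^{d-1}(y)$. For $x\in B_{2}$ and $y\in\partial B_R$ we have $|x-y|\ges 1$, so $G(x,\cdot)$ and its tangential derivatives are uniformly bounded on $\partial B_R$; hence $|G(x,\cdot)_r-G(x,\cdot)|\les r$ pointwise on $\partial B_R$, and the same estimate survives one $x$-derivative. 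Cauchy--Schwarz then gives \eqref{wg17}. The only genuine obstacle I foresee is bookkeeping the compatibility constant $c$ correctly through the spherical-harmonics step for \eqref{wg15}, since $f_r$ and $f$ share their mean; everything else is standard elliptic regularity combined with the convolution scaling $\|\nabla^{k}f_r\|_{L^\infty}\les r^{-k-(d-1)/2}\|f\|_{L^{2}}$.
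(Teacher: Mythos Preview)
Your overall strategy is sound and most steps go through, but the route differs from the paper's in ways worth noting, and your argument for \eqref{wg03} has a small but genuine loss.

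For \eqref{wg15} and \eqref{wg12} the paper simply cites \cite[Lem.~2.1]{GO}; your spherical-harmonics computation is a correct substitute, though your intermediate expression $\sum n^{2}(s/R)^{2n-2}a_{n,k}^{2}$ is off (the mode-$n$ contribution to $|\nabla\vp^r|^2$ on $\partial B_s$ carries a factor of order $1$, not $n^{2}$; with the spurious $n^{2}$ the bound would actually fail at $s=R$). For \eqref{wg17} you use the Neumann Green's function and shift the convolution onto $G$; this is correct. The paper instead observes that the Neumann-to-Dirichlet map commutes with rotations, hence with convolution on $\partial B_R$, so that $\vp^r|_{\partial B_R}=(\vp|_{\partial B_R})_r$; then interior regularity for the harmonic $\vp^r-\vp$ together with the Pohozaev identity give \eqref{wg17} in one line.

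The gap is in \eqref{wg03}. From $\sup_{\partial B_R}|\nabla_{\tang}^{k}f_r|\lesssim r^{-k-(d-1)/2}\|f\|_{L^2}$ you invoke ``Schauder estimates up to the boundary'' to obtain $\sup_{B_R}|\nabla^{k+1}\vp^r|$ with the same $r$-scaling. But the $L^\infty$ estimate you implicitly need, namely $\|\nabla^{k+1}\vp^r\|_{L^\infty}\lesssim\|\nabla_{\tang}^{k}f_r\|_{L^\infty}$, is false in general: the boundary-to-boundary map $f\mapsto\nabla_{\tang}\vp$ is a singular integral of Calder\'on--Zygmund type on $\partial B_R$ and is not bounded on $L^\infty$. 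Schauder only gives $\|\vp^r\|_{C^{k+1,\alpha}}\lesssim\|f_r\|_{C^{k,\alpha}}$, which costs an extra factor $r^{-\alpha}$ and therefore does \emph{not} yield ``exactly the stated $r$-dependent scaling''. The paper avoids this by staying in $L^2$: after reducing via the maximum principle to $\sup_{\partial B_R}|\nabla^3\vp^r|$, it applies a rescaled Sobolev embedding on $\partial B_R$ with integer order $n>(d-1)/2$, bounds $\|\nabla_{\tang}^{m}\nabla^3\vp^r\|_{L^2(\partial B_R)}$ by $\|\nabla_{\tang}^{m+2}f_r\|_{L^2(\partial B_R)}$ (via Pohozaev and the fact that the Dirichlet-to-Neumann map commutes with rotations, hence is bounded $H^{k+1}\to H^{k}$), and finally uses the lossless $L^2$ convolution bound $r^{m+2}\|\nabla_{\tang}^{m+2}f_r\|_{L^2}\lesssim\|f\|_{L^2}$.
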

\begin{proof}
A proof of  \eqref{wg15} and \eqref{wg12} can be found for instance in \cite[Lem. 2.1]{GO}, so we are left to show \eqref{wg03} and \eqref{wg17}.

\medskip 
 Up to subtracting a quadratic function to $\vp^r$, we may assume without loss of generality that $c=0$ (and thus $\vp^r$ is harmonic in $B_R$). We start by the argument for (\ref{wg03}) and restrict to the (more involved) estimate on $\nabla^3\vp^r$. 
By the  maximum principle for the component-wise harmonic $\nabla^3\vp^r$, it is enough to establish
\begin{align*}
r^2\sup_{\partial B_R}|\nabla^3\vp^r|\lesssim\lt(\frac{1}{r^{d-1}}\int_{\partial B_R}f^2\rt)^\frac{1}{2}.
\end{align*}
Fixing an integer $n>\frac{d-1}{2}$,  we first claim that for $r\in(0,1]$,
\begin{equation}\label{Sob}
 \sup_{\partial B_R}|\zeta|\lesssim r^n\lt(\frac{1}{r^{d-1}}\int_{\partial B_R}|\nabla_{tan}^n\zeta|^2\rt)^\frac{1}{2}+\lt(\frac{1}{r^{d-1}}\int_{\partial B_R}|\zeta|^2\rt)^\frac{1}{2}.
\end{equation}
 To prove \eqref{Sob}, we start with the standard Sobolev embedding on $\R^{d-1}$, $\sup_{\mathbb{R}^{d-1}}|\zeta|^2\lesssim\int_{\mathbb{R}^d}(|\nabla^n\zeta|^2+\zeta^2)$, which by scaling turns into 
\[
 \sup_{\mathbb{R}^{d-1}}|\zeta|^2\lesssim r^{2n- (d-1)} \int_{\mathbb{R}^d}|\nabla^n\zeta|^2+ \frac{1}{r^{d-1}}\int_{\R^{d-1}}\zeta^2. 
\]
This can easily be localized (with help of an extension operator) to
\[
 \sup_{[0,1]^{d-1}}|\zeta|^2\lesssim r^{2n- (d-1)} \int_{[0,1]^{d-1}}|\nabla^n\zeta|^2+ \frac{1}{r^{d-1}}\int_{[0,1]^{d-1}}\zeta^2. 
\]
Covering $\partial B_R$ by finitely many patches that are diffeomorphic to
$[0,1]^{d-1}$, we conclude the proof of \eqref{Sob}.

\medskip 
 Applying \eqref{Sob} to $\zeta=\nabla^3\vp^r$, we are left with the proof of 
 \[
  r^{n+2}\lt(\int_{\partial B_R}|\nabla_{tan}^n\nabla^3\vp^r|^2\rt)^\frac{1}{2}+r^2\lt(\int_{\partial B_R}|\nabla^3\vp^r|^2\rt)^\frac{1}{2} \lesssim\lt(\int_{\partial B_R}f^2\rt)^\frac{1}{2}.
 \]
 This is a direct consequence of the estimate
 \begin{equation}\label{ven01}
 \int_{\partial B_R}|\nabla_{tan}^m\nabla^3\vp^r|^2\les \int_{\partial B_R}|\nabla_{tan}^{m+2}f_r|^2,
 \end{equation}
which will be shown to  hold for every $m\in\N$, and the fact that 
\[r^{m+2}\lt(\int_{\partial B_R}|\nabla_{tan}^{m+2}f_r|^2\rt)^\frac{1}{2}\lesssim\lt(\int_{\partial B_R}f^2\rt)^\frac{1}{2},\] which follows from the properties of the convolution.
\medskip

 Let us prove \eqref{ven01} and fix $m\in \N$. By Pohozaev identity (see \cite[Lem. 8.3.2]{Dupaigne}) we have for every harmonic function $\zeta$ in $B_R$,
\begin{equation}\label{Poho}
 \int_{\partial B_R} |\nabla \zeta|^2\les \int_{\partial B_R} |\nabla_{tan} \zeta|^2\les{\int_{\partial B_R} |\nu\cdot\nabla \zeta|^2}.
\end{equation}
In particular, the Dirichlet-to-Neumann map
$ \zeta\mapsto \nabla \zeta$ is bounded from $H^1(\partial B_R)$ into $L^2(\partial B_R)$.
As the map commutes with rotations, it also commutes
with their infinitesimal generators $\{x_i\partial_j-x_j\partial_i\}_{i\not=j=1,\cdots,d}$, 
which in turn span the tangent space of $\partial B_R$. Hence, for $k\in \N$,
the map is also bounded from $H^{k+1}(\partial B_R)$ to $H^{k}(\partial B_R)$. Applying this first to $\zeta=\nabla^2\vp^r$ and $k=m$ and then to $\zeta=\nabla \vp^r$ and $k=m+1$, we get 
\[
 \int_{\partial B_R}|\nabla_{tan}^m\nabla^3\vp^r|^2\les \int_{\partial B_R} |\nabla^{m+2}_{tan} \nabla \vp^r|^2.
\]
Arguing as above we obtain that also the Neumann-to-Dirichlet map  $\nu\cdot \nabla\zeta\to \zeta$ is bounded from $H^{k}(\partial B_R)$ to $H^{k+1}(\partial B_R)$ for $k\in \N$.
Applying this to $\zeta=\vp^r$ and $k=m+2$, we conclude the proof of \eqref{ven01}.

\medskip 

We finally prove \eqref{wg17}. By interior regularity for the harmonic $\vp^r-\vp$, we have
\begin{align*}
\sup_{B_2}|\nabla(\vp^r-\vp)|\lesssim\lt(\int_{\partial B_R}(\vp^r-\vp)^2\rt)^\frac{1}{2}\les  r \lt(\int_{\partial B_R}|\nabla_{tan} \vp|^2\rt)^\frac{1}{2}\stackrel{\eqref{Poho}}{\les}r \lt(\int_{\partial B_R}|f|^2\rt)^\frac{1}{2},
\end{align*}
where in the last inequality we used that $\vp$ is harmonic to apply \eqref{Poho}.
 \end{proof}

 \begin{lemma}\label{lem:elliptic}
 For $R>0$, recall that  $\Pi_R(x)= R \frac{x}{|x|}$. For  every $(f,g)$, with $g\ge 0$ and $\spt g \subset \overline{B}_R\backslash B_{\frac{R}{2}}$, letting $\hat g:= \Pi_R\# g$ (see \eqref{defhat}), the   unique solution $\vp$  (up to additive constants) of 
 \[
   \Delta \vp=g +c \ \textrm{ in } B_R \qquad \textrm{ and } \qquad 
    \nu\cdot \nabla\vp= f \ \textrm{ on } \partial B_R\]
  satisfies
  \begin{equation}\label{estim:elliptic}
   \int_{B_R}|\nabla \vp|^2\les R \lt( \int_{\partial B_R} f^2+ \int_{\partial B_R}\hat g^2\rt).
  \end{equation}
\end{lemma}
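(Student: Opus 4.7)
The plan is to derive an energy identity by testing the Poisson equation with $\vp$ itself and then to control the two resulting terms separately. Normalizing $\vp$ so that $\int_{B_R}\vp=0$ (legitimate since $\vp$ is only defined up to an additive constant), an integration by parts gives
\[\int_{B_R}|\nabla\vp|^2=\int_{\partial B_R}f\vp-\int_{B_R}(g+c)\vp=\int_{\partial B_R}f\vp-\int_{B_R}g\vp,\]
where the $c$-contribution drops out thanks to the mean-zero normalization. Everything then reduces to showing that both integrals on the right-hand side are bounded by $R^{\frac{1}{2}}\bigl(\int_{\partial B_R}f^2+\int_{\partial B_R}\hat g^2\bigr)^{\frac{1}{2}}\bigl(\int_{B_R}|\nabla\vp|^2\bigr)^{\frac{1}{2}}$, after which Young's inequality yields the claim.

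For the boundary term, I would combine the standard trace inequality with the Poincar\'e inequality (available since $\vp$ has mean zero on $B_R$) to obtain the scale-invariant bound $\int_{\partial B_R}\vp^2\lesssim R\int_{B_R}|\nabla\vp|^2$, and Cauchy-Schwarz then produces the desired estimate involving $\int_{\partial B_R}f^2$.

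For the bulk term I would exploit the support assumption $\spt g\subset\bar B_R\setminus B_{R/2}$: every point in the support of $g$ can be joined to $\partial B_R$ by a radial segment of length at most $R/2$. Writing $x=s\omega$ in spherical coordinates and using the fundamental theorem of calculus in the radial direction,
\[\vp(s\omega)=\vp(R\omega)-\int_s^R\partial_r\vp(r\omega)\,dr,\]
I split $\int_{B_R}g\vp$ into two pieces. The first piece is exactly $\int_{\partial B_R}\vp\,d\hat g$ by the defining property of the pushforward $\hat g=\Pi_R\#g$, and is estimated in the same way as the boundary term via Cauchy-Schwarz combined with the trace/Poincar\'e bound. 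The second piece is handled by Fubini (swapping the order of the $s$ and $r$ integrations) followed by Cauchy-Schwarz first in $r$ and then in $\omega$; the key ingredient is the pointwise inequality $\int_{R/2}^rg(s\omega)s^{d-1}\,ds\le R^{d-1}\hat g(R\omega)$, which holds because $g\ge 0$ and which ensures that the resulting bound is precisely of order $R^{\frac{1}{2}}(\int_{\partial B_R}\hat g^2)^{\frac{1}{2}}(\int_{B_R}|\nabla\vp|^2)^{\frac{1}{2}}$ with the correct dimensional scaling.

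The argument is completely elementary, and I expect no real obstacle. The only mildly technical step is the spherical-coordinate bookkeeping for the second piece of the bulk estimate, where one needs to track the weights $r^{d-1}$ coming from Lebesgue measure and the factor $R^{d-1}$ in the relation between the density of $\hat g$ on $\partial B_R$ and its pullback on $S^{d-1}$, so that the final dimensional scaling comes out correctly.
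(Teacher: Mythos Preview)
Your proposal is correct and follows essentially the same approach as the paper's proof: both test the equation with the mean-zero $\vp$, handle the boundary term by Cauchy--Schwarz plus the trace/Poincar\'e inequality, and treat the bulk term by splitting $\vp(s\omega)=\vp(R\omega)-\int_s^R\partial_r\vp$ so that the first piece becomes $\int_{\partial B_R}\vp\,d\hat g$ and the second is controlled via Cauchy--Schwarz in the radial and then the angular variable. The only cosmetic difference is the order of operations in the second piece (you Fubini first and then bound $\int_{R/2}^r g\,s^{d-1}ds\le R^{d-1}\hat g(R\omega)$, whereas the paper first bounds $|\vp(r\omega)-\vp(R\omega)|$ by the radial $L^2$ norm of $\partial_r\vp$); the resulting estimate is identical.
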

\begin{proof}
By scaling, we may assume that $R=1$. 
 Without loss of generality, we may also assume that $\int_{B_1} \vp=0$ and that $\int_{\partial B_1} f^2+ \int_{\partial B_1} \hat g^2<\infty$, otherwise there is nothing to prove.
 Using integration by parts, the trace inequality for Sobolev functions together with the Poincar\'e inequality for functions of mean zero,
 \begin{align*}
  \int_{B_1}|\nabla \vp|^2&=\int_{\partial B_1} \vp f -\int_{B_1} \vp \Delta \vp\\
  &\le \lt(\int_{\partial B_1} \vp^2\rt)^{\frac12} \lt(\int_{\partial B_1} f^2\rt)^{\frac12} +\lt|\int_{B_1} g \vp\rt|\\
  &\les \lt(\int_{ B_1} |\nabla \vp|^2\rt)^{\frac12} \lt(\int_{\partial B_1} f^2\rt)^{\frac12} +\lt|\int_{B_1} g \vp\rt|.
 \end{align*}
Hence, it is  now enough to prove that 
\begin{equation}\label{eq:toproveelliptic}
 \lt|\int_{B_1} g \vp\rt|\les\lt(\int_{ B_1} |\nabla \vp|^2\rt)^{\frac12} \lt(\int_{\partial B_1} \hat g^2\rt)^{\frac12}. 
\end{equation}
Using that by hypothesis $\spt g \subset \overline{B}_1\backslash B_{\frac{1}{2}}$, we can write by definition \eqref{defhat} 
\begin{align*}
 \lt|\int_{B_1} g \vp\rt| &\le \lt|\int_{\partial B_1} \vp \hat g\rt|+\lt|\int_{\partial B_1} \int_{\frac{1}{2}}^1 (\vp(r \omega)-\vp(\omega)) g(r\omega) r^{d-1} dr d\omega\rt|\\
 &\les \lt(\int_{\partial B_1} \vp^2\rt)^{\frac12} \lt(\int_{\partial B_1} \hat g^2\rt)^{\frac12} +\int_{\partial B_1} \int_{\frac{1}{2}}^1 |\vp(r \omega)-\vp(\omega)| g(r\omega) r^{d-1} dr d\omega\\
 &\les \lt(\int_{ B_1} |\nabla \vp|^2\rt)^{\frac12} \lt(\int_{\partial B_1}\hat g^2\rt)^{\frac12}+\int_{\partial B_1} \int_{\frac{1}{2}}^1 |\vp(r \omega)-\vp(\omega)| g(r\omega) r^{d-1} dr d\omega,
\end{align*}
where in the last line we  used once more  that $\int_{\partial B_1} \vp^2\les \int_{B_1} |\nabla \vp|^2$. Since for $r\in(\frac{1}{2},1)$
\[
 |\vp(r\omega)-\vp(\omega)|\le \lt(\int_{\frac{1}{2}}^1 |\partial_r \vp(s \omega)|^2  ds \rt)^{\frac12}\les  \lt(\int_{0}^1 |\partial_r \vp(s \omega)|^2 s^{d-1} ds \rt)^{\frac12},
\]
estimate \eqref{eq:toproveelliptic} follows from 
\begin{align*}
& \int_{\partial B_1} \int_{\frac{1}{2}}^1 |\vp(r \omega)-\vp(\omega)| g(r\omega) r^{d-1} dr d\omega\\
&\les \int_{\partial B_1} \int_{\frac{1}{2}}^1 \lt(\int_{0}^1 |\partial_r \vp(s \omega)|^2 s^{d-1} ds\rt)^{\frac12}g(r\omega) r^{d-1} dr d\omega\\
 &\stackrel{{\eqref{defhat}}}{=}\int_{\partial B_1}  \lt(\int_{0}^1 |\partial_r \vp(s \omega)|^2 s^{d-1}ds \rt)^{\frac12} \hat g(\omega) d\omega\\
 &\le \lt(\int_{\partial B_1}\int_{0}^1 |\partial_r \vp(s \omega)|^2 s^{d-1}ds\rt)^{\frac12} \lt(\int_{\partial B_1}\hat g^2\rt)^{\frac12}.
\end{align*}
\end{proof}
Lemma \ref{lem:elliptic} will often be used in combination with the following elementary estimate.
\begin{lemma}\label{2.3}
  For $R>0$, recall that $\Pi_R(x)= R \frac{x}{|x|}$ denotes the projection on $\partial B_R$. There exists $1\geq\eps(d)>0$ (depending only on the dimension) such that
  for every   $g\ge 0$ with $\spt g\subset B_{(1+\eps)R}\backslash B_{(1-\eps)R}$,  setting $\hat g:= \Pi_R \# g$ (see \eqref{defhat}), we have 
  \begin{equation}\label{estimz}
   R^{1-d}\lt(\int g\rt)^2\les \int_{\partial B_R} \hat g^2\les \sup g \int |R-|x|| g.
  \end{equation}

 \end{lemma}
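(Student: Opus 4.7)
The plan is to reduce to $R=1$ by scaling and handle the two inequalities separately in polar coordinates. At $R=1$, the definition \eqref{defhat} of $\hat g$ reads
\[
\hat g(\omega) = \int_0^\infty g(r\omega)\, r^{d-1}\, dr \qquad \text{for } \omega \in \partial B_1,
\]
and since $\Pi_1$ preserves total mass, $\int_{\partial B_1} \hat g = \int_{\R^d} g$. The left-hand inequality then follows immediately from Cauchy--Schwarz on the unit sphere:
\[
\Big(\int g\Big)^2 = \Big(\int_{\partial B_1} \hat g\Big)^2 \le |\partial B_1|\int_{\partial B_1} \hat g^2 \les \int_{\partial B_1} \hat g^2.
\]

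For the right-hand inequality the key reduction is the pointwise one-dimensional estimate
\[
\Big(\int_0^\infty h(r)\, dr\Big)^2 \le 4\, \sup h \int_0^\infty |1-r|\, h(r)\, dr,
\]
valid for nonnegative $h$ supported in $[1-\varepsilon, 1+\varepsilon]$. Granting this, I would apply it pointwise in $\omega$ to $h(r) := g(r\omega)\, r^{d-1}$, which is supported in $[1-\varepsilon, 1+\varepsilon]$ by the assumption on $\spt g$ and satisfies $\sup h \les \sup g$ since $r \sim 1$ on its support. Integrating over $\omega \in \partial B_1$ then yields
\[
\int_{\partial B_1} \hat g^2\, d\omega \les \sup g \int_{\partial B_1}\int |1-r|\, g(r\omega)\, r^{d-1}\, dr\, d\omega = \sup g \int |1-|x||\, g(x)\, dx,
\]
which is exactly the right-hand inequality in \eqref{estimz}.

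To prove the one-dimensional estimate, I would decompose via layer cake, $h = \int_0^{\sup h} \chi_{\{h > s\}}\, ds$, and apply Cauchy--Schwarz in $s$:
\[
\Big(\int h\Big)^2 = \Big(\int_0^{\sup h} |\{h > s\}|\, ds\Big)^2 \le \sup h \int_0^{\sup h} |\{h > s\}|^2\, ds.
\]
The remaining input is the purely geometric fact that $|A|^2 \le 4 \int_A |1-r|\, dr$ for any measurable $A \subset [1-\varepsilon, 1+\varepsilon]$; this follows from the bathtub principle, since among all sets of fixed measure $|A|$ the integral $\int_A |1-r|\, dr$ is minimized by the symmetric interval $[1-|A|/2,\, 1+|A|/2]$ around the zero of $|1-r|$ (which fits in $[1-\varepsilon, 1+\varepsilon]$ because $|A| \le 2\varepsilon$), where the integral equals $|A|^2/4$. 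The main subtlety, and the step I expect to be the real crux, is precisely identifying the correct pointwise weighted estimate: a naive Cauchy--Schwarz of the form $(\int h)^2 \le \int |1-r|\, h \cdot \int |1-r|^{-1}\, h$ is useless because $|1-r|^{-1}$ fails to be integrable at $r=1$, so one has to combine the layer cake decomposition with the bathtub principle to genuinely exploit the vanishing of the weight at $r=1$.
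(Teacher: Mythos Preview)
Your proof is correct and follows essentially the same route as the paper: reduce by scaling to $R=1$, use Cauchy--Schwarz on $\partial B_1$ for the left inequality, and for the right inequality work pointwise in $\omega$ with $h(r)=r^{d-1}g(r\omega)$ and appeal to the bathtub principle. The only cosmetic difference is that the paper applies the bathtub principle directly at the level of functions (minimizing $\int|1-r|\tilde\psi$ over $0\le\tilde\psi\le\sup\psi$ with prescribed mass), while you first pass through a layer-cake decomposition and apply it to the level sets; the two are equivalent.
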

\begin{proof}
 By scaling we may assume that $R=\sup g=1$. The first inequality is a direct consequence of Cauchy-Schwarz inequality in the form
 \[
  \lt(\int g\rt)^2=\lt(\int_{\partial B_1}\hat g\rt)^2\les \int_{\partial B_1}\hat g^2.
 \]
We now turn to the second inequality.  Let us first point out that if $\eps\ll1$, we have $\sup_{\partial B_1}|\hat{g}|\ll1$ since $\spt g\subset B_{1+\eps}\backslash B_{1-\eps}$. Momentarily fixing $\omega\in \partial B_1$ and setting $\psi(r):= r^{d-1} g(r\omega)$ for $r>0$, 
we have $0\le \psi\le (1+\eps)^{d-1}\le 2$ and 
\[
 \int_{0}^{\infty} \psi = \hat g(\omega),
\]
so that for  $\omega\in \partial B_1$,
\[
 \int_{0}^\infty |1-r| r^{d-1} g(r\omega)\ge \min_{\stackrel{0\le \tilde\psi\le 2}{\int \tilde\psi =\hat g(\omega)}} \int_{0}^\infty |1-r| \tilde\psi(r) \ges \hat g(\omega),
\]
where the last inequality follows since the minimizer of 
\[
 \min_{\stackrel{0\le \tilde\psi\le 2}{\int \tilde\psi =\hat g(\omega)}} \int_{0}^\infty |1-r| \tilde\psi(r)
\]
is given  by  $2I(|r-1|\le \frac{1}{4}\hat g(\omega))$ (which is admissible since $|\hat{g}(\omega)|\ll1$).
\end{proof}

\begin{lemma}\label{lem:difphi}
 For every $R\in (3,4)$, every positive measures $f$ and $g$ on $\partial B_R$ with $f(\partial B_R)=g(\partial B_R)$ and every $0<\alpha<1$, the solution $\vp$ with $\int_{B_R} \vp=0$ of 
 \[
  \Delta \vp= 0 \ \textrm{ in } B_R \qquad \textrm{ and } \qquad  \nu\cdot \nabla \vp = f-g \  \textrm{ on } \partial B_R
 \]
satisfies
\begin{equation}\label{ma34}
 \sup_{B_2} |\nabla^2 \vp|+\sup_{B_2} |\nabla \vp|\les W_{\partial B_R}^\alpha(f,g) \lt[g(\partial B_R)\rt]^{1-\frac{\alpha}{2}},
\end{equation}
where the implicit constant depends on $\alpha$.
\end{lemma}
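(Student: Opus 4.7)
The plan has two ingredients. First, a H\"older--Wasserstein duality estimate: for any $\zeta\in C^{0,\alpha}(\partial B_R)$ and any admissible coupling $\pi$ between $f$ and $g$ (which exists thanks to $f(\partial B_R)=g(\partial B_R)$), one rewrites
\[
\int_{\partial B_R}\zeta\,d(f-g)=\int(\zeta(y)-\zeta(x))\,d\pi(x,y),
\]
and then applies the pointwise H\"older bound $|\zeta(y)-\zeta(x)|\le[\zeta]_{C^{0,\alpha}}|x-y|^\alpha$ followed by H\"older's inequality with exponents $2/\alpha$ and $2/(2-\alpha)$. Taking the infimum over couplings yields
\[
\lt|\int\zeta\,d(f-g)\rt|\les[\zeta]_{C^{0,\alpha}(\partial B_R)}\,W_{\partial B_R}^{\alpha}(f,g)\,g(\partial B_R)^{1-\alpha/2},
\]
with an $\alpha$-dependent constant.

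Second, a smooth-kernel representation of the interior derivatives of $\vp$. Using the Neumann Green function $N(x,y)$ of $B_R$ (mean-zero in $x$, zero flux on $\partial B_R$), the solution is
\[
\vp(x)=\int_{\partial B_R}N(x,y)\,d(f-g)(y),\qquad x\in B_R,
\]
which upon differentiation gives $\nabla^k\vp(x)=\int_{\partial B_R}\nabla_x^kN(x,y)\,d(f-g)(y)$ for $k=1,2$. Since $R\in(3,4)$ and $x\in B_2$ force $|x-y|\ge 1$ for all $y\in\partial B_R$, the kernels $y\mapsto\nabla_x^kN(x,y)$ are smooth on $\partial B_R$, with $C^{0,\alpha}(\partial B_R)$-seminorms bounded uniformly in $x\in B_2$ by a universal constant. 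Plugging these kernels into the duality of the first step yields, for every $x\in B_2$,
\[
|\nabla\vp(x)|+|\nabla^2\vp(x)|\les W_{\partial B_R}^\alpha(f,g)\,g(\partial B_R)^{1-\alpha/2},
\]
which is the claim.

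The main technical point is to justify the Neumann representation for boundary measure data and the uniform smoothness of the kernel on the relevant range; on the ball this is standard via the explicit reflection formula, since the diagonal singularity of $N$ stays at distance at least $1$ from $B_2\times\partial B_R$. If one prefers to bypass the Neumann Green function, one can instead combine classical interior estimates for the harmonic $\nabla^k\vp$ (yielding $\sup_{B_2}|\nabla^k\vp|\les\|\vp-\bar\vp\|_{L^\infty(B_{5/2})}$) with a Poisson-type boundary representation of $\vp$ on a slightly larger ball $B_r$ with $r\in(5/2,R)$, thereby reducing the problem to an integral of $f-g$ against a smooth boundary weight, to which Step~1 applies directly.
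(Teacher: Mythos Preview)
Your proof is correct, and it shares with the paper the same core ingredient: the H\"older--Wasserstein duality estimate
\[
\lt|\int_{\partial B_R}\zeta\,d(f-g)\rt|\les[\zeta]_{C^{0,\alpha}(\partial B_R)}\,W_{\partial B_R}^{\alpha}(f,g)\,g(\partial B_R)^{1-\alpha/2}.
\]
Where you differ is in how this is fed back into interior estimates on $\nabla\vp$ and $\nabla^2\vp$. You represent $\vp$ via the Neumann Green function $N(x,y)$ of the ball, differentiate under the integral, and apply the duality directly with $\zeta=\nabla_x^kN(x,\cdot)$, which is smooth on $\partial B_R$ uniformly for $x\in B_2$ since $|x-y|\ge 1$. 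The paper instead takes a detour through $L^p$: by the mean-value property it suffices to bound $\|\nabla\vp\|_{L^p(B_R)}$ for $p$ just below $\frac{d}{d-1}$, which is done by duality against an arbitrary test field $\xi$, solving the dual Neumann problem $\Delta\omega=\nabla\cdot\xi$ with homogeneous flux, and using Sobolev embedding together with Calder\'on--Zygmund estimates to get $[\omega]_{C^{0,\alpha}}\les\|\xi\|_{L^q}$; an integration by parts then rewrites $\int\nabla\vp\cdot\xi=\int_{\partial B_R}\omega\,d(f-g)$, to which the same duality applies.

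Your route is more elementary in that it avoids Calder\'on--Zygmund theory, at the price of relying on properties of the Neumann Green function on a ball (justified, as you note, by the explicit reflection formula). The paper's route is more robust---it would carry over to domains without an explicit Green function---and is perhaps more in keeping with the variational/PDE flavor of the rest of the argument.
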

\begin{proof}
 Let $1<p<\frac{d}{d-1}$ be defined by $\frac{1}{p}= \frac{\alpha}{d}+ \frac{d-1}{d}$. By the mean value formula, 
 \[
  \sup_{B_2} |\nabla^2 \vp|+ \sup_{B_2} |\nabla \vp|\les \lt(\int_{B_R} |\nabla \vp|^p\rt)^{\frac{1}{p}}
 \]
and it is therefore enough to prove   that 
\begin{equation}\label{ma35}
 \lt(\int_{B_R} |\nabla \vp|^p\rt)^{\frac{1}{p}}\les W_{\partial B_R}^\alpha(f,g) \lt[g(\partial B_R)\rt]^{1-\frac{\alpha}{2}}.
\end{equation}
We prove this estimate  by duality. Consider $q$ the dual exponent of $p$, i.e. $\frac{1}{p}+\frac{1}{q}=1$, and for arbitrary $\xi$ smooth and compactly supported in $B_R$ denote by $\omega$ the solution of the (dual) problem
\begin{equation}\label{ao09}
 \Delta \omega =\Div \xi \ \textrm{ in } B_R \qquad \textrm{ and } \qquad \nu\cdot \nabla \omega=0 \  \textrm{ on } \partial B_R
\end{equation}
with $\int_{B_R} \omega=0$. Noting that $q>d$ and $\alpha=1-\frac{d}{q}$, Sobolev's embedding  followed by  Calder\'on-Zygmund estimates for the Neumann-Laplacian 
(see for instance \cite[Th. 3.16]{Troianello}) yields
\[
 [\omega]_{\alpha,B_R}\les \lt(\int_{B_R} |\nabla \omega|^q\rt)^{\frac{1}{q}}\les \lt(\int_{B_R} |\xi|^q\rt)^{\frac{1}{q}}.
\]
Let $\pi$ be an optimal transference plan between $f$ and $g$. Using integration by parts we obtain
\begin{align*}
\int_{B_R} \nabla \vp \cdot \xi= -\int_{B_R} \vp\, \Div \xi&=\int_{\partial B_R} \omega d(f-g) \\
 &= \int_{\partial B_R\times \partial B_R} (\omega(x)-\omega(y)) d\pi(x,y)\\
 &\les [\omega]_{\alpha, B_R} \int_{\partial B_R\times \partial B_R} |x-y|^\alpha d\pi\\
  &\stackrel{\textrm{H\"older}}{\les} \lt(\int_{B_R} |\xi|^q\rt)^{\frac{1}{q}} W^\alpha(f,g) \lt[g(\partial B_R)\rt]^{1-\frac{\alpha}{2}}\\
 &\les \lt(\int_{B_R} |\xi|^q\rt)^{\frac{1}{q}} W_{\partial B_R}^\alpha(f,g) \lt[g(\partial B_R)\rt]^{1-\frac{\alpha}{2}}.
\end{align*}
 This concludes the proof of \eqref{ma35}.
\end{proof}

 Arguing along the same lines, we may also prove:
  \begin{lemma}\label{lem:gradu}
 For $R>0$, let $\mu$ be a signed measure on $B_R$ and $f$ be a signed measure on $\partial B_R$, both with finite total variation. Then, if $\vp$ solves
 \[
  \Delta \vp=\mu \ \textrm{ in } B_R \qquad \textrm{ and } \qquad \nu \cdot \nabla \vp=f \ \textrm{ on } \partial B_R,
 \]
we have $\nabla \vp \in L^p(B_{R})$ for each exponent $1\leq p < \frac{d}{d-1}.$
\end{lemma}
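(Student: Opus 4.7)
The plan is to argue by duality, exactly in the spirit of the proof of Lemma \ref{lem:difphi}. Fix $p\in[1,d/(d-1))$ and let $q$ denote its conjugate exponent, so that $q>d$. Since $C_c^\infty(B_R)^d$ is dense in $L^q(B_R)^d$, the sought integrability together with the bound $\|\nabla\vp\|_{L^p(B_R)}\lesssim |\mu|(B_R)+|f|(\partial B_R)$ will follow from the duality inequality
\[
\lt|\int_{B_R}\nabla\vp\cdot\xi\rt|\lesssim \|\xi\|_{L^q(B_R)}\bigl(|\mu|(B_R)+|f|(\partial B_R)\bigr)\qquad\forall\,\xi\in C_c^\infty(B_R)^d.
\]

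Given such $\xi$, I would introduce the zero-mean auxiliary function $\omega$ solving the Neumann problem
\[
\Delta\omega=\Div\xi\ \textrm{ in } B_R,\qquad \nu\cdot\nabla\omega=0\ \textrm{ on } \partial B_R,
\]
which is solvable since $\int_{B_R}\Div\xi=0$ by compact support of $\xi$. Exactly as in the previous lemma, Calder\'on--Zygmund estimates for the Neumann Laplacian give $\|\nabla\omega\|_{L^q(B_R)}\lesssim \|\xi\|_{L^q(B_R)}$, and since $q>d$, Morrey's embedding combined with the zero-mean normalization upgrades this to
\[
\|\omega\|_{L^\infty(\bar B_R)}\lesssim \|\nabla\omega\|_{L^q(B_R)}\lesssim \|\xi\|_{L^q(B_R)}.
\]

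The key identity comes from a double integration by parts, using the compact support of $\xi$ in the first step and the vanishing Neumann data of $\omega$ in the second, which gives
\[
\int_{B_R}\nabla\vp\cdot\xi=-\int_{B_R}\vp\,\Delta\omega=\int_{\partial B_R}\omega\,df-\int_{B_R}\omega\,d\mu.
\]
The desired inequality then follows by bounding the right-hand side by $\|\omega\|_{L^\infty(\bar B_R)}\bigl(|f|(\partial B_R)+|\mu|(B_R)\bigr)$ and invoking the previous display.

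The main technical obstacle is justifying the double integration by parts when $\mu$ and $f$ are merely finite measures and $\vp$ is a distributional solution. I would handle this by a standard approximation scheme: mollify $\mu$ and $f$ into smooth densities $\mu_n,f_n$ preserving total variations and the compatibility $\int d\mu_n=\int df_n$, solve the corresponding Neumann problem to obtain $\vp_n\in C^\infty(\bar B_R)$ for which the identity is classical, apply the duality argument above to get the uniform bound $\|\nabla\vp_n\|_{L^p(B_R)}\lesssim |\mu|(B_R)+|f|(\partial B_R)$, and finally pass to the limit $n\to\infty$ using the weak compactness of $\nabla\vp_n$ in $L^p(B_R)$ together with the fact that (up to additive constants) $\vp_n\to\vp$ in distribution, which forces the weak limit to be $\nabla\vp$.
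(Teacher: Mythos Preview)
Your proposal is correct and follows essentially the same duality approach as the paper, which explicitly states that Lemma \ref{lem:gradu} is proved ``arguing along the same lines'' as Lemma \ref{lem:difphi}. The only cosmetic difference is that you bound $\omega$ in $L^\infty$ rather than in a H\"older seminorm, which is the natural adaptation here since the data are raw measures rather than Wasserstein distances.
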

 \subsection{The optimal transport problem}\label{sec:OT}

To fix notation, let us recall a few definitions and well-known facts about optimal transportation. Much more may be found for instance in the books \cite{Viltop,Santam}.

\medskip 

For a measure $\pi$ on $\R^d\times \R^d$, we denote its marginals by $\pi_1$ and $\pi_2$ i.e. $\pi_1(A)=\pi(A\times \R^d)$ and $\pi_2(A)=\pi(\R^d\times A)$. 
For $\mu$ and $\lambda$ two positive measures with compact support and equal mass, we define the Wasserstein distance between $\mu$ and $\lambda$ as
\begin{equation}\label{eq:W2}
 W^2(\mu,\lambda):=\min_{\pi_1=\mu,\pi_2=\lambda} \int |x-y|^2 d\pi.
\end{equation}
Under our hypothesis, an optimal coupling always exists. By the Knott-Smith optimality criterion (see \cite[Th. 2.12]{Viltop}), the coupling $\pi$ is optimal if and only if its support is cyclically monotone, i.e. there
exists a convex lower semi-continuous function $\psi$ such that $\spt \pi \subset \textrm{Graph}(\partial \psi)$.
Moreover, if $\mu$ does not give mass to Lebesgue negligible sets, by Brenier's Theorem (see \cite[Th. 2.12]{Viltop} again), $\pi$ is optimal if and only if $\pi=({\rm id}\times \nabla \psi)\# \mu$, where $\nabla \psi$ is the unique gradient of a convex function with $\nabla \psi\# \mu=\lambda$.
\medskip 

For $O\subset \R^d$, we denote $W_O(\mu,\lambda):=W(\mu\restr O,\lambda\restr O)$ and we recall that we let $\kappa$ be the generic constant such that
$W_O(\mu,\kappa)$ is well-defined\footnote{here we identified the measure $\kappa dx$ with $\kappa$}   (see \eqref{defkappa}). 
As in \cite{GO}, a central point for our analysis is the Eulerian version of optimal transportation, also known as the Benamou-Brenier formulation (see for instance \cite[Th. 8.1]{Viltop} or \cite[Ch. 8]{AGS}). It states that 
\begin{equation}\label{BBwass}
 W^2(\mu, \lambda)=\min_{(\rho,j)} \lt\{  \int_{\R^d}\int_0^1 \frac{1}{\rho}|j|^2 \ : \ \partial_t \rho+ \Div j=0, \ \rho_0=\mu \textrm{ and } \rho_1=\lambda\rt\},
\end{equation}
where the continuity equation and the boundary data are understood in the distributional sense, i.e.\ for every $\zeta\in C^1_c(\R^d\times [0,1])$,
\begin{equation}\label{conteqprel}
 \int_{\R^d}\int_0^1 \partial_t \zeta d\rho +\nabla \zeta \cdot dj=\int_{\R^d} \zeta_1 d\lambda- \int_{\R^d} \zeta_0 d\mu, 
\end{equation}
(recall that $\zeta_t=\zeta(\cdot,t)$) and where 
\[
 \int_{\R^d}\int_0^1 \frac{1}{\rho}|j|^2= \int_{\R^d}\int_0^1 |v|^2 d\rho
\]
if $j\ll \rho$ with $\frac{dj}{d\rho}=v$ and infinity otherwise (see \cite[Th. 2.34]{AFP}). Let us point out that in particular, the admissible measures for \eqref{BBwass} are allowed to
contain singular parts with respect to the Lebesgue measure. We also recall that for every $R>0$ we have the following dual definition of the Eulerian energy (see \cite[Prop. 5.18]{Santam})
\begin{equation}\label{dualBB}
 \frac12\int_{B_R}\int_0^1 \frac{1}{\rho}|j|^2=\sup_{\xi\in C^0_c(B_R\times[0,1])^d}\int_{B_R}\int_0^1 \xi \cdot dj-\frac{|\xi|^2}{2} d\rho.
\end{equation}
We also note that if $\pi$ is optimal for $W^2(\mu,\lambda)$, then  defining for $t\in [0,1]$,
\begin{equation}\label{defrhoj}
 \int \zeta d\rho_t=\int \zeta((1-t)x+ty) d\pi \quad \textrm{and} \quad \int \xi\cdot dj_t=\int\xi((1-t)x+ty) \cdot(y-x) d\pi,  
\end{equation}
the couple $(\rho,j)=(\rho_t dt, j_t dt)$ is optimal for \eqref{BBwass}. Using \eqref{defrhoj} in combination with \eqref{dualBB} and \eqref{BBwass}, we obtain\footnote{Indeed, from \eqref{defrhoj} and \eqref{dualBB},
together with $\xi\cdot (y-x)-\frac{1}{2}|\xi|^2\le \frac{1}{2}|x-y|^2$ for every $\xi\in \R^d$, we have that $\int_{B_R}\int_0^1 \frac{1}{\rho}|j|^2\le\int I(|(1-t)x+ty|<R)|x-y|^2 d \pi$
and the other inequality then follows  from \eqref{BBwass} and another application of \eqref{dualBB} with $B_R$ replaced by $B_R^c$ to rule out strict inequality. }  the localized counterpart of \eqref{BBwass} 
\begin{equation}\label{je15}
\int_{B_R}\int_0^1 \frac{1}{\rho}|j|^2=\int \int_0^1 I(|(1-t)x+ty|<R)|x-y|^2 d \pi. 
\end{equation}

We notice that since any admissible $(\rho,j)$ for \eqref{BBwass} is divergence free (in $(t,x)$),  for every $R>0$ it admits internal (and external) traces on $\partial (B_R\times(0,1))$ (see  \cite{ChenFried}) i.e.
there exists a measure $f$ on $\partial B_R\times (0,1)$ such that 
\begin{equation}\label{deftraces}
 \int_{B_R}\int_0^1 \partial_t \zeta d\rho +\nabla \zeta \cdot dj=\int_{B_R} \zeta_1 d\lambda- \int_{B_R} \zeta_0 d\mu +\int_{\partial B_R}\int_0^1 \zeta df.
\end{equation}
Moreover, recalling that  $\bar j=\int_0^1 dj_t$ (see \eqref{defbar}), we have 
\begin{equation}\label{eq:divjbar}
\nabla\cdot \bar j=\mu-\lambda 
\end{equation}
 and also $\bar j$ has internal\footnote{and of course the internal trace coincides with $\bar f=\int_0^1 f$.}
(and external) traces on $\partial B_R$ which agree for every $R$ such that  $|\bar j|(\partial B_R)=\mu(\partial B_R)=\lambda(\partial B_R)=0$.
\medskip 

For $R>0$, $\mu$ and $\lambda$ positive measures on $\partial B_R$ with equal mass, we denote (with a slight abuse of notation)
\[
 W_{\partial B_R}^2(\mu,\lambda):=\min_{\pi_1=\mu, \pi_2=\lambda}\int_{\partial B_R\times \partial B_R} d_{\partial B_R}^2(x,y) d\pi,
\]
where $d_{\partial B_R}$ denotes the geodesic distance on $\partial B_R$. Notice that $W_{\partial B_R}^2(\mu,\lambda)$ is comparable with the Euclidean Wasserstein distance $W^2(\mu,\lambda)$ and that the analog
of the Benamou-Brenier formulation \eqref{BBwass} holds (see \cite[Th. 13.8]{VilOandN})
\begin{equation}\label{BBwassboundary}
 W^2_{\partial B_R}(\mu,\lambda)=\min_{(\rho,j)}\lt\{\int_{\partial B_R}\int_0^1 \frac{1}{\rho}|j|^2  :  \partial_t \rho+\nabla_{\tang}\cdot j=0,  \rho_0=\mu \textrm{ and } \rho_1=\lambda\rt\}
\end{equation}
The following lemma will be useful in the construction part of the proof of the harmonic approximation result.
\begin{lemma}\label{lem:consbound}
 Let $f$ and $g$ be two positive measures on $\partial B_R\times [0,1]$ such that $\spt f\subset \partial B_R\times[0,t_-]$, $\spt g \subset \partial B_R \times[t_+,1]$ with $\tau:=t_+-t_->0$ and $\bar f(\partial B_R)=\bar g(\partial B_R)$
 (where $\bar f:=\int_0^1 f$, see \eqref{defbar}). Then,  
 there exists a density-flux pair $(\rho,j)$ on $\partial B_R\times [0,1]$  such that
\begin{equation}\label{ma24}
 \int_{\partial B_R}\int_0^1 \partial_t \zeta d\rho+\nabla_{tan} \,\zeta\cdot dj=\int_{\partial B_R}\int_0^1 \zeta d(g-f)
\end{equation}
and 
\begin{equation}\label{ma25}
 \int_{\partial B_R}\int_0^1 \frac{1}{\rho}|j|^2= \frac{1}{\tau}\, W_{\partial B_R}^2(\bar f,\bar g).
\end{equation}

\end{lemma}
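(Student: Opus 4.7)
The plan is to split $[0,1]$ into the three sub-intervals $[0,t_-]$, $[t_-,t_+]$, $[t_+,1]$ and construct $(\rho,j)$ separately on each piece before gluing them.

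On the middle interval $[t_-,t_+]$ I apply the Benamou--Brenier identity \eqref{BBwassboundary} on $\partial B_R$ to obtain a density-flux pair $(\tilde\rho_s,\tilde j_s)_{s\in[0,1]}$ connecting $\bar f$ to $\bar g$ (both of equal total mass by hypothesis) with cost exactly $W^2_{\partial B_R}(\bar f,\bar g)$, and then reparametrize time via
\[
\rho_t:=\tilde\rho_{(t-t_-)/\tau},\qquad j_t:=\tau^{-1}\tilde j_{(t-t_-)/\tau}\qquad\text{for }t\in[t_-,t_+].
\]
This scaling preserves the continuity equation $\partial_t\rho+\nabla_{tan}\cdot j=0$ and, by the change of variable $t=t_-+\tau s$ in the cost integral, multiplies the Benamou--Brenier cost by $1/\tau$, giving exactly the right-hand side of \eqref{ma25}.

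On $[0,t_-]$ I set $j\equiv 0$ and take $\rho$ to be the positive measure on $\partial B_R\times[0,t_-]$ defined by
\[
\int\zeta\,d\rho:=\int_{\partial B_R\times[0,t_-]}\Bigl(\int_s^{t_-}\zeta(x,t)\,dt\Bigr)\,df(x,s),
\]
so that the time-slice $\rho_t$ is the $\partial B_R$-marginal of $f\restr(\partial B_R\times[0,t])$. A direct distributional check then yields $\partial_t\rho=f$ with $\rho_0=0$ and $\rho_{t_-}=\bar f$. Symmetrically on $[t_+,1]$, take $j\equiv 0$ and $\rho$ such that $\partial_t\rho=-g$, $\rho_{t_+}=\bar g$, $\rho_1=0$. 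This explicit formula sidesteps any regularity issue when $f$ and $g$ are general (possibly singular) measures, since one only pairs them with test functions that are linear in time.

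Finally, I define $(\rho,j)$ globally by concatenation. Since on each outer interval $\rho$ has the form $\rho_t\,dt$ and on the middle interval $(\tilde\rho,\tilde j)$ is a Benamou--Brenier geodesic, no $\delta$-in-time components are created at the interfaces $t=t_\pm$. Verification of \eqref{ma24} against an arbitrary $\zeta$ reduces to integration by parts in $t$ on each sub-interval: the outer pieces produce $-\int_0^{t_-}\zeta\,df+\int_{t_+}^1\zeta\,dg=\int\zeta\,d(g-f)$ plus boundary terms $\int\zeta_{t_-}\,d\bar f-\int\zeta_{t_+}\,d\bar g$, while the middle piece produces the opposite boundary terms $\int\zeta_{t_+}\,d\bar g-\int\zeta_{t_-}\,d\bar f$ (using $\rho_{t_-}=\bar f$ and $\rho_{t_+}=\bar g$ on both sides of each interface), so the boundary terms cancel. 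The cost identity \eqref{ma25} then follows at once from $j\equiv 0$ off $[t_-,t_+]$ together with the scaling calculation above. The only mildly subtle point is this cancellation of the time-boundary terms at $t_\pm$; everything else is routine.
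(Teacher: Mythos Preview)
Your proposal is correct and follows essentially the same approach as the paper: the paper also constructs $(\rho,j)$ piecewise by taking $j\equiv 0$ with $\rho_t=\int_0^t f(\cdot,s)$ on $[0,t_-]$ (which is exactly your formula after Fubini), the time-rescaled Benamou--Brenier geodesic on $[t_-,t_+]$, and the symmetric construction on $[t_+,1]$, then verifies \eqref{ma24} by summing the three weak formulations so that the interface terms at $t_\pm$ cancel.
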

\begin{proof}
 We first connect in $(0,t_-)$, the density $f$ to $\bar f$ (at no cost), then in $(t_-,t_+)$ we connect $\bar f$ to $\bar g$ (at cost $\frac{1}{\tau}\, W_{\partial B_R}^2(\bar f,\bar g)$) and finally connect in $(t_+,1]$, $\bar g$ to $g$ (again at no cost).\\
More precisely, for $t\in [0,a]$, let 
 \[
  \int \zeta d\rho_t=\int_{\partial B_R} \int_0^t \zeta(x) df(x,s) \qquad \textrm{and } \qquad j_t:=0
 \]
 so that 
 \begin{equation}\label{ma26}
  \int_{\partial B_R}\int_0^{t_-} \partial_t \zeta d\rho+\nabla_{tan} \,\zeta\cdot dj= \int_{\partial B_R} \zeta_{t_-} d\bar f -\int_{\partial B_R}\int_0^1 \zeta df.
 \end{equation}

 Let then $(\tilde \rho,\tilde j)$ be a minimizer of the Benamou-Brenier version \eqref{BBwassboundary} of $W_{\partial B_R}(\bar f,\bar g)$ and consider for $t\in(t_-,t_+)$ the rescaling
 \[
  (\rho_t,j_t):=\lt(\tilde{\rho}_{\frac{t-t_-}{\tau}},  \frac{1}{\tau} \tilde{j}_{\frac{t-t_-}{\tau}}\rt),
 \]
so that 
\begin{equation}\label{ma27}
 \int_{\partial {B_R}} \int_{t_-}^{t_+} \partial_t \zeta d\rho+\nabla_{tan} \,\zeta\cdot dj=\int_{\partial B_R} \zeta_{t_+} \, d \bar g -\int_{\partial B_R} \zeta_{t_-}\, d\bar f
\end{equation}
and 
\[
 \int_{\partial B_R}\int_{t_-}^{t_+}\frac{1}{\rho}|j|^2= \frac{1}{\tau} W_{\partial B_R}^2(\bar f,\bar g).
\]
Finally for $t\in (t_+,1)$, we let 
\[
  \int \zeta d\rho_t= \int_{\partial B_R} \int_t^1 \zeta(x) dg(x,s) \qquad \textrm{and } \qquad j_t:=0
 \]
so that 
\begin{equation}\label{ma28}
 \int_{\partial B_R}\int_{t_+}^1 \partial_t \zeta d\rho+\nabla_{tan} \,\zeta\cdot dj= \int_{\partial B_R}\int_0^1 \zeta dg -\int_{\partial B_R} \zeta_{t_+} d\bar g.
\end{equation}
Putting together \eqref{ma26}, \eqref{ma27} and \eqref{ma28}, we get \eqref{ma24}.
\end{proof}

 \subsubsection{A few technical results}
 We collect in this section a couple of technical results which will be useful later on.
\begin{lemma}\label{2.7}
 For every $R>0$ and every  measure $\mu$ on $B_R$,
 \begin{equation}\label{wg84}
  W_{B_R}(\mu,\kappa)\les W_{B_R}(\mu+\kappa dx, 2\kappa).
 \end{equation}

\end{lemma}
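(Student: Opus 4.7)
The plan is to apply the triangle inequality with the intermediate measure $(\mu+\kappa)/2$, together with joint convexity of $W^2$ and a simple scaling identity. Both inequalities that ``play'' against the triangle inequality will be one-sided, but pointing in opposite directions, which will give the desired bound through the reverse triangle inequality.

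First, I would use the scaling identity $W^2_{B_R}(c\alpha, c\beta) = c\, W^2_{B_R}(\alpha, \beta)$ for $c>0$ (which follows by pushing forward couplings), applied with $c=2$ to $\alpha=(\mu+\kappa)/2$ and $\beta=\kappa$ (both of total mass $\kappa|B_R|$). This yields
\[
W_{B_R}(\mu+\kappa,2\kappa)=\sqrt{2}\,W_{B_R}\!\left(\tfrac{\mu+\kappa}{2},\kappa\right).
\]
Next, since $\mu=\tfrac12\mu+\tfrac12\mu$ and $\tfrac{\mu+\kappa}{2}=\tfrac12\mu+\tfrac12\kappa$, joint convexity of $W^2$ gives
\[
W^2_{B_R}\!\left(\mu,\tfrac{\mu+\kappa}{2}\right)\le \tfrac12 W^2_{B_R}(\mu,\mu)+\tfrac12 W^2_{B_R}(\mu,\kappa)=\tfrac12 W^2_{B_R}(\mu,\kappa),
\]
i.e.\ $W_{B_R}(\mu,(\mu+\kappa)/2)\le \tfrac{1}{\sqrt 2}W_{B_R}(\mu,\kappa)$.

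Finally, I would apply the triangle inequality
\[
W_{B_R}(\mu,\kappa)\le W_{B_R}\!\left(\mu,\tfrac{\mu+\kappa}{2}\right)+W_{B_R}\!\left(\tfrac{\mu+\kappa}{2},\kappa\right),
\]
rearrange it into $W_{B_R}(\mu,(\mu+\kappa)/2)\ge W_{B_R}(\mu,\kappa)-W_{B_R}((\mu+\kappa)/2,\kappa)$, and insert the two preceding estimates to obtain
\[
W_{B_R}(\mu,\kappa)-W_{B_R}\!\left(\tfrac{\mu+\kappa}{2},\kappa\right)\le \tfrac{1}{\sqrt 2}W_{B_R}(\mu,\kappa),
\]
so that
\[
\left(1-\tfrac{1}{\sqrt 2}\right)W_{B_R}(\mu,\kappa)\le W_{B_R}\!\left(\tfrac{\mu+\kappa}{2},\kappa\right)=\tfrac{1}{\sqrt 2}W_{B_R}(\mu+\kappa,2\kappa),
\]
which gives $W_{B_R}(\mu,\kappa)\le(\sqrt 2+1)\,W_{B_R}(\mu+\kappa,2\kappa)$, i.e.\ the claimed $\lesssim$ with explicit constant $\sqrt 2+1$.

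There is no real obstacle: the proof is a direct combination of standard structural properties of $W_2$ (scaling in mass, joint convexity, triangle inequality). The only conceptual step is recognizing that the midpoint $(\mu+\kappa)/2$ is automatically within distance $\tfrac{1}{\sqrt 2}W_{B_R}(\mu+\kappa,2\kappa)$ of $\kappa$ by scaling and within distance $\tfrac{1}{\sqrt 2}W_{B_R}(\mu,\kappa)$ of $\mu$ by joint convexity; these two complementary one-sided bounds are exactly what the reverse triangle inequality needs to close the loop.
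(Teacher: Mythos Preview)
Your proof is correct and takes essentially the same approach as the paper: triangle inequality through the midpoint $\tfrac{1}{2}(\mu+\kappa)$, the scaling identity $W^2(c\alpha,c\beta)=cW^2(\alpha,\beta)$, and subadditivity/joint convexity to bound $W_{B_R}(\mu,\tfrac{1}{2}(\mu+\kappa))$ by $\tfrac{1}{\sqrt2}W_{B_R}(\mu,\kappa)$, yielding the same constant $\sqrt2+1$. The paper just writes the three ingredients in a single chain of inequalities rather than isolating them first.
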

\begin{proof}
 By the triangle inequality and the monotonicity of $W_{B_R}$,
 \begin{align*}
  W_{B_R}(\mu,\kappa)&\le W_{B_R} \lt(\mu, \frac{1}{2}(\mu+\kappa dx)\rt)+W_{B_R} \lt(\frac{1}{2}(\mu+\kappa dx),\kappa\rt)\\
  &\le W_{B_R} \lt(\frac{1}{2}\mu, \frac{1}{2}\kappa\rt)+W_{B_R} \lt(\frac{1}{2}(\mu+\kappa dx),\kappa\rt)\\
  &= \frac{1}{\sqrt{2}} \lt(W_{B_R}(\mu,\kappa)+W_{B_R}(\mu+\kappa dx, 2\kappa)\rt),
 \end{align*}
which concludes the proof of \eqref{wg84}.
\end{proof}

\begin{lemma}
 For every $R>0$, $\zeta\in C^\infty(B_R)$, and  $\mu$ supported in $B_R$,
 \begin{equation}\label{wg80}
  \lt|\int_{B_R} \zeta (d\mu-\kappa dx)\rt|\le \lt(\kappa\int_{B_R} |\nabla \zeta|^2\rt)^{\frac{1}{2}}W_{B_R}(\mu,\kappa)+\frac{1}{2}  \sup_{B_R} |\nabla^2\zeta| W_{B_R}^2(\mu,\kappa).
 \end{equation}

\end{lemma}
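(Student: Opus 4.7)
The plan is to use an optimal coupling between $\mu$ and $\kappa\,dx\restr B_R$ together with a second-order Taylor expansion of $\zeta$. Let $\pi$ be optimal for $W_{B_R}(\mu,\kappa)$, so $\pi_1=\mu$, $\pi_2=\kappa\,dx\restr B_R$, and $\int|x-y|^2\,d\pi=W^2_{B_R}(\mu,\kappa)$. Both marginals being supported in $B_R$, we may write
\[
\int_{B_R}\zeta(d\mu-\kappa\,dx)=\int_{B_R\times B_R}\bigl(\zeta(x)-\zeta(y)\bigr)d\pi(x,y).
\]

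Since $B_R$ is convex, the segment joining any $x,y\in B_R$ lies in $B_R$, and a Taylor expansion of $\zeta$ around $y$ gives the pointwise bound
\[
\bigl|\zeta(x)-\zeta(y)-\nabla\zeta(y)\cdot(x-y)\bigr|\le \tfrac12\sup_{B_R}|\nabla^2\zeta|\,|x-y|^2.
\]
Integrating against $\pi$ and using that $\int|x-y|^2\,d\pi=W_{B_R}^2(\mu,\kappa)$ yields
\[
\left|\int_{B_R}\zeta(d\mu-\kappa\,dx)-\int \nabla\zeta(y)\cdot(x-y)\,d\pi\right|\le\tfrac12\sup_{B_R}|\nabla^2\zeta|\,W_{B_R}^2(\mu,\kappa).
\]

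For the remaining linear term, Cauchy–Schwarz together with the fact that the second marginal of $\pi$ is $\kappa\,dx\restr B_R$ gives
\[
\left|\int\nabla\zeta(y)\cdot(x-y)\,d\pi\right|\le\left(\int|\nabla\zeta(y)|^2\,d\pi\right)^{1/2}\!\!\left(\int|x-y|^2\,d\pi\right)^{1/2}=\left(\kappa\int_{B_R}|\nabla\zeta|^2\right)^{1/2}\!W_{B_R}(\mu,\kappa).
\]
Combining the two displays and applying the triangle inequality produces exactly \eqref{wg80}. The argument is essentially elementary and there is no real obstacle; the only points requiring a moment of care are the convexity of $B_R$ (needed to make the Taylor remainder pointwise valid on the support of $\pi$) and the choice of which marginal to expand around, so that the $L^2$-norm of $\nabla\zeta$ appears with the flat weight $\kappa$ rather than against $\mu$.
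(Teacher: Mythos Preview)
Your proof is correct and follows exactly the same approach as the paper: Taylor-expand $\zeta$ around the second (Lebesgue) marginal of an optimal coupling, integrate against $\pi$, and apply Cauchy--Schwarz to the linear term so that $|\nabla\zeta|^2$ is weighted by $\kappa\,dx$. Your write-up just makes explicit the two steps the paper compresses into one sentence.
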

\begin{proof}
 Integrating the Taylor expansion $|\zeta(x)-\zeta(y)-\nabla \zeta(y)\cdot(x-y)|\le \frac{1}{2}\sup_{B_R} |\nabla^2\zeta| |x-y|^2$ against an optimal transport plan between $\mu$ and $\kappa dx$ gives \eqref{wg80} after an application of Cauchy-Schwarz.
\end{proof}
\subsubsection{An $L^\infty$ bound on the displacement}
We now prove an $L^\infty$ bound on the transport. This is one of the fundamental ingredients for the proof of Theorem \ref{theo:harmonicLagintro}. It is similar to \cite[Lem. 3.1]{GO} with the technical difference that we consider here couplings between arbitrary measures.
 \begin{lemma}\label{Linf}
  Let $\pi$ be a coupling between two measures $\mu$ and $\lambda$. Assume that $\spt \pi$ is monotone and that for some $R>0$, $\frac{1}{R^2}E+\frac{1}{R^2}D\ll1$ where 
 \[
   E:= \frac{1}{|B_{6R}|}\int_{(B_{6R}\times \R^d)\cup(\R^d\times B_{6R})} |x-y|^2 d\pi \]
   and
   \[
   D:= \frac{1}{|B_{6R}|}W^2_{B_{6R}}(\mu,\kappa_{\mu})+ \frac{R^2}{\kappa_{\mu}}(\kappa_\mu-1)^2+\frac{1}{|B_{6R}|}W^2_{B_{6R}}(\lambda,\kappa_{\lambda})+ \frac{R^2}{\kappa_{\lambda}}(\kappa_\lambda-1)^2.
  \]
Then, for every $(x,y)\in \spt \pi \cap \lt((B_{5R}\times \R^d)\cup (\R^d\times B_{5R})\rt)$ 
\begin{equation}\label{wg31}
 |x-y|\les R \lt(\frac{1}{R^2}E+\frac{1}{R^2}D\rt)^{\frac{1}{d+2}}=: R M.
\end{equation}
As a consequence, for $(x,y)\in \spt \pi$ and $t\in[0,1]$,
\begin{equation}\label{ma38}
 (1-t)x+ty\in B_{2R} \qquad \Longrightarrow \qquad  x\in B_{3R} \ \textrm{ or } \ y\in B_{3R}
\end{equation}
and 
\begin{equation}\label{ma39}
 x\in B_R \ \textrm{ or } \ y\in B_R \qquad  \Longrightarrow \qquad  (1-t)x+ty\in B_{2R}.
\end{equation}
Moreover,
\begin{equation}\label{ma88}
 \frac{1}{|B_{5R}|}\int_{(B_{5R}\times \R^d)\cup(\R^d\times B_{5R})} d\pi\les 1.
\end{equation}

 \end{lemma}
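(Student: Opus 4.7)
\textbf{Proof strategy for (\ref{wg31}).} I would argue by contradiction, combining cyclical monotonicity of $\spt\pi$ with the Wasserstein-to-Lebesgue estimate controlling $\mu$. Assume there exists $(x_0,y_0)\in\spt\pi$ with $x_0\in B_{5R}$ (the case $y_0\in B_{5R}$ being symmetric after swapping $\mu$ and $\lambda$) and $r:=|y_0-x_0|\gg RM$; set $v:=(y_0-x_0)/r$. Since $M\ll 1$ forces $r\ll R$, the midpoint ball $B:=B_{r/8}(x_0+\frac{r}{2}v)$ lies safely inside $B_{6R}$.

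\textbf{Pairwise displacement lower bound.} The core claim is that $|y-x|\ge r/4$ for every $(x,y)\in\spt\pi$ with $x\in B$. Decompose $x-x_0=\beta v+w_\perp$ and $y-y_0=\alpha v+u$ with $u\perp v$; from $x\in B$ we read off $\beta\in[3r/8,5r/8]$ and $|w_\perp|\le r/8$, while cyclical monotonicity against $(x_0,y_0)$ gives $\alpha\beta+w_\perp\cdot u\ge 0$. A short case analysis in the sign of $\alpha$ yields the claim: for $\alpha\ge 0$, $(y-x)\cdot v=r+\alpha-\beta\ge 3r/8$; for $\alpha<0$, monotonicity combined with $\beta\ge 3r/8$ and $|w_\perp|\le r/8$ forces $|u|\ge 3|\alpha|$, so that either $|u-w_\perp|\ge r/4$ when $|\alpha|\ge r/8$ (by the reverse triangle inequality), or $(y-x)\cdot v=r-|\alpha|-\beta\ge r/4$ when $|\alpha|<r/8$. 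In every scenario, $|y-x|\ge r/4$.

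\textbf{Mass bound and contradiction.} Markov's inequality applied to an optimal coupling for $W^2_{B_{6R}}(\mu,\kappa_\mu)$ then yields $\mu(B)\gtrsim r^d$ as soon as $r\gg(R^d D)^{1/(d+2)}$, a condition guaranteed by $r\gg RM\ge R(D/R^2)^{1/(d+2)}=(R^d D)^{1/(d+2)}$. Combining with the pairwise lower bound,
\[
|B_{6R}|E\;\gtrsim\;\int_{B\times\R^d}|y-x|^2\,d\pi\;\ge\;(r/4)^2\,\mu(B)\;\gtrsim\; r^{d+2},
\]
so $r\lesssim(R^d E)^{1/(d+2)}\le RM$, contradicting $r\gg RM$.

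\textbf{Consequences and principal difficulty.} Inequality (\ref{ma39}) is immediate: $x\in B_R$ together with (\ref{wg31}) gives $|x-y|\lesssim RM\ll R$, hence both $x,y\in B_{2R}$ and any convex combination stays in $B_{2R}$. For (\ref{ma88}), the smallness $D\ll R^2$ combined with $(\kappa_\mu-1)^2R^2/\kappa_\mu\le D$ gives $\kappa_\mu\sim 1$, whence $\mu(B_{5R})\le\mu(B_{6R})=\kappa_\mu|B_{6R}|\sim|B_{5R}|$, and similarly for $\lambda$. Implication (\ref{ma38}) is handled by (\ref{wg31}) when $x$ or $y$ lies in $B_{5R}$; for the remaining case $x,y\notin B_{5R}$ with the segment intersecting $B_{2R}$, one has the geometric lower bound $|x-y|\ge 2R\sqrt{21}$, while pairing with any $(x',y')\in\spt\pi$ whose first coordinate lies near the closest point $x^*\in B_{2R}$ of $[x,y]$ to the origin (available by the Wasserstein mass estimate for $\mu$) and exploiting $|x'-y'|\lesssim RM$ together with the anti-parallelism of $x-x^*$ and $y-x^*$ in the monotonicity inequality $(x-x')\cdot(y-y')\ge 0$ forces $|x-y|\lesssim RM$, a contradiction. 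The principal difficulty lies in the pairwise monotonicity step: a naive ball ``behind'' $x_0$ along $-v$ would fail, since there static configurations $(x,y)\in\spt\pi$ with $y\approx x$ are monotonicity-compatible; the midpoint choice is what forces $\beta>0$ and thereby converts monotonicity into a genuine constraint on $u$ whenever $\alpha<0$.
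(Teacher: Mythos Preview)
Your argument for \eqref{wg31} contains a circularity at the very first step: you write ``Since $M\ll 1$ forces $r\ll R$'', but $r\ll R$ is precisely (a weak form of) what you are trying to prove. The contradiction hypothesis $r\gg RM$ is a \emph{lower} bound on $r$, and $M\ll1$ only says $RM\ll R$; together these say nothing about an upper bound on $r$. Yet you need $r\ll R$ to ensure that the midpoint ball $B=B_{r/8}(x_0+\tfrac{r}{2}v)$ lies inside $B_{6R}$, which is used both in the displacement claim (so that the pairs $(x,y)$ with $x\in B$ contribute to $E$) and in the Markov step (so that the coupling for $W^2_{B_{6R}}(\mu,\kappa_\mu)$ sees $B$). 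The gap is easily patched: if $r\ge R$, run the identical argument with the ball $B_{R/16}(x_0+\tfrac{R}{4}v)\subset B_{6R}$; your case analysis then gives $|y-x|\gtrsim R$ for every $(x,y)\in\spt\pi$ with $x$ in that ball, the mass bound gives $\mu\gtrsim R^d$ there, and one obtains $E\gtrsim R^2$, contradicting $E\ll R^2$. Once $r\le R$ is secured, your argument as written is correct --- the midpoint-ball case analysis is clean and the Markov step is fine (using $\kappa_\mu\sim1$, which you invoke later anyway).

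Modulo this, your route to \eqref{wg31} is genuinely different from the paper's. The paper integrates the pointwise monotonicity inequality $(x-y)\cdot(x-x')\le\tfrac32|x-x'|^2+\tfrac12|x'-y'|^2$ against $\eta(x')\,d\pi(x',y')$ for a smooth bump $\eta$ supported in $B_r(x-re)$, then replaces $\mu$ by Lebesgue via the quantitative estimate \eqref{wg80}; optimizing over the direction $e$ and the scale $r$ yields the bound directly, without contradiction. Your approach is more combinatorial: a pointwise geometric lemma (every pair with first coordinate in the midpoint ball has displacement $\ge r/4$) combined with Markov on the Wasserstein coupling. The paper's method is slightly more robust (no case analysis, and it makes the dependence on $E$ and $D$ transparent in one line), while yours avoids the auxiliary lemma \eqref{wg80} and is arguably more elementary.

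For \eqref{ma38}, your argument is correct but heavier than needed. The paper observes that since $E+D\ll1$ there is some $(x',y')\in\spt\pi$ with $|x'|+|y'|\ll1$; then monotonicity gives
\[
|(1-t)x+ty-(1-t)x'-ty'|^2\ge(1-t)^2|x-x'|^2+t^2|y-y'|^2\ge\tfrac12\min(|x-x'|^2,|y-y'|^2),
\]
and the left side is $\le 4+o(1)$, forcing $\min(|x|,|y|)<3$. This handles all $(x,y)$ at once, with no separate treatment of the case $x,y\notin B_{5R}$.
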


 \begin{proof}
 By scaling we may assume that $R=1$.
  We first prove \eqref{wg31}. Fix $(x,y)\in \spt \pi \cap \lt((B_5\times \R^d)\cup (\R^d\times B_5)\rt)$. Without loss of generality, we may assume that $(x,y)\in B_5\times \R^d$. 
  By monotonicity, we then have for every $(x',y')\in \spt \pi$ that  $(y-y')\cdot(x-x')\ge 0$. Writing $y-y'=(y-x)+(x-x')+(x'-y')$ this may be rewritten as
  \begin{align*}
  (x-y)\cdot  (x-x')&\le |x-x'|^2+(x-x')\cdot(x'-y')\\
   &\le \frac{3}{2} |x-x'|^2 +\frac{1}{2}|x'-y'|^2.
  \end{align*}
Integrating this inequality against the measure $\eta(x') d\pi(x',y')$ for some smooth function $\eta\in[0,1]$ supported in $B_6$ we get 
\begin{multline}\label{wg30}
 (x-y)\cdot \int (x-x') \eta(x') d\mu(x')\les \int |x-x'|^2 \eta(x') d\mu(x') +\int_{B_6\times \R^d} |x'-y'|^2 d\pi\\
 \les \int |x-x'|^2 \eta(x') d\mu(x') +E.
\end{multline}
We now fix an arbitrary unit vector $e$ and a radius $0<r\ll 1$ and assume that $\eta$ is supported in $B_r(x-re)$ (which is contained in $B_6$ since $x\in B_5$) with $r\sup |\nabla \eta|+r^2 |\nabla^2 \eta|\les 1$. 
We want to replace the measure $\mu$ by the Lebesgue measure and for this use \eqref{wg80} and $\kappa_\mu\sim 1$ to get
\begin{equation}\label{ven02}
 \lt|\int (x-x') \eta(x') d\mu(x') -\kappa_\mu \int (x-x') \eta(x') dx'\rt|\les (r^d D)^{\frac{1}{2}} + \frac{1}{r}D
\end{equation}
and 
\begin{equation}\label{ven03}
 \lt| \int |x-x'|^2 \eta(x') d\mu(x')- \kappa_\mu \int |x-x'|^2 \eta(x') dx'\rt|\les (r^{d+2} D)^{\frac{1}{2}}+D.
\end{equation}
Appropriately normalizing $\eta$ and choosing it to be radially symmetric around $x-re$ we have
\[
 \int (x-x') \eta(x') dx'=r^{d+1} e \qquad \textrm{and} \qquad \int |x-x'|^2 \eta(x')dx'\les r^{d+2}.
\]
Using again that $\kappa_\mu\sim 1$, \eqref{ven02} and \eqref{ven03}, \eqref{wg30} becomes
\[
 r^{d+1}(x-y)\cdot e\les \lt((r^d D)^{\frac{1}{2}}+\frac{1}{r} D\rt)|x-y|+ r^{d+2}+(r^{d+2} D)^{\frac{1}{2}}+D+E.
\]
Since $e$ is arbitrary, using Young's inequality i.e. $(r^d D)^{\frac{1}{2}}\les \eps r^{d+1}+\frac{1}{r\eps} D$, this yields
\[
 r^{d+1}|x-y|\les \frac{1}{r} D|x-y|+ r^{d+2}+D+E.
\]
Choosing $r$ to be a large multiple of $(E+D)^{\frac{1}{d+2}}$, we conclude the proof of \eqref{wg31}.

\medskip
To show \eqref{ma88} we estimate
\[
 \frac{1}{|B_{5}|}\int_{(B_{5}\times \R^d)\cup(\R^d\times B_{5})} d\pi\le\frac{1}{|B_{5}|}\int_{B_{6}\times \R^d} d\pi + \frac{1}{|B_{5}|}\int_{\R^d\times B_{6}} d\pi =\frac{\mu(B_{6})+\lambda(B_6)}{|B_{5}|}\les \kappa_\mu + \kappa_\lambda
\]
so that \eqref{ma88} follows from $\kappa_\mu+\kappa_\lambda\sim1$. Since \eqref{ma39} is a direct consequence of \eqref{wg31}, we are left with the proof of \eqref{ma38}. Since $E+D\ll1$, there exists $(x',y')\in \spt \pi$ with $|x'|+|y'|\ll1$. By monotonicity of $\spt \pi$, we then have
\begin{align*}
 4+o(1)&\ge |(1-t)x+ty -(1-t)x'-ty'|^2\\
 &= (1-t)^2|x-x'|^2+2t(1-t) (x-x')\cdot(y-y')+ t^2|y-y'|^2\\
 &\ge (1-t)^2|x-x'|^2+ t^2|y-y'|^2\\
 &\ge \frac{1}{2} \min(|x-x'|^2,|y-y'|^2)\ge \frac{1}{2} \min(|x|^2,|y|^2)- o(1),
\end{align*}
which concludes the proof.
 \end{proof}

\subsubsection{A localization result}
Our next lemma shows that the quantity $W_{B_5}^2(\mu,\kappa)+\frac{1}{\kappa}(\kappa-1)^2$,
which captures the (squared) distance of a measure $\mu$ to the Lebesgue measure,
is well behaved under restricting $\mu$ to balls $B_R$ with $R\sim 1$. However,
this seems true only on average in $R$.
Since in the application to the matching problem we will need the lemma for cubes instead of balls, we will state it in a general form.
\begin{lemma}\label{Lres} We have for any positive  measure $\mu$ 
\begin{align*}
\int_3^4\lt(W_{B_R}^2(\mu,\kappa_R)+\frac{1}{\kappa_R}(\kappa_R-1)^2\rt)dR\lesssim D,
\end{align*}
provided 
\begin{align}\label{wg86}
D:=W_{O}^2(\mu,\kappa)+\frac{1}{\kappa}(\kappa-1)^2\ll 1,
\end{align}
for some set $O\supset B_5$.
\end{lemma}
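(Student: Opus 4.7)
The plan is to build, for each $R\in(3,4)$, a coupling between $\mu\restr B_R$ and $\kappa_R dx\restr B_R$ starting from an optimal coupling $\pi$ for $W_O^2(\mu,\kappa)$. Write $a(R):=\pi(B_R\times(O\setminus B_R))$ and $b(R):=\pi((O\setminus B_R)\times B_R)$ for the outgoing and incoming crossing masses; the marginal identity forces $(\kappa_R-\kappa)|B_R|=a(R)-b(R)$. I set
\[
\tilde\pi_R:=\pi\restr(B_R\times B_R)+\pi_R^{\mathrm{corr}},
\]
where $\pi_R^{\mathrm{corr}}$ couples $\mu_c:=(\pi\restr(B_R\times(O\setminus B_R)))_1$ with $(\kappa_R-\kappa)dx\restr B_R+\nu_c$, with $\nu_c:=(\pi\restr((O\setminus B_R)\times B_R))_2$. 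By construction the marginals of $\tilde\pi_R$ are $\mu\restr B_R$ and $\kappa_R dx\restr B_R$.

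The correction $\pi_R^{\mathrm{corr}}$ is assembled in three substeps: project $\mu_c$ and $\nu_c$ onto $\partial B_R$, couple the two projections on the sphere, then absorb the uniform mean-shift $(\kappa_R-\kappa)dx\restr B_R$. The geometric key is that on the crossing support one has $\max(R-|x|,\,|y|-R)\le|x-y|$, so the projection onto $\partial B_R$ costs at most $|x-y|^2$ pointwise.

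To integrate over $R$ I combine Fubini with the coarea-type identity
\[
\int_3^4\1\bigl(\min(|x|,|y|)\le R<\max(|x|,|y|)\bigr)\,dR\le\bigl||x|-|y|\bigr|\le|x-y|.
\]
The in-place contribution integrates to $\int|x-y|^2\,d\pi\le D$, and the crossing/projection contribution to $\int|x-y|^3\,d\pi\lesssim D$ using $|x-y|\lesssim 1$ on the bounded set $O$. For the mean-shift cost and for the $\frac{1}{\kappa_R}(\kappa_R-1)^2$ term, I use the divergence identity: with $\bar j$ the time-averaged Benamou-Brenier flux of $\pi$, the continuity equation gives $(\kappa_R-\kappa)|B_R|=\int_{\partial B_R}\nu\cdot\bar j$. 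Cauchy-Schwarz together with the layer formula $\int_3^4\int_{\partial B_R}\cdot\,dR=\int_{B_4\setminus B_3}\cdot\,dx$ then yields
\[
\int_3^4(\kappa_R-\kappa)^2\,dR\lesssim\int_{B_4\setminus B_3}|\bar j|^2\,dx,
\]
which is controlled by $W_O^2(\mu,\kappa)\le D$ through a standard Benamou-Brenier/Cauchy-Schwarz argument (exploiting $\bar\rho\sim 1$ on the annulus, itself a consequence of $\kappa\sim 1$ and $D\ll 1$). The $\frac{1}{\kappa_R}(\kappa_R-1)^2$ term is then handled by splitting $(\kappa_R-1)^2\le 2(\kappa_R-\kappa)^2+2(\kappa-1)^2$ and using that $\kappa_R\sim 1$.

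The main obstacle is precisely this mean-shift step: the naive Lagrangian bound $(\mathrm{diam}\,B_R)^2|a(R)-b(R)|$ for $\pi_R^{\mathrm{corr}}$ only integrates to $\sqrt D$ by coarea, so it is essential to trade the mass difference for a squared-gradient quantity through the divergence/Poisson identity in order to recover the sharp rate $\lesssim D$.
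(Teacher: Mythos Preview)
Your outline has the right architecture and you correctly identify the mean-shift as the crux, but the proposed fix has a genuine gap. You claim $\int_{B_4\setminus B_3}|\bar j|^2\,dx\lesssim D$ via ``$\bar\rho\sim 1$ on the annulus''. The lemma, however, must cover arbitrary positive $\mu$; if $\mu$ has a Dirac at a point of $B_4\setminus B_3$, then $\bar j$ and $\bar\rho$ are genuinely singular there and $\int|\bar j|^2\,dx$ is not even defined. The only robust bound is the Benamou--Brenier energy $\int\frac{|\bar j|^2}{\bar\rho}\le D$, which does not upgrade to an $L^2$ bound on $\bar j$ without an $L^\infty$ bound on $\bar\rho$ that you do not have. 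The same issue undermines the ``couple the two projections on the sphere'' step: $\hat\mu_c$ and $\hat\nu_c$ stem from \emph{disjoint} families of trajectories (outgoing vs.\ incoming), have different masses, and admit no natural cheap coupling; a crude coupling costs $\lesssim\min(a(R),b(R))$ and integrates only to $\sqrt D$.

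The mechanism you are missing is the one the paper exploits: regularity comes from the \emph{target}. Because the second marginal of $\pi$ is $\kappa\,dx$, the measure $g_{\pm,R}$ recording where crossing trajectories \emph{end} satisfies $0\le g_{\pm,R}\le\kappa$. Lemma~\ref{2.3} converts this $L^\infty$ bound into the quadratic estimate
\[
\Big(\int dg_{\pm,R}\Big)^2\lesssim\int_{\partial B_R}\hat g_{\pm,R}^2\lesssim\int\big|R-|x|\big|\,dg_{\pm,R},
\]
and the right-hand side integrates over $R\in(3,4)$ to $W_O^2(\mu,\kappa)$ by coarea. This simultaneously yields $\int_3^4\kappa_{\pm,R}^2\,dR\lesssim D$ and the energy bound needed for the Poisson/Benamou--Brenier construction (Lemma~\ref{lem:elliptic}) that connects $(\kappa-\kappa_{-,R})dx+\hat g_{+,R}$ to $\kappa_R\,dx$ at cost $\lesssim\int|R-|x||\,dg_{\pm,R}$. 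In short, your divergence identity identifies the right \emph{quantity}, but the required \emph{estimate} hinges on the bounded density of the endpoints on the Lebesgue side, not on any putative pointwise control of $\bar\rho$.
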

Before starting the proof, let us point out that the proofs in Section \ref{sec:radius} use similar ideas and techniques in a slightly more complicated setup. Hence, inside the 
proof we use similar notation.  
\begin{proof}

Let $\pi$ denote the optimal transference plan for $W_{O}^2(\mu,\kappa)$.
It is convenient to introduce the corresponding (non-normalized) measure $\mathbb{P}\ge 0$ on straight
trajectories $[0,1]\ni t\mapsto X(t)$: It is the push-forward of $\pi$
under $X(t)=ty+(1-t)x$.
For given $R\in(0,5)$ consider the set of exiting and entering trajectories, respectively (see Figure \ref{fig:Wmukappa}),
\begin{figure}\begin{center}
 \resizebox{7.cm}{!}{\input{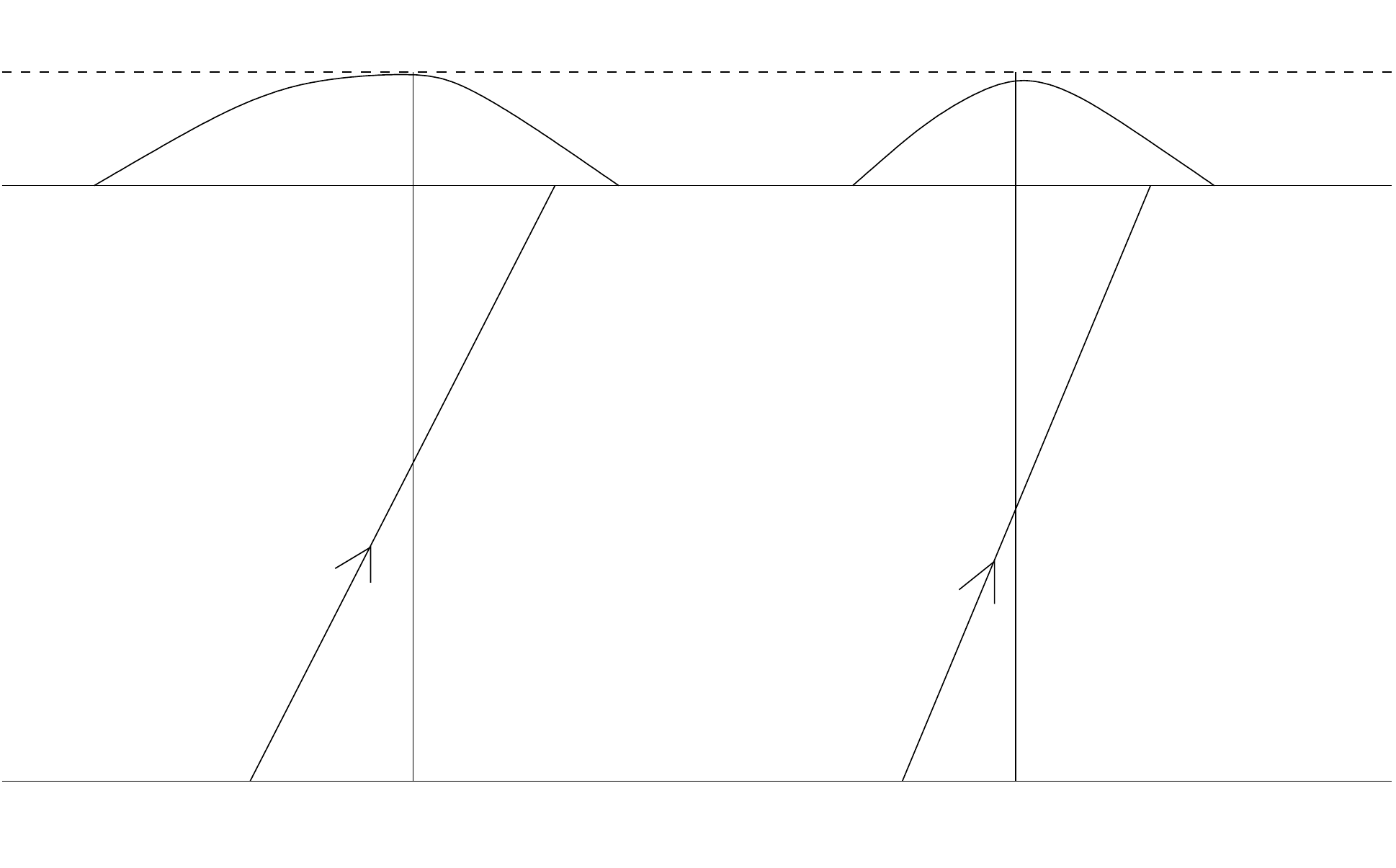_t}} 
   \caption{The definition of $g_{\pm,R}$.} \label{fig:Wmukappa}
 \end{center}
 \end{figure}
\begin{align}\label{wg90}
\begin{array}{cc}\Omega_{+,R}&:=\{\,X\,|\,|X(0)|<R,\;|X(1)|\ge R\,\},\\
\Omega_{-,R}&:=\{\,X\,|\,|X(0)|\ge R,\;|X(1)|< R\,\}.
\end{array}
\end{align}
We define the measures $0\le g_{+,R}\le \kappa$ on $B_R^c$ and $0\le g_{-,R}\le \kappa$ on $B_R$ which record where
exiting and entering trajectories end up:
\begin{align}\label{wg96}
\int\zeta dg_{\pm,R}:=\int_{\Omega_{\pm,R}}\zeta(X(1))d\P.
\end{align}
We claim that these measures are, on average in $R$, concentrated close to $\partial B_R$:
\begin{equation}\label{ma1}
\int_{3}^4\lt(\int(|x|-R)d g_{+,R} +\int(R-|x|) d g_{-,R}\rt)dR\le W_{O}^2(\mu,\kappa).
\end{equation}
Indeed, by definition (\ref{wg96}) of $g_{\pm,R}$, we have
\begin{align*}
\lefteqn{\int(|x|-R)d g_{+,R}+\int(R-|x|) d g_{-,R}}\nonumber\\
&=\int_{|X(0)|<R\le|X(1)|}(|X(1)|-R)d\P+\int_{|X(1)|<R\le|X(0)|}(R-|X(1)|)d\P\nonumber\\
&\le\int_{\min\{|X(0)|,|X(1)|\}<R\le\max\{|X(0)|,|X(1)|\}}||X(1)|-|X(0)||d\P,
\end{align*}
so that
\begin{align*}
\lefteqn{\int_{3}^4\lt(\int(|x|-R)d g_{+,R} +\int(R-|x|) d g_{-,R}\rt)dR}\nonumber\\
&\le\int|\max\{|X(0)|,|X(1)|\}-\min\{|X(0)|,|X(1)|\}||X(1)|-|X(0)||d\P\nonumber\\
&\le\int|X(1)-X(0)|^2 d\P = W_O^2(\mu,\kappa).
\end{align*}
Momentarily fixing $R$ and setting
\begin{align}\label{wg91}
\kappa_{+,R}:=\frac{1}{|B_R|}g_{+,R}(\mathbb{R}^d),\quad \kappa_{-,R}:=\frac{1}{|B_R|} g_{-,R}(\mathbb{R}^d),
\end{align}
we claim that for every $R\in (3,4)$ with $\mu(\partial B_R)=0$, 
\begin{equation}\label{wg88}
W_{B_R}^2(\mu,\kappa_R)\lesssim W_{O}^2(\mu,\kappa)+\int(|x|-R)d g_{+,R}+\int(R-|x|) d g_{-,R},
\end{equation}
where\footnote{Notice that by \eqref{wg90} (while the test function $I(|x|<R)$ is not continuous, a simple approximation argument shows that this is actually not an issue), \[\mu(B_R)=\P(|X(0)|<R)=\P(|X(1)|<R)-\P(\Omega_{-,R})+\P(\Omega_{+,R})=|B_R|(\kappa-\kappa_{-,R}+\kappa_{+,R}).\]
} 
\begin{align}\label{wg89}
\kappa_R:=\kappa-\kappa_{-,R}+\kappa_{+,R}.
\end{align}
We first notice that thanks to the $L^\infty$ bound \eqref{wg31}, and the definitions \eqref{wg90}, \eqref{wg96} and \eqref{wg91}, for any $R\in(3,4)$, 
\begin{equation}\label{wg92}
 \kappa_{\pm,R}=\frac{1}{|B_R|}\P(\Omega_{\pm,R})\stackrel{\eqref{wg31}}{\les} \P( ||X(1)|-R|\les D^{\frac{1}{d+2}}) \stackrel{\eqref{wg86}}{\ll} \kappa.
\end{equation}

We now turn to (\ref{wg88}) and to this purpose introduce
the measure $f_{+,R}\ge 0$ on $ \partial B_R\times [0,1]$ that keeps track
of where and when trajectories exit (see Figure \ref{fig:Wmukappa})	
\begin{align*}
\int\zeta df_{+,R}:=\int_{\Omega_{+,R}}\zeta(X(t_+),t_+)d\P,
\end{align*}
here $t_+$ denotes the unique $t_+\in(0,1]$ such that $|X(t_+)|=R$.
We also introduce the measures $\hat g_{\pm,R}\ge 0$ and $\bar f_{+,R}\ge 0$ on $\partial B_R$ by projection (note that $\hat g_{\pm,R}$ has a density)
\begin{align}\label{wg95}
\int\zeta d\hat g_{\pm,R}:=\int\zeta\lt(R\frac{x}{|x|}\rt) dg_{\pm,R}(x),\quad
\int\zeta d\bar f_{+,R}:=\int\zeta(x)df_{+,R}(x,t).
\end{align}
We now use the triangle inequality for $W_{B_R}$ and to split the estimate (\ref{wg88}) into
\begin{align}\label{wg81}
W_{ B_R}^2(\mu,\kappa_R)\les \ &W^2_{\bar B_R}(\mu,\kappa dx- g_{-,R}+\bar f_{+,R})\nonumber\\
&+W^2_{\bar B_R}(\kappa dx- g_{-,R}+\bar f_{+,R},(\kappa-\kappa_{-,R})dx+\hat g_{+,R})\nonumber\\
&+W^2_{\bar B_R}((\kappa-\kappa_{-,R})dx+\hat g_{+,R},\kappa_R),
\end{align}
and then the sub-additivity of $W^2_{\bar B_R}$ 
to split the middle term further into
\begin{multline}\label{wg82}
W^2_{\bar B_R}(\kappa dx- g_{-,R}+\bar f_{+,R},(\kappa-\kappa_{-,R})dx+\hat g_{+,R})
\\
\le W^2_{ B_R}(\kappa dx- g_{-,R},\kappa-\kappa_{-,R})
+W^2_{\partial B_R}(\bar f_{+,R},\hat g_{+,R}).
\end{multline}

\medskip

We start with the two easier terms.
We claim that the first right-hand side term in (\ref{wg81}) is estimated as follows
\begin{align*}
W^2_{\bar B_R}(\mu,\kappa dx- g_{-,R}+\bar f_{+,R})\le W_{O}^2(\mu, \kappa).
\end{align*}
Indeed, an admissible transference plan $\pi_R$ is given by restricting the plan $\pi$
\[
\int\zeta d\pi_R=\int_{|X(0)|<R,|X(1)|<R}\zeta(X(0),X(1))d\P+\int_{\Omega_{+,R}}\zeta(X(0),X(t_+))d\P.
\]
Likewise, the second right-hand side term of (\ref{wg82}) is estimated in the same way:
\begin{align*}
W^2_{\partial B_R}(\bar f_{+,R},\hat g_{+,R})\le W_{O}^2(\mu, \kappa).
\end{align*}
Again, an admissible transference plan $\pi_{+,R}$ is given by
\begin{align*}
\int\zeta d\pi_{+,R}
=\int_{\Omega_{+,R}}\zeta\lt(X(t_+),R\frac{X(1)}{|X(1)|}\rt)d\P.
\end{align*}

\medskip 

We now turn to the remaining two more subtle terms.
For the last right-hand side term in (\ref{wg81}) we claim that 
\begin{equation}\label{wg93}
W_{\bar B_R}^2((\kappa-\kappa_{-,R})dx+\hat g_{+,R}\,,\,\kappa_R)\lesssim\int(|x|-R)d g_{+,R}.
\end{equation}
Indeed, recalling the definition \eqref{wg89} of $\kappa_R$, an admissible density-flux pair $(\rho,j)$ for the Benamou-Brenier formulation \eqref{BBwass} 
is given by
\begin{align*}
\rho_t:=(\kappa-\kappa_{-,R}+t\kappa_{+,R})dx+(1-t)\hat g_{+,R},\qquad j_t:=
\left\{\begin{array}{cl}\nabla\vp&\mbox{in}\;B_R\\0&\mbox{else}\end{array}\right\},
\end{align*}
where $\vp$ is a  solution of
\[
\Delta\vp=-\kappa_{+,R}\ \textrm{ in } B_R\qquad \textrm{ and } \qquad \nu\cdot\nabla\vp=-\hat g_{+,R}\ \textrm{ on }\partial B_R,
\]
so that we have
\begin{align*}
W_{\bar B_R}^2((\kappa-\kappa_{-,R})dx+\hat g_{+,R}\,,\,\kappa_R)\le\frac{1}{\kappa-\kappa_{-,R}}\int_{B_R}|\nabla\vp|^2.
\end{align*}
Combining this with the energy estimate \eqref{estim:elliptic} (applied to $f=\hat g_{+,R}$ and $g=0$), \eqref{wg31} to be able to apply \eqref{estimz} and \eqref{wg92}, this concludes the proof of \eqref{wg93}.

\medskip

Likewise, the first right-hand side term in (\ref{wg82}) is estimated as
\begin{align*}
W^2_{ B_R}(\kappa dx- g_{-,R},\kappa-\kappa_{-,R})\lesssim\int(R-|x|) d g_{-,R}.
\end{align*}
Indeed, by (\ref{wg84}) it is a consequence of 
\[
W^2_{ B_R}((2\kappa-\kappa_{-,R})dx- g_{-,R},2(\kappa-\kappa_{-,R}))\lesssim\int(R-|x|) d g_{-,R}.
\]
In turn, this follows arguing as in \eqref{wg93} and using as admissible density-flux pair $(\rho,j)$ 
\[
\rho_t:=2\kappa-\kappa_{-,R}-t\kappa_{-,R}-(1-t) g_{-,R},\qquad j_t:=
\left\{\begin{array}{cl}\nabla\vp&\mbox{in}\;B_R\\0&\mbox{else}\end{array}\right\},
\]
where $\vp$ is a solution of
\[
\Delta\vp=\kappa_{-,R}- g_{-,R}\ \textrm{ in } B_R \qquad \textrm{ and } \qquad \nu\cdot\nabla\vp=0\ \textrm{ on } \partial B_R.
\]
This concludes the proof of \eqref{wg88}.

\medskip

Finally, by the definition \eqref{wg89} of $\kappa_R$ and \eqref{wg92}, 
\[
 \frac{1}{\kappa_R}(1-\kappa_R)^2\lesssim \frac{1}{\kappa} (1-\kappa)^2+ \kappa_{-,R}^2+\kappa_{+,R}^2.
\]
Combining this with $\kappa_{\pm,R}= \frac{1}{|B_R|}g_{\pm,R}(\R^d)$ (cf. \eqref{wg91}) and \eqref{estimz}, we obtain 
\[
 \frac{1}{\kappa_R}(1-\kappa_R)^2\lesssim \frac{1}{\kappa}(1-\kappa)^2+\int(|x|-R)dg_{+,R}+\int(R-|x|) d g_{-,R},
\]
which together with \eqref{wg88} and \eqref{ma1} concludes the proof of the lemma.
 \end{proof}
 \begin{remark}\label{rem:DataLebesgue}
  Let us point out that with a very similar proof, it may be shown that if $\pi$ is the optimal coupling between a measure $\mu$ and the Lebesgue measure for which $E+D\ll1$ where
  \[
E:=\frac{1}{|B_5|}\int_{(B_5\times \R^d)\cup(\R^d\times B_5)} |x-y|^2 d\pi \qquad \textrm{and} \qquad D:= \frac{1}{|B_5|}W_{B_5}^2(\mu,\kappa),   
  \]
then
\[
 \int_3^4 \lt(W_{B_R}^2(\mu,\kappa_R)+\frac{1}{\kappa_R}(\kappa_R-1)^2\rt) dR\les E+D.
\]
In particular, this means that when the target measure is the Lebesgue measure, up to choosing a good radius, we have a good control on $\frac{1}{\kappa}(\kappa-1)^2)$.
 \end{remark}

 \section{The harmonic approximation result}
 In this section, which is the core of the paper, we prove Theorem \ref{theo:harmonicLagintro}. As already explained, by scaling we will prove it on the unit scale. For $\pi$ an optimal coupling between two measures $\mu$ and $\lambda$, we recall that we set
 \begin{align*}
  E&=\frac{1}{|B_6|}\int_{(B_6\times \R^d)\cup(\R^d\times B_6)} |x-y|^2 d\pi,\\
  D&= \frac{1}{|B_6|} W_{B_6}^2(\mu,\kappa_\mu)+\frac{1}{\kappa_\mu}(\kappa_\mu-1)^2+\frac{1}{|B_6|} W_{B_6}^2(\lambda,\kappa_\lambda)+\frac{1}{\kappa_\lambda}(\kappa_\lambda-1)^2
 \end{align*}
and work under the hypothesis that $E+D\ll1$. 
As explained in the introduction, the strategy of the proof is to argue at the Eulerian level and first choose  a good radius $R\in (3,4)$
for which the solution $\Phi$ (respectively $\phi$) of the Poisson equation in $B_R$ 
with Neumann boundary condition given by the Eulerian flux (respectively a regularized version of) $\bar j$ on $\partial B_R$ is 'well-behaved'. This is done in Section \ref{sec:radius}.
Then, we prove in Section \ref{sec:approxorth}
an approximate orthogonality result, which combined with the construction of a competitor  with controlled energy in Section \ref{sec:construct}
and the minimality of the density-flux pair $(\rho,j)$ yields Theorem \ref{theo:harmonicLagintro}.

 \subsection{Choice of a good radius}\label{sec:radius}
 In this section we define the main densities and fluxes which will appear in the proof of Theorem \ref{theo:harmonicLagintro} and prove that we may choose a good radius for which they are well-behaved. 
 
 \subsubsection{Unregularized fluxes}
 Using the Gluing Lemma (see \cite[Lem. 7.6]{Viltop}) and disintegration, we couple the  optimal transport between $\mu$ and $\lambda$ on the time interval $t\in(0,1)$
to the optimal transport between $\kappa_\mu$ and $\mu$ (restricted to $B_6\times B_6$) in the interval $(-1,0)$ and
to the optimal transport between $\lambda$ and $\kappa_\lambda$ (restricted to $B_6\times B_6$) in the interval $(1,2)$. 
In view of Lemma \ref{Linf}, there exists a  (not normalized) measure $\mathbb{P}$ on
continuous trajectories $[-1,2]\ni t\mapsto X(t)\in\mathbb{R}^d$ that couples $\kappa_\mu dx$, $\mu$,
$\lambda$, and $\kappa_\lambda dx$ in the sense that for any $\zeta\in C^0_c(B_5)$ (see Figure \ref{fig:gluing}) 

\begin{align}\label{io17}
\begin{array}{rcl}
\int\zeta(X(-1))d\P&=&\int\zeta \kappa_\mu dx,\\[1ex]
\int\zeta(X( 0))d\P&=&\int\zeta d\mu,\\[1ex]
\int\zeta(X( 1))d\P&=&\int\zeta d\lambda,\\[1ex]
\int\zeta(X( 2))d\P&=&\int\zeta \kappa_\lambda dx.
\end{array}
\end{align}
Moreover, the trajectories are piecewise affine on
$(-1,0)\cup(0,1)\cup(1,2)$ and optimal in the sense that 
\begin{align}\label{io16}
\begin{array}{llc}
\int|X(0)-X(-1)|^2d\P&\le W_{B_6}^2(\mu,\kappa_\mu)&\le  |B_6| D,\\[1ex]
\int|X(1)-X(0)|^2d\P&
&\le  |B_6| E,\\[1ex]
\int|X(2)-X(1)|^2d\P&\le W_{B_6}^2(\lambda,\kappa_\lambda)&\le |B_6| D.
\end{array}
\end{align}

\medskip

Finally, by \eqref{defrhoj} and \eqref{io17}, the density/flux pair $(\rho=\rho_tdt,j=j_tdt)$ can be recovered from $\mathbb{P}$ on $B_5\times(0,1)$ 
in the sense that for $t\in[0,1]$, $\zeta\in C^0_c(B_5)$, and $\xi\in C^0_c(B_5)^d$
\begin{equation}\label{io18}
\begin{array}{ll}
\ds \int \zeta(X(t))d\P&=\ds \int\zeta d\rho_t,\\
 \ds \int \xi(X(t))\cdot\dot X(t)d\P&=\ds\int\xi\cdot dj_t.
\end{array}
\end{equation}
We then recall the definitions of  the time integrated fluxes (see \eqref{defbar})
\begin{equation}\label{ma15}
\bar \rho=\int_0^1 \rho_t dt  \qquad \textrm{and} \qquad \bar j=\int_0^1 j_t dt
\end{equation}
Let us notice that using    \eqref{ma38} (in combination with $E+D\ll1$) on the one hand and \eqref{io16} together with \eqref{io18} on the other hand, we have  
\begin{equation}\label{ma42}
 \int_{B_2} d\bar \rho \les 1 \qquad \textrm{ and } \qquad \int_{B_5}\int_0^1 \frac{1}{\rho} |j|^2\le E.
\end{equation}

In the upcoming Lemma \ref{lemcompatib}, we  derive that for any radius $R$, the (inner) flux boundary data $f_R$ of $j$ on $\partial B_R\times[0,1]$
admit a representation in terms of $\mathbb{P}$. Indeed, consider the set $\Omega_{+,R}$
of trajectories that exit $B_R$ and the set $\Omega_{-,R}$ of those that enter:
\begin{equation}\label{ma2}
 \begin{array}{ll}
  \Omega_{+,R}:=\{\,X\,|\,|X(1)|\ge R\;\mbox{and}\;|X(t)|<R\;\mbox{for some}\;t\in[0,1]\,\},\\
\Omega_{-,R}:=\{\,X\,|\,|X(0)|\ge R\;\mbox{and}\;|X(t)|<R\;\mbox{for some}\;t\in[0,1]\,\}.
 \end{array}
\end{equation}

We notice that these sets are not necessarily disjoint. We also define the set of trajectories $\Omega_R$ which enter, exit or are entirely contained in $B_R$:
\begin{equation}\label{OmR}
 \Omega_R:= \{\,X\,|\,|X(t)|<R\;\mbox{for some}\;t\in[0,1]\,\}
\end{equation}
and point out  that by the $L^\infty$ bound \eqref{wg31}, if $E+D\ll1$ then for  $R\le 5$,  $X\in \Omega_R$ and  $(s,t)\in [-1,2]$,
\begin{equation}\label{ma3}
 |X(t)-X(s)|\les M,
\end{equation}
where we recall that $M= (E+D)^{\frac{1}{d+2}}$.
We associate exiting and entering times to the trajectories that spend time
in $B_R$:
\begin{align}\label{io26}
\left.\begin{array}{c@{\hspace{0.2ex}}c@{\hspace{0.2ex}}ccc}
t_+&:=&\sup\{t\in[0,1]||X(t)|<R\}&\in&(0,1]\\[1ex]
t_-&:=&\inf\{t\in[0,1]||X(t)|<R\}&\in&[0,1)
\end{array}\right\}\quad \mbox{for}\;X\in\Omega_R;
\end{align}
note that $t_+=1$ if the trajectory ends in $\bar B_R$,
and $t_-=0$ if the trajectory starts in $\bar B_R$. 
We introduce the non-negative measures $f_{\pm,R}$ on $\partial B_R\times[0,1]$ 
that keep track of where in space-time trajectories exit and enter:
\begin{align}\label{io03}
\displaystyle\int\zeta df_{\pm,R}=\displaystyle\int_{\Omega_{\pm,R}}
\zeta(X(t_\pm),t_\pm)d\mathbb{P}.
\end{align}
We are now in the position to introduce signed measure on $\partial B_R\times[0,1]$:
\begin{align}\label{io14}
f_R:=f_{+,R}-f_{-,R}.
\end{align}

\begin{lemma}\label{lemcompatib}
 For every $R\in(0,5)$, the measure $f_R$ coincides with the inner trace of $j$ on $\partial B_R\times (0,1)$ in the sense of \eqref{deftraces} i.e. for every $\zeta\in C^1_c(\R^d\times[0,1])$
 \begin{equation}\label{locconteq}
  \int_{B_R}\int_0^1 \partial_t \zeta d\rho +\nabla \zeta \cdot dj=\int_{B_R} (\zeta_1 d\lambda -\zeta_0 d\mu) + \int_{\partial B_R}\int_0^1 \zeta df_R. 
 \end{equation}
As a consequence, for every pair $(\tilde \rho,\tilde j)$  satisfying 
\begin{equation}\label{locconteqcomp}
  \int\int_0^1 \partial_t \zeta d\tilde\rho +\nabla \zeta \cdot d\tilde j=\int_{B_R} (\zeta_1 d\lambda -\zeta_0 d\mu) + \int_{\partial B_R}\int_0^1 \zeta df_R,
\end{equation}
there holds,
\begin{equation}\label{locmin}
 \int_{B_R}\int_0^1 \frac{1}{\rho}|j|^2\le \int\int_0^1 \frac{1}{\tilde \rho}|\tilde j|^2.
\end{equation}

\end{lemma}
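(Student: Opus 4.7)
The plan is to derive the trace identity \eqref{locconteq} directly from the trajectory representation \eqref{io18}, and then deduce the localized minimality \eqref{locmin} by a simple gluing argument.

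\textbf{Step 1 (Trace identity via trajectories).} The starting point is that on $(0,1)$ each trajectory $t\mapsto X(t)$ is affine, so the set $\{t\in[0,1]:|X(t)|<R\}$ is either empty or the open interval $(t_-,t_+)$ with the convention \eqref{io26}. Fix a test function $\zeta\in C^1_c(\R^d\times[0,1])$. For every $X\in\Omega_R$ the fundamental theorem of calculus gives
\begin{equation*}
\int_{t_-}^{t_+}\!\!\bigl(\partial_t\zeta(X(t),t)+\nabla\zeta(X(t),t)\cdot\dot X(t)\bigr)dt
=\zeta(X(t_+),t_+)-\zeta(X(t_-),t_-).
\end{equation*}
Integrating against $\mathbb{P}$ and recognizing the left-hand side via \eqref{io18} (since $I(X(t)\in B_R)=I(t\in(t_-,t_+))$ for $X\in\Omega_R$ and the integrand vanishes otherwise), one obtains
\begin{equation*}
\int_{B_R}\!\!\int_0^1\partial_t\zeta\,d\rho+\nabla\zeta\cdot dj
=\int_{\Omega_R}\!\bigl(\zeta(X(t_+),t_+)-\zeta(X(t_-),t_-)\bigr)d\mathbb{P}.
\end{equation*}

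\textbf{Step 2 (Splitting the boundary terms).} I would then split $\Omega_R$ by whether $X(0)\in B_R$ or not (and analogously for $X(1)$). When $|X(0)|<R$ one has $t_-=0$ so $X(t_-)=X(0)$; when $|X(0)|\ge R$ (and the trajectory later enters $B_R$) $t_->0$ and $X(t_-)\in\partial B_R$, placing $X$ in $\Omega_{-,R}$; symmetric statements hold at time $1$ with $\Omega_{+,R}$. The sets where $X$ is merely tangent to $\partial B_R$ can be handled by a standard approximation in $R$ (or absorbed since they contribute zero after integration against a non-atomic sub-measure). Using \eqref{io17} to rewrite the time-$0$ and time-$1$ contributions as integrals against $\mu\restr B_R$ and $\lambda\restr B_R$, and the definitions \eqref{io03},\eqref{io14} of $f_{\pm,R},f_R$ for the boundary contributions, yields exactly
\begin{equation*}
\int_{\Omega_R}\bigl(\zeta(X(t_+),t_+)-\zeta(X(t_-),t_-)\bigr)d\mathbb{P}
=\int_{B_R}\zeta_1 d\lambda-\int_{B_R}\zeta_0 d\mu+\int_{\partial B_R}\!\!\int_0^1\zeta\,df_R,
\end{equation*}
which is \eqref{locconteq}. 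The main obstacle here is the bookkeeping of the case distinction at times $0$ and $1$ together with the tangential null sets; these have to be treated so that every trajectory contributes exactly once to each term.

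\textbf{Step 3 (Localized minimality).} For the second part, given $(\tilde\rho,\tilde j)$ satisfying \eqref{locconteqcomp}, I would define the glued pair
\begin{equation*}
(\rho',j'):=(\tilde\rho,\tilde j)\restr{\bar B_R\times[0,1]}+(\rho,j)\restr{(\R^d\setminus B_R)\times[0,1]}.
\end{equation*}
Combining \eqref{locconteqcomp} with the identity \eqref{locconteq} subtracted from the global continuity equation satisfied by $(\rho,j)$ (which gives the continuity equation satisfied by $(\rho,j)$ outside $B_R$ with boundary trace $f_R$ on $\partial B_R$), one checks that $(\rho',j')$ is globally admissible for $W^2(\mu,\lambda)$. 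Optimality of $(\rho,j)$ via \eqref{BBwass} then gives
\begin{equation*}
\int_{B_R}\!\!\int_0^1\frac{1}{\rho}|j|^2+\int_{B_R^c}\!\!\int_0^1\frac{1}{\rho}|j|^2
\le \int_{B_R}\!\!\int_0^1\frac{1}{\tilde\rho}|\tilde j|^2+\int_{B_R^c}\!\!\int_0^1\frac{1}{\rho}|j|^2,
\end{equation*}
and cancelling the exterior contribution and bounding the right-hand side by the full integral of $\tilde\rho,\tilde j$ yields \eqref{locmin}. This step is essentially routine once Step 1 has been established.
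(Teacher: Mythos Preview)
Your proposal is correct and follows essentially the same approach as the paper: the trace identity is obtained by writing the left-hand side via the trajectory representation \eqref{io18}, applying the fundamental theorem of calculus along each affine trajectory to get $\zeta(X(t_+),t_+)-\zeta(X(t_-),t_-)$, and then splitting $\Omega_R$ according to whether the endpoints lie in $B_R$; the localized minimality is obtained by gluing $(\tilde\rho,\tilde j)$ with $(\rho,j)\restr B_R^c$ and invoking global optimality. The only cosmetic difference is that the paper does not restrict $(\tilde\rho,\tilde j)$ to $\bar B_R$ before gluing and makes the subadditivity of the Benamou--Brenier functional (from \eqref{dualBB}) explicit when estimating the energy of the glued pair, whereas you fold this into ``essentially routine''.
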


\begin{proof}
 We first prove \eqref{locconteq}.
For $\zeta\in C^1_c(\R^d\times[0,1])$, by definition \eqref{io18} of $(\rho,j)$\footnote{as in the proof of Lemma \ref{Lres},
 it can be seen that we may use the (discontinuous) test functions  $I(|x|<R) \partial_t\zeta$ and $I(|x|<R) \nabla\zeta$.} 
 \begin{align*}
 \lefteqn{ \int_{B_R}\int_0^1 \partial_t \zeta d\rho +\nabla \zeta \cdot dj}\\
 &=\int \int_0^1\lt[\partial_t \zeta (X(t),t) +\nabla \zeta(X(t),t)\cdot \dot{X}(t)\rt] I(|X(t)|<R) dt d\P\\
  &=\int_{\Omega_R} \lt(\zeta(X(t_+),t_+)-\zeta(X(t_-),t_-)\rt) d\P.
 \end{align*}
Now by \eqref{ma2}, $\Omega_{+,R}=\Omega_R\cap \{ X \, |\, |X(1)|\ge R\}$ and thus,
\begin{align*}
 \int_{\Omega_R} \zeta(X(t_+),t_+)d\P&= \int_{\Omega_R} \zeta(X(1),1) I(|X(1)|<R)d\P\\
 &\qquad + \int_{\Omega_R} \zeta(X(t_+),t_+) I(|X(1)|\ge R)d\P\\
 &\stackrel{\eqref{io17}\&\eqref{io03}}{=} \int_{B_R} \zeta_1 d\lambda +\int_{\partial B_R}\int_0^1\zeta d f_{+,R}.
\end{align*}
Analogously,
\[
 \int_{\Omega_R} \zeta(X(t_-),t_-) d\P=\int_{B_R} \zeta_0 d\mu +\int_{\partial B_R}\int_0^1\zeta d f_{-,R},
\]
which concludes the proof of \eqref{locconteq}.\\
Let now $(\tilde \rho,\tilde j)$ be a pair satisfying \eqref{locconteqcomp}. The pair $(\check \rho,\check j):=(\tilde \rho, \tilde j)+ (\rho,j)\restr B_R^c$ 
is then a competitor for \eqref{BBwass}  so that by subadditivity of $\int \frac{1}{\rho}|j|^2$ (which follows from \eqref{dualBB}),
\[
 \int \frac{1}{\rho}|j|^2\le \int \frac{1}{\check \rho}|\check j|^2\le \int \frac{1}{\tilde \rho} |\tilde j|^2 +\int_{B^c_R}\int_0^1 \frac{1}{\rho}|j|^2,
\]
from which \eqref{locmin} follows.
\end{proof}

 \subsubsection{Regularized fluxes and good radius for the approximate orthogonality}
 We now introduce the regularized boundary flux (see Figure \ref{fig:hatg})
 
  \begin{figure}\begin{center}
 \resizebox{7.cm}{!}{\input{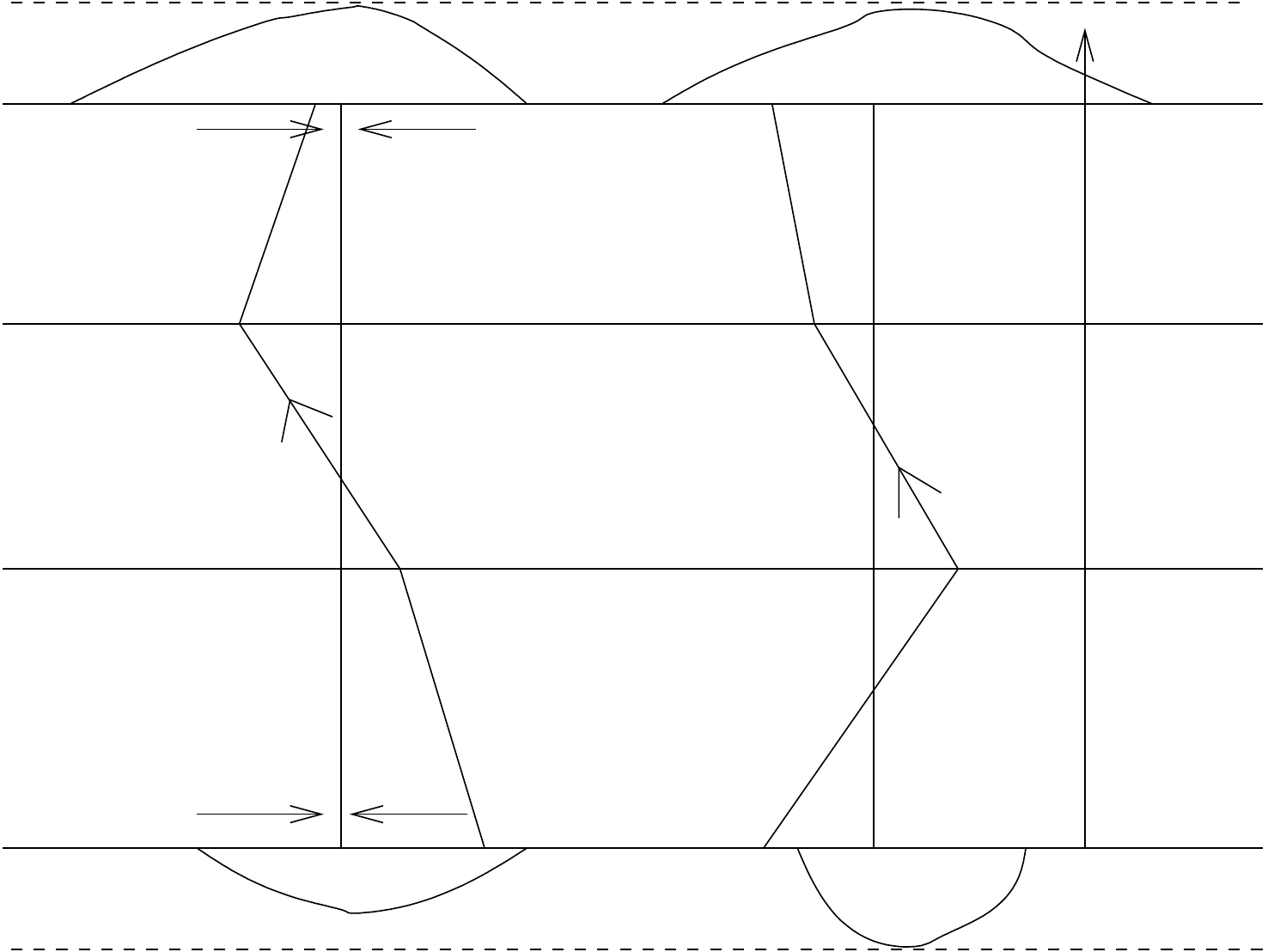_t}} 
   \caption{The regularized fluxes $\hat{g}_{\pm}$.} \label{fig:hatg}
 \end{center}
 \end{figure}

\begin{align}\label{wg97}
\hat g_R:=\hat g_{+,R}-\hat g_{-,R}
\end{align}
where $\hat g_{+,R}\ge 0$ and $\hat g_{-,R}\ge 0$ on $\partial B_R$ are defined through
the projection onto $\partial B_R$ of where these trajectories started and ended,
respectively,
\begin{align}\label{wg78}
\ds\int\zeta d\hat{g}_{+,R}:=\ds\int_{\Omega_{+,R}}\zeta\lt(R\frac{X(-1)}{|X(-1)|}\rt)d\P, \quad
\ds\int\zeta d\hat{g}_{-,R}:=\ds\int_{\Omega_{-,R}}\zeta\lt(R\frac{X(2)}{|X(2)|}\rt)d\P.
\end{align}
From the definition it follows that these measures have densities. We will show that these densities are square integrable and $\hat{g}_{\pm,R}$ are close
to the following  non-negative measures on $\partial B_R$: 
\begin{equation}\label{wg79}
\int\zeta d\bar f_{\pm,R}:=\int_{\Omega_{\pm,R}}\zeta(X(t_\pm))d\P.
\end{equation}
For later reference, we also introduce the notation 
\begin{equation}\label{barf}
 \bar f_R:=\bar f_{+,R}-\bar f_{-,R}.
\end{equation}

\begin{lemma}[Approximation]\label{Lapp} With $M=(E+D)^\frac{1}{d+2}$ we have
\begin{align}
\int_3^4\int_{\partial B_R}\hat{g}_{\pm,R}^2\,  dR\lesssim E+D,\label{wg98}\\
\int_3^4W_{\partial B_R}^2(\hat{g}_{\pm,R},\bar f_{\pm,R})dR\lesssim M(E+D).\label{wg43}
\end{align}
\end{lemma}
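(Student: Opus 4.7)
The plan is to reduce both estimates to integrals of trajectory displacements against $\P$, via Lemma~\ref{2.3} for the $L^2$ bound and an explicit coupling for the Wasserstein bound, and then to average in $R$ via Fubini. First I would introduce the unprojected measure $g_{+,R}$ on $\R^d$ defined by $\int\zeta\,dg_{+,R}=\int_{\Omega_{+,R}}\zeta(X(-1))\,d\P$, so that $\hat g_{+,R}=\Pi_R\#g_{+,R}$. Since $g_{+,R}\le(X(-1))_\#\P=\kappa_\mu\,dx$ with $\kappa_\mu\sim 1$, its density is bounded, and by the $L^\infty$ displacement bound \eqref{ma3} applied to any $X\in\Omega_{+,R}\subset\Omega_R$ we have $|X(-1)-X(t_+)|\les M$, so $\spt g_{+,R}\subset B_{R+CM}\setminus B_{R-CM}$. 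Since $M\ll 1$, Lemma~\ref{2.3} then yields the pointwise-in-$R$ bound
\begin{align*}
\int_{\partial B_R}\hat g_{+,R}^2\les\int|R-|x||\,dg_{+,R}=\int_{\Omega_{+,R}}\bigl||X(t_+)|-|X(-1)|\bigr|\,d\P\le\int_{\Omega_{+,R}}|X(t_+)-X(-1)|\,d\P.
\end{align*}
For the Wasserstein distance $W_{\partial B_R}(\hat g_{+,R},\bar f_{+,R})$, the natural coupling sending $X(t_+)$ to $RX(-1)/|X(-1)|$ has Euclidean cost
\begin{align*}
\bigl|X(t_+)-RX(-1)/|X(-1)|\bigr|\le|X(t_+)-X(-1)|+\bigl||X(-1)|-R\bigr|\le 2|X(t_+)-X(-1)|,
\end{align*}
which is $\ll R$, so the geodesic distance on $\partial B_R$ is comparable to the Euclidean one and
\begin{align*}
W_{\partial B_R}^2(\hat g_{+,R},\bar f_{+,R})\les\int_{\Omega_{+,R}}|X(t_+)-X(-1)|^2\,d\P.
\end{align*}

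To average these bounds in $R$, I would set $A(X):=\{R\in(3,4):X\in\Omega_{+,R}\}$. Since $X\in\Omega_{+,R}$ means $\min_{t\in[0,1]}|X(t)|<R\le|X(1)|$, the set $A(X)$ is contained in an interval of length at most $|X(1)|-\min_t|X(t)|$. Since $X$ is affine on $[0,1]$, $|X(1)-X(t^*)|=(1-t^*)|X(1)-X(0)|\le|X(1)-X(0)|$ for the minimizing $t^*$, hence $|A(X)|\le|X(1)-X(0)|$, and by \eqref{ma3} also $|A(X)|\les M$. Similarly, for $R\in A(X)$, again by affineness, $|X(t_+)-X(-1)|\le|X(t_+)-X(0)|+|X(0)-X(-1)|\le|X(1)-X(0)|+|X(0)-X(-1)|$. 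Combining these through Fubini and Young's inequality,
\begin{align*}
\int_3^4\int_{\partial B_R}\hat g_{+,R}^2\,dR&\les\int|A(X)|\bigl(|X(1)-X(0)|+|X(0)-X(-1)|\bigr)\,d\P\\
&\les\int\bigl(|X(1)-X(0)|^2+|X(0)-X(-1)|^2\bigr)\,d\P\stackrel{\eqref{io16}}{\les}E+D,
\end{align*}
while inserting the sharper bound $|A(X)|\les M$ into the analogous computation for the squared integrand yields \eqref{wg43}.

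The estimates for $\hat g_{-,R}$ and $\bar f_{-,R}$ follow by a symmetric argument with $X(2)$ and $\kappa_\lambda$ replacing $X(-1)$ and $\kappa_\mu$, and $t_-$ replacing $t_+$. The main obstacle, and the reason for routing the argument through the extended piecewise-affine interpolation $\P$, is to simultaneously secure the two hypotheses of Lemma~\ref{2.3}: it is precisely the exact constancy of the profile at $t=-1$ and $t=2$ (and not merely closeness to a constant) that gives the density bound $\sup g_{\pm,R}\les 1$, while the $L^\infty$ displacement bound produces the concentration $\spt g_{\pm,R}\subset B_{R+CM}\setminus B_{R-CM}$.
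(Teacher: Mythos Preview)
Your proof is correct and follows essentially the same approach as the paper: introduce the unprojected measure $g_{+,R}$, apply Lemma~\ref{2.3} using the density bound from $(X(-1))_\#\P=\kappa_\mu\,dx$ and the support bound from \eqref{ma3}, construct the natural coupling for the Wasserstein term, and then average in $R$ via Fubini using that $\int_0^\infty I(X\in\Omega_{+,R})\,dR\le|X(1)-X(0)|$. The only cosmetic differences are that the paper bounds $|X(t_+)-RX(-1)/|X(-1)||$ via the Lipschitz constant of $x\mapsto x/|x|$ on $\{|x|\ge 2\}$ rather than your triangle inequality, and for \eqref{wg43} it pulls the factor $M$ out of $|X(1)-X(0)|$ at the end rather than directly from $|A(X)|$.
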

\begin{proof}
 By symmetry, we can focus on the case with the $+$-sign (exiting trajectories).
We consider the measure $0\le g_{+,R}\le \kappa_\mu$ on $\mathbb{R}^d$ where trajectories in $\Omega_{+,R}$ initially started (so that in particular $\hat{g}_{+,R}= \Pi_R\# g_{+,R}$, where $\Pi_R$ is the projection on $\partial B_R$)
\begin{align}\label{wg41}
\int\zeta dg_{+,R}
=\int_{\Omega_{+,R}}\zeta(X(-1))d\P
\end{align}
and claim that it is concentrated near $\partial B_R$ in the sense that
\begin{align}
\int_3^4\int|R-|x||dg_{+,R}dR\lesssim E+D.\label{wg44}
\end{align}
Then, (\ref{wg98}) would follow from \eqref{estimz}.
\medskip

 By definition (\ref{wg41}) of $g_{+,R}$ and definition 
(\ref{io26}) of $t_+$ on $\Omega_{+,R}$ we have 
\begin{align*}
\int|R-|x||dg_{+,R}=\int_{\Omega_{+,R}}||X(t_+)|-|X(-1)||d\P
\end{align*}
and thus by the triangle inequality and the straightness of trajectories on $(0,1)$
\begin{equation}\label{io12}
\int|R-|x||dg_{+,R}\le\int_{\Omega_{+,R}}(|X(1)-X(0)|+|X(0)-X(-1)|)d\P.
\end{equation}
Furthermore, since by definition \eqref{ma2}, for $X\in\Omega_{+,R}$ we have $|X(1)|\ge R\ge \min_{[0,1]}|X|$ and thus
\begin{align}\label{wg45}
\int_0^\infty I(X\in\Omega_{+,R})dR=|X(1)|-\min_{[0,1]}|X|\le|X(1)-X(0)|.
\end{align}
Combining (\ref{io12}) with (\ref{wg45}) we obtain
\begin{align*}
\lefteqn{\int_3^4\int|R-|x||dg_{+,R}dR}\nonumber\\
&\le\int|X(1)-X(0)|(|X(1)-X(0)|+|X(0)-X(-1)|)d\P.
\end{align*}
By Cauchy-Schwarz and (\ref{io16})
this yields as desired $\int_3^4\int(R-|x|)dg_{+,R}dR\le E+(ED)^\frac{1}{2}$.

\medskip

We now turn to (\ref{wg43}).
It follows from the definitions \eqref{wg78} and \eqref{wg79}  that
\begin{align*}
\int\zeta(x,y)d\pi_R(x,y)=\int_{\Omega_{+,R}}\zeta\lt(X(t_+),R\frac{X(-1)}{|X(-1)|}\rt)d\P
\end{align*}
defines a coupling $\pi_R$ between $\bar f_{+,R}$ and $\hat{g}_{+,R}$ so that in particular
\begin{align*}
W^2_{\partial B_R}(\bar f_{+,R},\hat{g}_{+,R})\les W^2(\bar f_{+,R},\hat{g}_{+,R})
\le\int_{\Omega_{+,R}}\lt|X(t_+)-R\frac{X(-1)}{|X(-1)|}\rt|^2d\P.
\end{align*}
Given $X\in\Omega_{+,R}$ note that since $|X(t_+)|=R$ we have (recall $R\in (3,4)$)
\[\lt|X(t_+)-R\frac{X(-1)}{|X(-1)|}\rt|=R\lt|\frac{X(t_+)}{|X(t_+)|}-\frac{X(-1)}{|X(-1)|}\rt|
\le 4\lt|\frac{X(t_+)}{|X(t_+)|}-\frac{X(-1)}{|X(-1)|}\rt|\]
and  since by \eqref{ma3} and $M\ll 1$,
\[|X(-1)|\ge|X(t_+)|-|X(t_+)-X(0)|-|X(0)-X(-1)|\ge 3-CM\ge 2,\]
we thus get
$|X(t_+)-R\frac{X(-1)}{|X(-1)|}|\lesssim |X(t_+)-X(-1)|$, so that
\begin{align*}
W^2_{\partial B_R}(\bar f_{+,R},\hat{g}_{+,R})\lesssim\int_{\Omega_{+,R}}|X(t_+)-X(-1)|^2d\P.
\end{align*}
By the triangle inequality and the straightness of trajectories $X$ on $(0,1)$ this yields
\begin{align*}
W^2_{\partial B_R}(\bar f_{+,R},\hat{g}_{+,R})
\lesssim\int_{\Omega_{+,R}}(|X(1)-X(0)|^2+|X(0)-X(-1)|^2)d\P.
\end{align*}
By (\ref{wg45}) this upgrades to
\begin{align*}
\lefteqn{\int_3^4W^2_{\partial B_R}(\bar f_{+,R},\hat{g}_{+,R})dR}\nonumber\\
&\lesssim\int_{\Omega_5}|X(1)-X(0)|(|X(1)-X(0)|^2+|X(0)-X(-1)|^2)d\P\nonumber\\
&\stackrel{\eqref{ma3}}{\les} M(E+D).
\end{align*}
\end{proof}
Using Lemma \ref{Lres}, we now show that we can choose $R$ such that $\bar \rho$ (recall \eqref{ma15}) is close to the Lebesgue measure in Wasserstein distance.
\begin{lemma}\label{Lres'}
 We have 
 \begin{equation}\label{wg23'}
 \int_3^4 (W^2_{B_R}(\bar \rho, \kappa_R)+\frac{1}{\kappa_R}(\kappa_R-1)^2)dR\les E+D.
\end{equation}
\end{lemma}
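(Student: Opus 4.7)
The strategy is to apply Lemma~\ref{Lres} to the measure $\bar\rho$ in place of $\mu$, taking the ambient set $O = B_5$. By Lemma~\ref{Lres}, it then suffices to verify the base estimate
\[
\bar D := W_{B_5}^2(\bar\rho, \bar\kappa) + \frac{1}{\bar\kappa}(\bar\kappa - 1)^2 \lesssim E + D, \qquad \bar\kappa := \frac{\bar\rho(B_5)}{|B_5|}.
\]
Under the standing assumption $E+D\ll 1$, this will in particular imply $\bar D \ll 1$, justifying the application of Lemma~\ref{Lres} to $\bar\rho$.

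To estimate $W_{B_5}^2(\bar\rho, \bar\kappa)$, I would exploit the Lagrangian representation $\int \zeta\, d\bar\rho = \int \int_0^1 \zeta(X(t))\, dt\, d\mathbb{P}$ and consider the \emph{effective coupling} $(X, t) \mapsto (X(t), X(-1))$, where $X \sim \mathbb{P}$ and $t$ is uniform in $[0,1]$. Its first marginal is $\bar\rho$ and, by \eqref{io17}, its second marginal agrees with $\kappa_\mu\, dx$ when tested against functions in $C^0_c(B_5)$. Using the affineness $X(t) = (1-t) X(0) + t X(1)$ on $[0,1]$ together with \eqref{io16}, the cost of this coupling is bounded by
\[
\int \int_0^1 |X(t) - X(-1)|^2\, dt\, d\mathbb{P} \le \tfrac{2}{3} \int |X(1)-X(0)|^2\, d\mathbb{P} + 2\int |X(0)-X(-1)|^2\, d\mathbb{P} \lesssim E + D.
\]

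To turn this global bound into $W_{B_5}^2(\bar\rho, \bar\kappa) \lesssim E+D$, which requires matching masses on $B_5$, I would follow the blueprint of the proof of Lemma~\ref{Lres}: introduce exiting trajectories (those with $X(t)\in B_5$ for some $t \in [0,1]$ but $X(-1)\notin B_5$) and entering trajectories (those with $X(-1)\in B_5$ but $X(t)\notin B_5$ throughout $[0,1]$). By Lemma~\ref{Linf}, the mass of both types is concentrated in a $CM$-neighborhood of $\partial B_5$ with $M = (E+D)^{1/(d+2)} \ll 1$. Splitting $W_{B_5}^2$ via triangle inequality and sub-additivity exactly as in \eqref{wg81}--\eqref{wg82}, and estimating the resulting boundary corrections via the elliptic bound \eqref{estim:elliptic} combined with \eqref{estimz}, yields the desired $W_{B_5}^2(\bar\rho, \bar\kappa) \lesssim E+D$. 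The density defect $\frac{1}{\bar\kappa}(\bar\kappa-1)^2$ is controlled similarly: the mass difference $\big|\bar\rho(B_5) - \kappa_\mu |B_5|\big|$ is dominated by the same boundary defect, and $(\kappa_\mu - 1)^2 \lesssim \kappa_\mu D$ by definition of $D$, while $E+D \ll 1$ ensures $\bar\kappa \sim 1$ so that the prefactor $1/\bar\kappa$ is harmless.

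The main obstacle is the delicate boundary bookkeeping at $\partial B_5$, just as in Lemma~\ref{Lres}: carefully matching masses via the $L^\infty$ estimate and running the triangle-inequality/sub-additivity chain so the defects collapse into the form $\int (|x|-5)\, dg_{+}+\int (5-|x|)\, dg_-$ that can be estimated using Lagrangian displacements. The only new ingredient compared to Lemma~\ref{Lres} is recognizing the Lagrangian representation of $\bar\rho$ above and running the argument for the effective trajectories $s\mapsto(1-s)X(t)+sX(-1)$ in place of the original affine trajectories of Lemma~\ref{Lres}.
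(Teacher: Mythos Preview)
Your approach has a genuine gap. To verify the hypothesis $W_{B_5}^2(\bar\rho,\bar\kappa)\lesssim E+D$ at the \emph{fixed} radius $5$, you propose to replicate the boundary bookkeeping of Lemma~\ref{Lres}. But the sharp bound in that lemma hinges on the averaging step~\eqref{ma1}: integrating over $R\in(3,4)$ is precisely what converts the first moment $\int_{\Omega_{\pm,R}}|X(1)-X(0)|\,d\mathbb P$ into the second moment $\int|X(1)-X(0)|^2\,d\mathbb P$. At a single radius this gain is unavailable. Running your argument, the boundary corrections $\int(|x|-5)\,dg_+$ and $\int(5-|x|)\,dg_-$ can only be controlled by Cauchy--Schwarz and the $L^\infty$ bound, yielding at best $M^{1/2}(E+D)^{1/2}$ (or $M^2$), not $E+D$. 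Feeding this weaker $\bar D$ into Lemma~\ref{Lres} produces a super-linear power of $E+D$ on the right-hand side, which does not give~\eqref{wg23'}.

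The paper avoids any fixed-radius estimate. It restricts trajectories to $\{|X(-1)|<5\}$ from the outset, defining $\rho_t'$ so that $(X(t),X(-1))$ is a coupling between $\rho_t'$ and $\kappa_\mu I(B_5)$ with cost $\lesssim E+D$ \emph{without} boundary correction. The $L^\infty$ bound~\eqref{ma3} gives $\rho_t'=\rho_t$ on $B_4$, so applying Lemma~\ref{Lres} (with its averaging intact) to $\rho_t'$ yields the estimate for $\rho_t$ on $B_R$, $R\in(3,4)$. One then integrates in $t$ and uses subadditivity of $W^2$ to pass to $\bar\rho$. The same restriction trick would repair your route: work with $\bar\rho':=\int_0^1\rho_t'\,dt$ rather than attempting to match masses of $\bar\rho$ on $B_5$.
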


\begin{proof}

 From the first item in
(\ref{io17}) and from (\ref{io16}) we have for all $t\in[0,1]$ that 
\begin{align*}
W^2_{B_5}(\rho_t',\kappa_\mu)\le E+D,
\end{align*}
where the measure $0\le\rho_t'\le\rho_t$ is defined via
\begin{align*}
\int\zeta d\rho_t'=\int \zeta(X(t)) I(|X(-1)|<5) d\P.
\end{align*}
By \eqref{ma3}, we obtain from (\ref{io18}) that $\rho_t'=\rho_t$ on $B_4$.
Hence, applying Lemma \ref{Lres} with $\rho_t'$ playing the role of $\mu$
yields (recall \eqref{defkappa})
\begin{align*}
\int_3^4\lt(W_{B_R}^2(\rho_t,\kappa_t)+\frac{1}{\kappa_t}(\kappa_t-1)^2\rt)dR\lesssim E+D,
\end{align*}
which once integrated in time and  thanks to the sub-additivity of $W^2$ gives the claim.
\end{proof}
 \subsubsection{Good radius for the construction}\label{sec:exceptional}
 For the construction part of the proof, we will need to introduce initial/terminal layers of size (multiples of) $\tau\ll1$. 
 We will treat a class of exceptional trajectories separately, namely exiting trajectories that exit
too early and entering trajectories that enter too late:
\begin{align}\label{io01}
\del\Omega_R:=&(\Omega_{+,R}\cap\{t_+\le 3\tau\})\cup
(\Omega_{-,R}\cap\{t_-\ge 1-3\tau\}).
\end{align}
These trajectories will be inserted into our construction of a competitor.

This means that we will have to pass (however only inside the construction part of the proof) to modified versions of the fluxes. We thus introduce the reduced sets of trajectories 
\begin{equation}\label{ma4}
 \Omega'_{+,R}:=\Omega_{+,R}\backslash\, \del\Omega_R, \qquad \Omega'_{-,R}:=\Omega_{-,R}\backslash \, \del\Omega_R
\end{equation}
and then the reduced fluxes on $\partial B_R\times [0,1]$
\begin{equation}\label{io05}
 \int \zeta df'_{\pm,R}= \int_{ \Omega'_{\pm,R}} \zeta (X(t_\pm),t_\pm) d\P  \qquad\textrm{ and} \qquad f'_R:=f'_{+,R}-f'_{-,R}, 
\end{equation}
from which $\bar f'_{\pm,R}$ and $\bar f'_R$ are naturally defined as their time integral. Notice that $f'_R$ is the flux created by all the trajectories which are not in $\del \Omega_R$. We finally introduce the corresponding
(constant in time) regularized fluxes:
\begin{equation}\label{io11}
 \ds \int\zeta d\hat{g}_{+,R}'=\ds\int_{\Omega'_{+,R}}\zeta\lt(R\frac{X(-1)}{|X(-1)|}\rt)d\P, \quad 
\ds\int\zeta d\hat g_{-,R}'=\ds\int_{\Omega'_{-,R}}\zeta\lt(R\frac{X(2)}{|X(2)|}\rt)d\P
\end{equation}
and
\begin{equation}\label{io10}
 \hat{g}'_R:=\hat{g}'_{+,R}-\hat{g}'_{-,R}.
\end{equation}

Arguing as in Lemma \ref{Lapp}, we obtain the following analogous estimates:
\begin{lemma}\label{Lapp'}
 With $M=(E+D)^{\frac{1}{d+2}}$, we have
 \begin{align}
\int_3^4\int_{\partial B_R}(\hat{g}'_{\pm,R})^2\,  dR\lesssim E+D,\label{wg98'}\\
\int_3^4W_{\partial B_R}^2(\hat{g}'_{\pm,R},\bar f'_{\pm,R})dR\lesssim M(E+D).\label{wg43'}
\end{align}
Moreover,
\begin{equation}\label{wg98diff}
 \int_3^4\int_{\partial B_R}(\hat{g}'_{\pm,R}-\hat{g}_{\pm,R})^2\,  dR\lesssim \tau^2 E+D,
\end{equation}
\end{lemma}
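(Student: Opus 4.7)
For \eqref{wg98'} and \eqref{wg43'}, my plan is to mimic the proof of Lemma \ref{Lapp} with the obvious modifications. The inclusion $\Omega'_{\pm,R} \subset \Omega_{\pm,R}$ gives $\hat{g}'_{\pm,R} \le \hat{g}_{\pm,R}$ as positive measures on $\partial B_R$, so \eqref{wg98'} is immediate from \eqref{wg98}. For \eqref{wg43'}, the coupling used to prove \eqref{wg43}, restricted to $\Omega'_{+,R}$, is admissible between $\bar f'_{+,R}$ and $\hat{g}'_{+,R}$ and yields the bound $\les M(E+D)$ by the same calculation.

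The substantive claim is \eqref{wg98diff}, which by symmetry I treat only in the $+$ case. By \eqref{wg78} and \eqref{io11}, the positive measure $\hat{g}_{+,R} - \hat{g}'_{+,R}$ equals $\Pi_R\#g^\flat$, where $\int\zeta dg^\flat := \int_{\Omega_{+,R}\cap\del\Omega_R}\zeta(X(-1))d\P$. Since $g^\flat\le \kappa_\mu dx$ by \eqref{io17} and $\spt g^\flat$ sits within $CM$ of $\partial B_R$ by \eqref{ma3}, Lemma \ref{2.3} yields $\int_{\partial B_R}(\hat{g}_{+,R}-\hat{g}'_{+,R})^2\les \int_{\Omega_{+,R}\cap\del\Omega_R}\bigl||X(-1)|-R\bigr|\,d\P$. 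For $\tau\ll 1$, $\Omega_{+,R}\cap\del\Omega_R$ is the disjoint union (since $t_-<t_+$) of the early-exit set $A(R) := \Omega_{+,R}\cap\{t_+\le 3\tau\}$ and the late-entry set $B(R) := \Omega_{+,R}\cap\Omega_{-,R}\cap\{t_-\ge 1-3\tau\}$. For $A(R)$, straightness on $[0,1]$ gives $|X(t_+)-X(0)|=t_+|X(1)-X(0)|\le 3\tau|X(1)-X(0)|$, whence $\bigl||X(-1)|-R\bigr|\le |X(-1)-X(0)|+3\tau|X(1)-X(0)|$; and since the convex parabola $t\mapsto|X(t)|^2$ equals $R^2$ at $t_+$ and exceeds $R^2$ for $t>t_+$, its minimizer $t^*\in[0,1]$ lies in $[0, t_+]\subset[0,3\tau]$, so $A(R)$ occurs iff $\min_{[0,1]}|X|<R\le|X(3\tau)|$, giving $\int_3^4 I(A(R))dR\le|X(3\tau)|-|X(t^*)|\le 3\tau|X(1)-X(0)|$. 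Swapping the order of integration, Cauchy--Schwarz with \eqref{io16}, and Young's inequality then bound the $A$-contribution by $\tau^2 E + D$.

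The main obstacle is handling $B(R)$. The symmetric parabola argument at the late end yields $\int_3^4 I(B(R))dR\le 3\tau|X(1)-X(0)|$, but the naive bound $|X(t_-)-X(0)|=t_-|X(1)-X(0)|\sim|X(1)-X(0)|$ is only linear in the displacement and loses the desired factor of $\tau$. My plan is to extract a quadratic improvement from the exact factorization
\begin{align*}
|X(t)|^2 - R^2 = (t-t_-)(t-t_+)|X(1)-X(0)|^2, \qquad t\in[0,1],
\end{align*}
which at $t=0$ reads $|X(0)|^2-R^2=t_- t_+|X(1)-X(0)|^2\les |X(1)-X(0)|^2$, so (since $|X(0)|+R\sim 1$) $|X(0)|-R\les|X(1)-X(0)|^2$, yielding $\bigl||X(-1)|-R\bigr|\les |X(-1)-X(0)|+|X(1)-X(0)|^2$. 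Assembling, the $B$-contribution is $\les\tau\sqrt{ED}+\tau\int|X(1)-X(0)|^3 d\P$; the first term is $\les \tau^2 E+D$ by Young's, and the second is $\les \tau M E\les \tau^2 E$ by the $L^\infty$-bound \eqref{wg31} together with \eqref{io16}, provided $\eps(\tau)$ is chosen small enough that $M\les\tau$.
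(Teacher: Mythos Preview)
Your argument is correct. The treatment of \eqref{wg98'} via the pointwise inequality $0\le\hat g'_{\pm,R}\le\hat g_{\pm,R}$ and of \eqref{wg43'} via restriction of the coupling is exactly right (and in fact a bit cleaner than simply ``redo the proof of Lemma~\ref{Lapp}'', which is all the paper says). The ``iff'' in your $A(R)$ sentence is not literally true, but you only use the forward direction, which is fine.

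For the late-entry set $B(R)$ you take a genuinely different route from the paper. The paper extracts the missing factor entirely from the $R$-integration: since $|X(1)|\ge R$, $|X(1-3\tau)|\ge R$, but $\min_{[1-3\tau,1]}|X|<R$, the sagitta-type inequality
\[
\min\{|X(1)|,|X(1-3\tau)|\}-\min_{[1-3\tau,1]}|X|\;\le\;\frac{|X(1)-X(1-3\tau)|^2}{8\min_{[1-3\tau,1]}|X|}
\]
gives $\int_3^4 I(B(R))\,dR\lesssim\tau^2 M\,|X(1)-X(0)|$, after which the crude bound $\bigl||X(-1)|-R\bigr|\le|X(0)-X(-1)|+|X(1)-X(0)|$ suffices. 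You instead keep only the linear bound $\int_3^4 I(B(R))\,dR\le 3\tau|X(1)-X(0)|$ and recover the missing factor from the root factorization $|X(0)|^2-R^2=t_-t_+|X(1)-X(0)|^2$, which yields $\bigl||X(-1)|-R\bigr|\lesssim|X(0)-X(-1)|+|X(1)-X(0)|^2$. Both arguments are valid; the trade-off is that the paper obtains \eqref{wg98diff} under the standing hypothesis $E+D\ll 1$ alone, whereas your cubic term $\tau\int|X(1)-X(0)|^3\,d\P\lesssim\tau M E$ forces the additional smallness $M\lesssim\tau$. Since the lemma is only ever invoked inside Proposition~\ref{prop:eulerian1stepphi}, where $E+D\le\eps(\tau)$ is assumed anyway, this extra constraint is harmless.
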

\begin{proof}
 The proofs of \eqref{wg98'} and \eqref{wg43'} are the same as the proofs of \eqref{wg98} and \eqref{wg43} so that we only need to comment on \eqref{wg98diff}. The proof follows the lines of \eqref{wg98}. 
 We focus once again only on the case with +-sign. Recall the definition \eqref{wg41} of $g_{+,R}$ and  analogously define  $g'_{+,R}$ so that 
 \[
  \int \zeta d(g_{+,R}-g'_{+,R})=\int_{\Omega_{+,R}\cap \del\Omega_R} \zeta(X(-1)) d\P.
 \]
 Since by definition \eqref{io01} of $\del\Omega_R$
 \[
  \Omega_{+,R}\cap \del\Omega_R=\lt(\Omega_{+,R}\cap\{ t_+\le  3 \tau\}\rt) \cup  \lt(\Omega_{+,R}\cap\{ t_-\ge 1-3\tau\}\rt),
 \]
using the piecewise straightness of trajecteries in form of $|X(t_+)-X(0)|\le 3\tau|X(1)-X(0)|$, the analog of \eqref{io12} becomes 
\begin{multline*}
 \int |R-|x|| d(g_{+,R}-g'_{+,R})\le \int_{\Omega_{+,R}\cap\{t_+\le 3\tau \}} (3\tau |X(1)-X(0)|+ |X(0)-X(-1)|) d\P\\
 + \int_{\Omega_{+,R}\cap\{ t_-\ge 1-3\tau\}} (|X(1)-X(0)|+ |X(0)-X(-1)|) d\P.
\end{multline*}
Since  for $X\in \Omega_{+,R}\cap\{t_+\le 3\tau \}$ we have $\min_{[0,3\tau]}|X|<R\le |X(3\tau)|$ and therefore, the analog of \eqref{wg45} for the first term on the right-hand side 
turns into
\[
 \int_0^\infty I(X\in \Omega_{+,R}\cap\{t_+\le  3\tau \})dR \les \tau|X(1)-X(0)|.
\]

We now estimate  the second term on the right-hand side and note that 
for $X\in\Omega_{+,R}\cap\{t_-\ge 1-3\tau\}$ we have by definitions (\ref{ma2})
and (\ref{io26}) 
that $|X(1)|,|X(1-3\tau)|\ge R$ while $\min_{[1-3\tau,1]}|X|<R$. By 
the straightness of trajectories and the $L^\infty$ bound (\ref{ma3}) in form of
\begin{align}\label{as01}
\max_{t\in [1-3\tau,1]}|X(1)-X(t)|\le 3\tau|X(1)-X(0)|\le 3\tau M
\end{align}
in conjunction with $R\ge 3$, we always have $\min_{[1-3\tau,1]}|X|\ge 2$.
Hence the elementary inequality
$\min\{|X(1)|,|X(1-3\tau)|\}$ $-\min_{[1-3\tau,1]}|X|$ 
$\le\frac{|X(1)-X(1-3\tau)|^2}{8\min_{[1-3\tau,1]}|X|}$ implies
\begin{align*}
\int_3^4&I(X\in\Omega_{+,R}\cap\{t_-\ge 1-3\tau\}) dR
\le\frac{1}{16}|X(1)-X(1-3\tau)|^2\nonumber\\
&\stackrel{(\ref{as01})}{\le}\frac{3}{16}\tau^2M|X(1)-X(0)|\ll \tau^2|X(1)-X(0)|,
\end{align*}
(in fact, this contribution will be of higher order).
Summing up, we obtain 
\begin{align*}
\lefteqn{\int_3^4\int(R-|x|)d(g_{+,R}-g_{+,R}')d\mathbb{P}}\nonumber\\
&\lesssim\int\tau|X(1)-X(0)|(\tau|X(1)-X(0)|+|X(0)-X(-1)|)d\mathbb{P}\\
&+\int\tau^2|X(1)-X(0)|(|X(1)-X(0)|+|X(0)-X(-1)|)d\mathbb{P},
\end{align*}
so that (\ref{wg98diff}) follows from Young's inequality in form
of $2\tau|X(1)-X(0)||X(0)-X(-1)|$ $\le\tau^2|X(1)-X(0)|^2+|X(0)-X(-1)|^2$ and \eqref{io16}.
\end{proof}
 Define the sets of generic trajectories (recall \eqref{OmR} and \eqref{io01}) that on $[0,1]$ start in $B_R$ and do not exit too early or that end in $B_R$ and do not enter too late, respectively,  
  \begin{align}\label{ma5}
\begin{array}{ccc}
 \Omega_{0,R}&:=&(\Omega_R\backslash \, \del \Omega_R)\cap \{ |X(0)|<R\}=\{ |X(0)|<R \textrm{ and } t_+> 3\tau\},\\[6pt]
  \Omega_{1,R}&:=&(\Omega_R\backslash \, \del \Omega_R)\cap \{ |X(1)|<R\}=\{|X(1)|<R \textrm{ and } t_-< 1-3\tau\}.
\end{array}
  \end{align}
   In line with \eqref{io17}, we define the measures in $B_R$
  \begin{equation}\label{io07bis}
\ds\int\zeta d\mu'_R=\ds\int_{\Omega_{0,R}}\zeta(X(0))d\P, \qquad 
\ds\int\zeta d\lambda'_R=\ds\int_{\Omega_{1,R}}\zeta(X(1))d\P,
\end{equation}
which correspond to where the trajectories which are not in $\del\Omega_R$ start and end provided they start or end in $B_R$. An important step in the construction is to connect over a time $\tau$ the measure $0\le \mu'_R\le \mu$ 
to a constant $\kappa_{\mu'_R}$ and the measure $0\le \lambda'_R\le \lambda$ to another constant $\kappa_{\lambda'_R}$.
\begin{lemma}\label{lem:connectboundarylayer}
 We have
 \begin{multline}\label{ma6}
  \int_3^4 \lt(W^2_{B_R}(\mu'_R, \kappa_{\mu'_R})+\frac{1}{\kappa_{\mu'_R}}(\kappa_{\mu'_R}-1)^2+ W^2_{B_R}(\lambda'_R, \kappa_{\lambda'_R})+\frac{1}{\kappa_{\lambda'_R}}(\kappa_{\lambda'_R}-1)^2\rt) \, dR\\
  \les \tau^2E +D.
 \end{multline}
\end{lemma}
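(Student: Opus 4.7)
The plan is to adapt the proof of Lemma~\ref{Lres} to the measure $\mu'_R$. By symmetry, the $\lambda'_R$ part will follow from the analogous construction with the trajectories on $[1,2]$ playing the role of those on $[-1,0]$. Observe that $\mu'_R=\mu\restr B_R-\delta\mu_R$, where by \eqref{io07bis} and \eqref{io01}, $\delta\mu_R$ is the image under $X(0)$ of the bad trajectories $\Omega_{+,R}\cap\{|X(0)|<R,\,t_+\le 3\tau\}$. Piecewise linearity of $X$ on $[0,1]$ together with $|X(t_+)|=R$ gives the geometric constraint
\begin{equation*}
R-|X(0)|\le|X(t_+)-X(0)|\le 3\tau|X(1)-X(0)|,
\end{equation*}
so $\delta\mu_R$ is concentrated in a thin annulus near $\partial B_R$. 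Set $c_R:=\delta\mu_R(B_R)/|B_R|$, so that $\kappa_{\mu'_R}=\kappa_R-c_R$.

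The key step is to establish integrated moment estimates by Fubini. For fixed $X$ with $v:=|X(1)-X(0)|$, the set of $R\in(3,4)$ at which $X$ is bad is contained in $(|X(0)|,|X(0)|+3\tau v]$, so $\int_3^4(R-|X(0)|)\,I(X\text{ bad at }R)\,dR\le\frac{9}{2}\tau^2 v^2$. Integrating against $\mathbb{P}$ and using \eqref{io16} gives
\begin{equation*}
\int_3^4\int(R-|x|)\,d\delta\mu_R\,dR\les\tau^2 E.
\end{equation*}
Introducing $g_{+,R}$, the image under $X(-1)$ of the bad trajectories (which satisfies $g_{+,R}\le\kappa_\mu\,dx$), the triangle inequality $|R-|X(-1)||\le|R-|X(0)||+|X(0)-X(-1)|$ combined with Cauchy--Schwarz, Young's inequality and $\int|X(-1)-X(0)|^2\,d\mathbb{P}\les D|B_6|$ yields the analogous bound $\int_3^4\int|R-|x||\,dg_{+,R}\,dR\les\tau^2 E+D$. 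Since $g_{+,R}$ has density bounded by $\kappa_\mu\les 1$, Lemma~\ref{2.3} gives $c_R^2\les\int|R-|x||\,dg_{+,R}$, whence $\int_3^4 c_R^2\,dR\les\tau^2 E+D$. Together with Lemma~\ref{Lres} applied directly to $\mu$ via the $[-1,0]$ trajectories (which provides $\int_3^4(W_{B_R}^2(\mu,\kappa_R)+\frac{1}{\kappa_R}(\kappa_R-1)^2)\,dR\les D$), the inequality $|\kappa_{\mu'_R}-1|\le|\kappa_R-1|+c_R$ handles the constant-density terms.

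For $W_{B_R}^2(\mu'_R,\kappa_{\mu'_R})$ I would reweight the optimal plan $\Pi$ for $W_{B_R}^2(\mu,\kappa_R)$ by $d\mu'_R/d\mu\le 1$, producing a coupling of $\mu'_R$ with some $\tilde\nu\le\kappa_R\,dx\restr B_R$ of mass $\kappa_{\mu'_R}|B_R|$ at cost $\le W_{B_R}^2(\mu,\kappa_R)$. It then remains to transport $\tilde\nu$ to $\kappa_{\mu'_R}\,dx\restr B_R$ at cost $\les\int|R-|x||\,dg_{+,R}$; this is achieved by a Benamou--Brenier construction modelled on \eqref{wg93}, using a positive carrier to prevent the density from vanishing, projecting $g_{+,R}$ onto $\partial B_R$, and invoking Lemma~\ref{lem:elliptic} together with the bound $\int_{\partial B_R}\widehat{g_{+,R}}^2\les\int|R-|x||\,dg_{+,R}$ from Lemma~\ref{2.3}. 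The triangle inequality and integration over $R\in(3,4)$ then close the argument. The main obstacle lies in this last step: the naive linear interpolation between $\tilde\nu$ and the uniform target vanishes in the interior at $t=0$, so one must use the carrier trick of Lemma~\ref{Lres} and track the mass balance carefully in order not to lose a factor of $M=(E+D)^{1/(d+2)}$, which would produce the weaker bound $\tau M+D$ in place of $\tau^2 E+D$.
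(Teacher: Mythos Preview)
Your Fubini computation giving $\int_3^4\int(R-|x|)\,d\delta\mu_R\,dR\lesssim\tau^2 E$ is clean and is the heart of the $\tau^2$ gain; together with the triangle inequality to pass to the $X(-1)$--moment, Young's inequality, and Lemmas~\ref{2.3} and~\ref{Lres}, your treatment of the $(\kappa_{\mu'_R}-1)^2$ term is complete and essentially coincides with the paper's Steps~1--2.

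The gap is in the Wasserstein term. After reweighting the abstract \emph{optimal} plan $\Pi$ for $W_{B_R}^2(\mu,\kappa_R)$, the residual measure $\kappa_R\,dx-\tilde\nu$ is the $\Pi$--pushforward of $\delta\mu_R$. This has no relation to your $g_{+,R}$, which is the $X(-1)$--image of $\delta\mu_R$ under the \emph{trajectory} coupling $\mathbb{P}$. Since you have no geometric control on where $\Pi$ sends $\delta\mu_R$ inside $B_R$ (it could land far from $\partial B_R$), Lemma~\ref{lem:elliptic}---which requires the source term to sit in a thin annulus---cannot be invoked, and the claimed bound $W_{B_R}^2(\tilde\nu,\kappa_{\mu'_R})\lesssim\int|R-|x||\,dg_{+,R}$ is unjustified. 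The carrier-density issue you flag is real but secondary; the primary obstacle is that two unrelated couplings are being conflated.

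The paper avoids this by never leaving the trajectory structure: it introduces $g_{0,R}$, the $X(-1)$--image of the \emph{good} trajectories $\Omega_{0,R}$, couples $\mu'_R$ to $g_{0,R}$ directly via $\mathbb{P}$ at cost $\le D^{1/2}$, radially projects $g_{0,R}\restr B_R^c$ onto $\partial B_R$, and only then runs the Benamou--Brenier plus Lemma~\ref{lem:elliptic} argument---which now applies because $g_{0,R}\le\kappa_\mu\,dx$ has bounded density and its deviation from $\kappa_\mu I(B_R)$ is concentrated near $\partial B_R$ by Step~1. Your argument can be repaired by replacing the abstract $\Pi$ with the trajectory coupling $(X(0),X(-1))$ restricted to $\Omega_{0,R}$, at which point it essentially becomes the paper's proof.
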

\begin{proof}
By symmetry, we may restrict to $\mu_R'$. 
The statement follows from combining the outcome of the four steps below.

\medskip

{\sc Step 1}. Introducing the measure $0\le g_{0,R}\le\kappa_{\mu}$ that captures where
the generic trajectories $X\in\Omega_{0,R}$ start at time $t=-1$, that is,
\begin{align}\label{wg58}
\int\zeta dg_{0,R}=\int_{\Omega_{0,R}}\zeta(X(-1))d\mathbb{P},
\end{align}
we claim that its deviation from the characteristic function $\kappa_{\mu}I(B_R)$ is small
(on average in $R\in(3,4)$):
\begin{align}\label{wg51}
\int_3^4\int_{B_R}&(R-|x|)d(\kappa_{\mu}-g_{0,R})+\int_{B_R^c}(|x|-R)dg_{0,R}dR
\nonumber\\
&\lesssim\tau^2 E+D.
\end{align}
The argument is a refinement of (\ref{wg44}) in Lemma \ref{Lapp}. 
By $\Omega_{0,R}\subset\{|X(0)|<R\}$, cf.~(\ref{ma5}), we have on the one hand
\begin{align*}
\int_{B_R^c}(|x|-R)dg_{0,R}\le \int_{|X(0)|<R\le|X(-1)|}|X(-1)|-|X(0)|d\mathbb{P}.
\end{align*}
Note that by (\ref{io17}) and (\ref{wg58}), we have on the other hand
\begin{align*}
\int_{B_R}(R-|x|)d(\kappa_{\mu}-g_{0,R})
=\int_{\Omega_{0,R}^c}(R-|X(-1)|)_+d\mathbb{P}
\end{align*}
and thus, since $\Omega_{0,R}^c$ $\subset\{|X(0)|\ge R\}\cup\{t_+\le3\tau\}$
$\subset\{\max(|X(0)|,|X(3\tau)|)\ge R\}$, cf.~(\ref{ma5})
and (\ref{io26}),
\begin{align*}
\lefteqn{\int_{B_R}(R-|x|)d(\kappa_{\mu}-g_{0,R})}\nonumber\\
&\le\int_{|X(-1)|<R\le\max(|X(0)|,|X(3\tau)|)}\max(|X(0)|,|X(3\tau)|)-|X(-1)|d\mathbb{P}.
\end{align*}
Integrating these two estimates  in $R$ and using 
 the straightness of $X$ on $[0,1]$ 
in form of 
\[\max(|X(0)|,|X(3\tau)|)-|X(-1)| 
\le 3\tau|X(1)-X(0)|+|X(0)-X(-1)|,\]
we obtain \eqref{wg51}.

\medskip

{\sc Step 2}. We claim that the total mass density $\kappa_{\mu_R'}=\frac{\mu_R'(B_R)}{|B_R|}$ is
close to $1$ on average: 
\begin{align}\label{ma7}
\int_3^4\frac{1}{\kappa_{\mu_R'}}(\kappa_{\mu_R'}-1)^2dR\lesssim\tau^2E+D.
\end{align}
Indeed, from definitions (\ref{io07bis}) and (\ref{wg58}) we obtain
$\mu_R'(B_R)$ $=\mathbb{P}(\Omega_{0,R})$ $=\int dg_{0,R}$ which we write as
\begin{align*}
|B_R|(\kappa_{\mu_R'}-\kappa_\mu)=\int_{B_R}(g_{0,R}-\kappa_\mu)+\int_{B_R^c}g_{0,R}.
\end{align*}
We now appeal to (\ref{estimz}) in Lemma \ref{2.3}, and use
$0\le g_{0,R}\le\kappa_\mu\sim 1$ to obtain
\begin{align*}
(\kappa_{\mu_R'}-\kappa_\mu)^2
\lesssim 
\int_{B_R}(R-|x|)(\kappa_{\mu}-g_{0,R})+\int_{B_R^c}(|x|-R)g_{0,R}.
\end{align*}
Thus (\ref{ma7}) follows from (\ref{wg51}), $\frac{1}{\kappa_\mu}(\kappa_\mu-1)^2\le D\ll 1$ and the fact that by \eqref{ma3}
\begin{equation}\label{kappamuprime}
 \kappa_{\mu_R'}\sim 1 \qquad \forall R\in (3,4).
\end{equation}

\medskip

{\sc Step 3}. We claim the inequality
\begin{align}\label{as05}
\lefteqn{W_{B_R}(\mu_R',\kappa_{\mu_R'})}\nonumber\\
&\le D^\frac{1}{2}
+\lt(M\int_{B_R^c}(|x|-R)dg_{0,R}\rt)^\frac{1}{2}
+W_{\bar B_R}(g_{0,R}+\hat g_{0,R},\kappa_{\mu_R'}),
\end{align}
where in line with (\ref{io11}), $\hat g_{0,R}$ is defined as the projection
of $g_{0,R}$ from $B_R^c$ onto $\partial B_R$, that is,
\begin{align}\label{as06}
\int\zeta d\hat g_{0,R}=\int_{B_R^c}\zeta\lt(R\frac{x}{|x|}\rt) dg_{0,R}.
\end{align}
Indeed, we start from the triangle inequality
\begin{align*}
\lefteqn{W_{B_R}(\mu_R',\kappa_{\mu_R'})}\nonumber\\
&\le W(\mu_R',g_{0,R})+W(g_{0,R},g_{0,R}I(B_R)+\hat g_{0,R})
+W_{\bar B_R}(g_{0,R}+\hat g_{0,R},\kappa_{\mu_R'}).
\end{align*}
Clearly, by the definitions (\ref{io07bis}) and (\ref{wg58}), it follows from
(\ref{io16}) that  $ W(\mu_R',g_{0,R})\le D^\frac{1}{2}$. By monotonicity,
 $W(g_{0,R},g_{0,R}I(B_R)+\hat g_{0,R})\le W(g_{0,R}I(B_R^c),\hat g_{0,R})$; in view of
definition (\ref{as06}) and a purely radial coupling we obtain
$W^2(g_{0,R}I(B_R^c),\hat g_{0,R})$ $\le \int_{B_R^c}(|x|-R)^2 dg_{0,R}$; 
by definition (\ref{wg58}) and the $L^\infty$-bound (\ref{ma3}), 
$g_{0,R}$ is supported in $B_{R+M}$, so that
we obtain the claimed inequality on this term, too.

\medskip

{\sc Step 4}. We claim that the last term in (\ref{as05}) is controlled as follows
\begin{align*}
\lefteqn{W_{\bar B_R}^2(g_{0,R}+\hat g_{0,R},\kappa_{\mu_R'})}\nonumber\\
&\lesssim
\int_{B_R}(R-|x|)d(\kappa_{\mu}-g_{0,R})+\int_{B_R^c}(|x|-R)dg_{0,R}.
\end{align*}
We follow the argument from Lemma \ref{Lres}. We first appeal to (\ref{wg84})
in Lemma \ref{2.7} to reduce to an estimate of
$W_{\bar B_R}^2(g_{0,R}+\hat g_{0,R}+\kappa_{\mu_R'},2\kappa_{\mu_R'})$.
We then argue as for (\ref{wg93}) to reduce further to an estimate of
$\frac{1}{\kappa_{\mu_R'}}\int_{B_R}|\nabla\vp|^2$, where $\nabla\vp$ is determined through
\begin{align*}
\Delta \vp=c-g_{0,R}\ \textrm{ in } B_R
\qquad\mbox{ and }\qquad\nu\cdot\nabla\vp=\hat g_{0,R}\ \textrm{ on } \partial B_R,
\end{align*}
and $c$ is such that the problem is solvable. By (\ref{kappamuprime})  we just need to control $\int_{B_R}|\nabla\vp|^2$, for
which we  appeal to the energy estimate (\ref{estim:elliptic})
in conjunction with (\ref{estimz}).
\end{proof}

\subsubsection{Choice of the good radius and definition of $\Phi$ and $\phi$}
We can finally choose a good radius. We refer to \eqref{ma15} for the definition of $\bar \rho$, to \eqref{wg78} for the definition of $\hat{g}_{\pm,R}$, to \eqref{wg79} for the definition of $\bar f_{\pm,R}$,  to \eqref{io01} for the definition of $\del\Omega_R$, to \eqref{io05} for the definition of $\bar f'_{\pm,R}$,
to \eqref{io11} for the definition of $\hat{g}'_{\pm,R}$,  and to \eqref{io07bis} for the definitions of $\mu'_R$ and $\lambda'_R$. 
\begin{proposition}\label{prop:goodR}
 Let $\pi$ be an optimal coupling between $\mu$ and $\lambda$ such that $E+D\ll1$. Recall the definition $M= (E+D)^{\frac{1}{d+2}}$. Then, there exists a set of radii $R\in (3,4)$ of positive measure such that  we have on the one hand:
 \begin{align}
 & W^2_{B_R}(\mu,\kappa_{\mu,R})+W^2_{B_R}(\lambda, \kappa_{\lambda,R})\les D, \label{wg21}\\
&  W_{B_R}^2\lt(\bar \rho ,\kappa\rt)+\frac{1}{\kappa}(\kappa-1)^2\les E+D, \label{wg23}\\
&  W_{\partial B_R}^2(\bar f_{\pm,R}, \hat{g}_{\pm,R})\les M(E+D), \label{ma9}\\
 & \int_{\partial B_R} \hat{g}_{\pm,R}^2 \les E+D \label{ma10}
 \end{align}
and on the other hand:
\begin{align}
&\int_{\partial B_R}(\hat{g}'_{\pm,R})^2\lesssim E+D,\label{ma11}\\
&W_{\partial B_R}^2(\hat{g}'_{\pm,R},\bar f'_{\pm,R})\lesssim M(E+D),\label{ma12}\\
&\int_{\partial B_R}(\hat{g}'_{\pm,R}-\hat{g}_{\pm})^2 \lesssim \tau^2 E +D,\label{ma13}\\
&  W^2_{B_R}(\mu'_R, \kappa_{\mu'_R})+\frac{1}{\kappa_{\mu'_R}}(\kappa_{\mu'_R}-1)^2+ W^2_{B_R}(\lambda'_R, \kappa_{\lambda'_R})+\frac{1}{\kappa_{\lambda'_R}}(\kappa_{\lambda'_R}-1)^2 \nonumber \\ & \qquad \les \tau^2E +D.\label{ma14}
\end{align}
\end{proposition}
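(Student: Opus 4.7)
The plan is to assemble the claim by a simple pigeonhole/Markov argument from the averaged estimates already established in Lemmas \ref{Lres}, \ref{Lres'}, \ref{Lapp}, \ref{Lapp'}, and \ref{lem:connectboundarylayer}. Each of these results provides a bound of the form $\int_3^4 X_i(R)\,dR \lesssim B_i$, where $B_i$ is one of $D$, $E+D$, $M(E+D)$, or $\tau^2 E+D$, and $X_i(R)$ is the nonnegative quantity appearing on the left-hand side of one of \eqref{wg21}--\eqref{ma14}.

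First, I would list the relevant averaged inequalities. For \eqref{wg21}, I would apply Lemma \ref{Lres} separately to $\mu$ and $\lambda$ (noting that by $\eqref{ma3}$ and $D\ll 1$, the hypotheses are satisfied on $B_5$), obtaining $\int_3^4 [W^2_{B_R}(\mu,\kappa_{\mu,R})+W^2_{B_R}(\lambda,\kappa_{\lambda,R})]\,dR\lesssim D$. The bound \eqref{wg23} is exactly Lemma \ref{Lres'}. The bounds \eqref{ma9}--\eqref{ma10} come from Lemma \ref{Lapp}. The bounds \eqref{ma11}--\eqref{ma13} are contained in Lemma \ref{Lapp'}. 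Finally, \eqref{ma14} is the content of Lemma \ref{lem:connectboundarylayer}.

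Second, I would conclude by a union bound. For each inequality $\int_3^4 X_i(R)\,dR\le C_i B_i$ (with $C_i$ absolute), Markov's inequality gives
\begin{equation*}
\lt|\lt\{R\in(3,4)\ :\ X_i(R)>K C_i B_i\rt\}\rt|\le \frac{1}{K}.
\end{equation*}
Since there are only finitely many (and universally many) such inequalities to enforce, choosing $K$ to be a sufficiently large absolute constant ensures that the intersection of the complementary sets has positive measure; on this intersection, every $X_i(R)$ is bounded by a universal multiple of $B_i$, which is precisely the statement of \eqref{wg21}--\eqref{ma14}. One can in addition restrict to radii with $\mu(\partial B_R)=\lambda(\partial B_R)=|\bar j|(\partial B_R)=0$, as this excludes at most countably many $R$.

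There is no real obstacle beyond keeping track of constants: all the hard work has already been done in establishing the averaged estimates, in particular the careful localization argument of Lemma \ref{Lres} and the separation of exceptional trajectories in Lemmas \ref{Lapp'} and \ref{lem:connectboundarylayer}. The only mild subtlety is that the bounds have genuinely different right-hand sides ($D$, $E+D$, $M(E+D)$, $\tau^2 E+D$); since we want each estimate to hold with its own natural right-hand side rather than merge them, Markov must be applied to each one separately, which is what the pigeonhole argument above accomplishes.
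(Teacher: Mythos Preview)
Your proposal is correct and matches the paper's own proof essentially verbatim: the paper simply collects \eqref{wg21} from Lemma~\ref{Lres} applied to $\mu$ and $\lambda$, \eqref{wg23} from Lemma~\ref{Lres'}, \eqref{ma9}--\eqref{ma10} from Lemma~\ref{Lapp}, \eqref{ma11}--\eqref{ma13} from Lemma~\ref{Lapp'}, and \eqref{ma14} from Lemma~\ref{lem:connectboundarylayer}, and then selects a common good radius. Your explicit Markov/union-bound justification makes the implicit pigeonhole step in the paper precise.
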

\begin{proof}
 Estimate \eqref{wg21} follows from an application of Lemma \ref{Lres} both to $\mu$ and $\lambda$; \eqref{wg23} is obtained from \eqref{wg23'} of Lemma \ref{Lres'};
 \eqref{ma9} and \eqref{ma10} follow from Lemma \ref{Lapp};  \eqref{ma11}, \eqref{ma12} and \eqref{ma13} are a by-product of Lemma \ref{Lapp'} and \eqref{ma14} follows from Lemma \ref{lem:connectboundarylayer}. 
 \end{proof}
From now  on we fix a radius $R\in (3,4)$ satisfying all the conditions of Proposition \ref{prop:goodR}.\\
\textbf{Convention:} we drop the notational dependence in $R$ when it does not lead to confusion (and write in particular $f$ for $f_R$, $\hat{g}$ for $\hat{g}_R$ etc...).\\

 We recall the definition of $\Phi$ from \eqref{ma89}, the solution to the Poisson equation with  non-regularized flux boundary conditions (recall the definition \eqref{barf} of $\bar f$)
 \begin{equation}\label{defPhi}
  \Delta \Phi= c \textrm{ in } B_R \qquad \nu\cdot\nabla \Phi= \bar{f} \textrm{ on } \partial B_R 
 \end{equation}
and define the function $\phi$, which is the solution to the same Poisson equation but with regularized flux boundary conditions
\begin{equation}\label{defphi}
  \Delta \phi= c \textrm{ in } B_R \qquad \nu\cdot\nabla \phi= \hat{g} \textrm{ on } \partial B_R. 
 \end{equation}
We normalize $\Phi$ and $\phi$ by imposing $\int_{B_R}\Phi=\int_{B_R} \phi=0$. As already explained, we will first prove the harmonic approximation result with the function $\phi$ and then prove that we can replace it by the function $\Phi$.  
 \subsection{Eulerian version of the harmonic approximation result}
 The aim of this section is to prove the Eulerian version of the harmonic approximation result, that is Proposition \ref{eulerian1stepintro}, which we recall for the reader's convenience:
\begin{prop1.6}
 For every $0<\tau\ll1$, there exist positive constants $\eps(\tau)>0$ and $C(\tau)>0$ such that if $E+D\le \eps(\tau)$, then there exists $R\in(3,4)$ such that if $\Phi$ is defined through \eqref{defPhi},
 \begin{equation}\label{ma36}
  \int_{B_2}\int_0^1\frac{1}{\rho} |j-\rho \nabla \Phi|^2\le \tau E+C(\tau) D.
 \end{equation}
 Moreover,
 \begin{equation}\label{estimPhi}
  \sup_{B_2} |\nabla^2 \Phi|^2+ \sup_{B_2} |\nabla \Phi|^2 \les E+D.
 \end{equation}

\end{prop1.6}

 Most of the work will be devoted to the proof of the analog statement but with $\Phi$ replaced by $\phi$.
 \begin{proposition}\label{prop:eulerian1stepphi}
 For every $0<\tau\ll1$, there exist positive constants $\eps(\tau)>0$ and $C(\tau)>0$ such that if $E+D\le \eps(\tau)$, then there exists $R\in(3,4)$ such that if $\phi$ is defined through \eqref{defphi},
 \begin{equation}\label{ma16}
  \int_{B_2}\int_0^1\frac{1}{\rho} |j-\rho \nabla \phi|^2\le \tau E+C(\tau) D.
 \end{equation}
\end{proposition}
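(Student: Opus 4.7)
The plan is to deduce Proposition 3.2 by combining three ingredients, as outlined in the introduction. First, Lemma 2.1 tells us that $(\rho,j)$ minimizes $\int\int \tfrac{1}{\rho}|j|^2$ among density-flux pairs on $B_R\times(0,1)$ with marginals $\mu,\lambda$ and inner lateral trace $f_R$. Second, one establishes an approximate orthogonality identity of the form
\begin{equation*}
\int_{B_2}\int_0^1 \frac{1}{\rho}|j-\rho\nabla\phi|^2 \;\le\; \int_{B_R}\int_0^1 \frac{1}{\rho}|j|^2 - \int_{B_R}|\nabla\phi|^2 + \tau E + C(\tau)D.
\end{equation*}
Third, one constructs an admissible competitor $(\tilde\rho,\tilde j)$ with action at most $\int_{B_R}|\nabla\phi|^2 + \tau E + C(\tau)D$. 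Chaining these via the minimality inequality \eqref{locmin} immediately yields \eqref{ma16}.

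For the orthogonality step, the issue is that $\hat g$ is only $L^2$ on $\partial B_R$, so $\nabla\phi$ may not be bounded. The remedy is to further convolve $\hat g$ at scale $r\ll 1$ on $\partial B_R$ to obtain $\hat g_r$, and let $\phi^r$ solve the corresponding Neumann problem. By Lemma 2.5, $\sup_{B_R}|\nabla\phi^r|$ is controlled by $r^{-(d-1)/2}(E+D)^{1/2}$, $r\sup|\nabla^2\phi^r|$ by the same quantity, and $\sup_{B_2}|\nabla(\phi^r-\phi)|\les r(E+D)^{1/2}$. Expanding the square and integrating by parts via the continuity equation \eqref{locconteq} together with $-\Delta\phi^r=-c$ gives
\begin{align*}
\int_{B_R}\int_0^1 \tfrac{1}{\rho}|j-\rho\nabla\phi^r|^2 ={}& \int_{B_R}\int_0^1 \tfrac{1}{\rho}|j|^2 - \int_{B_R}|\nabla\phi^r|^2 \\
&+ 2\int_{B_R}\phi^r\,d(\mu-\lambda) + 2\int_{\partial B_R}\phi^r\,d(\bar f-\hat g_r) \\
&+ \int_{B_R}|\nabla\phi^r|^2\,(d\bar\rho - dx).
\end{align*}
The first error is bounded via $W^2(\mu,\lambda)\les D$ and $\sup|\nabla\phi^r|$; the second via $W^2_{\partial B_R}(\bar f,\hat g)\les M(E+D)$ from \eqref{ma9} combined with tangential Lipschitz regularity of $\phi^r$; the third via \eqref{wg23} together with the Lipschitz bound on $|\nabla\phi^r|^2$ from \eqref{wg03}. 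One then passes from $\phi^r$ to $\phi$ in $B_2$ using \eqref{wg17}, which produces the error $\tau E + C(\tau)D$ after choosing $r$ appropriately in terms of $\tau$.

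For the competitor, fix $\tau\ll 1$ and construct $(\tilde\rho,\tilde j)$ separately on $(0,\tau)$, $(\tau,1-\tau)$, $(1-\tau,1)$, simply re-inserting the exceptional trajectories of $\del\Omega_R$ into the competitor at essentially no extra cost thanks to \eqref{ma88}. In the bulk, set $\tilde\rho_t:=\tfrac{t-\tau}{1-2\tau}\kappa_{\lambda'_R}+\tfrac{1-\tau-t}{1-2\tau}\kappa_{\mu'_R}$ and $\tilde j_t:=\tfrac{1}{1-2\tau}\nabla\phi$, which satisfies the continuity equation with flux $\hat g$ on $\partial B_R$ and has action $\int_{B_R}|\nabla\phi|^2 + O(\tau)(E+D)$ since $\kappa_{\mu'_R},\kappa_{\lambda'_R}\sim 1$. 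The two time layers connect $\mu'_R$ to $\kappa_{\mu'_R}$ and $\kappa_{\lambda'_R}$ to $\lambda'_R$ at cost $\tfrac{1}{\tau}(\tau^2 E+D) = \tau E+\tfrac{D}{\tau}$ via \eqref{ma14}. Finally, the boundary flux is corrected from $\hat g$ to $f_R$ by applying Lemma 2.8 in a thin annulus using the split $\tilde g$ described in the introduction, paying $\tfrac{1}{\tau}W^2_{\partial B_R}(\bar f',\hat g')\les\tfrac{M}{\tau}(E+D)$ via \eqref{ma12}, which is absorbed in $\tau E+C(\tau)D$ once $M\ll 1$.

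The main obstacle is the third error term $\int_{B_R}|\nabla\phi^r|^2(d\bar\rho-dx)$ in the orthogonality identity. Because $\bar\rho$ is only a measure, one cannot invoke an $L^\infty$ bound on $\rho$ as in \cite{GO}, which relied on McCann displacement convexity that is unavailable between arbitrary marginals. Instead one must exploit the Wasserstein-closeness \eqref{wg23} combined with the Lipschitz bound on $|\nabla\phi^r|^2$ inherited from \eqref{wg03}. This forces a delicate calibration of $r$: small enough that $\phi^r\approx\phi$ in $B_2$ via \eqref{wg17}, yet large enough that $|\nabla\phi^r|^2$ is Lipschitz at scale $r$, ultimately tying $r$ to a power of $\tau$ in the final bound.
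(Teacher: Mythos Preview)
Your proposal is correct and follows essentially the same approach as the paper: combine the approximate orthogonality (Lemma~1.7), the competitor construction (Lemma~1.8), and the local minimality \eqref{locmin}, with the parameter choice $r=\tau$. One minor imprecision: bounding $\int_{B_R}\phi^r\,d(\mu-\lambda)$ via ``$W^2(\mu,\lambda)\lesssim D$'' is ill-posed since $\mu\!\restriction_{B_R}$ and $\lambda\!\restriction_{B_R}$ need not have equal mass; the paper instead uses $\int_{B_R}\phi^r=0$ to split this term as $\int_{B_R}\phi^r(d\mu-\kappa_{\mu,R}dx)-\int_{B_R}\phi^r(d\lambda-\kappa_{\lambda,R}dx)$ and controls each piece via \eqref{wg80} and \eqref{wg21}, and similarly in the construction it works with $\phi'$ (boundary flux $\hat g'$) rather than $\phi$ and only passes back to $\phi$ at the end via \eqref{ma13}.
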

\begin{proof}
           The proof is a direct combination of the orthogonality Lemma \ref{Lortintro} and the construction Lemma \ref{Lconintro} which are contained in the next two subsections  together with \eqref{locmin} and the choice of parameters $r=\tau$.       
                 \end{proof}
Before coming to the proof of Lemma \ref{Lortintro} and  Lemma \ref{Lconintro}, let us derive Proposition \ref{eulerian1stepintro} from Proposition \ref{prop:eulerian1stepphi}.
\begin{proof}[Proof of Proposition \ref{eulerian1stepintro}]
 Let $\vp_{\pm}$ be the solution of 
 \[
  \Delta \vp_\pm= 0 \ \textrm{ in } B_R \qquad \textrm{ and } \qquad \nu\cdot \nabla \vp_{\pm}= \bar f_\pm-\hat{g}_\pm \ \textrm{ on } \partial B_R 
 \]
with $\int_{B_R} \vp_\pm=0$ so that $\Phi-\phi=\vp_+-\vp_-$. Applying Lemma \ref{lem:difphi} to $\vp_\pm$, we find that for every $0<\alpha<1$, 
\begin{align*}\sup_{B_2}|\nabla^2 (\Phi-\phi)|+\sup_{B_2} |\nabla (\Phi-\phi)|&\les W^\alpha_{\partial B_R} (\bar f_\pm,\hat{g}_\pm) \lt[ \hat{g}_{\pm}(\partial B_R)\rt]^{1-\frac{\alpha}{2}}\\
 &\stackrel{\eqref{ma9}}{\les}\lt(M(E+D)\rt)^{\frac{\alpha}{2}} \lt(\int_{\partial B_R} \hat{g}_\pm^2\rt)^{\frac{1}{2} -\frac{\alpha}{4}} \\
 &\stackrel{\eqref{ma10}}{\les}\lt(M(E+D)\rt)^{\frac{\alpha}{2}} \lt(E+D\rt)^{ \frac{1}{2} -\frac{\alpha}{4}}\\
 &\les M^{\frac{\alpha}{2}}(E+D)^{\frac{1}{2}+\frac{\alpha}{4}}.
\end{align*}
Combining this with \eqref{wg12} applied to $\phi$ and \eqref{ma10} we first obtain \eqref{estimPhi}. This also yields 
\begin{align*}
 \int_{B_2}\int_0^1 \frac{1}{\rho}|j-\rho \nabla \Phi|^2&\les \int_{B_2}\int_0^1 \frac{1}{\rho}|j-\rho \nabla \phi|^2+\int_{B_2}  |\nabla \Phi-\nabla \phi|^2 d\bar \rho\\
 &\stackrel{\eqref{ma16}}{\les} \tau E+C(\tau) D+M^{\alpha}(E+D)^{1+\frac{\alpha}{2}}\int_{B_2}d\bar \rho\\
 &\stackrel{\eqref{ma42}}{\les}\tau E+C(\tau) D+M^{\alpha}(E+D)^{1+\frac{\alpha}{2}},
\end{align*}
which concludes the proof of \eqref{ma36}. 
\end{proof}

 \subsubsection{Approximate orthogonality}\label{sec:approxorth}
 In this section we prove the approximate orthogonality property, that is Lemma \ref{Lortintro}, which we recall for the reader's convenience\footnote{Notice that since in the second step of the proof we appeal to interior regularity estimates, we need to go from $B_R$ in the term between parenthesis 
 on the left-hand side of \eqref{io15} to the smaller ball $B_2$ for the first term on the left-hand side of \eqref{io15}} :
 \begin{lem1.7}[Orthogonality]
For every $0<r\ll 1$, there exist $\eps(r)>0$ and $C(r)>0$ such that if $E+D\le \eps$, 
  \begin{equation}\label{io15}
   \int_{B_2}\int_0^1 \frac{1}{\rho}|j-\rho\nabla \phi|^2- \lt(\int_{B_R}\int_0^1 \frac{1}{\rho}|j|^2-\int_{B_R}|\nabla \phi|^2\rt)
   \le r E +C  D.
  \end{equation}
 \end{lem1.7}
\begin{proof}
Let $0<r\ll1$ be given and let us recall the notation from Lemma \ref{lem:ellipticr}. If $\hat{g}$ is given by \eqref{wg97}, we let $\hat{g}_r$ be the convolution with a smooth convolution kernel at scale $r$ (on $\partial B_R$) of $\hat{g}$. We then let $\phi^r$ with $\int_{B_R} \phi^r=0$ be the solution of
 \begin{equation}\label{wg01}
  \Delta \phi^r=c \ \textrm{ in }  B_R \qquad \textrm{ and } \qquad  \nu\cdot \nabla \phi^r=\hat{g}_r\  \textrm{ on } \partial B_R.
 \end{equation}
We first prove \eqref{io15} with $\phi^r$ instead of $\phi$ and then get rid of this regularization.\\

\medskip
{\sc Step 1.} We prove that 
\begin{multline}\label{ma17}
 \int_{B_R}\int_0^1\frac{1}{\rho}|j-\rho \nabla\phi^r|^2-\lt(\int_{B_R}\int_0^1 \frac{1}{\rho}|j|^2-\int_{B_R} |\nabla \phi^r|^2\rt)\\
 \les r E +\frac{1}{r} D +\frac{1}{r^{d+2}} D^2 +\frac{1}{r^{d+1}} E^2+\frac{1}{r^{\frac{d+1}{2}}}  M^{\frac{1}{2}} (E+D)^{\frac{5}{4}}.
\end{multline}
Our argument relies on the formula 
\begin{align*}
\int_{B_R}\int_0^1 \frac{1}{\rho}|j-\rho\nabla \phi^r|^2=&\int_{B_R}\int_0^1 \frac{1}{\rho}|j|^2-\int_{B_R} |\nabla \phi^r|^2\\
 &+2\int_{B_R} \phi^r d(\mu-\lambda) +2\int_{\partial B_R} \phi^r d(\hat{g}_r-\bar f)\\
 \qquad &+\int_{B_R} |\nabla \phi^r|^2(d\bar \rho-dx),
\end{align*}
where we recall the definitions of  $\bar f$ in \eqref{barf} and $\bar \rho$ in \eqref{ma15}. This formula is obtained by expanding the square and combining \eqref{locconteq} with \eqref{wg01}, both tested with $\phi^r$ (which is smooth), and using that $\int_{B_R} \phi^r=0$.\\

\medskip
{\sc Step 1.1.} We first estimate $\int_{B_R} \phi^r d(\mu-\lambda)$. For this, using again $\int_{B_R} \phi^r=0$, we write it as (recall \eqref{wg21} for the definition of $\kappa_{\mu,R}$ and $\kappa_{\lambda,R}$)
\[
 \int_{B_R} \phi^r d(\mu-\lambda)=\int_{B_R} \phi^r (d\mu-\kappa_{\mu,R} dx)+\int_{B_R} \phi^r (d\lambda-\kappa_{\lambda,R} dx)
\]
and estimate using $\kappa_{\mu,R}\sim 1$ in the second step
\begin{align*}
 \lefteqn{\lt|\int_{B_R} \phi^r  (d\mu-\kappa_{\mu,R} dx)\rt|}\\
 &\stackrel{\eqref{wg80}}{\le  }\lt(\kappa_{\mu,R}\int_{B_R} |\nabla \phi^r|^2\rt)^{\frac{1}{2}} W_{B_R}(\mu,\kappa_{\mu,R})+\frac{1}{2} \sup_{B_R} |\nabla^2 \phi^r| W_{B_R}^2(\mu,\kappa_{\mu,R})\\
 &\stackrel{\eqref{wg12}\&\eqref{wg03}}{\les} \lt(\int_{\partial B_R} \hat{g}_r^2\rt)^{\frac{1}{2}} W_{B_R}(\mu,\kappa_{\mu,R})+\frac{1}{r}\lt(\frac{1}{r^{d-1}}\int_{\partial B_R} \hat{g}_r^2\rt)^{\frac{1}{2}} W_{B_R}^2(\mu,\kappa_{\mu,R})\\
 &\stackrel{\eqref{ma10}\&\eqref{wg21}}{\les} (E+D)^{\frac{1}{2}}D^{\frac{1}{2}}+ \frac{1}{r^{\frac{d+1}{2}}}(E+D)^{\frac{1}{2}} D \\
 &\les r E +\frac{1}{r} D +\frac{1}{r^{d+2}} D^2.
\end{align*}
The term $\int_{B_R} \phi^r (d\lambda-\kappa_{\lambda,R} dx)$ may be estimated analogously.\\

\medskip
{\sc Step 1.2.} We then estimate $\int_{\partial B_R} \phi^r d(\hat{g}_r-\bar f)$. For this we split this term as follows 
\[
 \int_{\partial B_R} \phi^r d(\hat{g}_r-\bar f)=\int_{\partial B_R} (\phi^r_r-\phi^r) d\hat{g}+\int_{\partial B_R} \phi^r d(\hat{g}-\bar f).
\]
On the one hand we have 
\begin{equation*}
 \lt|\int_{\partial B_R} (\phi^r_r-\phi^r) d\hat{g}\rt|\le r\lt(\int_{\partial B_R} |\nabla \phi^r|^2 \int_{\partial B_R} \hat{g}^2\rt)^{\frac{1}{2}}
 \stackrel{\eqref{wg15}}{\les} r \int_{\partial B_R} \hat{g}^2\stackrel{\eqref{ma10}}{\les} r(E+D). 
\end{equation*}

On the other hand, treating separately the positive and negative parts and  arguing as for \eqref{wg80}, we obtain 
\begin{align*}
 \lefteqn{\lt|\int_{\partial B_R} \phi^r d(\hat{g}_\pm-\bar f_\pm)\rt|}\\
 &\le \sup_{\partial B_R} |\nabla_{tan} \phi^r| \lt(\int_{\partial B_R} \hat{g}_\pm \rt)^{\frac{1}{2}}W_{\partial B_R} (\bar f_\pm,\hat g_\pm)+ \frac{1}{2} \sup_{\partial B_R} |\nabla^2_{tan} \phi^r| \, W^2_{\partial B_R} (\bar f_\pm,\hat g_\pm)\\
  &\stackrel{\eqref{wg03}}{\les} \frac{1}{r^{\frac{d-1}{2}}}\lt(\int_{\partial B_R} \hat{g}_\pm^2\rt)^{\frac{3}{4}}W_{\partial B_R} (\bar f_\pm,\hat g_\pm)+ \frac{1}{r} \lt(\frac{1}{r^{d-1}}\int_{\partial B_R} \hat{g}^2_\pm\rt)^{\frac{1}{2}}W^2_{\partial B_R}(\bar f_\pm,\hat g_\pm)\\
 &\stackrel{\eqref{ma9}\&\eqref{ma10}}{\les} \frac{1}{r^{\frac{d-1}{2}}} M^{\frac{1}{2}}(E+D)^{\frac{5}{4}}+\frac{1}{r^{\frac{d+1}{2}}} M (E+D)^{\frac{3}{2}}\les \frac{1}{r^{\frac{d+1}{2}}}  M^{\frac{1}{2}} (E+D)^{\frac{5}{4}}.
\end{align*}

\medskip
{\sc Step 1.3.} We finally estimate $\int_{B_R} |\nabla \phi^r|^2(d\bar \rho-dx)$. Let $\zeta:=|\nabla \phi^r|^2$ (which is smooth) so that 
\begin{align*}
 &\lt|\int_{B_R} \zeta (d \bar \rho- dx)\rt|\\
 &\le \lt|\int_{B_R} \zeta (d\bar \rho- \kappa dx)\rt|+ |\kappa-1|\int_{B_R} \zeta\\
 &\stackrel{\eqref{wg80}}{\le} \lt(\int_{B_R} |\nabla \zeta|^2\rt)^{\frac{1}{2}} W_{B_R}(\bar \rho,\kappa)+ \frac{1}{2}\sup_{B_R} |\nabla^2 \zeta| W_{B_R}^2(\bar \rho,\kappa)+|\kappa-1|\int_{B_R} \zeta\\
 &\stackrel{\eqref{wg23}}{\les} \lt(\int_{B_R} |\nabla \zeta|^2\rt)^{\frac{1}{2}} (E+D)^{\frac{1}{2}}+\sup_{B_R} |\nabla^2 \zeta| (E+D)+\lt(\int_{B_R} \zeta\rt)(E+D)^{\frac{1}{2}}.
\end{align*}
Since
\begin{align*}
 \int_{B_R} \zeta &=\int_{B_R}|\nabla \phi^r|^2\stackrel{\eqref{wg12}}{\les} \int_{\partial B_R} \hat{g}^2\stackrel{\eqref{ma10}}{\les} E+D,\\[6pt]
\lt(\int_{B_R} |\nabla \zeta|^2\rt)^{\frac{1}{2}}&\les \sup_{B_R} |\nabla^2 \phi^r| \lt(\int_{B_R} |\nabla \phi^r|^2\rt)^{\frac{1}{2}}\\
&\stackrel{\eqref{wg03}\&\eqref{wg12}}{\les} \frac{1}{r^{\frac{d+1}{2}}} \int_{\partial B_R} \hat{g}^2\stackrel{\eqref{ma10}}{\les}\frac{1}{r^{\frac{d+1}{2}}}(E+D),\\[6pt]
&\textrm{and}\\[6pt]
\sup_{B_R} |\nabla^2\zeta|&\les  \sup_{B_R} |\nabla \phi^r| \sup_{B_R} |\nabla^3 \phi^r|+ \sup_{B_R} |\nabla^2 \phi^r|^2\\
&\stackrel{\eqref{wg03}}{\les} \frac{1}{r^{d+1}} \int_{\partial B_R} \hat{g}^2\stackrel{\eqref{ma10}}{\les} \frac{1}{r^{d+1}} (E+D),
\end{align*}
we conclude that  
\begin{multline*}
 \lt|\int_{B_R} |\nabla \phi^r|^2 (d \bar \rho- dx)\rt|\les \frac{1}{r^{\frac{d+1}{2}}} (E+D)^{\frac{3}{2}}+\frac{1}{r^{d+1}}(E+D)^2+(E+D)^{\frac{3}{2}}\\
 \les \frac{1}{r^{\frac{d+1}{2}}} (E+D)^{\frac{3}{2}}+\frac{1}{r^{d+1}}(E+D)^2.
\end{multline*}
Note that because of $M=(E+D)^{\frac{1}{d+2}}$, also the first right-hand side term is contained in  the last right-hand side term of \eqref{ma17}.\\

Combining {\sc Step 1.1, Step 1.2} and { \sc Step 1.3} together, we obtain \eqref{ma17}.\\

\medskip
{\sc Step 2.} We now pass from $\phi^r$ to $\phi$.  
 Using the identity
 \[
  \frac{1}{\rho}|j-\rho\nabla \phi|^2-\frac{1}{\rho}|j-\rho\nabla \phi^r|^2=2 j\cdot \nabla( \phi-\phi^r)+\rho(|\nabla \phi|^2-|\nabla \phi^r|^2), 
 \]
and Cauchy-Schwarz's inequality, we first obtain 
\begin{align*}
 \lefteqn{\int_{B_2}\int_0^1 \frac{1}{\rho}|j-\rho \nabla \phi|^2-\int_{B_2}\int_0^1 \frac{1}{\rho}|j-\rho \nabla \phi^r|^2}\\
 &\les \lt(\int_{B_2} d\bar \rho \int_{B_2}\int_0^1 \frac{1}{\rho}|j|^2\rt)^{\frac{1}{2}} \sup_{B_2} |\nabla(\phi-\phi^r)|\\
 &\qquad \qquad +  \lt(\sup_{B_2} |\nabla (\phi-\phi^r)|\rt)\lt(\sup_{B_2} |\nabla \phi|+|\nabla \phi^r|\rt) \int_{B_2} d\bar \rho\\
 &\stackrel{\eqref{ma42}\&\eqref{wg17}\&\eqref{wg12}}{\les} r E^{\frac{1}{2}} \lt(\int_{\partial B_R} \hat{g}^2\rt)^{\frac{1}{2}}+r \int_{\partial B_R} \hat{g}^2\\
  &\stackrel{\eqref{ma10}}{\les} r(E+D).
\end{align*}
Now by \eqref{defphi} and \eqref{wg01}, we also  have
\begin{align*}
 \int_{B_R} |\nabla \phi|^2-\int_{B_R} |\nabla \phi^r|^2&=\int_{\partial B_R} (\phi+\phi^r)(\hat g-\hat{g}_r)\\
 &= \int_{\partial B_R} \lt[(\phi+\phi^r)-(\phi+\phi^r)_r\rt] \hat{g}\\
 &\les r \lt(\int_{\partial B_R} |\nabla(\phi+\phi^r)|^2\rt)^{\frac{1}{2}} \lt(\int_{\partial B_R} \hat{g}^2\rt)^{\frac{1}{2}}\\
 &\stackrel{\eqref{wg15}\&\eqref{ma10}}{\les} r (E+D),
\end{align*}
so that combined with \eqref{ma17} and the obvious $\int_{B_2}\int_0^1 \frac{1}{\rho}|j-\rho\nabla \phi^r|^2\le \int_{B_R}\int_0^1 \frac{1}{\rho}|j-\rho\nabla \phi^r|^2$, we conclude
\begin{align*}
 \lefteqn{ \int_{B_2}\int_0^1 \frac{1}{\rho}|j-\rho\nabla \phi|^2- \lt(\int_{B_R}\int_0^1 \frac{1}{\rho}|j|^2-\int_{B_R}|\nabla \phi|^2\rt)}\\
 & = \int_{B_2}\int_0^1 \frac{1}{\rho}|j-\rho\nabla \phi|^2-\int_{B_R}\int_0^1 \frac{1}{\rho}|j-\rho\nabla \phi^r|^2\\
 &\qquad +\int_{B_R}|\nabla \phi|^2-\int_{B_R} |\nabla \phi^r|^2\\
  &\qquad +\int_{B_R}\int_0^1 \frac{1}{\rho}|j-\rho\nabla \phi^r|^2-\lt(\int_{B_R}\int_0^1 \frac{1}{\rho}|j|^2-\int_{B_R}|\nabla \phi^r|^2\rt)\\
  &\les r(E+D)+r E +\frac{1}{r} D +\frac{1}{r^{d+2}} D^2 +\frac{1}{r^{d+1}} E^2+\frac{1}{r^{\frac{d+1}{2}}}  M^{\frac{1}{2}} (E+D)^{\frac{5}{4}}\\
  &\les r E +\frac{1}{r} D +\frac{1}{r^{d+2}} D^2 +\frac{1}{r^{d+1}} E^2+\frac{1}{r^{\frac{d+1}{2}}}  M^{\frac{1}{2}} (E+D)^{\frac{5}{4}}.
\end{align*}
Therefore, for every fixed $r>0$, provided $E+D$ is small enough we see that \eqref{io15} holds. 
\end{proof}

\subsubsection{The construction}\label{sec:construct}
We now turn to the construction part of the proof i.e. Lemma \ref{Lconintro}, 
\begin{lem1.8}
 For every $0<\tau\ll1$, there exist $\eps(\tau)>0$ and $C(\tau)>0$ such that if $E+D\le \eps$, then there exists a density-flux pair $(\tilde \rho, \tilde j)$ satisfying \eqref{locconteqcomp} and such that 
 \begin{equation}\label{io31}
  \int\int_0^1 \frac{1}{\tilde \rho}|\tilde{j}|^2-\int_{B_R}|\nabla \phi|^2 \le \tau E +C D.
 \end{equation}
 
\end{lem1.8}
\begin{proof}
 We split the construction and
its estimate into the following steps:
\begin{itemize}

\item {\sc Step 1:} the construction in the initial layer $B_R\times (0,\tau)$, using \eqref{ma14} to connect the measure $\mu'$ defined in \eqref{io07bis} to the constant $\kappa_{\mu'}$ and
in the final layer $B_R\times (1-\tau,1)$, connecting in a similar way the constant $\kappa_{\lambda'}$ to the measure $\lambda'$ (see Figure \ref{fig:init}), 
\item{\sc Step 2:} the construction in the boundary layer $(B_R\backslash\bar B_{R-r})\times(\tau,1-\tau)$ (see Figure \ref{fig:lead}),  to connect the constant-in-$t$ flux (cf.\ \eqref{io10}) $\hat{g}'$   to (cf.\ \eqref{io11} and \eqref{io10}) 
\begin{equation}\label{io04}
 \tilde{g}':=\begin{cases}
              \frac{1}{\tau} \hat{g}'_+ & \textrm{for } t\in (\tau,2\tau)\\
              -\frac{1}{\tau} \hat{g}'_- &\textrm{for } t\in (1-2\tau,1-\tau),
             \end{cases}
\end{equation}
\item{\sc Step 3:} the leading order construction in $B_R\times(\tau,1-\tau)$ to connect the constant  $\kappa_{\mu'}$ to the constant $\kappa_{\lambda'}$ (see Figure \ref{fig:lead}), using $\nabla \phi'$ defined by 
\begin{equation}\label{io06}
 \Delta \phi'= c \ \textrm{ in } B_R \qquad \textrm{ and } \qquad  \nu\cdot \nabla \phi'= \hat{g}' \ \textrm{ on } \partial B_R,
\end{equation}
as flux,
\item{\sc Step 4:} the construction in the boundary $\partial B_R\times(\tau,1-\tau)$, to connect the flux $\tilde{g}'$ to the flux $f'$ defined in \eqref{io05} (see Figure \ref{fig:boundary}),
\item{\sc Step 5:} the estimate of cost of kept trajectories (see Figure \ref{fig:kept}),
\item{\sc Step 6:} the passage from $\phi'$ to $\phi$,
\item{\sc Step 7:} the collection of the error terms.
\end{itemize}
Before starting, we notice for further reference that 
\begin{equation}\label{wg60}
 \int_{B_R} \lt(|\nabla \phi'|^2+|\nabla \phi|^2\rt)\stackrel{\eqref{estim:elliptic}}{\les} \int_{\partial B_R} \lt((\hat{g}')^2+\hat{g}^2\rt)\stackrel{\eqref{ma11}\&\eqref{ma10}}{\les}E+D.
\end{equation}

\medskip
{\sc Step 1.} {\sc Construction in the initial and final layers}.
By the Eulerian formulation of the Wasserstein distance (cf.\ \eqref{ma14}), there exists $(\rhoin,\jin)$ supported in $\bar B_R\times[0,\tau]$  such that (see Figure \ref{fig:init})
\begin{figure}\begin{center}
 \resizebox{7.cm}{!}{\input{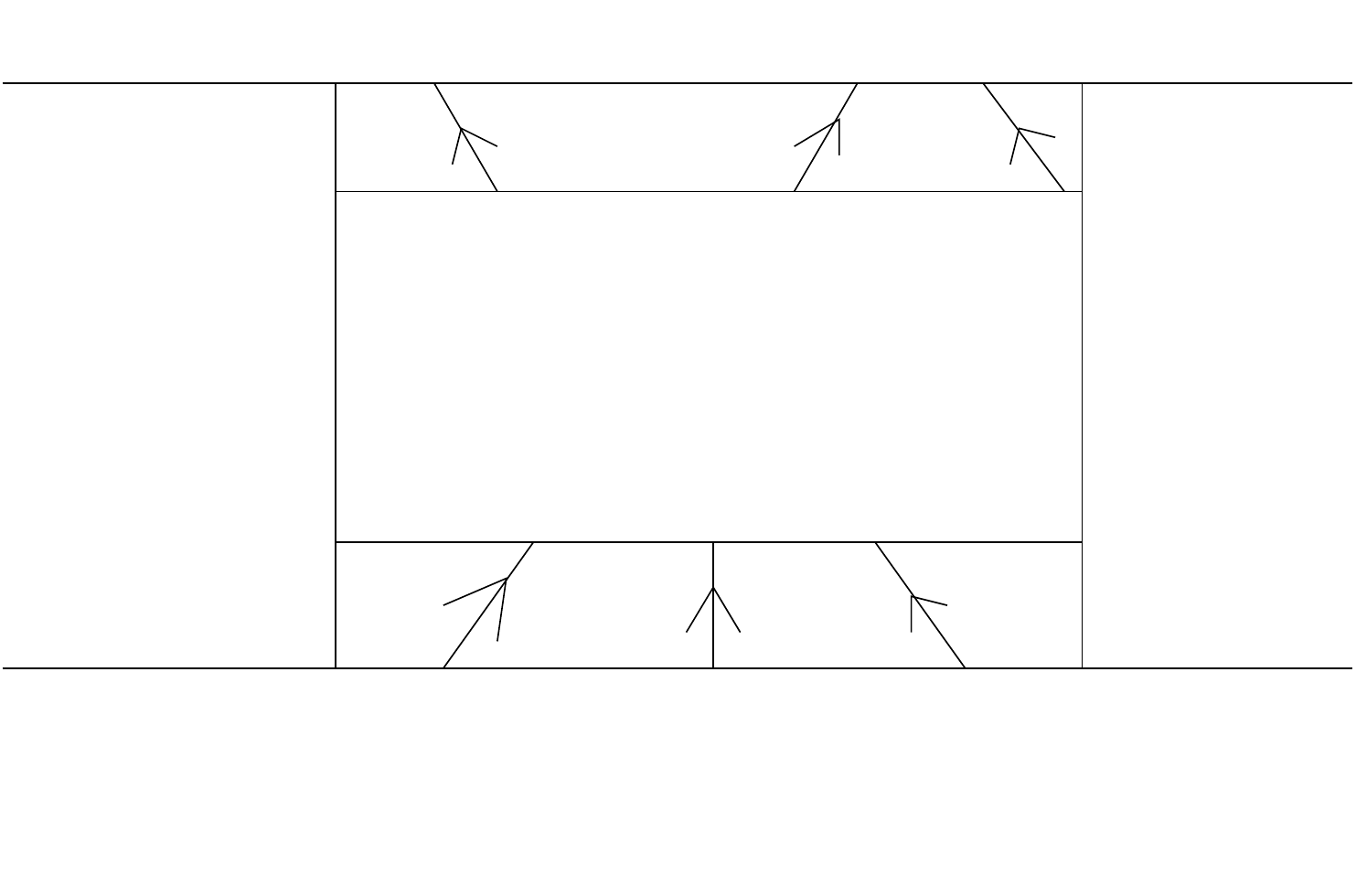_t}} 
   \caption{Initial and final layers.} \label{fig:init}
 \end{center}
 \end{figure}
\begin{equation}\label{ma18}
 \int_{B_R}\int_0^1 \partial_t \zeta d \rhoin+\nabla \zeta\cdot d\jin=\int_{B_R} (\zeta_{\tau} \kappa_{\mu'} dx -\zeta_0 d\mu')
\end{equation}
and 
\begin{equation}\label{ma19}
 \int_{B_R}\int_0^1 \frac{1}{\rhoin}|\jin|^2 = \frac{1}{\tau} W^2_{B_R}(\mu',\kappa_{\mu'})\stackrel{\eqref{ma14}}{\les} \tau E +\frac{1}{\tau} D.
\end{equation}

Similarly, there exists $(\rhofin,\jfin)$ supported in $B_R\times[1-\tau,1]$ such that  (see Figure \ref{fig:init})
\begin{equation}\label{ma20}
 \int_{B_R}\int_0^1 \partial_t \zeta d \rhofin+\nabla \zeta\cdot d\jfin=\int_{B_R} (\zeta_1 d\lambda' -\zeta_{1-\tau} \kappa_{\lambda'} dx)
\end{equation}
and 
\begin{equation}\label{ma21}
 \int_{B_R}\int_0^1 \frac{1}{\rhofin}|\jfin|^2\les  \tau E +\frac{1}{\tau} D.
\end{equation}

\medskip
{\sc Step 2.} {\sc Construction in the boundary layer}. By definition \eqref{io04} of $\tilde{g}'$, we have for every $x\in \partial B_R$, 
\[
 \int_\tau^{1-\tau} \tilde{g}'(t,x) dt=\int_{\tau}^{1-\tau}\frac{1}{1-2\tau} \hat{g}'(x) dt
\]
 and 
 \[
 \int_{\partial B_R} \int_\tau^{1-\tau} \lt(\tilde{g}'-\frac{1}{1-2\tau} \hat{g}'\rt)^2\les \frac{1}{\tau} \int_{\partial B_R} (\hat{g}')^2\stackrel{\eqref{ma11}}{\les} \frac{1}{\tau}(E+D).
 \]
Therefore, applying \cite[Lem. 2.4]{GO} to $\tilde{g}'-\frac{1}{1-2\tau} \hat{g}'$ and choosing $r$ to be a large but order one multiple of $\lt(\int_{B_R} \int_\tau^{1-\tau} \lt(\tilde{g}'-\frac{1}{1-2\tau} \hat{g}'\rt)^2\rt)^{\frac{1}{d+1}}$, we get the existence of a  
pair $(s,q)$  supported in $B_R\backslash \bar{B}_{R-r}\times(\tau,1-\tau)$ (see Figure \ref{fig:lead}) with $|s|\le \frac{1}{2}$, such that 
\begin{equation}\label{ma23}
 \int_{B_R}\int_{\tau}^{1-\tau} \partial_t \zeta ds+\nabla \zeta \cdot dq= \int_{\partial B_R}\int_{\tau}^{1-\tau} \zeta (\tilde{g}'-\frac{1}{1-2\tau} \hat{g}')
\end{equation}
and
\begin{equation}\label{wg67}
 \int_{B_R\backslash B_{R-r}}\int_{\tau}^{1-\tau} |q|^2\les  r \int_{\partial B_R}\int_{\tau}^{1-\tau} \lt(\tilde{g}'-\frac{1}{1-2\tau} \hat{g}'\rt)^2\les \lt(\frac{1}{\tau}(E+D)\rt)^{\frac{d+2}{d+1}}.
\end{equation}

\medskip
{\sc Step 3.} {\sc Leading order construction}.
For $t\in [\tau,1-\tau]$ we define (see Figure \ref{fig:lead})
\begin{figure}\begin{center}
 \resizebox{10.cm}{!}{\input{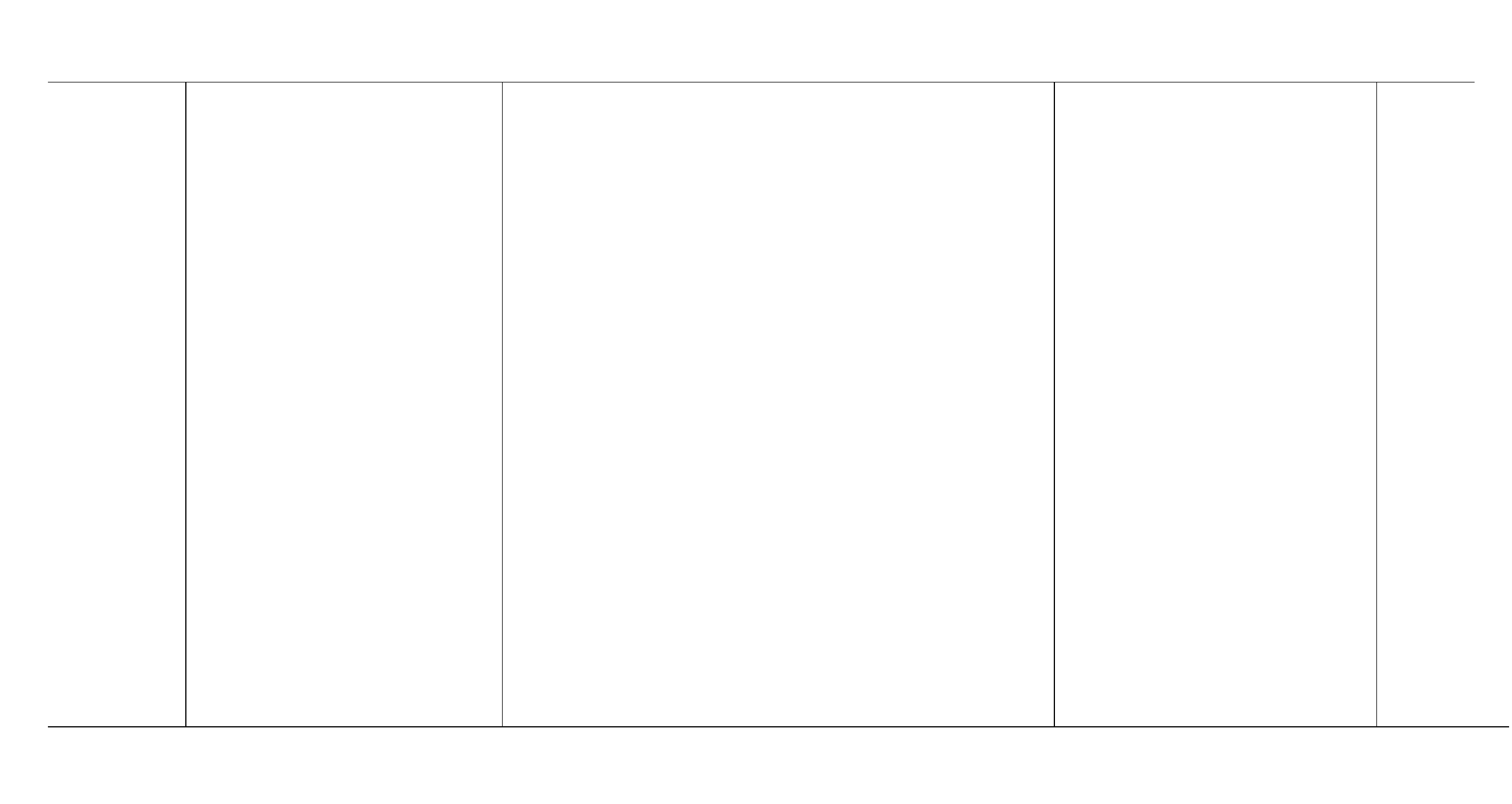_t}} 
   \caption{Leading order construction.} \label{fig:lead}
 \end{center}
 \end{figure}
\begin{equation}\label{rhobulk}
 \rho^\lead:=\frac{t-\tau}{1-2\tau} \kappa_{\lambda'}+\frac{1-\tau-t}{1-2\tau} \kappa_{\mu'} +s, \qquad j^\lead:=
                                                                                                            \frac{1}{1-2\tau}\nabla \phi' +q, 
\end{equation}
both restricted to $B_R\times [\tau,1-\tau]$. Notice that by \eqref{ma5} and \eqref{ma4}, $\Omega_{0}\backslash \Omega_{1}= \Omega'_{+}\backslash \Omega'_{-}=\{|X(0)|<R,|X(1)|\geq R, t_+ \geq 3\tau\}$ and likewise
$\Omega_{1}\backslash \Omega_{0}= \Omega'_{-}\backslash \Omega'_{+},$ and therefore 
\begin{align*}
 |B_R|(\kappa_{\mu'}-\kappa_{\lambda'})= \int_{B_R} (d\mu'-d\lambda')\stackrel{\eqref{io07bis}}{=} &\P(\Omega_{0})-\P(\Omega_{1})\\
  = &\P(\Omega'_{+})-\P(\Omega'_{-})\stackrel{\eqref{io11}}{=}\int_{\partial B_R} (\hat{g}'_+-\hat{g}'_-),
\end{align*}
so that using \eqref{ma23} and \eqref{io06}
\begin{equation}\label{ma22}
 \int_{B_R}\int_{\tau}^{1-\tau} \partial_t \zeta d \rho^\lead+\nabla \zeta\cdot dj^\lead=\int_{B_R} (\zeta_{1-\tau} \kappa_{\lambda'}-\zeta_{\tau} \kappa_{\mu'}) +\int_{\partial B_R}\int_{\tau}^{1-\tau}  \zeta \tilde{g}'. 
\end{equation}
Using that $(s,q)$ is supported in $B_R\backslash \bar{B}_{R-r}\times(\tau,1-\tau)$ with $|s|\le \frac{1}{2}$, the cost is majorized by 
\begin{multline*}
 \int_{B_R}\int_{\tau}^{1-\tau} \frac{1}{\rho^\lead}|j^\lead|^2-\frac{1}{1-2\tau}\int_{B_r}|\nabla \phi'|^2\\
 \les    \int_{B_R\backslash B_{R-r}} |\nabla \phi'|^2+\int_\tau^{1-\tau}\int_{B_R\backslash B_{R-r}}|q|^2.
 \end{multline*}
Therefore, using \eqref{wg60}, \eqref{ma14},  \eqref{wg15}, and \eqref{wg67} and recalling the choice of $r$ (see Step 2) we find 
\begin{multline}\label{wg68}
 \int_{B_R}\int_{\tau}^{1-\tau} \frac{1}{\rho^\lead}|j^\lead|^2-\int_{B_R} |\nabla \phi'|^2\\
 \les (\tau+(\tau^2 E+D)^{\frac{1}{2}})(E+D)+ r(E+D)+\lt(\frac{1}{\tau}(E+D)\rt)^{\frac{d+2}{d+1}}\\ \les \tau E + \frac{1}{\tau} D+\lt(\frac{1}{\tau}(E+D)\rt)^{\frac{d+2}{d+1}}.
\end{multline}
 
\medskip
{\sc Step 4.} {\sc Construction in the boundary}. We will separate the construction for the terms with the $+$-sign and with the $-$-sign. We start with the $+$-sign.
Notice that by definition \eqref{io04}, the positive part of  $\tilde{g}'$, denoted by $\tilde{g}'_+$, satisfies $ \int_0^1 \tilde{g}'_+=\hat{g}_+'$ and $\spt \tilde g'_+\subset \partial B_R\times (\tau,2\tau)$.
By definition \eqref{io05}, we have $ \int_0^1 {f}'_+=\bar{f}'_+$  and $\spt f'_+\subset \partial B_R \times (3\tau,1)$. Hence, we may apply Lemma \ref{lem:consbound} to $f=\tilde{g}'_+$ and $g=f'_+$ to obtain a density-flux pair 
$(\rho^\bdr_+,j^\bdr_+)$  in $\partial B_R \times (0,1)$ with (see Figure \ref{fig:boundary})
\begin{figure}\begin{center}
 \resizebox{7.cm}{!}{\input{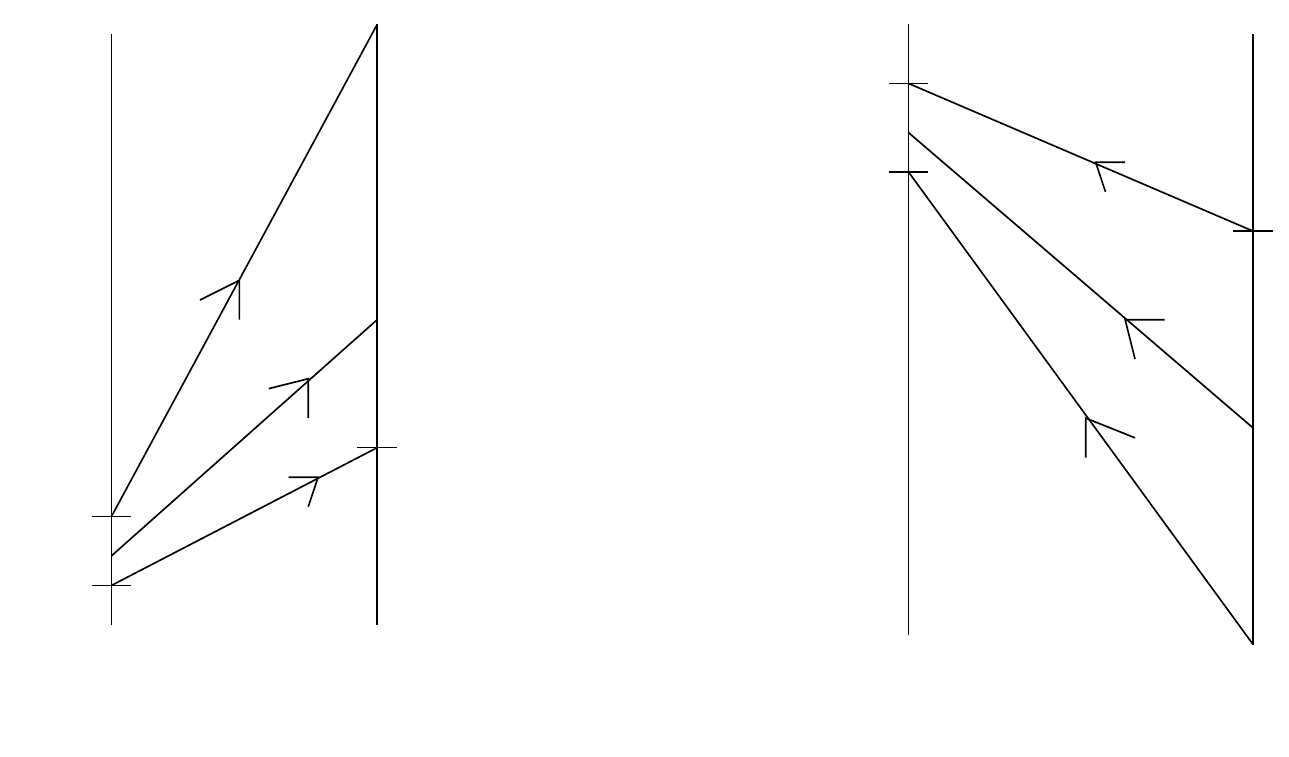_t}} 
   \caption{Construction in the boundary.} \label{fig:boundary}
 \end{center}
 \end{figure}
\[
 \int_{\partial B_R} \int_0^1 \partial_t \zeta d\rho^\bdr_+ +\nabla \zeta\cdot dj^{\bdr}_+=\int_{\partial B_R} \int_0^1\zeta d( f'_+-\tilde{g}_+') 
\]
and 
\[
 \int_{\partial B_R}\int_0^1 \frac{1}{\rho^\bdr_+}|j^{\bdr}_+|^2\le \frac{1}{\tau} W_{\partial B_R}^2 (\hat{g}_+'\bar{f}'_+)\stackrel{\eqref{ma12}}{\les} \frac{M}{\tau} (E+D).
\]
Similarly, applying Lemma \ref{lem:consbound} to $f=f'_-$ and $g=\tilde{g}'_-$, we obtain a density-flux pair 
$(\rho^\bdr_-,j^\bdr_-)$  in $\partial B_R \times (0,1)$ with (see Figure \ref{fig:boundary}) 
\[
 \int_{\partial B_R} \int_0^1 \partial_t \zeta d\rho^\bdr_- +\nabla \zeta\cdot dj^{\bdr}_-=\int_{\partial B_R} \int_0^1\zeta d( \tilde{g}_-'-f'_-) 
\]
and 
\[
 \int_{\partial B_R}\int_0^1 \frac{1}{\rho^\bdr_-}|j^{\bdr}_-|^2\les \frac{M}{\tau} (E+D).
\]
Letting $(\rho^\bdr,j^\bdr):=(\rho^\bdr_++\rho^\bdr_-,j^\bdr_++j^\bdr_-)$, we conclude that 
\begin{equation}\label{ma29}
 \int_{\partial B_R} \int_0^1 \partial_t \zeta d\rho^\bdr +\nabla\zeta\cdot dj^{\bdr}=\int_{\partial B_R} \int_0^1\zeta d( f'-\tilde{g}') 
\end{equation}
and 
\begin{equation}\label{ma30}
 \int_{\partial B_R}\int_0^1 \frac{1}{\rho^\bdr}|j^{\bdr}|^2\les \frac{M}{\tau} (E+D).
\end{equation}

\medskip

{\sc Step 5.} {\sc Cost of kept trajectories}. Recalling the definition \eqref{io01} of $\del\Omega$, we let for $t\in (0,1)$ in line with \eqref{io18} (see Figure \ref{fig:kept}),
\begin{figure}\begin{center}
 \resizebox{7.cm}{!}{\input{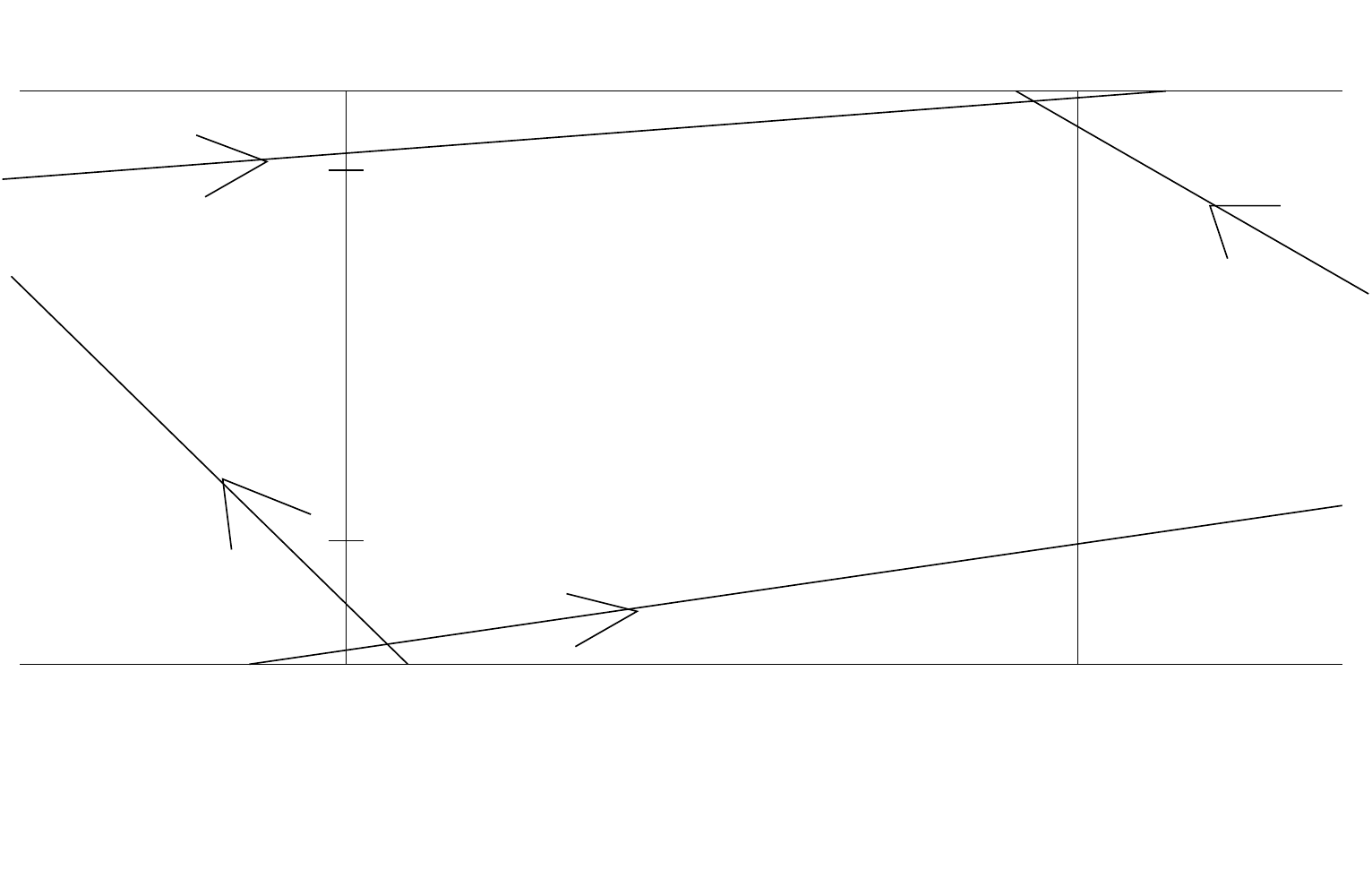_t}} 
   \caption{Kept trajectories.} \label{fig:kept}
 \end{center}
 \end{figure}
\begin{align*}
 \int \zeta d\rho^{\kept}_t&=\int_{\del\Omega} I(|X(t)|<R) \zeta (X(t)) d\P, \\
 \int \xi\cdot dj_t^{\kept}&=\int_{\del\Omega} I(|X(t)|< R)\xi(X(t))\cdot \dot{X}(t) d\P
\end{align*}
so that by definition \eqref{io05} of $f'_R$ and \eqref{io07bis} of $\mu'$ and $\lambda'$,
\begin{align}\label{ma31}
 \int_{ B_R} \int_0^1 & \partial_t \zeta d\rho^\kept +\nabla \zeta\cdot dj^{\kept}\nonumber \\&=\int_{B_R} \zeta_1 d(\lambda-\lambda')+\int_{B_R} \zeta_0 d(\mu-\mu') +\int_{\partial B_R}\int_0^1 \zeta d (f-f').
\end{align}
By the representation \eqref{dualBB}, we may bound
\begin{multline}\label{ma32}
 \frac{1}{2}\int_{ B_R} \int_0^1\frac{1}{\rho^\kept}|j^\kept|^2=\sup_{\xi\in C^0_c(B_R\times [0,1])^d} \int \xi\cdot dj-\frac{|\xi|^2}{2} d\rho\\
 =\sup_{\xi\in C^0_c(B_R\times [0,1])^d} \int_{\del\Omega}\int_0^1 I(|X(t)|<R) \lt(\xi \cdot \dot{X}-\frac{|\xi|^2}{2}\rt) dt d\P\\
 \le \frac{1}{2}\int_{\del\Omega}\int_{t_-}^{t_+} |\dot{X}|^2  dt d\P \les \tau E,
\end{multline}
where in the last estimate we used that trajectories are straight and that for $X\in \del\Omega$, $|t_+-t_-|\le 3\tau$.
\medskip

{\sc Step 6.} {\sc The passage from $\phi'$ to $\phi$}. Since $\int_{B_R} \phi=\int_{B_R} \phi'=0$, using the definitions \eqref{defphi}, \eqref{io06} and integration by parts, we obtain
\[
 \int_{B_R} |\nabla \phi'|^2-\int_{B_R} |\nabla \phi|^2=\int_{\partial B_R} (\phi'+\phi)(\hat{g}'-\hat{g}).
\]
Therefore, by Cauchy-Schwarz and the Poincar\'e-trace estimate,
\begin{align}
\lt| \int_{B_R} |\nabla \phi'|^2-\int_{B_R} |\nabla \phi|^2\rt|&\les \lt( \lt(\int_{B_R} |\nabla \phi'|^2+\int_{B_R} |\nabla \phi|^2\rt) \int_{\partial B_R} (\hat{g}'-\hat{g})^2\rt)^{\frac{1}{2}}\nonumber \\
&\stackrel{\eqref{wg60}}{\les}  \lt((E+D)\int_{\partial B_R} (\hat{g}'_{\pm}-\hat{g}_{\pm})^2\rt)^{\frac{1}{2}} \nonumber \\
&\stackrel{\eqref{ma13}}{\les}\lt( (E+D)(\tau^2 E+ D)\rt)^{\frac{1}{2}} \les  \tau E +\frac{1}{\tau} D \label{ma33}.
\end{align}

\medskip

{\sc Step 7.} {\sc Collecting error terms}. We use 
\[(\tilde{\rho},\tilde{j}):= (\rhoin+\rhofin+\rho^\lead+\rho^\bdr+\rho^\kept,\jin+\jfin+j^\lead+j^\bdr+j^\kept)\]
as our competitor. Putting together \eqref{ma18}, \eqref{ma20}, \eqref{ma22},  \eqref{ma29} and \eqref{ma31}, we see that the continuity equation \eqref{locconteqcomp} is satisfied. 
Putting together estimates \eqref{ma19} and  \eqref{ma21} coming from the construction in the final and initial layers, \eqref{wg68} coming from the leading order construction,  \eqref{ma30} coming from transporting in $\partial B_R$, \eqref{ma32} counting the contribution from the kept trajectories and
\eqref{ma33} estimating the error  of passing from $\phi'$ to $\phi$, we get
\begin{align*}
 \lefteqn{\int_{B_R} \int_0^1 \frac{1}{\tilde{\rho}}|\tilde{j}|^2-\int_{B_R} |\nabla \phi|^2}\\
 &\les \tau E +\frac{1}{\tau} D+\lt(\frac{1}{\tau}(E+D)\rt)^{\frac{d+2}{d+1}}+\frac{M}{\tau}(E+D)+ \tau E +\frac{1}{\tau} D\\
 &\les  \lt(\tau +\frac{M}{\tau}\rt) E +\lt(\frac{1}{\tau}(E+D)\rt)^{\frac{d+2}{d+1}}+ \frac1\tau D,
\end{align*}
from which \eqref{io31} follows if $E+D$ is small enough.
\end{proof}

\subsection{Proof of the harmonic approximation result}
 We may now finally prove our main harmonic approximation result in its Lagrangian version, Theorem \ref{theo:harmonicLagintro} 
 \begin{theo1.4}
  For every $\tau>0$, there exist positive constants $\eps(\tau)$ and $C(\tau)$ such that if  $\pi$ is an optimal transference plan between  two measures $\mu$ and $\lambda$ with $E+D\le \eps$, then there exists 
  a radius $R\in (3,4)$ such that if $\Phi$ is given by 
  \[
   \Delta \Phi=c \ \textrm{ in } B_R \qquad \textrm{ and } \qquad \nu\cdot \nabla \Phi=  \nu\cdot \bar j \ \textrm{ on } \partial B_R,
  \]
then 
\begin{equation}\label{wg74}
 \int_{(B_1\times \R^d)\cup(\R^d\times B_1)} |x-y+\nabla \Phi(x)|^2 d\pi\le \tau E +C D
\end{equation}
and 
\begin{equation}\label{ma37}
 \sup_{B_2} |\nabla^2 \Phi|^2+ \sup_{B_2} |\nabla \Phi|^2\les E+D.
\end{equation}

 \end{theo1.4}
\begin{proof}
 The proof is almost identical to the proof of \cite[Prop. 3.6]{GO} but we give it for the reader's convenience.
Let us first notice that \eqref{ma37} is given by \eqref{estimPhi}. We thus only need to derive \eqref{wg74} from \eqref{ma36}. 
 Since $\nabla \Phi$ is smooth in $B_2$, we obtain as for \eqref{je15}, that 
\[
 \int \int_0^1 I(|ty+(1-t)x|<2)  |x-y+\nabla \Phi(ty+(1-t)x)|^2 d\pi dt=\int_{B_2} \int_0^1 \frac{1}{\rho}|j-\rho\nabla\Phi|^2.
\]
Recalling that by \eqref{ma39} of Lemma \ref{Linf}, 
if $x\in B_1$ or $y\in B_1$ and $(x,y)\in \spt \pi$ then $ty+(1-t)x\in B_{2}$, we may therefore estimate
\begin{align*}
 \lefteqn{\int_{(B_1\times \R^d)\cup(\R^d\times B_1)} |x-y+\nabla \Phi(x)|^2 d\pi}\\
 &\les  \int_{(B_1\times \R^d)\cup(\R^d\times B_1)}\int_0^1 |x-y+\nabla \Phi(ty+(1-t)x)|^2 d\pi dt\\
 &\qquad + \int_{(B_1\times \R^d)\cup(\R^d\times B_1)}\int_0^1 |\nabla \Phi (x) -\nabla \Phi(ty+(1-t)x)|^2 d\pi dt\\
 &\les \int_{B_2}\int_0^1 \frac{1}{\rho}|j-\rho \nabla \Phi|^2+ \sup_{B_{2}} |\nabla^2 \Phi|^2 \int_{(B_1\times \R^d)\cup(\R^d\times B_1)} |y-x|^2 d\pi\\
 &\stackrel{\eqref{ma36}\& \eqref{estimPhi}}{\les} \tau E +C(\tau) D +(E+D) E.
\end{align*}
 This concludes the proof of \eqref{wg74}, provided $E+D$ is small enough.
 \end{proof}

 \section{Quantitative bounds on the displacement}
 In this section we prove Theorem \ref{theo:main}.  For a given measure $\pi$ on $ \R^d\times\R^d$
 with cyclically monotone support and $\barR>0$, we let $\mu:=\pi_1\restr B_{\barR}$ and assume from now on that $\pi_2\restr B_{\barR}=dx$.
 \medskip
 
 Let us recall that we fixed a nonnegative rate function $\beta$ satisfying the following hypothesis (see \eqref{hypbeta}): 
it is increasing, $R\mapsto\frac{\beta(R)}{R}$ is decreasing, and  
 there exists a constant $C_\beta>0$ such that 
\begin{equation}\label{hypbetamain}
 \sum_{\ell\geq0} \frac{\beta(2^\ell R)}{2^\ell R}\le C_\beta \frac{\beta(R)}{R} \quad \textrm{ for every }  R\ge1 \qquad \textrm{and} \qquad \beta(1)\le C_\beta.
\end{equation} 
Note that this in particular implies for any $\ell\in\N$ and $R>2^\ell$
\begin{equation}\label{hypCampa}
 2^{-\ell} \beta(R)\les \beta( 2^{-\ell} R).
\end{equation}
Moreover, since $R\mapsto\frac{\beta(R)}{R}$ is decreasing, for  $R\gg 1$, there holds
\begin{equation}\label{ma153}
 \beta(R)\ll R^2.
\end{equation}

 \subsection{The Campanato iteration}
 We first prove that as a consequence of Theorem \ref{theo:harmonicLagintro} and a Campanato iteration, we can obtain an $L^2$ bound on the displacement, that is Proposition \ref{theo:campintro}, which we now restate.  
To this aim, let us first recall some notation. For a sequence of approximately geometric radii i.e.
\begin{equation}\label{so30}
 \barR\ge R_0\ge\cdots\ge R_K\ge C \qquad \textrm{with } \ R_{k-1}\ge 2 R_k\ge \frac{1}{C} R_{k-1}
\end{equation}
with $\barR\sim R_0$ and $R_K\sim 1$, we put
\[
 E_k=\frac{1}{|B_{6R_k}|}\int_{(B_{6R_k}\times \R^d)\cup(\R^d\times B_{6R_k})} |x-y|^2 d\pi_k
\]
and consider $\pi_k$ the coupling defined recursively by $\pi_0:=\pi$ and 
\begin{equation}\label{so34}
 \pi_k=({\rm id},{\rm id}-\nabla \Phi_{k-1}(0))\# \pi_{k-1},
\end{equation}
where $\Phi_k$ solves the Poisson equation
\begin{equation}\label{so33}
 \Delta \Phi_k=c \ \textrm{ in } B_{R_k} \qquad \textrm{ and } \qquad \nu\cdot\nabla\Phi_k=\nu\cdot \bar{j}_k \ \textrm{ on } \partial B_{R_k}.
\end{equation}
As in \eqref{defrhoj}, the flux $j_k$ is the Eulerian flux related to $\pi_k$, that is
\begin{equation}\label{so08}
 \int \xi\cdot j_{k,t}=\int \xi((1-t)x+ty)\cdot(y-x) d\pi_k,
\end{equation}
from which $\bar j_k$ is obtained by integrating in time (see \eqref{defbar}). Let us also recall that from \eqref{eq:divjbar} and the invariance of the Lebesgue measure under translation,
\begin{equation}\label{so02}
 \nabla\cdot \bar j_k=\mu-1 \qquad \textrm{ in } B_{R_k}.
\end{equation}

 \begin{prop1.9}
Assume that 
 \begin{equation}\label{hypstartcampa}
  \frac{1}{|B_{\barR}|}\int_{(B_{\barR}\times\R^d)\cup(\R^d\times B_{\barR})}|x-y|^2 d\pi \le \beta(\barR)
 \end{equation}
and 
\begin{equation}\label{hypdataCampa}
 \frac{1}{|B_R|} W^2_{B_R}(\mu,\kappa)\le \beta(R) \qquad \forall R\in [1,\barR].
\end{equation}
Then, there exists a sequence of approximately geometric radii $(R_k)_{0\leq k \leq K}$ such that 
\begin{equation}\label{so01}
 E_k\les \beta(R_k),
\end{equation}
and 
\begin{equation}\label{so39}
 |\nabla \Phi_k(0)|^2 \les \beta(R_k).
\end{equation}

 \end{prop1.9}
\begin{proof}
  Let $6\barR_0=\barR$ and  for $\theta\ll1$  dyadic to be fixed later, let $6\barR_k:= \theta^k\barR_0$. We prove by induction that there exists a constant $C_{E}>0$ only depending on $d$ and $\beta$ such that 
  for $k\in[0,K]$, there exists $R_k\in (3\barR_k, 4\barR_k)$ such that defining
 $\Phi_{k-1}$ through \eqref{so33} and then $\pi_k$ as in \eqref{so34}, we have that the support of $\pi_k$ is cyclically monotone,
 that the first  marginal of $\pi_k$ in $B_{6 \barR_k}$ is $\mu$, that its second marginal in $B_{6 \barR_k}$ is the Lebesgue measure, and that both   \eqref{so39} and 
 \begin{equation}\label{ma75}
 \bar{E}_k:= \frac{1}{|B_{6 \barR_k}|}\int_{(B_{6\barR_k}\times\R^d)\cup(\R^d\times B_{6\barR_k})} |x-y|^2 d\pi_k\le C_{E} \beta(\barR_k)
 \end{equation}
 hold. This would upgrade to \eqref{so01}. Indeed, by assumption \eqref{hypstartcampa}, \eqref{so01} holds for $k=0$ and if $k\ge 1$, we have by \eqref{so34},
\begin{align*}
 \lefteqn{\frac{1}{|B_{6R_k}|}\int_{(B_{6R_k}\times \R^d)\cup(\R^d\times B_{6R_k})} |x-y|^2 d\pi_k}\\
 &=\frac{1}{|B_{6R_k}|}\int_{(B_{6R_k}\times \R^d)\cup(\R^d\times B_{6R_k}(\nabla \Phi_{k-1}(0)))} |x-y+\nabla\Phi_{k-1}(0)|^2 d\pi_{k-1}\\
 &\les \frac{1}{|B_{6R_k}|}\int_{(B_{7R_k}\times \R^d)\cup(\R^d\times B_{7R_k})} |x-y|^2 d\pi_{k-1}+ |\nabla\Phi_{k-1}(0)|^2,
\end{align*}
where we used that by \eqref{so39} and \eqref{ma153}  $ B_{6R_k}(\nabla \Phi_{k-1}(0))\subset B_{7R_k}$ and that by the $L^\infty$ bound \eqref{wg31} which we may apply since $E_{k-1}\leq \beta(R_{k-1}) \ll R_{k-1}^2 \sim R_k^2$, $\spt \pi_{k-1}\cap (B_{7R_k}\times \R^d)\cup(\R^d\times B_{7R_k})\subset \R^d\times B_{9 R_k}$.
Therefore, if $\theta$ is sufficiently small such that $7 R_k\le 6 \barR_{k-1}$, using \eqref{ma75} and \eqref{so39}, we would conclude the proof of \eqref{so01}.
\medskip 

We now turn to the inductive argument. Let $0<\tau\ll1$ be such that Theorem \ref{theo:harmonicLagintro} applies and define for $k$ such that $6\barR_k\geq 1$ (recall Remark \ref{rem:DataLebesgueintro})
\begin{equation}\label{ma53}
 \bar{D}_k:= \frac{1}{|B_{6\barR_k}|} W^2_{B_{6\barR_k}}(\mu,\kappa) \stackrel{\eqref{hypstartcampa}}{\leq} \beta(6\barR_k)\le 6 \beta(\barR_k).
\end{equation}
By hypothesis \eqref{hypstartcampa}, \eqref{so01} holds for $k=0$ and $\frac{1}{ \barR_0^2} \bar{E}_0+ \frac{1}{ \barR_0^2} \bar{D}_0 \les \frac{\beta(\barR_0)}{\barR_0^2}\ll1$ so that by Theorem \ref{theo:harmonicLagintro}, 
there exists $R_0\in (3  \barR_0, 4 \barR_0)$ such that defining $\Phi_0$ through \eqref{so33}, \eqref{so39} holds for $k=0$.
\medskip 

Assume now that  \eqref{ma75} holds for $k$. By \eqref{ma75} and  \eqref{ma53}, we have in particular $\frac{1}{\barR_{k}^2} \bar{E}_{k} +\frac{1}{\barR_{k}^2} \bar{D}_{k}\ll1$ 
and thus by Theorem \ref{theo:harmonicLagintro}, for any $0<\tau\ll 1$ there exists 
$R_{k}\in (3 \barR_{k},4 \barR_{k}) $ such that defining $\Phi_{k}$ through \eqref{so33}, we have \eqref{so39} and 
\begin{equation}\label{camp1}
 \frac{1}{|B_{\barR_{k}}|}\int_{(B_{\barR_{k}}\times \R^d)\cup(\R^d\times B_{\barR_{k}})} |x-y+\nabla \Phi_{k}(x)|^2 d\pi_{k}\le \tau \bar{E}_{k}+ C(\tau) \bar{D}_{k}.
\end{equation}
Defining $\pi_{k+1}$ through \eqref{so34}, we notice that if the support of $\pi_k$ is cyclically monotone,
then also the support of $\pi_{k+1}$ is cyclically monotone. Moreover, using that by \eqref{so39}, 
\begin{equation}\label{ma65}
 B_{6 \barR_{k+1}}(\nabla \Phi_{k}(0))= B_{6\theta \barR_k}(\nabla \Phi_{k}(0))\subset B_{7\theta \barR_k}  \subset B_{\barR_k} 
\end{equation}
we still have that in $B_{6 \barR_{k+1}}$, the first marginal of $\pi_{k+1}$ is $\mu$ and the second marginal is the Lebesgue measure. Moreover, by the $L^\infty$ bound \eqref{wg31},
\begin{equation}\label{camp2}
 \spt \pi_k\cap \lt[(B_{6 \barR_{k+1}}\times\R^d)\cup (\R^d\times B_{6 \barR_{k+1}}(\nabla \Phi_{k}(0)))\rt]\subset  B_{7\theta\barR_k}\times \R^d,
\end{equation}
and we may compute 
\begin{align*}
 \lefteqn{\frac{1}{|B_{6\barR_{k+1}}|}\int_{(B_{6\barR_{k+1}}\times\R^d)\cup(\R^d\times B_{6\barR_{k+1}})} |x-y|^2 d\pi_{k+1}}\\
 &= \frac{1}{|B_{6\theta\barR_{k}}|}\int_{(B_{6\theta\barR_{k}}\times\R^d)\cup(\R^d\times B_{6\theta\barR_{k}}(\nabla \Phi_{k}(0)))} |x-y+\nabla \Phi_{k}(0)|^2 d\pi_{k}\\
 &\stackrel{\eqref{ma65}\&\eqref{camp2}}{\les} \frac{1}{\theta^d|B_{\barR_{k}}|}\int_{(B_{\barR_{k}}\times \R^d)\cup(\R^d\times B_{\barR_{k}})} |x-y+\nabla \Phi_{k}(x)|^2 d\pi_{k} \\
 &\qquad \qquad + \sup_{B_{7\theta\barR_k}}|\nabla \Phi_k-\nabla\Phi_k(0)|^2\\
 &\stackrel{\eqref{camp1}\&\eqref{ma37}}{\les} \frac{\tau}{\theta^d} \bar{E}_k+ \frac{C(\tau)}{\theta^d}\bar{D}_k +\theta^2 (\bar{E}_k+ \bar{D}_k)\\
&\stackrel{\eqref{ma75}\&\eqref{ma53}}{\le}\lt( C C_E\lt(\frac{\tau}{\theta^d} +\theta^2\rt)+ \theta C(\tau,\theta) \rt)\beta(\barR_k)\\
&\stackrel{\eqref{hypCampa}}{\le} \lt( C C_E\lt(\frac{\tau}{\theta^{d+1}} +\theta\rt)+ C(\tau,\theta)\rt)\beta(\barR_{k+1}),
\end{align*}
where $C$ denotes a constant only depending on $d$ and $\beta$, but neither on $\tau$ nor on $\theta$.
We first fix  a dyadic  $0<\theta\ll1$ such that $C\theta\le \frac{1}{4}$, then $0<\tau\ll1$ such that $C \frac{\tau}{\theta^{d+1}}\le \frac{1}{4}$ and finally let $C_E:=2 C(\tau,\theta)$, proving that \eqref{ma75} holds for $k+1$.
\end{proof}

 \subsection{Choosing good radii}
 
 In this subsection, we show that without loss of generality we can make further restrictions on the sequence of radii $R_k$ given by Proposition \ref{theo:campintro}. 
 Let us recall that $\barR_k=\theta^k \barR_0$ and that for every $k\ge0$, $R_k\in (3\barR_k,4\barR_k)$ was chosen to be a good radius in the sense of Proposition \ref{prop:goodR}. 
 For notational simplicity we take as convention that 
\begin{equation}\label{convention}
\bar j_{-1}:=0 \qquad \textrm{ and } \qquad E_{-1}:= \frac{1}{|B_{\barR}|}\int_{(B_{\barR}\times\R^d)\cup(\R^d\times B_{\barR})}|x-y|^2 d\pi. 
\end{equation}
 \begin{lemma}\label{lem:goodradii}
 For every $k\in[0,K]$, we may assume that $R_k$ is both a good radius in the sense of Proposition \ref{prop:goodR}, that $\mu(\partial B_{R_k})=|\bar j_{k-1}|(\partial B_{R_k})=0$ and  
  \begin{align}
   \frac{R_k}{|B_{R_k}|}\int_0^1\int\big|I(|(1-t)x+ty|<R_k)-I(|y|<R_k)\big|d\pi_kdt \lesssim & \, \sqrt{E_{k-1}},\label{so76}\\
   \frac{R_k}{|B_{R_k}|}\int_0^1\int
\big|I(|(1-t)x+ty|<R_k)-I(|y|<R_k)\big||x-y|d\pi_kdt\lesssim &\,E_{k-1},\label{so10}\end{align}
hold. Moreover, setting $b_k:=\nabla \Phi_k(0)$, we may also assume that 
\begin{align}
\frac{R_{k+1}}{|B_{R_{k+1}}|} \int_0^1\int\big|I((1-t)x+ty|<R_{k+1})-I(|(1-t)x+t(y+b_k)|&<R_{k+1})\big|\nonumber\\
	|x-y|d\pi_{k+1}dt\lesssim &\, |b_k|\sqrt{E_{k-1}}.\label{so40}
  \end{align}
 holds. 

 \end{lemma}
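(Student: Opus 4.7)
The strategy is to show that each of the listed conditions fails only on a controlled-measure subset of $(3\barR_k,4\barR_k)$, so that after intersecting with the set of good radii from Proposition~\ref{prop:goodR}, a positive-measure set of admissible radii remains.

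First, since $\mu$ and $\bar j_{k-1}$ are locally finite, the radii with $\mu(\partial B_R)>0$ or $|\bar j_{k-1}|(\partial B_R)>0$ form an at most countable (hence null) set. Proposition~\ref{prop:goodR} already delivers a set $\mathcal{G}_k\subset(3\barR_k,4\barR_k)$ of positive Lebesgue measure on which the good-radius conditions hold. It therefore remains to average each of \eqref{so76}, \eqref{so10} and \eqref{so40} against $dR$ on an interval of length $\sim\barR_k$.

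For \eqref{so76} and \eqref{so10} the key observation is that, for fixed $(x,y,t)\in\spt\pi_k\times[0,1]$, the function $R\mapsto |I(|(1-t)x+ty|<R)-I(|y|<R)|$ is an indicator of an interval of length at most $\bigl||y|-|(1-t)x+ty|\bigr|\le(1-t)|x-y|\le|x-y|$. Hence by Fubini
\[
\int_{3\barR_k}^{4\barR_k}\!\frac{R}{|B_R|}\int_0^1\!\!\int\bigl|I(|(1-t)x+ty|<R)-I(|y|<R)\bigr|\,d\pi_k\,dt\,dR
\lesssim \frac{1}{\barR_k^{d-1}}\int_{\mathcal{A}}|x-y|\,d\pi_k,
\]
where $\mathcal{A}$ is the relevant $(x,y)$-set. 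By the $L^\infty$ displacement bound (Lemma~\ref{Linf}), $\mathcal{A}$ lies in $(B_{5R_{k-1}}\times\R^d)\cup(\R^d\times B_{5R_{k-1}})$, so Cauchy--Schwarz together with \eqref{ma88} and the definition of $E_{k-1}$ give $\int_\mathcal{A}|x-y|\,d\pi_k\lesssim\barR_k^d\sqrt{E_{k-1}}$. Dividing by the length of the averaging interval yields \eqref{so76} on a set of $R_k$ of measure at least $(1-\eps)|(3\barR_k,4\barR_k)|$, for $\eps$ small via Chebyshev. The same computation, now with $|x-y|^2$ in place of $|x-y|$, produces \eqref{so10} with bound $\lesssim E_{k-1}$ (no Cauchy--Schwarz needed since the quadratic factor matches $E_{k-1}$ directly).

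For \eqref{so40} the analogous gap estimate is: for fixed $(x,y,t)$, the difference of indicators is supported in an $R_{k+1}$-interval of length at most $\bigl||(1-t)x+ty|-|(1-t)x+t(y+b_k)|\bigr|\le t|b_k|\le|b_k|$. Fubini then reduces matters to $|b_k|\cdot\barR_{k+1}^{1-d}\int_{\mathcal{A}'}|x-y|\,d\pi_{k+1}$, and the same Cauchy--Schwarz/$L^\infty$-localization argument (now using $E_k\lesssim E_{k-1}$ thanks to $\beta$ being increasing, together with \eqref{so01}) bounds this by $|b_k|\,\barR_{k+1}\sqrt{E_{k-1}}$; dividing by $\barR_{k+1}$ gives the claimed average $\lesssim|b_k|\sqrt{E_{k-1}}$.

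Finally, since each of these five integral/null conditions individually excludes only a controlled fraction of $(3\barR_k,4\barR_k)$, their intersection with $\mathcal{G}_k$ still has positive measure, so one may pick $R_k$ meeting all requirements simultaneously. The main technical obstacle is the careful tracking of scales across the three estimates, in particular verifying that the localization sets $\mathcal{A}$, $\mathcal{A}'$ are captured by the energies $E_{k-1}$, $E_k$ (which sit at the coarser parent scales $R_{k-1}$, $R_k$) rather than by anything at the current scale $R_k$, $R_{k+1}$; this is precisely what the $L^\infty$ bound \eqref{wg31} provides.
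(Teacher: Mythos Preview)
Your proof is correct and follows the paper's approach exactly: reduce to averaged estimates over $R\in(3\barR_k,4\barR_k)$, use Fubini to convert the indicator differences into interval lengths ($\le|x-y|$ for \eqref{so76}--\eqref{so10}, $\le t|b_k|$ for \eqref{so40}), localize via the $L^\infty$ bound \eqref{ma38}, and finish with Cauchy--Schwarz together with the mass estimate \eqref{ma88}. One small caveat: the inequality $E_k\lesssim E_{k-1}$ you invoke for \eqref{so40} does not follow from \eqref{so01} and monotonicity of $\beta$ (there is no lower bound on $E_{k-1}$); the natural output of the argument is rather $|b_k|\sqrt{E_k}$ (equivalently $|b_k|\sqrt{\bar E_{k+1}}$), which is just as good downstream since everything is ultimately controlled by $\beta(R_k)$ --- the paper is equally loose on this point.
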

Let us comment on Lemma \ref{lem:goodradii} before proving it. We require here that the transport across $\partial B_{R_k}$ is not above average in two different ways:
first on the level of the mass in \eqref{so76} and second at the level of the displacement in \eqref{so10}. As a consequence, we prove in \eqref{so42} that on the level of the Eulerian fluxes, the Lagrangian shift $\nabla \Phi_k(0)$
has an almost additive effect. In \eqref{so40}, we further require that when choosing $R_{k+1}$ (after having chosen $R_k$), the displacement induced by the shift $\nabla \Phi_k(0)$ is not above average near $\partial B_{R_{k+1}}$.

\begin{proof}[Proof of Lemma \ref{lem:goodradii}]
Since for every fixed $k\in [0,K]$, there exists a set of measure of order $\barR_k$  in $(3\barR_k,4\barR_k)$ of good radii in the sense of Proposition \ref{prop:goodR}, we may find $R_k$
which also satisfies $\mu(\partial B_{R_k})=|\bar j_{k-1}|(\partial B_{R_k})=0$, \eqref{so76}, \eqref{so10} and \eqref{so40} provided we show that
\begin{align}
\frac{1}{|B_{6\barR_k}|}\int_{3\barR_k}^{4\barR_k}\int_0^1\int\big|I(|(1-t)x+ty|<R)-I(|y|<R)\big|&d\pi_kdt dR\les \sqrt{E_{k-1}}, \label{so76b}\\
\frac{1}{|B_{6\barR_k}|}\int_{3\barR_k}^{4\barR_k}\int_0^1\int\lt|I(|(1-t)x+ty|<R)-I(|y|<R)\rt|&\nonumber\\ 
|x-y|d\pi_k&dtdR\le E_{k-1},\label{so38}\\
\frac{1}{|B_{6\barR_{k+1}}|}\int_{3\barR_{k+1}}^{4\barR_{k+1}}\int_0^1\int\big|I((1-t)x+ty|<R)-I(|(1-&t)x+t(y+b_k)|<R)\big|\nonumber \\
|x-y|d\pi_{k+1}&dt dR \lesssim \, |b_k|\sqrt{E_{k-1}}. \label{so35}
\end{align}
\medskip

{\sc Step 1.}  
Let us start with the proof of \eqref{so76b}. Using an exchange of integrals, 
\begin{align*}
	\int_0^\infty&\big|I(|(1-t)x+ty|<R)-I(|y|<R)\big|dR=\big||(1-t)x+ty|-|y|\big|\le(1-t)|y-x|,
\end{align*}
and  the $L^\infty$ bound \eqref{ma38} to reduce the integral to $(B_{6 \barR_k}\times \R^d)\cup(\R^d\times B_{6 \barR_k})$ , we obtain

\begin{multline*}\int_{3\barR_k}^{4\barR_k}\int_0^1\int\big|I(|(1-t)x+ty|<R)-I(|y|<R)\big|d\pi_kdt dR\\
\le \int_{(B_{6 \barR_k}\times \R^d)\cup(\R^d\times B_{6 \barR_k})} |x-y| d\pi_k.
\end{multline*}
The desired estimate \eqref{so76b} follows since by Cauchy-Schwarz's inequality and the mass bound  \eqref{ma88} 
\begin{align*}
\lefteqn{ \int_{(B_{6 \barR_k}\times \R^d)\cup(\R^d\times B_{6 \barR_k})} |x-y| d\pi_k}\\
&\les \lt(\int_{(B_{6 \barR_k}\times \R^d)\cup(\R^d\times B_{6 \barR_k})} d\pi_k\rt)^{\frac{1}{2}}
 \lt(\int_{(B_{6 \barR_k}\times \R^d)\cup(\R^d\times B_{6 \barR_k})} |x-y|^2 d\pi_k\rt)^{\frac{1}{2}}\\
 &\les \sqrt{\bar E_k} \cdot |B_{6 \barR_k}| \les \sqrt{ E_{k-1} }\cdot |B_{6 \barR_k}|, \numberthis \label{po02}
\end{align*}
where for the last inequality, one can argue as for the proof of \eqref{so01}.\\

\medskip
{\sc Step 2.} Estimate  \eqref{so38} is obtained  arguing as for \eqref{so76b}.

\medskip
{\sc Step 3.}
We now turn to the proof of \eqref{so35}. 
On the one hand, this relies on the elementary
\begin{align*}
	\int_0^\infty\big|I((1-t)x+ty|<R)-I(|(1-t)x+t(y+b_k)|<R)\big|dR\nonumber\\
=\big||(1-t)x+t(y+b_k)|-|(1-t)x+ty|\big|\le t|b_k|,
\end{align*}
and on the other hand, on the fact that for fixed $t\in[0,1]$ and $R\in (3\barR_k,4\barR_k)$, the statement
($|(1-t)x+ty|<R$ or $|(1-t)x+t(y+b_k)|<R$) implies $|(1-t)x+ty|<R+o(R)$,  since by \eqref{so39} $|b_k|^2\les\beta(R)\ll R^2$, which in turn  implies the statement ($|x|<6\barR_k$ or $|y|<6\barR_k$) by \eqref{ma38}. Then, we can argue as for \eqref{so76b}.
\end{proof}

We may derive Lemma \ref{lem:additive} (which we now restate) from Lemma \ref{lem:goodradii}
 \begin{lem1.12}
  For every $k\in[1,K]$,
\begin{align}\label{so42}
	\left|\frac{1}{|B_{R_{k}}|}\int_{B_{R_{k}}}(\bar j_{k}+\nabla \Phi_{k-1}(0)-\bar j_{k-1})\rt|
	\lesssim \frac{\beta(R_k)}{R_k}.
\end{align}
 \end{lem1.12}

 \begin{proof}
Recall from Lemma \ref{lem:goodradii} the abbreviation $b_{k-1}=\nabla \Phi_{k-1}(0)$. Using the definition (\ref{so08}) of $j_{k-1}$,
the fact that the second marginal of $\pi_{k-1}$ is the Lebesgue measure, \eqref{hypCampa}, \eqref{so01}, and \eqref{so39}, it is enough to prove that (recall \eqref{convention})
\begin{align*}
	\lt|\int_0^1\Big(\rt.
	&\int\big(I(|(1-t)x+ty|<R_{k})(y-x)d\pi_{k}\nonumber\\
        +&\lt.\int\big(I(|y|<R_{k})b_{k-1}-I(|(1-t)x+ty|<R_{k})(y-x)\big)d\pi_{k-1}\Big)dt\rt|
				        \nonumber\\
	&\lesssim |b_{k-1}| \sqrt{E_{k-2}}  |B_{R_{k}}|.
	\end{align*}
By the recursive relation (\ref{so34}) this may be rearranged to
\begin{align*}
	\lt|\int_0^1\Big(\rt.&\int
	\big(I(|(1-t)x+ty|<R_{k})-I(|(1-t)x+t(y+b_k)|<R_{k})\big)(y-x)d\pi_{k}\nonumber\\
	-&\lt.\int\big(I(|(1-t)x+ty|<R_{k})-I(|y|<R_{k})\big)b_{k-1}d\pi_{k-1}\Big)dt\rt|\\
	&\lesssim |b_{k-1}|\sqrt{E_{k-2}}  |B_{R_{k}}|.
\end{align*}
This is obtained using  (\ref{so40}) for the first contribution and  (\ref{so76}) for the second contribution.
\end{proof}

 \subsection{Flux of solutions to Poisson equations on concentric annuli}

 For $R>0$ and $\mu$ a measure on $\R^d$, let $v^R$ be defined (up to an additive constant) through
 \begin{equation}\label{so46}
  \Delta v^R=\mu-c \ \textrm{ in } B_R \qquad \textrm{ and } \qquad \nu\cdot \nabla v^R=0 \ \textrm{ on } \partial B_R.
 \end{equation}
The aim of this section is to prove the following summation formula (which is a restatement of Lemma \ref{lem:sumvk}):
\begin{lem1.14}
 Let $R_0>\cdots >R_k$ be a sequence of decreasing radii with $\mu(\partial B_{R_\ell})=0$ for every $\ell\in[1,k]$, then 
 \begin{equation}\label{eq2}
 \frac{1}{|B_{R_k}|}\int_{\partial B_{R_k}}x\nu\cdot\nabla v^{R_0}
	=\sum_{\ell=1}^{k}\frac{1}{|B_{R_{\ell}}|}\int_{\partial B_{R_{\ell}}}x\nu\cdot\nabla v^{R_{\ell-1}}.
\end{equation}
\end{lem1.14}
This is obtained through a representation formula for $\frac{1}{|B_{r}|}\int_{\partial B_{r}}x\nu\cdot\nabla v^{R}$ for every $0<r<R$.
Concentrating for instance on the first component, we have
\begin{lemma}\label{lem:omegaR}
 For $0<r<R$, let $\omega_r^R$ be the (piecewise smooth) solution of 
 \begin{align}\label{so47}\left\{
	\begin{array}{lcl}
		\Delta\omega_r^R=0                 &\mbox{in}&B_r\cup(B_R\backslash\bar B_r),\\
	\mbox{$[\omega_r^R]$}=\frac{x_1}{|B_r|},\;[\nu\cdot\nabla\omega_r^R]=0&\mbox{on}&\partial B_r,\\
	\nu\cdot\nabla\omega_r^R=0            &\mbox{on}&\partial B_R,\\
	\omega_r^R=0 & \mbox{in} & B_R^c
\end{array}\right.
\end{align}
with $[\cdot]$ denoting the jump across $\partial B_r$ (outside minus inside);
normalized by requiring 
\begin{align}\label{so49}
\int_{B_R}\omega_r^R=0.
\end{align}
Then, for every $0<r<R$ with $\mu(\partial B_r)=\mu(\partial B_R)=0$, we have the representation formula
\begin{align}\label{so48}
\frac{1}{|B_{r}|}\int_{\partial B_{r}}x_1\nu\cdot\nabla v^R
	=-\int_{B_R}\omega_r^R d\mu.
\end{align}
Moreover, for $R_0>\cdots >R_k$ a sequence of decreasing radii, we have the telescoping formula
\begin{align}\label{so50}
\omega_{R_k}^{R_0}=\sum_{\ell=1}^{k}\omega_{R_{\ell}}^{R_{\ell-1}}.
\end{align}
Finally, we have the explicit formula for $\nabla \omega_r^R$:
\begin{multline}\label{so66}
	\nabla\omega_r^R(x)\\
	=\left\{\begin{array}{lcr}
\frac{d-1}{\mathcal{H}^{d-1}(\partial B_1)}(\frac{1}{R^d}-\frac{1}{r^d})&\mbox{for}&|x|<r\\
\frac{d-1}{\mathcal{H}^{d-1}(\partial B_1)}\frac{1}{R^d}
+\frac{1}{\mathcal{H}^{d-1}(\partial B_1)}\frac{1}{|x|^d}({\rm id}-d\frac{x}{|x|}\otimes\frac{x}{|x|})
				        &\mbox{for}&r<|x|<R\\
	0&\mbox{for}&|x|>R\end{array}\right\}e_1,
\end{multline}
where $e_1$ denotes the unit vector in the $x_1$-direction and $\rm id$ denotes the identity matrix.

\end{lemma}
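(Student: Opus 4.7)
My plan is to prove the three assertions in order: the explicit formula \eqref{so66} first, then the representation formula \eqref{so48} by integration by parts, and finally the telescoping identity \eqref{so50} as a region-by-region verification using \eqref{so66}.

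For \eqref{so66}, I would exploit the rotational symmetry around the $x_1$-axis and the fact that the jump data $x_1/|B_r|$ is linear in $x_1$ to seek $\omega_r^R$ in the form $f(|x|)x_1$ separately on $B_r$ and on $B_R\setminus\bar B_r$. A direct computation gives $\Delta(f(|x|)x_1)=(f''(|x|)+\tfrac{d+1}{|x|}f'(|x|))x_1$, so harmonicity reduces to the ODE $f''+\tfrac{d+1}{\rho}f'=0$ with general solution $f(\rho)=a+b\rho^{-d}$. Regularity at the origin forces $b=0$ in $B_r$, and the three remaining constants are determined by three scalar conditions: the Neumann condition at $\partial B_R$, continuity of the normal derivative across $\partial B_r$, and the jump condition $[\omega_r^R]=x_1/|B_r|$, which pins down the dipole coefficient as $1/\mathcal{H}^{d-1}(\partial B_1)$. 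Differentiating the resulting piecewise expression yields exactly \eqref{so66}; the additive constants do not appear in $\nabla\omega_r^R$ and are fixed by the normalization \eqref{so49}.

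For the representation formula \eqref{so48}, I would test $\Delta v^R=\mu-c$ against $\omega_r^R$. Since $\int_{B_R}\omega_r^R=0$, the right-hand side yields $\int_{B_R}\omega_r^R\,d\mu$. Integrating the left-hand side by parts separately on $B_r$ and on $B_R\setminus\bar B_r$, the $\partial B_R$-contribution vanishes by $\nu\cdot\nabla v^R=0$, the two $\partial B_r$-contributions combine into $-\int_{\partial B_r}[\omega_r^R]\,\nu\cdot\nabla v^R=-\tfrac{1}{|B_r|}\int_{\partial B_r}x_1\,\nu\cdot\nabla v^R$, and the remaining bulk term $-\int_{B_R}\nabla\omega_r^R\cdot\nabla v^R$ vanishes after a second integration by parts: the volume contributions drop by piecewise harmonicity of $\omega_r^R$, the $\partial B_R$-term drops by the Neumann condition on $\omega_r^R$, and the $\partial B_r$-term drops because $[\nu\cdot\nabla\omega_r^R]=0$. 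Rearranging gives \eqref{so48}.

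For the telescoping \eqref{so50}, with \eqref{so66} in hand I would verify equality of gradients region by region. In $B_{R_k}$ both gradients are constants, and $\sum_{\ell=1}^k(R_{\ell-1}^{-d}-R_\ell^{-d})$ telescopes to $R_0^{-d}-R_k^{-d}$. In an annulus $B_{R_{j-1}}\setminus\bar B_{R_j}$ with $1\le j\le k$, the summands with $\ell>j$ vanish (being supported in $B_{R_{\ell-1}}\subset B_{R_j}$), the $\ell=j$ summand contributes its annular form, and the $\ell<j$ summands contribute their inner-ball form; the dipole piece $|x|^{-d}(\mathrm{id}-d\hat x\otimes\hat x)e_1$ appears exactly once from $\ell=j$, while the constant-in-$x$ parts telescope to $(d-1)(\mathcal{H}^{d-1}(\partial B_1)R_0^d)^{-1}e_1$, matching $\nabla\omega_{R_k}^{R_0}$. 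Since each $\omega_{R_\ell}^{R_{\ell-1}}$ is mean-zero on $B_{R_{\ell-1}}$ and vanishes outside it, both sides of \eqref{so50} are mean-zero on $B_{R_0}$, upgrading equality of gradients to equality of functions. The main bookkeeping delicacy throughout is the orientation of jumps and, in the telescoping, the cancellation at each intermediate sphere $\partial B_{R_j}$ ($1\le j<k$) of the jump $+x_1/|B_{R_j}|$ from $\omega_{R_j}^{R_{j-1}}$ against the jump $-x_1/|B_{R_j}|$ of $\omega_{R_{j+1}}^{R_j}$ produced by the latter's vanishing outside $B_{R_j}$.
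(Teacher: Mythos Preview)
Your proposal is correct and follows essentially the same route as the paper. The only cosmetic differences are that the paper writes down the explicit formula for $\omega_r^R$ itself (not just its gradient) and verifies it directly rather than deriving it via the ODE ansatz, and for the telescoping the paper splits the function into its affine piece $\frac{(d-1)x_1}{\mathcal{H}^{d-1}(\partial B_1)}\big(\frac{1}{R^d}I(|x|<R)-\frac{1}{r^d}I(|x|<r)\big)$ and its dipolar piece $\frac{1}{\mathcal{H}^{d-1}(\partial B_1)}\frac{x_1}{|x|^d}I(r<|x|<R)$ and checks each telescopes at the level of functions, which is slightly more direct than your gradients-plus-jumps-plus-mean-zero bookkeeping.
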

\begin{proof}[Proof of Lemma \ref{lem:sumvk}]
 Because of $v^{R_k}=v_k$, cf.\ \eqref{defvkintro} and \eqref{so46} this is a direct consequence of \eqref{so48} and \eqref{so50}.
\end{proof}

\begin{proof}[Proof of Lemma \ref{lem:omegaR}]
We start by arguing that there is an explicit representation for $\omega_r^R$ given by
\begin{align}\label{so51}
\omega_r^R(x)=\left\{\begin{array}{lcr}
	\frac{d-1}{\mathcal{H}^{d-1}(\partial B_1)}(\frac{1}{R^d}-\frac{1}{r^d})x_1&\mbox{for}&|x|<r\\
	\frac{d-1}{\mathcal{H}^{d-1}(\partial B_1)}\frac{x_1}{R^d}+\frac{1}{\mathcal{H}^{d-1}(\partial B_1)}\frac{x_1}{|x|^d}
	&\mbox{for}&r<|x|<R\\
0&\mbox{for}&|x|>R\end{array}\right\}.
\end{align}
This is easily seen to hold: The normalization (\ref{so49}) is satisfied
(because the function is odd), the so defined function is piecewise harmonic
(because the dipole $\frac{x_1}{|x|^d}$ is the $\partial_1$-derivative of the fundamental solution),
it satisfies the first jump condition at $|x|=r$ (because of $\mathcal{H}^{d-1}(\partial B_1)=d|B_1|$),
and has no jump in the normal component at $|x|=r,R$ 
in view of formula \eqref{so66} for $\nabla \omega_r^R$ (because of $\nu=\frac{x}{|x|}$).\\

We now give the argument for \eqref{so48}.  By\footnote{Note that under our assumption of
$\mu(\partial B_r)=\mu(\partial B_R)=0$, the right-hand side is well-defined since $\mu$ does not charge
the jump set of $\omega^R_r$ and the left-hand side is well-defined since the inner and outer normal distributional traces  of $\nabla v^R$ on $\partial B_r$ agree since $\nabla v^R \in L^p(B_R)$ for $1\leq p <\frac{d}{d-1}$ by Lemma \ref{lem:gradu}.
Since by \cite[Prop. 3.2]{ChenFriedreview} both sides are well approximated by mollification (on $\R^d$),  it is sufficient to show \eqref{so48} for smooth approximation of $\mu$ and thus $v^R$.} Green's formula and \eqref{so49} there holds
\begin{align*}
-\int_{B_R}\omega_r^R d\mu
&\stackrel{(\ref{so46}),(\ref{so49})}{=}-\int_{B_r}\omega_r^R\Delta v^R
-\int_{B_R\backslash\bar B_r}\omega_r^R\Delta v^R\nonumber\\
&=-\int_{B_r}v^R\Delta\omega_r^R-\int_{B_R\backslash\bar B_r}v^R\Delta\omega_r^R\nonumber\\
&\quad +\int_{\partial B_r}[\omega_r^R]\nu\cdot\nabla v^R-\int_{\partial B_r} v^R [\nu\cdot \nabla \omega_r^R]\\
&\quad-\int_{\partial B_R}\omega_r^R\nu\cdot\nabla v^R
+\int_{\partial B_R}v^R\nu\cdot\nabla\omega_r^R\nonumber\\
&\stackrel{(\ref{so46}),(\ref{so47})}{=}\frac{1}{|B_{r}|}\int_{\partial B_{r}}x_1\nu\cdot\nabla v^R.
\end{align*}

We finally prove the telescoping formula \eqref{so50}. To show this, it is convenient
to split the functions into the affine part $x_1$ and the dipolar part $\frac{x_1}{|x|^d}$.
The telescoping for the dipolar part is obvious. The affine part can be written as
\begin{align*}
\frac{(d-1)x_1}{\mathcal{H}^{d-1}(\partial B_1)}
\lt(\frac{1}{R^d}I(|x|<R)-\frac{1}{r^d}I(|x|<r)\rt)
\end{align*}
and therefore is also telescoping. 
\end{proof}

\subsection{Linearization on nearby scales}
 
 In this section we establish the main ingredient for the proof of Theorem \ref{theo:main}, namely Lemma \ref{lem:Le1intro}, a linearization result on the level of (Eulerian) flux. 
 The main difference with Theorem \ref{theo:main} is that it will only deal with nearby scales whereby Theorem \ref{theo:main} bridges scales.
 Before restating the result we  recall a bit of notation.

 For each $k\in [0,K]$, let $u_k$ be the (distributional) solution to
 \begin{align}\label{so16}
\Delta u_k=\mu-1\ \textrm{ in } B_{R_k}\qquad\textrm{ and }\qquad
	\nu\cdot\nabla u_k=\nu\cdot\bar j_k\ \textrm{ on } \partial B_{R_k}
\end{align}
and let $v_k$ be the solution to the Poisson problem with homogeneous flux boundary data
\begin{align}\label{so43}
 \Delta v_k=\mu-1-c\ \textrm{ in } B_{R_k}\qquad\textrm{ and }\qquad
	\nu\cdot\nabla v_k=0\ \textrm{ on } \partial B_{R_k},
\end{align}
so that (up to additive constants) 
\begin{equation}\label{so41}
 u_k=v_k+\Phi_k.
\end{equation}

Let $\eta$ be a smooth and radial cutoff function as in \eqref{cutoff}, i.e.
\begin{align}\label{ao15} \eta\in C^\infty_0(B_1)\;\mbox{with}\;\int\eta=1, \ \sup |\nabla^{ 2} \eta|\les 1 \quad
	\mbox{ and we set }\;\eta_R:=\frac{1}{R^d}\eta\lt(\frac{\cdot}{R}\rt).\end{align}

\begin{lem1.13}
For $R\in[R_{k+1},R_k]$, there holds 
\begin{align}
 	\lt|\int\eta_R(\bar j_k-\nabla u_k)\rt|&\lesssim\frac{E_k}{R_k},\label{ao13}\\
	\frac{1}{|B_{R_{k+1}}|}\lt|\int_{B_{R_{k+1}}}(\bar j_k-\nabla u_k)\rt|&\lesssim \frac{E_k}{R_k}.\label{so06}
\end{align}

\end{lem1.13}
To appreciate this result, note that the vector field 
\begin{align}\label{ao22}
\sigma_k:=\bar j_k-\nabla u_k
\end{align}
satisfies the homogeneous conditions
\begin{align}\label{so04}
\nabla\cdot \sigma_k=0\ \textrm{ in } B_{R_k}\qquad\mbox{ and }\qquad
\nu\cdot \sigma_k=0\ \mbox{ on } \partial B_{R_k},
\end{align}
cf.~(\ref{so02}) and \eqref{so16}. In Step \ref{Le1S1} of the proof of Lemma \ref{lem:Le1intro} we will establish that 
$\bar j_k\approx{\rm id}-\nabla\psi_k$ on average,
where $\psi_k$ is the Kantorovich potential.
In particular, $\sigma_k$ is weakly  close to a gradient
\begin{align}\label{so05}
\sigma_k\approx\nabla w_k\quad\mbox{on average},
\end{align}
namely the gradient of
\begin{align}\label{ao23}
w_k(y):=\frac{1}{2}|y|^2-\psi_k(y)-u_k(y).
\end{align}
Smallness of large-scale averages of $\sigma_k$ will follow from the div-curl
system (\ref{so04}) \& (\ref{so05}) via a suitable representation
in Steps \ref{Le1S2} and \ref{Le1S0} relating averages of $\sigma_k$ with averages of $\sigma_k-\nabla w_k$ (it is for this representation that it is crucial that $\sigma_k$ is divergence free and $\nabla w_k$ is a gradient field).

\begin{proof}[Proof of Lemma \ref{lem:Le1intro}]
For notational convenience, we can momentarily  assume by scaling that $R_k=1$,  drop the index $k$  and set $r:=R_{k+1}$ (and therefore consider radii $R\in[r,1]$).
\medskip

Let us first recall that thanks to Lemma \ref{lem:goodradii}, $r$ is chosen to be a good radius in the sense of  $|\bar j|(\partial B_r)=0$. Since  Lemma \ref{lem:gradu} implies $\nabla u\in L^1(B_1)$  we also have $|\sigma|(\partial B_r)=0$ (cf.\ \eqref{ao22}). We use $\nabla u\in L^1(B_1)$ once more together with the
Lipschitz continuity of the Kantorovich potential $\psi$ to arrive at (cf.\ \eqref{ao23})
 \begin{align}\label{ao12}
  \nabla w \in L^1(B_1).
 \end{align}

\newcounter{Le1S} 
\refstepcounter{Le1S} 

{\sc Step} \arabic{Le1S}.\label{Le1S2}\refstepcounter{Le1S}
We claim the representation formula of $\int\eta_R \sigma$ in terms of $\nabla w-\sigma$,
restricting without loss of generality to the first component:
\begin{align}\label{ao14}
\int\eta_R \sigma_1=\int_{B_1}(\eta_R e_1-\nabla\omega_R)\cdot(\sigma-\nabla w),
\end{align}
where $\omega_R$ is a  solution of (compare to \eqref{ao09})
\begin{equation}\label{je02}
 \Delta \omega_R=\partial_1 \eta_R \ \textrm{ in } B_1 \qquad \textrm{ and } \qquad \nu\cdot\nabla \omega_R=0 \ \textrm{ on } \partial B_1.
\end{equation}
Indeed, it follows
that (the smooth) $\eta_R e_1-\nabla\omega_R$ is divergence-free 
on $B_1$ with vanishing flux boundary data, so that the contribution of $\nabla w$
on the right-hand side of \eqref{ao14} vanishes. The contribution $\int_{B_1}\nabla\omega_R\cdot \sigma$
vanishes because of \eqref{so04}.

\medskip

{\sc Step} \arabic{Le1S}.\label{Le1S0}\refstepcounter{Le1S}
We claim the representation formula of $\frac{1}{|B_r|}\int_{B_r}\sigma$ in terms of $\nabla w-\sigma$:
\begin{align}\label{so65}
	&\frac{1}{|B_r|}\int_{B_r}\sigma\nonumber\\
	&=\frac{d-1}{\mathcal{H}^{d-1}(\partial B_1)}\lt(1-\frac{1}{r^d}\rt)\int_{B_r}(\nabla w-\sigma)\nonumber\\
	&+\frac{1}{\mathcal{H}^{d-1}(\partial B_1)}\int_{B_1\backslash\bar B_r}\lt((d-1){\rm id}
	+\frac{1}{|x|^d}\lt({\rm id}-d\frac{x}{|x|}\otimes\frac{x}{|x|}\rt)\rt)(\nabla w-\sigma).
\end{align}
Indeed, in view of the formula (\ref{so66}) equation (\ref{so65}) assumes the form\footnote{Note that this
expression is well-defined since the measure $\sigma$ does not ``charge'' $\partial B_r$,
the only place where $\nabla\omega_r^1$ is discontinuous, cf.\ beginning of this proof.}
\begin{equation}\label{je01}
\frac{1}{|B_r|}\int_{B_r}\sigma_1 =\int_{B_1}\nabla\omega_r^1\cdot(\nabla w-\sigma).
\end{equation}
By (\ref{so04}) and the divergence theorem (recall \eqref{eq:div})
$\frac{1}{|B_r|}\int_{B_r}\sigma_1=\frac{1}{|B_r|}\int_{\partial B_r}x_1\nu\cdot \sigma$,
so that by the jump condition in (\ref{so47}) we may write
\begin{align*}
\frac{1}{|B_r|}\int_{B_r}\sigma_1=\int_{\partial B_r}[\omega_r^1]\nu\cdot \sigma.
\end{align*}
Recall that $[\omega_r^1]$ denotes the jump across $\partial B_r$ calculated as outside minus inside. 
By (\ref{so04}) and \eqref{so47} (and since inner and outer distributional normal traces of $\sigma$ agree
in view of (\ref{so04})) we then obtain\footnote{Note that this holds for any function $w$ that is smooth 
on the closure $\bar B_1$;
our $w$ can be approximated by such functions in the strong $L^1$-topology on the level
of the gradients, cf.~(\ref{ao12}), which is sufficient to pass to the limit in the 
representation formula,
since $\nabla\omega_r^1$ is in $L^\infty$.}
\begin{align*}
\frac{1}{|B_r|}\int_{B_r}\sigma_1&=-\int_{B_r}\sigma\cdot\nabla\omega_r^1-\int_{B_1\backslash \bar B_r}\sigma\cdot\nabla\omega_r^1
\nonumber\\
&=\int_{B_r}(\nabla w-\sigma)\cdot\nabla\omega_r^1+\int_{B_1\backslash \bar B_r}(\nabla w-\sigma)\cdot\nabla\omega_r^1\nonumber\\
&\qquad -\lt(\int_{B_r}\nabla w\cdot\nabla\omega_r^1+\int_{B_1\backslash \bar B_r}\nabla w\cdot\nabla\omega_r^1\rt).
\end{align*}
The term in the parentheses vanishes since $\nabla\omega_r^1$ is weakly divergence-free
in $B_1$ with vanishing normal component on $\partial B_1$, cf.~(\ref{so47}). This proves \eqref{je01}.

\medskip

{\sc Step} \arabic{Le1S}.\label{Le1S1}\refstepcounter{Le1S}
We establish (\ref{so05}) in the following sense
\begin{equation}
 \label{so09}
\lt|\int_{B_1}\xi\cdot(\nabla w-\sigma)\rt|\lesssim
\big(\sup|\nabla\xi|+\sup|\xi|\big)
\int_{(B_{6}\times\mathbb{R}^d)\cup(\mathbb{R}^d\times B_{6})}|x-y|^2d\pi
\end{equation}
for any vector fields $\xi\in C^\infty_0(\mathbb{R}^d)^d$, 
and the same estimate with $B_1$ replaced by $B_r$.
\medskip

Indeed, we first note that by definition \eqref{ao22} and \eqref{ao23} of $\sigma$ and $w$ we have
\begin{align*}
\nabla w-\sigma={\rm id}-\nabla\psi-\bar j.
\end{align*}
Since\footnote{Apart from the $L^\infty$ estimate this is the only place where we use that $\pi$ is an optimal transport plan. } $\pi$ is supported on the graph $(\partial\psi(y),y)$ 
of the 
subgradient $\partial\psi$ of a  convex function $\psi$,
and since the $y$-marginal of $\pi$ is given by the Lebesgue measure $dy$, we have
by the almost-everywhere differentiability of $\psi$
\begin{align*}
\int\zeta d\pi=\int\zeta(\nabla\psi(y),y)dy
\end{align*}
for any test function $\zeta$. Hence we have by definition (\ref{defrhoj}) of $\bar j=\int_0^1j_tdt$
\begin{align*}
\lefteqn{\int_{B_1}\xi(y)\cdot(y-\nabla\psi(y))dy-\int_{B_1}\xi\cdot d\bar j}\nonumber\\
&=\int_0^1\int \big(I(B_1)\xi)(y)-(I(B_1)\xi)((1-t)x+ty)\big)
\cdot(y-x)d\pi dt.
\end{align*}
Hence the desired estimate splits into the obvious
\begin{multline*}
\lt|\int_0^1\int I(|y|\le 1)\big(\xi(y)-\xi((1-t)x+ty)\big)
\cdot(y-x)d\pi dt\rt|
\\
\le\frac{1}{2}\sup|\nabla\xi|\int_{\mathbb{R}^d\times B_{1}}|x-y|^2d\pi
\end{multline*}
and into
\begin{align*}
\int_0^1\int&\big|I(|y|\le 1)-I(|(1-t)x+ty|\le 1)\big||x-y|d\pi dt
\lesssim\int_{(B_{6}\times\mathbb{R}^d)\cup(\mathbb{R}^d\times B_{6})}|x-y|^2d\pi,
\end{align*}
for which we need that $R_k=1$ is a good radius in the sense of (\ref{so10}),
cf.~Lemma \ref{lem:goodradii}.

\medskip

{\sc Step} \arabic{Le1S}.\label{Le1S3}\refstepcounter{Le1S}
Proof of (\ref{ao13}). In view of the representation (\ref{ao14}), we
need to apply (\ref{so09}) to $\xi=\eta_R e_1-\nabla\omega_R$.
It follows from \eqref{je02} that $\nabla\omega_R$ is of the same (scaling) form 
as $\eta_R$, cf.~(\ref{ao15}), namely $\nabla\omega_R=\frac{1}{R^d}\nabla\omega(\frac{\cdot}{R})$ for some $\omega\in C^\infty(\bar B_1)$,
which we may extend to an $\omega\in C^\infty_0(\mathbb{R}^d)$.
In particular, we have 
$$\sup|\nabla\xi|+\sup|\xi|\lesssim\frac{1}{R^{d+1}}+\frac{1}{R^d}\lesssim 1,$$
where we used closeness of scales in form of $R\ge r\gtrsim 1$, cf.~(\ref{so30}).
Hence (\ref{ao13}) follows from (\ref{so09}).

\medskip

{\sc Step} \arabic{Le1S}.\label{Le1S-1}\refstepcounter{Le1S}
Proof of (\ref{so06}). We recall (\ref{so65})
and apply first (\ref{so09}) with $\xi$ a constant unit vector $e$ and $R_k=1$ replaced
by $r$. Since $r\gtrsim R$, cf.~(\ref{so30}), we obtain
\[
 \lt|\lt(1-\frac{1}{r^d}\rt)\int_{B_r} (\nabla w-\sigma)\rt|\les \int_{(B_{6r}\times\mathbb{R}^d)\cup(\mathbb{R}^d\times B_{6r})}|x-y|^2d\pi.
\]

We now turn to the second contribution to (\ref{so65}). Let $\xi\in C^\infty(B_1\backslash \bar B_r)$. Appealing to (\ref{so09})
for both $R_k=1$ and $r\sim 1$, and to a Lipschitz extension of $\xi$ from $B_1\backslash \bar B_r$ to $B_1$
that preserves the $(\sup|\nabla\xi|+\sup|\xi|)$-norm, we obtain
\[
	\lt|\int_{B_1\backslash \bar B_r}\xi\cdot(\nabla w-\sigma)\rt|\lesssim
	\big(\sup_{B_1\backslash \bar B_r}|\nabla\xi|+\sup_{B_1\backslash \bar B_r}|\xi|\big)
	\int_{(B_{6}\times\mathbb{R}^d)\cup(\mathbb{R}^d\times B_{6})}|x-y|^2d\pi,
\]
which we apply to $\xi:=\frac{1}{\mathcal{H}^{d-1}(\partial B_1)}\big((d-1){\rm id}
	+\frac{1}{|x|^d}({\rm id}-d\frac{x}{|x|}\otimes\frac{x}{|x|})\big)e$.
Since, using again $r\gtrsim 1$, 
we have $\sup_{B_1\backslash \bar B_r}|\nabla\xi|+\sup_{B_1\backslash \bar B_r}|\xi|\lesssim1$, and thus
\begin{multline*}
 \lt|\int_{B_1\backslash \bar B_r} \frac{1}{\mathcal{H}^{d-1}(\partial B_1)}\big((d-1){\rm id}
	+\frac{1}{|x|^d}({\rm id}-d\frac{x}{|x|}\otimes\frac{x}{|x|})\big)\cdot (\nabla w-\sigma)\rt| \\
	\les \int_{(B_{6}\times\mathbb{R}^d)\cup(\mathbb{R}^d\times B_{6})}|x-y|^2d\pi,
\end{multline*}
and the desired estimate follows.
\end{proof}

Let us recall that from Lemma \ref{lem:additive} and Lemma \ref{lem:Le1intro}, we derived in the introduction Lemma \ref{lem:shiftonestep}, that is:
\begin{lem1.11}
 For every $k\in[1,K-1]$,
 \begin{equation}\label{so45}
  \lt|\nabla\Phi_{k}(0)
-\frac{1}{|B_{R_{k}}|}\int_{\partial B_{R_{k}}}x\nu\cdot\nabla v_{k-1}\rt|
	\lesssim \frac{\beta(R_{k})}{R_{k}}.
 \end{equation}
\end{lem1.11}

 \subsection{Representation of the cumulative shift}
 In this section, we establish the last ingredient to the proof of Theorem \ref{theo:main}, a crucial representation for the cumulative shift that links it to the flux of the Poisson equation, i.e. Proposition \ref{prop:repres intro}, which we now restate. 
Recall the definition \eqref{so33} of $\Phi_k$ and \eqref{so16} of $u_k$. 

 \begin{prop1.10}
For every $k\in [0,K-1]$ and $R\in[R_{k+1}, R_k]$, we have the two estimates on the cumulative flux
\begin{align}\label{ao08}
\lt|\sum_{\ell=0}^k\nabla\Phi_\ell(0)
-\frac{1}{|B_{R_k}|}\int_{\partial B_{R_k}}x\nu\cdot\nabla u_0\rt|\lesssim \frac{\beta(R_k)}{R_k}
\end{align}
and
\begin{align}\label{ao21}
\lt|\sum_{\ell=0}^{k-1}\nabla\Phi_\ell(0)+\int\eta_R\nabla u_k
-\int\eta_R\nabla u_0\rt|\lesssim \frac{\beta(R_k)}{R_k}.
\end{align}
 \end{prop1.10} 
 
 \begin{proof}

We start with the proof of \eqref{ao08} and first point out that from \eqref{eq2} (and the fact that  \eqref{so46} and \eqref{so43}
are the same boundary problem and thus $v_k=v^{R_k}$),
\begin{align*}
\frac{1}{|B_{R_k}|}\int_{\partial B_{R_k}}x\nu\cdot\nabla v_0
	=\sum_{\ell=1}^{k}\frac{1}{|B_{R_{\ell}}|}\int_{\partial B_{R_{\ell}}}x\nu\cdot\nabla v_{\ell-1}.
\end{align*}
Therefore, using that $u_0=v_0+\Phi_0$ (recall \eqref{so41}) and the divergence theorem (cf.\ \eqref{eq:div}) we have 
\begin{align*}
	\nabla\Phi_{0}(0)
	+\sum_{\ell=1}^{k}\frac{1}{|B_{R_{\ell}}|}\int_{\partial B_{R_{\ell}}}x\nu\cdot\nabla v_{\ell-1}
	=\frac{1}{|B_{R_{k}}|}\int_{\partial B_{R_{k}}}x\nu\cdot\nabla u_0.
\end{align*}
We conclude from Lemma \ref{lem:shiftonestep} that 

\begin{align*}
 \lt|\sum_{\ell=0}^k\nabla\Phi_\ell(0)
-\frac{1}{|B_{R_k}|}\int_{\partial B_{R_k}}x\nu\cdot\nabla u_0\rt|&=
\lt|\sum_{\ell=1}^{k} \lt(\nabla \Phi_{\ell}(0) - \frac{1}{|B_{R_{\ell}}|}\int_{\partial B_{R_{\ell}}}x\nu\cdot\nabla v_{\ell-1}\rt)\rt|\\
&\stackrel{\eqref{so45}}{\les} \sum_{\ell=1}^{k} \frac{\beta(R_{\ell})}{R_{\ell}}\stackrel{\eqref{hypbetamain}}{\les} \frac{\beta(R_k)}{R_k},
\end{align*}
which is \eqref{ao08}.\\

From \eqref{ao08}, estimate  \eqref{ao21} follows via the identity
\begin{align*}
\nabla\Phi_k(0)-\frac{1}{|B_{R_k}|}\int_{\partial B_{R_k}}x\nu\cdot\nabla u_0
=\int\eta_R\nabla u_k-\int\eta_R\nabla u_0
\end{align*}
for $R\in[R_{k+1},R_k]$. Notice that  since $\nabla u_k=\nabla v_k+\nabla \Phi_k$ (recall (\ref{so41})) and  by the mean-value property for the harmonic $\nabla\Phi_k$
in form of $\int\eta_R\nabla\Phi_k$ $=\nabla\Phi_k(0)$, cf.~the normalization in (\ref{ao15}), this identity reduces to
\begin{align*}
-\frac{1}{|B_{R_k}|}\int_{\partial B_{R_k}}x\nu\cdot\nabla u_0
=\int\eta_R\nabla v_k-\int\eta_R\nabla u_0.
\end{align*}
By the boundary condition $\nu\cdot \nabla v_k=0$ on $\partial B_{R_k}$ (cf. (\ref{so43})) the last statement takes the form
$$\frac{1}{|B_{R_k}|}\int_{\partial B_{R_k}}x\nu \cdot\nabla(v_k-u_0)=\int\eta_R\nabla(v_k-u_0).$$
Since $\nabla(v_k-u_0)$ is harmonic in $B_{R_k}$,
cf.~(\ref{so43}) \& (\ref{so16}), the latter holds by the divergence theorem (see \eqref{eq:div}) and the mean-value property
in form of 
$$\int\eta_R\nabla(v_k-u_0)=\frac{1}{|B_{R_k}|}\int_{B_{R_k}}\nabla(v_k-u_0).$$ 
\end{proof}

 Let us close this section by  estimating the distance between  spherical and radial averages of $\nabla u$. 
 This implies that when identifying the cumulated  shift, one can use either of them.
 \begin{lemma}\label{lem:shift}
  For every $k\in [0,K-1]$ and $R\in[R_{k+1},R_k]$,
  \begin{equation}\label{eq:mer01}
   \lt|\int \eta_R \nabla u_0-\frac{1}{|B_{R_k}|}\int_{\partial B_{R_k}} x\nu\cdot \nabla u_0 \rt|^2\les \beta(R).
  \end{equation}

 \end{lemma}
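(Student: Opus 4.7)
\textbf{Reduction via Proposition \ref{prop:repres intro}.} First I would subtract \eqref{ao08} from \eqref{ao21}. Since $u_k=v_k+\Phi_k$ (cf.\ \eqref{so41}) and $\nabla\Phi_k$ is harmonic in $B_{R_k}$, the mean-value identity $\int\eta_R\nabla\Phi_k=\nabla\Phi_k(0)$ holds for the radial unit-mass $\eta_R$ supported in $B_R\subset B_{R_k}$. The cumulated shifts $\sum_{\ell=0}^{k-1}\nabla\Phi_\ell(0)$ and $\sum_{\ell=0}^{k}\nabla\Phi_\ell(0)$ then telescope and leave
\[
\int\eta_R\nabla u_0-\frac{1}{|B_{R_k}|}\int_{\partial B_{R_k}}x\nu\cdot\nabla u_0=\int\eta_R\nabla v_k+O\lt(\frac{\beta(R_k)}{R_k}\rt).
\]
Since $R\in[R_{k+1},R_k]$ with $R_k\le CR_{k+1}$ (cf.\ \eqref{so30}), $R\sim R_k$ and $\beta(R)\sim\beta(R_k)$. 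Using $\beta(R_k)\lesssim R_k^2$ (cf.\ \eqref{ma153}), the squared error $(\beta(R_k)/R_k)^2\lesssim\beta(R_k)\sim\beta(R)$. It thus suffices to show $|\int\eta_R\nabla v_k|^2\lesssim\beta(R)$.

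\textbf{Duality test function.} Next, fix a coordinate direction $e_i$ and introduce $\omega$ as the unique distributional solution of
\[
\Delta\omega=\partial_i\eta_R\ \textrm{ in }B_{R_k},\qquad \nu\cdot\nabla\omega=0\ \textrm{ on }\partial B_{R_k},\qquad \int_{B_{R_k}}\omega=0,
\]
solvability being ensured by $\int_{B_{R_k}}\partial_i\eta_R=0$ (since $\eta_R$ is compactly supported in $B_R\subset B_{R_k}$). Recalling from \eqref{so43} that $\Delta v_k=\mu-\kappa_{R_k}$ with homogeneous Neumann data, two integrations by parts using the Neumann conditions on both $v_k$ and $\omega$, together with $\int\omega=0$, give
\[
\int\eta_R\partial_iv_k=-\int\partial_i\eta_R\,v_k=-\int\Delta\omega\,v_k=-\int\omega\,\Delta v_k=-\int\omega\,d(\mu-\kappa_{R_k}dx).
\]

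\textbf{Wasserstein estimate.} Then I would apply \eqref{wg80} with $\omega$ on $B_{R_k}$:
\[
\lt|\int\omega\,d(\mu-\kappa_{R_k}dx)\rt|\lesssim\lt(\int_{B_{R_k}}|\nabla\omega|^2\rt)^{\frac{1}{2}}W_{B_{R_k}}(\mu,\kappa_{R_k})+\sup_{B_{R_k}}|\nabla^2\omega|\,W^2_{B_{R_k}}(\mu,\kappa_{R_k}).
\]
Testing the equation for $\omega$ with $\omega$ and combining Cauchy--Schwarz with Poincar\'e give $\|\nabla\omega\|_{L^2(B_{R_k})}\lesssim R_kR^{-d/2-1}\sim R^{-d/2}$; the scaling $\omega(x)=R^{1-d}\tilde\omega(x/R)$ of the free-space solution together with Schauder estimates for the smooth, harmonic Neumann boundary correction (whose data is of size $R^{-d}$ on $\partial B_{R_k}$, with $R\sim R_k$) yield $\sup|\nabla^2\omega|\lesssim R^{-(d+1)}$. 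Combined with the hypothesis bound $W^2_{B_{R_k}}(\mu,\kappa_{R_k})\lesssim R_k^d\beta(R_k)\sim R^d\beta(R)$ (cf.\ \eqref{hypdataCampa}), the right-hand side is at most $\beta(R)^{1/2}+R^{-1}\beta(R)\lesssim\beta(R)^{1/2}$, using $\beta(R)\lesssim R^2$ once more. Squaring and summing over $i$ completes the argument.

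\textbf{Main obstacle.} The hardest part is the pointwise Hessian bound $\sup|\nabla^2\omega|\lesssim R^{-(d+1)}$: a direct convolution representation $\nabla^2\omega=\nabla^2 G_0*\partial_i\eta_R$ involves a non-integrable kernel and so does not give the bound immediately. The clean route is to work at unit scale with $\tilde\omega$, where the Hessian is plainly bounded by the smoothness of $\eta$, and then scale back; the additional harmonic Neumann correction is controlled by standard boundary Schauder estimates, which is where it is crucial that $R\sim R_k$ so that the correction sees its data at the natural scale.
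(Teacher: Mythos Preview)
Your argument is correct but takes a different route from the paper. Your reduction via Proposition~\ref{prop:repres intro} works (and is not circular, since that proposition is established earlier without using this lemma); in fact the identity $\int\eta_R\nabla u_0-\frac{1}{|B_{R_k}|}\int_{\partial B_{R_k}}x\nu\cdot\nabla u_0=\int\eta_R\nabla v_k$ holds \emph{exactly} (it is derived inside the paper's proof of \eqref{ao21} via the mean-value property for $\nabla\Phi_k$ and harmonicity of $\nabla(v_k-u_0)$), so your $O(\beta(R_k)/R_k)$ detour is unnecessary. The paper instead avoids the reduction and works directly with $u_0$: it constructs $\omega=\partial_1\hat\omega$ for a \emph{radial} $\hat\omega$ solving $\Delta\hat\omega=-\eta_R-\tfrac{d-1}{|B_{R_k}|}$ with constant Neumann data, so that $\omega$ satisfies $\Delta\omega=-\partial_1\eta_R$, $\nu\cdot\nabla\omega=0$, and additionally $\omega=-x_1/|B_{R_k}|$ on $\partial B_{R_k}$; this last Dirichlet-type condition is what generates the boundary flux term directly upon integrating by parts against $\Delta u_0$. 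The radial ODE makes $\sup|\nabla\omega|\lesssim R_k^{-d}$ immediate, and the paper then uses only the Lipschitz transport estimate $|\int\omega\,d(\mu-\kappa)|\le\sup|\nabla\omega|\int|x-y|\,d\pi\lesssim\beta(R)^{1/2}$. This entirely sidesteps your ``main obstacle'': the Hessian bound $\sup|\nabla^2\omega|\lesssim R^{-(d+1)}$ that you need for \eqref{wg80} is genuine work (your Schauder sketch is valid), but it becomes unnecessary if you replace \eqref{wg80} by the cruder Lipschitz estimate, which in your setting would only require $\sup|\nabla\omega|\lesssim R^{-d}$ and is correspondingly easier to establish.
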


 \begin{proof}
 By \eqref{hypdataCampa}, it is enough to prove that 
  \begin{equation}\label{eq:mer02}
   \lt|\int \eta_R \partial_1 u_0-\frac{1}{|B_{R_k}|}\int_{\partial B_{R_k}} x_1\nu\cdot \nabla u_0 \rt|^2\les\frac{1}{|B_{R_k}|}W_{B_{R_k}}^2(\mu,\kappa).
  \end{equation}
By scaling we may assume that $R_k=1$.
The argument is somewhat reminiscent of the one leading to Lemma \ref{lem:omegaR}. 
We claim that there exists $\omega\in C^\infty(\overline{B}_1)$ with $\sup_{B_1} |\nabla \omega|\les 1$ such that for $\zeta\in C^\infty_c(\R^d)$,
\begin{equation}\label{eq:mer03}
 \int \eta_R \partial_1\zeta -\frac{1}{|B_1|}\int_{\partial B_1} x_1 \nu\cdot\nabla\zeta=\int_{B_1} \omega \Delta \zeta.
\end{equation}
Then, testing this equation with $\zeta=u_0$ (thanks to a simple approximation argument) and observing that $\int_{B_1}\omega=0$ (choosing $\zeta=|x|^2$), we would obtain \eqref{eq:mer02} from
\[
 \left|\int_{B_1} \omega d(\mu-\kappa)\rt|^2\les W^2_{B_1}(\mu,\kappa).
\]
Let $\hat{\omega}$ be a solution of 
\[
 \Delta \hat{\omega}=-\eta_R -\frac{d-1}{|B_1|}  \ \textrm{ in } B_1 \qquad \textrm{ and } \qquad  \nu\cdot \nabla \hat{\omega}=-\frac{1}{|B_1|} \ \textrm{ on } \partial B_1,
\]
which is solvable since $\int \eta_R=1$ and $\mathcal{H}^{d-1}(\partial B_1)=d|B_1|$. Since $\eta_R$ is radially symmetric so is $\hat{\omega}$ and it is readily checked that $\omega:=\partial_1\hat{\omega}=\frac{d \hat{\omega}}{dr}(r)\frac{x_1}{r}$ solves 
\[
 \Delta \omega=-\partial_1 \eta_R \ \textrm{ in } B_1 \qquad \textrm{ and } \qquad \nu\cdot \nabla \omega=0, \ \omega=-\frac{x_1}{|B_1|} \ \textrm{ on } \partial B_1 
\]
which is equivalent to \eqref{eq:mer03}. Finally since $R\sim 1$ we have $\sup_{B_1} |\nabla \omega|\les 1$. 
 \end{proof}

 \subsection{Proof of Theorem \ref{theo:main}}
 We start this section with a technical lemma that we will need for the proof of Theorem \ref{theo:main}.
\begin{lemma}\label{lem:aux}
 For every $k\in[0,K-1]$ and every $R\in[R_{k+1},R_k]$, there holds 
 \begin{align}
	\lt|\int\eta_R\bar j_k-\int\eta_R(x)(y-x)d\pi_k\rt|&\lesssim\frac{E_k}{R_k},\label{ao19}\\
	\lt|\int\eta_R(x)d\pi-1\rt|&\lesssim\frac{\sqrt{E_k}}{R_k}\ll 1,\label{ao17}\\
	\lt|\int\eta_R\nabla u_k\rt|&\lesssim\sqrt{E_k}.\label{ao18}
\end{align}
\end{lemma}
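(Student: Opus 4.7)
The plan is to establish the three estimates in order, each being a short calculation built on the following ingredients: the definition \eqref{so08} of $\bar j_k$, the preservation under the cumulative shifts of the marginal structure of $\pi_k$ (first marginal $\mu$ and second marginal Lebesgue on $B_{6\barR_k}$, as used throughout the induction in Proposition \ref{theo:campintro}), the $L^\infty$ displacement bound from Lemma \ref{Linf} (available since $E_k\les\beta(R_k)\ll R_k^2$ by Proposition \ref{theo:campintro} and \eqref{ma153}), the mass bound \eqref{ma88}, and Cauchy--Schwarz. Since $R\in[R_{k+1},R_k]$ with $R_{k+1}\sim R_k$, I freely use $R\sim R_k$, $\|\nabla\eta_R\|_\infty\les R^{-(d+1)}$, and $\|\eta_R\|_\infty\les R^{-d}$.

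For \eqref{ao19}, I would rewrite the difference via \eqref{so08} as
\begin{align*}
\int\eta_R\, d\bar j_k-\int\eta_R(x)(y-x)\,d\pi_k=\int_0^1\int\bigl(\eta_R((1-t)x+ty)-\eta_R(x)\bigr)(y-x)\,d\pi_k\,dt.
\end{align*}
A Taylor bound gives $|\eta_R((1-t)x+ty)-\eta_R(x)|\les tR^{-(d+1)}|y-x|$, and the $L^\infty$ bound on the displacement confines the integrand to pairs with both endpoints in (say) $B_{2R_k}\subset B_{6R_k}$; hence the right-hand side is at most $R^{-(d+1)}\int_{(B_{6R_k}\times\R^d)\cup(\R^d\times B_{6R_k})}|y-x|^2\,d\pi_k\sim E_k/R_k$. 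For \eqref{ao17}, since $\spt\eta_R\subset B_{R_k}\subset B_{6\barR_k}$, the marginal structure of $\pi_k$ gives $\int\eta_R(x)\,d\pi_k=\int\eta_R\,d\mu=\int\eta_R(x)\,d\pi$ and $\int\eta_R(y)\,d\pi_k=\int\eta_R=1$, so
\begin{align*}
\int\eta_R(x)\,d\pi_k-1=\int\bigl(\eta_R(x)-\eta_R(y)\bigr)\,d\pi_k.
\end{align*}
The Lipschitz bound $|\eta_R(x)-\eta_R(y)|\les R^{-(d+1)}|x-y|$ and Cauchy--Schwarz, combined with $\pi_k((B_R\times\R^d)\cup(\R^d\times B_R))\les R^d$ from \eqref{ma88} and $\int|x-y|^2\,d\pi_k\les R_k^d E_k$ on the same domain, yield
\begin{align*}
\lt|\int\eta_R(x)\,d\pi_k-1\rt|\les R^{-(d+1)}\bigl(R^d\cdot R_k^d E_k\bigr)^{1/2}\sim\frac{\sqrt{E_k}}{R_k},
\end{align*}
with right-hand side $\ll 1$ since $E_k\ll R_k^2$.

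Finally, \eqref{ao18} will follow by combining \eqref{ao13} from Lemma \ref{lem:Le1intro} with \eqref{ao19}: together they reduce $\int\eta_R\nabla u_k$ to $\int\eta_R(x)(y-x)\,d\pi_k$ up to an error $E_k/R_k$. Cauchy--Schwarz then gives
\begin{align*}
\lt|\int\eta_R(x)(y-x)\,d\pi_k\rt|\le\lt(\int\eta_R(x)\,d\pi_k\rt)^{1/2}\lt(\int\eta_R(x)|y-x|^2\,d\pi_k\rt)^{1/2}\les 1\cdot\sqrt{E_k},
\end{align*}
where the first factor is bounded by \eqref{ao17} and the second by $\|\eta_R\|_\infty\les R^{-d}$ together with the definition of $E_k$. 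Absorbing $E_k/R_k\les\sqrt{E_k}$ (one more application of $E_k\ll R_k^2$) yields \eqref{ao18}. No step is conceptually subtle; the only genuine bookkeeping is verifying that the marginals of $\pi_k$ coincide with $\mu$ and Lebesgue on $B_{6\barR_k}$ despite the cumulative shifts, which is inherited from the inductive setup of Proposition \ref{theo:campintro}.
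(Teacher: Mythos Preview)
Your proof is correct and follows essentially the same approach as the paper: for \eqref{ao19} and \eqref{ao17} you rewrite the difference via the Eulerian definition \eqref{so08} and the marginal identities, then apply the Lipschitz bound on $\eta_R$, the $L^\infty$ localization from Lemma \ref{Linf}, the mass bound \eqref{ma88}, and Cauchy--Schwarz, exactly as the paper does. The only cosmetic difference is in \eqref{ao18}: the paper passes from $\nabla u_k$ to $\bar j_k$ via \eqref{ao13} and bounds $\int\eta_R\bar j_k$ directly, whereas you insert the additional step \eqref{ao19} to reduce to $\int\eta_R(x)(y-x)\,d\pi_k$ before applying Cauchy--Schwarz; both routes are equally short and yield the same bound.
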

\begin{proof}
 We start with (\ref{ao19}) and note that by definition (\ref{so08}) of $\bar j$, the left-hand side can be written as 
 \begin{multline*}
 \int_0^1\int(\eta_R((1-t)x+ty)-\eta_R(x))(y-x)d\pi_kdt\\= \int_0^1\int_0^t \int \nabla \eta_R((1-s)x+sy)\cdot (y-x)^2 d\pi_kdsdt
 \end{multline*}
 and thus  
\begin{align*}
	\lefteqn{\lt|\int\eta_R\bar j_k-\int\eta_R(x)(y-x)d\pi_k\rt|}\\
	&\leq \frac12\sup|\nabla\eta_R|\int_0^1\int I(|(1-t)x+ty|<R)|y-x|^2d\pi_k dt\\
	&\stackrel{\eqref{ao15}\&(\ref{ma38})}{\les} \frac{1}{R^{d+1}}\int_{(B_{3R}\times\mathbb{R}^d)\cup(\mathbb{R}^d\times B_{3R})}|x-y|^2d\pi_k.
\end{align*}
 Combining this with $R\sim R_k$ (cf. \eqref{so01}),
we obtain (\ref{ao19}).
\medskip

We turn to (\ref{ao17}) and reformulate the left-hand side: Clearly, since the $x$-component
of the transference plan is not affected by the shift (\ref{so34}), we may replace
$\pi$ by $\pi_k$; the $y$-marginal of $\pi_k$ is the Lebesgue measure and
by the normalization in (\ref{ao15}) we have $1=\int\eta_R(y)d\pi_k$, so that as above
\begin{align*}
 \lt|\int\eta_R(x)d\pi-1\rt|&
 \leq \frac12 \sup|\nabla\eta_R|\int_0^1\int I(|(1-t)x+ty|<R)|y-x|d\pi_k dt\\
 &\les \frac1{R^{d+1}}\int_{(B_{3R}\times\mathbb{R}^d)\cup
(\mathbb{R}^d\times B_{3R})}|x-y|d\pi_k.
\end{align*}
 In view of $R\sim R_k$
it is sufficient  to show the estimate 
\begin{align}\label{ao20}
\frac{1}{R_k^d}\int_{(B_{3R_k}\times\mathbb{R}^d)\cup(\mathbb{R}^d\times B_{3R_k})}
|x-y|d\pi_k\lesssim\sqrt{E_k}.
\end{align}
which follows from the Cauchy-Schwarz inequality and \eqref{ma88}. 

\medskip

We finally turn to (\ref{ao18}) and start by applying (\ref{ao13}), yielding
\begin{align}\label{ao16}
\lt|\int\eta_R(\bar j_k-\nabla u_k)\rt|\lesssim\frac{E_k}{R_k}.
\end{align}
Noting that  $\frac{E_k}{R_k}$
$\ll\sqrt{E_k}$ thanks to \eqref{so01} and \eqref{ma153}, it is enough to establish \eqref{ao18} with $\nabla u_k$
replaced by $\bar j_k$. We observe that by definition (\ref{so08}) we have
\begin{align*}
\lt|\int\eta_R\bar j_k\rt|&=\lt|\int_0^1\int\eta_R((1-t)x+ty)(y-x)d\pi_kdt\rt|\\
&\stackrel{\eqref{ao15}}{\les} \int I(|(1-t)x+ty|<R)|x-y|d\pi_k,
\end{align*}
 so that the argument continuous as for \eqref{ao17}.
\end{proof}

 Now we may prove Theorem \ref{theo:main} which we restate and reformulate here for the reader's convenience.
 
\begin{theo1.2}
Let 
\[
 h_R:=\int \eta_R \nabla u_0.
\]
Then, for every $R\in[R_K,R_0]$,
\begin{equation}\label{po03}
  \lt|\int \eta_R(x)(y-x-h_R)d\pi\rt|+\lt|\int \eta_R(y-h_R)(y-x-h_R)d\pi\rt| \les \frac{\beta(R)}{R}
 \end{equation}
and 
\begin{multline}\label{je04}
 \sup \lt\{ |y-x-h_R| \ : \ (x,y)\in\spt \pi \cap [(B_{R}\times \R^d)\cup( \R^d\times B_{R}(h_R))]\rt\}\\
 \les R \lt(\frac{\beta(R)}{R^2}\rt)^{\frac{1}{d+2}}.\
\end{multline}
\end{theo1.2}

 \begin{proof}
 \newcounter{theomain} 
\refstepcounter{theomain}

{\sc Step} \arabic{theomain}. \refstepcounter{theomain} We  actually start with the  proof of \eqref{je04}.
We claim that
\begin{align}\label{so62}
\frac{1}{|B_{2R}|}\int_{(B_{2R}\times\mathbb{R}^d)\cup(\mathbb{R}^d\times B_{2R}(h_R))}|x-y+h_R|^2d\pi
\lesssim \beta(R).
\end{align}
Combining this with the $L^\infty$ bound \eqref{wg31} applied to $\tilde \pi=(\textrm{id},\textrm{id}-h_R)\# \pi$ would yield \eqref{je04}.
\medskip

Let $k\in[0,K-1]$ be such that $R\in [R_{k+1},R_k]$. Undoing the iterative shift (\ref{so34}), estimate (\ref{so01}) takes the form
\begin{align}\label{so61}
\frac{1}{|B_{6R_k}|}
\int_{(B_{6R_k}\times\mathbb{R}^d)\cup(\mathbb{R}^d\times B_{6R_k}(\tilde h_k))}
|x-y+\tilde h_k|^2d\pi\lesssim \beta(R_k)
\end{align}
with shift $\tilde h_k:=\sum_{\ell=0}^{k}\nabla\Phi_\ell(0)$. 
By \eqref{ao08}, \eqref{eq:mer01} (and \eqref{hypbetamain}), we have
\begin{align}\label{so64}
|\tilde h_k-h_R|\lesssim\frac{\beta( R_k)}{R_k} + \beta^{\frac{1}{2}}(R)\les \beta^{\frac{1}{2}}(R).
\end{align}
Because $\beta(R)\ll R^2$  (cf.\ \eqref{ma153}) this in particular implies that $B_{2R}(h_R)\subset B_{6R_k}(\tilde h_k)$. We may thus estimate
\begin{align*}
 \lefteqn{\frac{1}{|B_{2R}|}\int_{(B_{2R}\times\mathbb{R}^d)\cup(\mathbb{R}^d\times B_{2R}(h_R))}|x-y+h_R|^2d\pi}\\
 &\les \frac{1}{|B_{2R}|}\int_{(B_{2R}\times\mathbb{R}^d)\cup(\mathbb{R}^d\times B_{2R}(h_R))}(|x-y+\tilde{h}_k|^2 + |h_R-\tilde{h}_k|^2)d\pi\\
 &\stackrel{\eqref{so64}}{\les} \frac{1}{|B_{6R_k}|}\int_{(B_{6R_k}\times\mathbb{R}^d)\cup(\mathbb{R}^d\times B_{6R_k}(\tilde h_k))}|x-y+\tilde{h}_k|^2 d\pi \\
 &\qquad + \lt(\frac{1}{|B_{2R}|}\int_{(B_{2R}\times\mathbb{R}^d)\cup(\mathbb{R}^d\times B_{2R}(h_R))} d\pi\rt) \beta(R) \\
 &\stackrel{\eqref{so61}\&\eqref{ma88}}{\les}  \beta(R).
\end{align*}
This proves \eqref{so62}.

\medskip

{\sc Step} \arabic{theomain}. \refstepcounter{theomain}
We now turn to the proof of \eqref{po03}.
Let again $k\in [0,K-1]$ be such that $R\in [R_{k+1},R_k]$. Writing  
 \[
  \int \eta_R(x) (y-x) d\pi_k-\int \eta_R \nabla u_k= \int \eta_R(x) (y-x) d\pi_k-\int \eta_R \bar j_k + \int \eta_R (\nabla u_k-\bar j_k)
 \]
and evoking  (\ref{ao19}) of Lemma \ref{lem:aux} and (\ref{ao13}), we obtain 
\[\lt| \int \eta_R(x) (y-x) d\pi_k-\int \eta_R \nabla u_k\rt|\les \frac{E_k}{R_k}.\]
Appealing
to the iterative definition (\ref{so34}) of $\pi_k$ this is equivalent to 
\begin{align*}
\lt|\int\eta_R(x)\lt(y-x-\sum_{\ell=0}^{k-1}\nabla\Phi_\ell(0)\rt)d\pi
-\int\eta_R\nabla u_k\rt|
\lesssim\frac{E_k}{R_k}.
\end{align*}
We now use (\ref{ao17}) \& (\ref{ao18})  of Lemma \ref{lem:aux} in the form of 
\[\lt|\int \eta_R \nabla u_k - \int \eta_R(x) \lt(\int \eta_R \nabla u_k\rt) d\pi\rt|\les \frac{E_k}{R_k}\]
to reformulate this as
\begin{equation}\label{almost}
\lt|\int\eta_R(x)\lt(y-x\rt) d\pi - \lt( \int \eta_R(x) d\pi\rt)\lt(\sum_{\ell=0}^{k-1}\nabla\Phi_\ell(0)+\int\eta_R\nabla u_k\rt)
\rt|\lesssim\frac{E_k}{R_k}.
\end{equation}
By \eqref{ao21},  \eqref{so01} and the  monotonicity of $\frac{\beta(R)}{R}$  we get the first part of \eqref{po03},
\[
 \lt|\int \eta_R(x) (y-x-h_R) d\pi\rt|\les \frac{\beta(R)}{R}.
\]
By \eqref{ao15},
\begin{multline*}
 \lt|\int (\eta_R(x)-\eta_R(y-h_R))(y-x-h_R)d\pi\rt|\\
 \les \frac{1}{R^{d+1}}\int_{(B_R\times\R^d)\cup(\R^d\times B_R(h_R))} |x-y+h_R|^2d\pi,
\end{multline*}
and thus the second part of \eqref{po03} follows from \eqref{so62}. 
\end{proof}

 We finally derive Corollary \ref{cormain}, which we recall for the reader's convenience, from Theorem \ref{theo:main}.
 \begin{cor1.1}
 Let $\mu$ be  such that for some $0\le \alpha<1$,  and some  $\barR\gg1$, there holds
 \begin{equation}\label{hypdatafinal}
  \frac{1}{|B_R|} W^2_{B_R}\lt(\mu, \kappa\rt)\le C R^\alpha \qquad \forall  R\in[ 1, \barR].
 \end{equation}
Then, assuming for simplicity that $\mu(B_{\barR})=|B_{\barR}|$, if $\pi$ is the optimal coupling for $W_{B_{\barR}}(\mu,1)$,  for every   $\barR\ge R\ges 1$, 
\begin{equation}\label{mainweakcorfinal}
 \lt|\int \eta_R(x)(y-x-h_R)d\pi\rt| 
 \les \frac{1}{R^{1-\alpha}},
\end{equation}
where for $R>0$, $h_R=\int \eta_R \nabla u$ with  $u$ the up to an additive constant unique (distributional) solution of the Poisson equation 
\[
 \Delta u=\mu-1 \ \textrm{ in } B_{\barR} \qquad \textrm{ and } \qquad \nu\cdot \nabla u=0 \ \textrm{ on } \partial B_{\barR}.
\]

\end{cor1.1}
\begin{proof}
The statement follows directly from Theorem \ref{theo:main} provided we may  choose  $R_0=\barR$.
To prove this, notice that on the one hand $\spt \pi\subset B_{\barR}\times B_{\barR}$ implies
\[
 \frac{1}{|B_{6\barR}|}\int_{(B_{6\barR}\times \R^d)\cup (\R^d\times B_{6\barR})} |x-y|^2d\pi\le\frac{1}{|B_{\barR}|}\int_{(B_{\barR}\times \R^d)\cup (\R^d\times B_{\barR})} |x-y|^2d\pi,
\]
and thus $R_0=\barR$ and $\Phi_0=0$ satisfy the conclusions \eqref{so01} and \eqref{so39} of the Campanato iteration. On the other hand, $R_0$ is a good radius in the sense 
of Lemma \ref{lem:goodradii} (since having no transport across $\partial B_{R_0}$, the left-hand sides of \eqref{so76} and \eqref{so10} are both equal to zero). Therefore $R_0=\barR$ is indeed admissible.
\end{proof}

 \bibliographystyle{amsplain}

\bibliography{OT}
\end{document}